\documentclass{article}

\usepackage[margin=1in]{geometry}
\usepackage{amsmath, mathtools, amsfonts, mathrsfs, amsthm, amssymb, cite, suffix, enumitem, bm, graphbox}
\mathtoolsset{showonlyrefs}
\usepackage[colorlinks]{hyperref}
\hypersetup{linkcolor=blue}
\usepackage{xcolor}

\usepackage{multirow}
\usepackage{caption}

\newtheorem{theorem}{Theorem}[section]
\newtheorem{lemma}[theorem]{Lemma}

\newtheorem{proposition}[theorem]{Proposition}

\theoremstyle{definition}
\newtheorem{definition}[theorem]{Definition}
\newtheorem{remark}[theorem]{Remark}

\usepackage{mleftright}
\renewcommand{\left}{\mleft}
\renewcommand{\right}{\mright}

\renewcommand{\leq}{\leqslant}
\renewcommand{\geq}{\geqslant}

\renewcommand{\emptyset}{\varnothing}

\renewcommand{\P}{\mathbb{P}}
\newcommand{\E}{\mathbb{E}}
\newcommand{\Z}{\mathbb{Z}}
\newcommand{\R}{\mathbb{R}}
\newcommand{\N}{\mathbb{N}}

\newcommand{\cN}{\mathcal{N}}
\newcommand{\cL}{\mathcal{L}}

\renewcommand{\c}{\mathsf{c}}

\newcommand{\Var}{\operatorname{Var}}

\newcommand{\Cov}{\operatorname{Cov}}
\newcommand{\TV}{\mathrm{TV}}
\renewcommand{\d}{\operatorname{\mathbf{d}}}
\newcommand{\ind}[1]{\mathbf{1}_{\{#1\}}}

\newcommand{\eps}{\varepsilon}
\newcommand{\Exp}[1]{\exp\left(#1\right)}

\WithSuffix\newcommand\ind*[1]{\mathbf{1}_{#1}}

\newcommand{\dtv}{\d_\TV}

\renewcommand{\tilde}{\widetilde}

\newcommand{\treebn}{{\mathcal{T}_{b,n}}}

\newcommand{\mubno}{{\mathcal{M}_{b,n}^0}}
\newcommand{\mubnh}{{\mathcal{M}_{b,n}^\ham}}
\newcommand{\mubnhr}{{\mathcal{M}_{b,n}^{\hamr}}}
\newcommand{\mubnhbl}{{\mathcal{M}_{b,n}^{\hambl}}}
\newcommand{\mubnhal}{{\mathcal{M}_{b,n}^{\hamal}}}

\newcommand{\walk}{\mathcal{W}}

\newcommand{\bb}{\mathfrak{b}}
\newcommand{\mm}{\mathfrak{m}}

\newcommand{\ham}{\mathscr{H}}
\newcommand{\rem}{\mathscr{R}}
\newcommand{\pot}{\mathscr{V}}

\newcommand{\hamr}{\ham_r}
\newcommand{\hambl}{\ham_{\lambda,\beta}}

\newcommand{\hamal}{\ham_\lambda^\alpha}

\newcommand{\qua}{\mathscr{A}}
\newcommand{\ext}{\mathscr{B}}

\newcommand{\trem}{\tilde{\rem}}
\newcommand{\twalk}{\tilde{\walk}}

\newcommand{\oud}{\mathcal{U}}

\newcommand{\plat}{\mathsf{Plateau}}

\title{Renormalizing small ball events for branching random walk}
\author{Vilas Winstein}

\begin{document}

\maketitle

\begin{abstract}
Consider a branching random walk (BRW) of depth $n$ with standard Gaussian
increments, conditioned on all endpoints lying in an interval of radius $r$
(which may depend on $n$), i.e.\ a \emph{small ball event}. We prove that this
conditioning results in exponential decay of correlations between the endpoints,
with correlation length $O(r)$. Our proof is based on the \emph{renormalization group},
which in this context captures the effect of the endpoint conditioning on
earlier steps in the walk. This effect can be separated into so-called \emph{relevant}
and \emph{irrelevant} parts, and in our setting the relevant part makes the conditioned
BRW behave like a branching Ornstein-Uhlenbeck walk for the first $n-O(r)$
steps, leading to the correlation decay. The irrelevant part shrinks in a
nontrivial manner, and the simplest argument to bound it uniformly only yields
correlation length $O(r^2)$. To achieve the optimal $O(r)$ bound, we establish
that the irrelevant part shrinks horizontally in space before decaying
uniformly.

The BRW is a prototypical example of a log-correlated Gaussian
field, and it forms the basis of many \emph{hierarchical field theories} in the
mathematical formulation of quantum field theory. As such, we present our
results in a general form which is applicable to other hierarchical field
theories with confining potentials, of which the small ball conditioned BRW is
an example. We apply our general result to two other models: first, the
\emph{sinh-Gordon} model which has a strictly convex potential, so at the level of
covariances the model behaves like one with a quadratic potential, i.e.\ the
model has a positive mass. Second, we consider generalizations of the
\emph{$\phi^4$ model} with any power $\alpha \geq 2$; as $\alpha$ increases from $2$ to
$\infty$, these power potentials naturally interpolate between the massive
(quadratic) potential and the infinite square well potential of the small ball
event.
\end{abstract}

\vfill

\begin{center}
    \captionsetup{type=figure}
    \includegraphics[width=\textwidth]{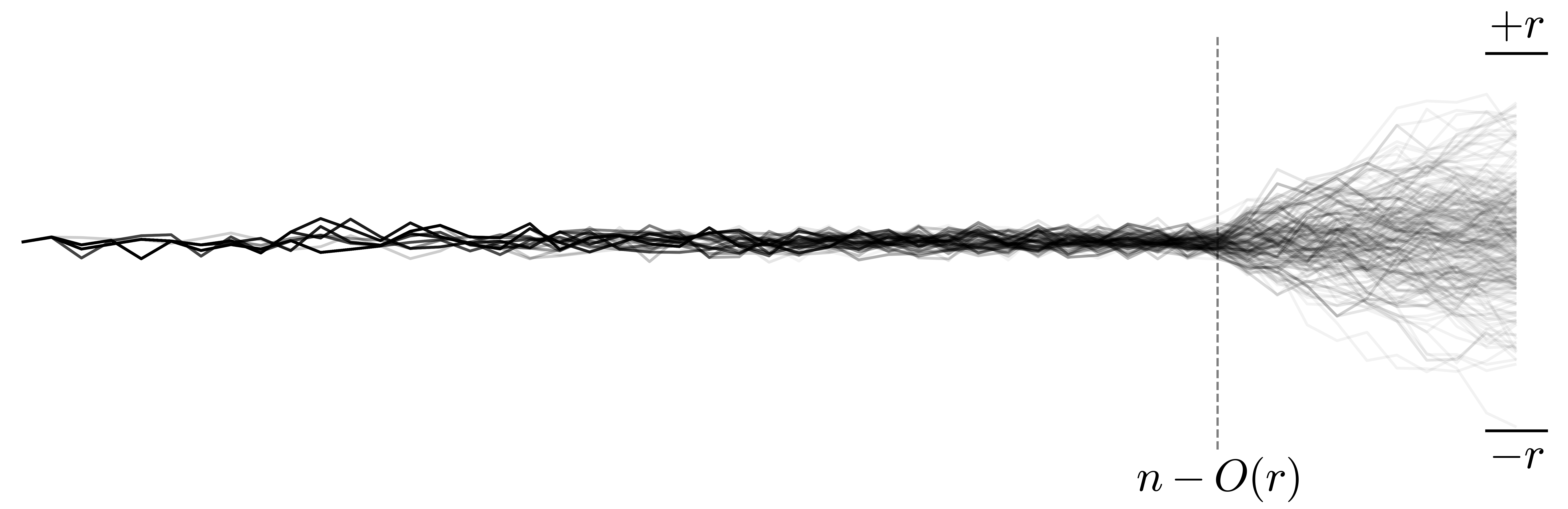}
    \caption{
    Illustrative depiction of the behavior of branching random walk conditioned
    on all endpoints lying in the interval $[-r,r]$. The initial $n-O(r)$ steps become
    a branching Ornstein--Uhlenbeck walk with a drift towards the origin.
    The drift strength (and thus fluctuation scale) of this portion of the walk
    only depends on the branching rate of the walk, not the radius $r$ of the interval.
    }
\label{fig:frontpage}
\end{center}

\clearpage

\setcounter{tocdepth}{2}
\tableofcontents

\vfill

\section{Introduction}
\label{sec:intro}

In this article we study the branching random walk.
As a prototypical example of a log-correlated field, it is
often used as a stand-in for the two-dimensional discrete Gaussian free field
upon which a host of physical phenomena are modeled.
However, to keep the exposition broadly accessible, before commenting further on
physical motivations, we lay out the elementary setup and state our main result.

\subsection{Basic setup and main result}
\label{sec:intro_setupresults}

For $b \in \{ 2, 3, 4, \dotsc \}$, the $b$-branching random walk (BRW) of depth $n$ is a
random walk indexed by $\treebn$, the complete $b$-ary tree of depth $n$.
We will identify $\treebn$ with the set of $b$-ary strings $\tau = (\tau_1,\dotsc,\tau_k)$
of length $k \leq n$, i.e.\
\begin{equation}
    \treebn = \bigcup_{k=0}^n [b]^k,
\end{equation}
where $[b] = \{1,\dotsc,b\}$.
Note that $[b]^0$ consists of a unique element, the \emph{empty string} $\emptyset$.
All strings in $[b]^k$ have \emph{length} $k$; the length of $\tau \in \treebn$
is denoted by $|\tau|$.
Additionally, for $\tau, \tau' \in \treebn$ with $|\tau'| \leq |\tau|$,
we say that $\tau'$ is an \emph{ancestor} of $\tau$,
denoted by $\tau' \preceq \tau$, if $\tau_j' = \tau_j$ for all $j \in \{1, \dotsc, |\tau'|\}$.

To construct the BRW $\left( \phi_\tau \right)_{\tau \in \treebn}$, we first define its increments
$\left( \zeta_\tau \right)_{\tau \in \treebn \setminus \{ \emptyset \}}$ to be mutually independent standard Gaussian random
variables, and then simply set
\begin{equation}
\label{eq:def_brw}
    \phi_\tau = \sum_{\emptyset \neq \tau' \preceq \tau} \zeta_{\tau'}.
\end{equation}
Note that with this definition, the root value $\phi_\emptyset$ is always zero.
We will often denote by $\Phi$ the set of endpoints of the branching random walk,
i.e.\ $\Phi = \left( \phi_\tau \right)_{\tau \in [b]^n}$.
Now we consider $\Phi$ conditioned on an $\infty$-ball of radius $r$; in other words,
we condition on the event
\begin{equation}
    \{ \| \Phi \|_\infty \leq r \} = \{ |\phi_\tau| \leq r \text{ for all } \tau \in [b]^n \}.
\end{equation}
Our main result concerns the correlation structure of $\Phi$ under this conditioning.
For this, we introduce the \emph{distance} $\d(\tau_1,\tau_2)$ between two strings
$\tau_1,\tau_2 \in [b]^n$, defined as $n - |\tau'|$, where $\tau'$ is the longest
common ancestor of $\tau_1$ and $\tau_2$.

\begin{theorem}
\label{thm:main_brw} 
There are constants $C, c > 0$ depending only on $b$ such that
for all $\tau_1, \tau_2 \in [b]^n$, we have
\begin{equation}
    \left| \Cov \left[ \, \phi_{\tau_1}, \phi_{\tau_2} \, \big| \,
    \|\Phi\|_\infty \leq r \right] \right|
    \leq C \Exp{- c \d(\tau_1,\tau_2) + C r}.
\end{equation}
\end{theorem}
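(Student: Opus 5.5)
The approach is a renormalization recursion on the tree, run from the leaves up to the root. For $k = 0,1,\dots,n$ define the effective weight of a vertex $\tau$ at height $k$ (distance $k$ from the leaves) by
\begin{equation}
    Z_k(x) = \P\left( |x + \phi'_\sigma| \le r \text{ for all leaves } \sigma \text{ of a depth-}k\text{ BRW } \phi' \right).
\end{equation}
These weights satisfy $Z_0 = \ind{|x|\le r}$ and
\begin{equation}
    Z_{k+1}(x) = \left(\E\, Z_k(x+\zeta)\right)^b, \qquad \zeta \sim \cN(0,1).
\end{equation}
Conditionally on the event, the walk is Markov from the root downward. Given the value $x$ at a height-$(k+1)$ vertex, its $b$ children are independent, each with transition density proportional to $e^{-(y-x)^2/2} Z_k(y)$.

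Write $Z_k = e^{-\ham_k}$. The main structural claim is that after $k_0 = O(r)$ steps, $\ham_k$ splits as
\begin{equation}
    \ham_k(x) = b^k\left(a_k x^2 + \text{const}_k\right) + \rem_k(x),
\end{equation}
where the coefficient $a_k$ converges to the fixed point of the quadratic recursion $a \mapsto b\,a/(1+2a)$. This is the relevant part; the remainder $\rem_k$ is the irrelevant part and should become negligible. Heuristically, $\log \E\, e^{-b^k a x^2}$ is exactly quadratic, and Gaussian convolution of the log-concave function $Z_k$ (which is log-concave by Prékopa–Leindler, inductively) contracts the non-quadratic remainder.

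The quantitative steps are as follows.
\begin{enumerate}
    \item Show that $Z_{k}$ is log-concave and even for every $k$.
    \item Show that after $k_0 \approx C r$ steps, $Z_k$ is comparable to a Gaussian at its natural scale. The reason is that the effective window width grows only additively, by $O(1)$ per step in variance. Once $b^k \gg$ the relevant quantities, which costs a factor like $e^{Cr}$, the curvature of $-\log Z_k$ dominates.
    \item Establish uniform strong convexity, $-(\log Z_k)'' \ge c\, b^k$ times a positive constant at relevant scales, for $k \ge k_0$. This comes from combining log-concavity with the strong log-concavity gained through powering by $b$.
\end{enumerate}

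Given uniform strong convexity, the downward chain is contracting. The child transition density $e^{-(y-x)^2/2 - \ham_{k}(y)}$ is strongly log-concave in $y$ with curvature $1 + \ham_k'' \ge 1+\kappa$. Its mean is Lipschitz in $x$ with constant $1/(1+\kappa) < 1$, by the Brascamp–Lieb / Caffarelli contraction for a tilt $xy$. Now take two leaves whose common ancestor $\tau'$ sits at height $d = \d(\tau_1,\tau_2)$. Conditioning on $\phi_{\tau'}$ gives
\begin{equation}
    \Cov(\phi_{\tau_1},\phi_{\tau_2}) = \Cov\left(m(\phi_{\tau'}),\, m(\phi_{\tau'})\right),
\end{equation}
where $m(x) = \E[\phi_{\tau_1} \mid \phi_{\tau'}=x]$. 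The function $m$ is a composition of $d$ conditional-mean maps. Those at heights in $[k_0,d]$ are each $(1+\kappa)^{-1}$-Lipschitz, while those at heights below $k_0$ are $1$-Lipschitz by Brascamp–Lieb, since $\ham_k$ is convex. Hence $|m'| \le (1+\kappa)^{-(d-k_0)}$. The variance of $\phi_{\tau'}$ is $O(r^2)$, or $O(1)$ via Brascamp–Lieb, so
\begin{equation}
    |\Cov| \le \Var(\phi_{\tau'})\,\|m'\|_\infty^2 \le C e^{-c(d - Cr)}.
\end{equation}
This is the claim.

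The main obstacle is the uniform strong convexity of $\ham_k$ starting at $k_0 = O(r)$ rather than $O(r^2)$. A naive diffusive estimate requires $k \gtrsim r^2$ before the Gaussian smoothing sees the window width. To get $O(r)$, I would first exploit the branching: $\ham_{k+1} = b\,\tilde\ham_k$, where $\tilde\ham_k = -\log \E\, e^{-\ham_k(\cdot+\zeta)}$. Powering by $b$ multiplies the convex part exponentially, while the flat part of the plateau near the origin is the only place convexity is missing. I would track the width of the region where $\ham_k'' < \kappa$ and show it shrinks linearly, each step eating $O(1)$ from its edges because the steep walls have curvature $\gtrsim b^k$. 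The region vanishes after $O(r)$ steps, after which curvature is uniformly bounded below and the contraction argument applies.
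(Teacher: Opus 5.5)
\textbf{There is a genuine gap, at exactly the step you flag as the main obstacle: the mechanism you propose there does not work.}

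Your $\ham_k=-\log Z_k$ is the paper's $\pot_k$, and its curvature never grows like $b^k$. By \eqref{eq:2dr_cum}, $\pot_{k+1}''(\phi)=b\bigl(1-\Var_{\walk_k^\phi}(\rho)\bigr)\le b$ for every $k$. Gaussian smoothing followed by the $b$-th power maps curvature $x$ to roughly $bx/(1+x)$, which saturates at $b-1$. This is also why the prefactor $b^k$ in your decomposition is inconsistent with the map $a\mapsto ba/(1+2a)$ you quote, which governs the $x^2$ coefficient itself. So the lower bound $c\,b^k$ in your third step is false, and there are no walls of curvature $\gtrsim b^k$ to eat the plateau.

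More importantly, with a fixed threshold $\kappa$, the width of $\{\pot_k''<\kappa\}$ cannot be shown to shrink from the information you have. Written in terms of $\pot_k$, the recursive bound \eqref{eq:rem_rec_bl} reads $\pot_{k+1}''(\phi)\ge b\,\walk_k^\phi\bigl[\pot_k''(\rho)/(1+\pot_k''(\rho))\bigr]$. Knowing only $\pot_k''\ge\kappa$ beyond the edge $W$ yields, at $\phi=W-1$, just $b\delta\kappa/(1+\kappa)$ with $\delta=\walk_k^{W-1}[\rho\ge W]$. That is below $\kappa$ unless $b\delta>1+\kappa$. Worse, if $\pot_k''$ is only known to be $<\kappa$ on a region of width $W\asymp r$, the restoring drift near the edge can be of order $\kappa r$, and then $\delta$ has no lower bound at all.

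The paper gets around this by letting the tolerance decay geometrically. With $T_{k,\eps}=\sup\{\phi>0:\pot_k''(\phi)<\eps\}$ (equivalently \eqref{eq:def_threshold}), Lemma \ref{lem:linear} gives $T_{k+1,\delta\eps}\le T_{k,\eps}-1$ under the constraint $\eps T_{k,\eps}^2\le 1$. That constraint is precisely what guarantees, via the worst-case comparison of Lemma \ref{lem:worstcase}, that $\walk_k^{T-1}$ overshoots the edge with probability bounded below. After $T=O(r)$ steps the plateau is gone, but one only has $\inf\pot_{T+1}''\ge\delta^{T+1}\eps=e^{-O(r)}$ (Lemma \ref{lem:linear_iterated}). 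A second phase of another $O(r)$ steps, escaping the repelling fixed point $0$ of $x\mapsto bx/(1+x)$ (Lemma \ref{lem:exp}), is then needed before $\pot''$ is bounded below by a constant. Your sketch contains neither the shrinking tolerance with the constraint $\eps T^2\le1$ nor this second phase. It also assumes without proof that $\{\pot_k''<\kappa\}$ is a symmetric interval; this is the unimodality of Lemma \ref{lem:unimodality}, which needs the confining property and not just log-concavity.

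Your endgame, by contrast, is sound after one repair, and for the covariance bound alone it is a shorter route than the paper's Section \ref{sec:ou} (OU coupling, Gaussian tail bound, Bernstein-type coalescence, Cauchy--Schwarz). The repair: $m$ is not a composition of the one-step mean maps, since those are not affine. The Lipschitz bound still holds at the level of kernels, because $\frac{d}{dx}\walk_j^x[f(\rho)]=\Cov_{\walk_j^x}(f(\rho),\rho)$, whose absolute value is at most $\|f'\|_\infty\Var_{\walk_j^x}(\rho)\le\|f'\|_\infty/(1+\inf\pot_j'')$ by Hoeffding's covariance identity and \eqref{eq:bl}. Combined with your exact identity $\Cov(\phi_{\tau_1},\phi_{\tau_2})=\Var(m(\phi_{\tau'}))$, this would give the theorem directly from Proposition \ref{prop:shrinking}, which in the small-ball case yields $\pot_k''\ge\frac12$ for all $k\ge C(1+r)$. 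But that proposition is exactly what your proposal does not supply, and it is where the $O(r)$ rather than $O(r^2)$ correlation length comes from.
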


Note that the covariance without conditioning is exactly $n - \d(\tau_1,\tau_2)$.
Since $\| \Phi \|_\infty$ is typically of order $n$ with a very light lower tail
\cite{A13Convergence,CH20Lower},
we are conditioning on an extremely low-probability event if $r \ll n$.
Thus, it is to be expected that the measure will change drastically,
and one of our main goals is to describe this change quantitatively.
Specifically, we will derive Theorem \ref{thm:main_brw} by precisely understanding
how the conditioning of the \emph{endpoints} changes the behavior of the \emph{increments}.

The picture to keep in mind, as indicated in Figure \ref{fig:frontpage},
is that instead of behaving like a standard BRW, the first $n-O(r)$ increments will
behave like a branching Ornstein--Uhlenbeck (OU) walk, with a strong pull back towards
the origin at each step (whose strength does not depend on $r$).
The OU walk has a stationary probability distribution and
exhibits exponential decay of correlations along its trajectory.
Thus as soon as the longest common ancestor $\tau'$ of $\tau_1$ and $\tau_2$
has length less than $n - O(r)$, the two walks leading from $\tau'$ to
$\tau_1$ and $\tau_2$ will quickly decorrelate as they take independent approximate OU steps.

\begin{remark}
\label{rmk:lowerbounds_intro}
Theorem \ref{thm:main_brw} does not indicate what the correlation structure is
for $\d(\tau_1,\tau_2) \lesssim r$, and clearly the stated upper bound
is loose in this situation.
We expect that the correlation structure in this scenario is similar to that 
of a depth-$\Omega(r)$ standard BRW.
This is indeed indicated by our methods (see e.g.\ Remark \ref{rmk:lowerbounds}),
and the precise picture should be that the first $n-O(r)$ increments
have a strong pull back to the origin, while the last $\Omega(r)$
steps are close to standard Gaussians.
However, we do not pursue this analysis in the present article.
\end{remark}

\begin{remark}
\label{rmk:truebehavior}
The heuristic described in Remark \ref{rmk:lowerbounds_intro} also explains why
the correct correlation length should be $O(r)$ rather than $O(r^2)$ (or some other function of $r$).
Indeed, the histogram of endpoints of a depth-$k$ BRW typically looks Gaussian with spread $O(\sqrt{k})$,
but includes many points with size $\Omega(k)$, deep into the tails of the Gaussian.
So if, for instance, the last $\Omega(r^\alpha)$ layers looked like a standard BRW (with $\alpha \in (1,2)$)
then while any particular endpoint would lie within $[-r,r]$ with high probability,
the \emph{extremal} endpoints would escape this interval.
Because of the extremely light lower tails of the maximum (and upper tails of the minimum),
it would be probabilistically very costly to cut them off via the conditioning.
\end{remark}

The proof of Theorem \ref{thm:main_brw} is based on the
\emph{renormalization group}, which is a broad class of techniques
for multiscale analysis in statistical mechanics and quantum field theory.
In the present context it may be thought of as a way to capture the effect of the
endpoint conditioning on the previous increments in the walk,
in principle even giving an explicit procedure for sampling from the
conditioned measure.
However, the distributions of increments are defined recursively, backwards from the endpoints,
and it is a nontrivial task to understand them; this constitutes the bulk of our analysis in
the present article.
The broad overview of this method and our specific technique for analyzing
it will be given in Section \ref{sec:oop}.

Our analysis of the renormalization group
also applies to a more general class of \emph{field theories} based on the BRW,
constructed by applying a potential to the endpoints.
Conditioning the BRW to have endpoints lying in $[-r,r]$ may be thought of as imposing
an \emph{infinite square well potential}, which is a strongly confining
potential.
This perspective will be described in more detail in Section \ref{sec:intro_fieldtheories}
below, and we will state results for a few other specific examples of confining field theories,
namely the \emph{sinh--Gordon model} and generalizations of the \emph{$\phi^4$ model},
as Theorems \ref{thm:main_sinhgordon} and \ref{thm:main_phialpha} in that section.
These results follow from a general formulation which will also yield Theorem \ref{thm:main_brw}.
This generalization will be stated as Theorem \ref{thm:main_general}, after we introduce
some requisite terminology in Section \ref{sec:oop}.

\vspace{3mm}

We now turn to describing the physical motivation of our setup,
as well as reviewing the related literature which provides a bit of context
for our results as well as our methods.
\subsection{Branching random walk and the Gaussian free field}
\label{sec:intro_gff}

As mentioned above, the BRW is often used as a toy model of the
two-dimensional discrete Gaussian free field (DGFF).
Let us quickly describe this model for completeness, although we will
not refer to it after Section \ref{sec:intro}.
For $m \in \N$, let us set $\Lambda_m = \{0,\dotsc,m-1\}^2$, and let
$\Lambda_m^+ = \{-1,\dotsc,m\}^2$.
The DGFF in $\Lambda_m$ (with Dirichlet boundary conditions)
is a Gaussian process $\Psi = \left(\psi_x\right)_{x \in \Lambda_m}$ with density
proportional to 
\begin{equation}
\label{eq:gff}
    \Exp{- \frac{1}{16}
        \sum_{\substack{x,y \in \Lambda_m^+ \\ \| x - y \| = 1}}
        \left( \psi_x - \psi_y \right)^2},
\end{equation}
where for $x \in \Lambda_m^+ \setminus \Lambda_m$ we set $\psi_x = 0$.
The covariance structure of this process is given by the Green's function
of the random walk in $\Z^2$ killed upon exiting $\Lambda_m$ \cite[Exercise 1.7]{B20Extrema}.
In particular, the variance of $\psi_x$ well within the interior of $\Lambda_m$
is of order $\log m$, and the covariance between $\psi_x$ and $\psi_y$
is of order $\log \frac{m}{\| x - y\|}$ for $x \neq y$ \cite[Theorem 1.17]{B20Extrema}.
For this reason, the DGFF is called \emph{log-correlated}.

Since the variance of $\psi_x$ increases to $\infty$ as $m \to \infty$,
there is no \emph{thermodynamic limit} of the DGFF in all of $\Z^2$,
unlike in other models in statistical physics such as the Ising model.
However, one can obtain a distribution-valued continuum limit $h$,
also called the Gaussian free field (GFF), in any reasonable 
finite domain $D \subseteq \R^2$.
This is obtained by considering the DGFF in $m D \cap \Z^2$ for $m \in \N$
(defined via a formula similar to \eqref{eq:gff}) as a distribution $h_m$ in $D$
which maps a test function $f : D \to \R$ to
\begin{equation}
    h_m(f) = \sum_{x \in m D \cap \Z^2} \psi_x f(x/m),
\end{equation}
and then taking a suitable distributional limit as $m \to \infty$.
Heuristically, the continuum GFF may be thought of as a random function $h : D \to \R$
with density proportional to
\begin{equation}
\label{eq:cgff}
    \Exp{- \frac{1}{2} \left< h, -\Delta h \right>},
\end{equation}
where $\Delta$ is the Laplacian operator in $D$.
However, as $h$ is not actually a function, this perspective requires some care to make precise
(e.g.\ via Fourier analysis; see \cite[Theorem 1.44]{BP25Gaussian}).

The continuum GFF arises as the universal scaling limit of a wide variety of
models from all domains of probability and statistical mechanics
such as tilings \cite{K00Conformal,K01Dominos,BLR20Dimers},
Dyson Brownian motion \cite{AGZ10Introduction,B10Clt},
and the critical two-dimensional Ising model via bosonization \cite{D11exact}.
It is also conjectured to be the universal limit of a variety of other
interface-type models
such as the solid-on-solid model with slanted boundary conditions \cite{BGV01Entropic}
and, what is quite related, the interface in the low-temperature 3-dimensional
Ising model under slanted Dobrushin boundary conditions \cite{LL24tilted}.
In the physics literature, the continuum GFF represents a ``non-interacting''
quantum field theory \cite{GJ12Quantum}, and various interaction terms may be imposed to
obtain other field theories, which we will explore shortly in
Section \ref{sec:intro_fieldtheories}.
For more information on the GFF and related topics,
the reader may consult the excellent references \cite{B20Extrema,BP25Gaussian}.

So the GFF, and by extension the DGFF, is of broad interest both in the mathematics
and the physics communities,
and as such, it is useful to introduce a simpler model which displays many of the same
qualitative characteristics.
The BRW is perhaps the simplest model which exhibits the same logarithmic correlation
structure as the DGFF, and from this a variety of similar phenomena can be observed
in the two models, and comparisons can also be made to derive results about one from the
other (see e.g.\ \cite[Lecture 7]{B20Extrema}).
Let us now see how to relate the DGFF and the BRW more explicitly.

\begin{figure}
    \centering
    \includegraphics[width=0.18\textwidth]{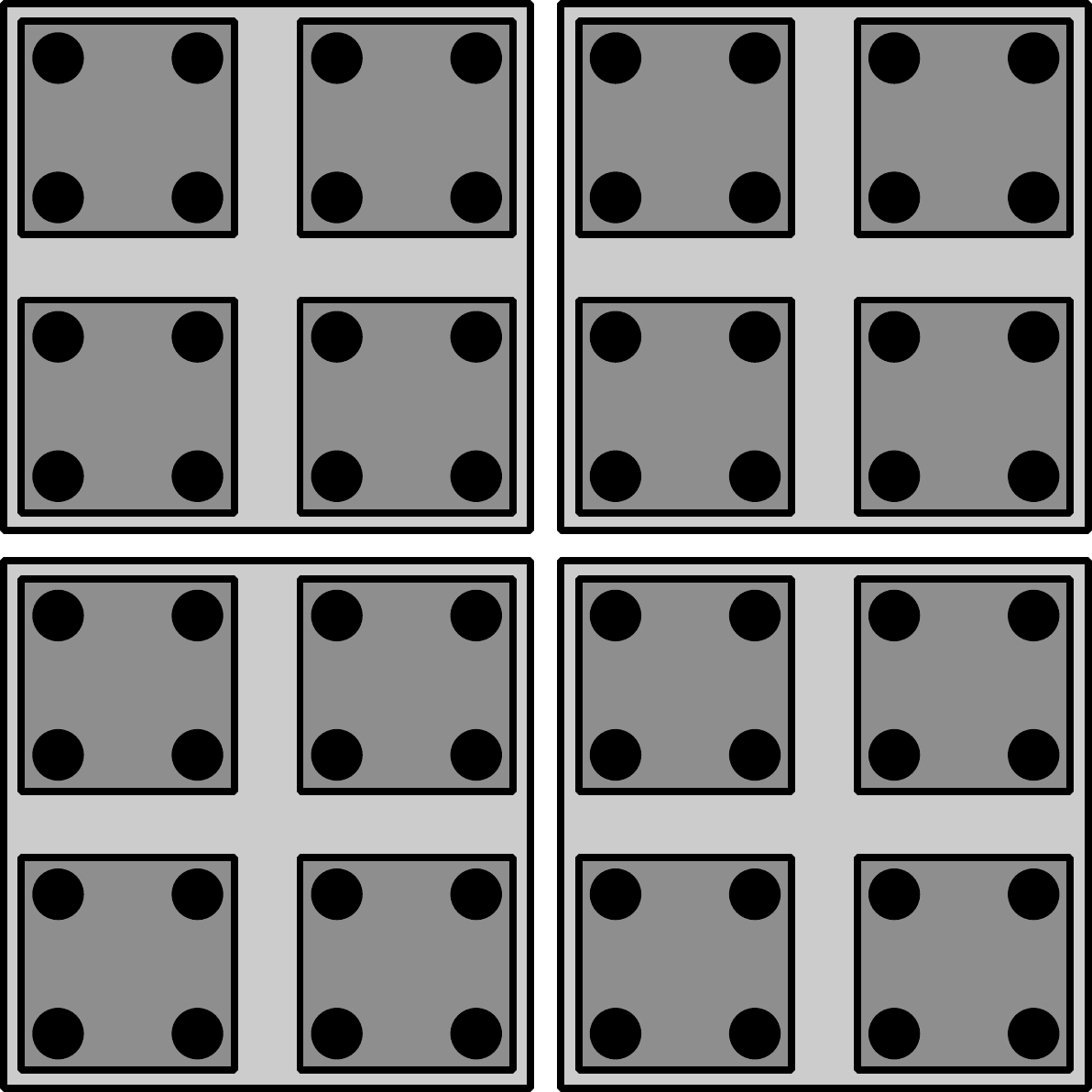}
    \hspace{0.03\textwidth}
    \includegraphics[width=0.75\textwidth]{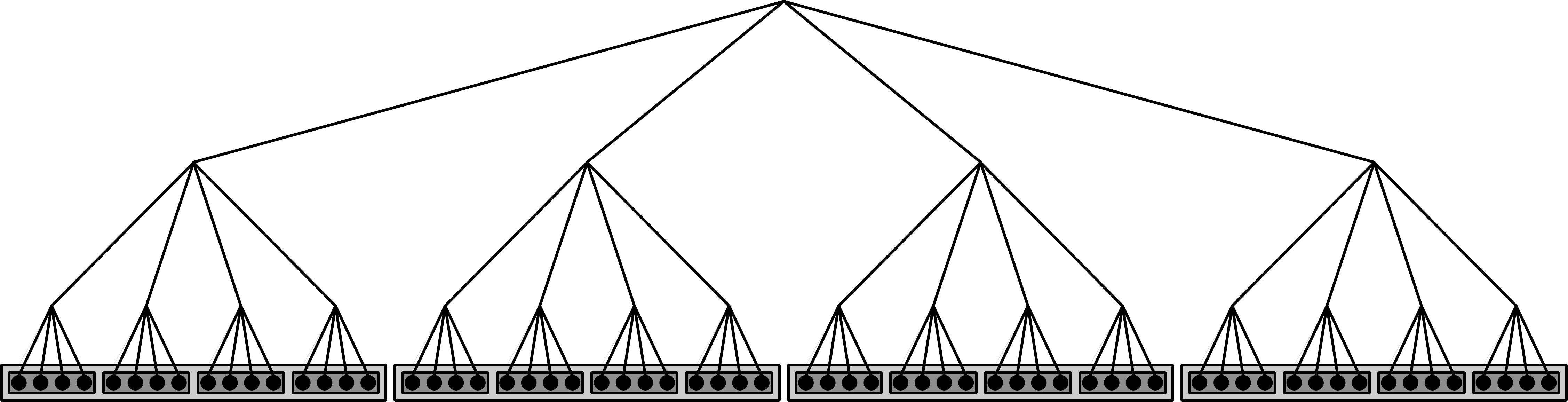}
    \caption{
    An illustration of the correspondence between $x \in \Lambda_m$ (left-hand side)
    and $\tau \in [4]^n$ (right-hand side), for $m = 8 = 2^3 = 2^n$.
    The lightly shaded larger boxes contain points which have the same first coordinate
    in $\tau$ under this identification, and the darkly shaded smaller boxes contain points
    which have the same first and second coordinate.
    }
    \label{fig:hierarchical}
\end{figure}

When $m = 2^n$, one can approximate the DGFF
$\left(\psi_x\right)_{x \in \Lambda_m}$ in $\Lambda_m$
by the endpoints of a $4$-branching random walk of depth $n$, i.e.\
$\left(\phi_\tau\right)_{\tau \in [4]^n}$, by identifying $x = (x_1, x_2) \in \Lambda_m$
with the string $\tau_x \in [4]^n$ via the binary expansions of $x_1$ and $x_2$ (see Figure \ref{fig:hierarchical}).
In particular, if we have
\begin{equation}
    x_1 = \sum_{j=0}^{n-1} 2^j x_1^j
    \qquad \text{and} \qquad
    x_2 = \sum_{j=0}^{n-1} 2^j x_2^j
\end{equation}
with $x_1^j, x_2^j \in \{0,1\}$ for $j \in \{0,\dotsc,n-1\}$,
then we identify $x$ with the $4$-ary string
\begin{equation}
    \tau_x = \left( 2 x_1^{n-1} + x_2^{n-1}, \dotsc, 2 x_1^0 + x_2^0 \right).
\end{equation}
Note that under this identification, $\d(\tau_x,\tau_y)$ is approximately
$\log_2 \| x - y \|$ for most choices of $x$ and $y$, although there will be a
large discrepancy for certain pairs of points, for instance those which
straddle one of the lines which divide $\Lambda_m$ in half.
For such points, $\| x - y \| = 1$ while $\d(\tau_x,\tau_y) = n$, but
there are relatively few points where this discrepancy is large.
Now, recalling the definition \eqref{eq:def_brw} of the BRW, for any
$x \in \Lambda_m$ we have $\Var[\phi_{\tau_x}] = n = \log_2 m$.
Further, for any $x,y \in \Lambda_m$ with $x \neq y$ we have
\begin{equation}
    \Cov\left[ \phi_{\tau_x}, \phi_{\tau_y} \right]
    = n - \d(\tau_x,\tau_y)
    \approx \log_2 m - \log_2 \| x - y \|,
\end{equation}
which is comparable to the covariance between $\psi_x$ and $\psi_y$
under the DGFF, as mentioned below \eqref{eq:gff}.

\vspace{3mm}

This motivates the comparison between the BRW and the DGFF; let us now
turn to a review of the literature related to our results with this 
perspective in mind.
\subsection{Entropic repulsion and confined random surfaces}
\label{sec:intro_confined}

There is a host of literature on random lattice surfaces
constrained to lie above a hard floor;
an example is the DGFF $\Psi = \left(\psi_x\right)_{x \in \Lambda_m}$
conditioned on the event
\begin{equation}
    \left\{ \psi_x \geq 0 \text{ for all } x \in \Lambda_m \right\}.
\end{equation}
This is essentially a one-sided version of the conditioning we consider
in the present article.
Such conditioning leads to the phenomenon of \emph{entropic repulsion},
whereby the surface is pushed upwards to allow for entropically favorable
downwards fluctuations.
This phenomenon has been proven for a variety of models including
the $(2+1)$-dimensional solid-on-solid model \cite{BMF86Random,CLMST14Dynamics}
which is a stand-in for the low-temperature interface in the $3$-dimensional Ising model,
the DGFF \cite{BDZ95Entropic,BDG01Entropic}, and more recently the BRW itself \cite{R24Branching}.

Heuristically, entropic repulsion in the BRW means that the initial few steps are pushed up
high enough so that the remaining steps do not feel the effect of the floor and may behave as usual.
This may be compared to the behavior discussed in Remarks \ref{rmk:lowerbounds_intro}
and \ref{rmk:truebehavior}: in our setting of imposing a hard floor and ceiling, the behavior
at the beginning is also altered in such a way that the remainder of the walk (the last $O(r)$ steps)
may behave freely.
However, the effect is stronger in the present setting, leading to more of the initial steps
being modified.

The version of this floor-and-ceiling problem for lattice surfaces,
e.g.\ the DGFF $\Psi$ conditioned on 
the small $\infty$-ball event
\begin{equation}
    \left\{ \| \Psi \|_\infty \leq r \right\}
    = \left\{ |\psi_x| \leq r \text{ for all } x \in \Lambda_m \right\},
\end{equation}
has received much less attention than the situation with only a hard floor.
However, a somewhat softer version of this problem, namely the \emph{infinite-volume}
local weak limit obtained by sending $m \to \infty$ with $r$ fixed,
was considered by both \cite{MS77Decay} and \cite{BMF86Random}.

First, in 1977, McBryan and Spencer \cite{MS77Decay},
motivated by connections to the $\mathrm{SO}(n)$ model, gave a short 
proof that the infinite-volume floor-and-ceiling model
exhibits exponential decay of correlations,
with a bound of $e^{O(r^2)}$ on the correlation length.
Later in 1986, the seminal work of Bricmont, El Mellouki, and Fr\"ohlich \cite{BMF86Random}
(which also introduced the notion of entropic repulsion) considered
the same model as a stand-in for the behavior of one surface in a linear series
of domain walls, i.e.\ random surfaces under a nonintersection constraint,
and via a more involved argument, they improved the bound on the
correlation length to $e^{O(r)}$.
We also mention in passing that the nonintersecting surfaces which motivated \cite{BMF86Random}
(and which were also mentioned by \cite{S05Random}) have only been recently studied directly \cite{AGP26Rigidity}.

The floor-and-ceiling problem for BRW itself (i.e.\ the problem we consider)
has not been studied before,
but our results may be predicted from the results of \cite{BMF86Random} on the DGFF.
Recall from Section \ref{sec:intro_gff} that we can compare the
DGFF in $\Lambda_m$ with $m = 2^n$
to the $4$-BRW endpoints $\Phi = (\phi_\tau)_{\tau \in [4]^n}$,
and that we have $\d(\tau_x,\tau_y) \approx \log_2 \| x - y \|$ under this comparison.
So, having a correlation length of $e^{O(r)}$ for the DGFF corresponds
to having a correlation length of $O(r)$ for the BRW under the conditioning
on $\{ \| \Phi \|_\infty \leq r \}$, which is exactly what we obtain.
As alluded to in Remark \ref{rmk:truebehavior}, this is the qualitatively
correct behavior; see \cite[Section 4]{V06Localization} for more about this
in the lattice setting.

One benefit of working with the simpler BRW as compared to the DGFF which was
previously considered is that our arguments are simpler and reveal more transparently
the underlying mechanisms behind the correlation decay.
They even provide an explicit idea of why the $e^{O(r^2)}$ bound on the correlation
length obtained by \cite{MS77Decay} (corresponding to an $O(r^2)$ bound for the BRW)
is indeed a simpler observation than the optimal bound
$e^{O(r)}$ of \cite{BMF86Random}; see Section \ref{sec:oop_shrinking} for more about this.
Furthermore, we work in a finite-volume setting and obtain bounds which hold
for all $n$ and all $r$ simultaneously (in particular, $r$ may depend on $n$).
Finally, our methods apply to a more general class of models, which we now introduce.
\subsection{Correlation decay results for hierarchical field theories}
\label{sec:intro_fieldtheories}

As mentioned in Section \ref{sec:intro_gff}, in the mathematical formulation of quantum field theory (QFT),
the continuum GFF plays the role of a ``free'' theory with no interactions.
As a first step towards realistic models such as quantum electrodynamics
or quantum chromodynamics (which involve multiple interacting fields),
a self-interaction term may be added to the GFF.
This is heuristically achieved by including a potential term
in \eqref{eq:cgff}, resulting in a ``density'' of the form
\begin{equation}
\label{eq:fieldtheory}
    \Exp{- \frac{1}{2} \left< h, -\Delta h \right> - \int_D \ham(h(x)) \,dx },
\end{equation}
for some $\ham : \R \to \R$.
A common initial choice for the potential is $\ham(\phi) = \lambda \phi^4$ for some $\lambda > 0$, 
resulting in the well-studied \emph{$\phi^4$ model}, which is the simplest
nontrivial QFT \cite{GJ12Quantum,S15P,GRS75P,BGH23Multiscale}.
Another choice, motivated by the theory of Gaussian multiplicative chaos (GMC), is
the \emph{sinh--Gordon} model which uses the potential $\ham(\phi) = \lambda \cosh \left( \sqrt{\beta} \phi \right)$
for some $\lambda, \beta > 0$ \cite{AFZ79Quantum,L08Construction,FP77Remarks,GGV262d,BV21Elliptic,HZ25Liouville,HZ24decay,H25Coupling}.
For more background on GMC and QFT, the reader may consult \cite{BP25Gaussian}
and \cite{GJ12Quantum} respectively.

Of course, as the continuum GFF $h$ is not a function in two dimensions
but rather a distribution, it is already not an easy task to define the measure
with ``density'' \eqref{eq:fieldtheory}, let alone study it.
There are multiple approaches to the construction of such a measure, including
via smooth mollification and via discretization.
For the latter approach, an analogous potential is instead applied to the DGFF with density \eqref{eq:gff},
which is well-defined.
In the present article, we will consider hierarchical versions of these discretized field theories,
where the DGFF is replaced by the BRW as described in Section \ref{sec:intro_gff}.
In other words, we will consider the following measures which are exponential tilts away from the endpoints of the standard BRW.

\begin{definition}
\label{def:mubnh}
Let $\mubno$ denote the distribution of the endpoints $\Phi = \left( \phi_\tau \right)_{\tau \in [b]^n}$
of the standard BRW as defined in \eqref{eq:def_brw}.
Then for any convex function $\ham : \R \to \R \cup \{\infty\}$, we may consider the probability measure
$\mubnh$ on $\R^{[b]^n}$, which has the following Radon--Nikodym derivative:
\begin{equation}
    \frac{\mubnh(d\Phi)}{\mubno(d\Phi)} \propto \Exp{- \sum_{\tau \in [b]^n} \ham\left( \phi_\tau \right)}.
\end{equation}
\end{definition}

Note that by the convexity assumption, the normalization constant is finite and so $\mubnh$
is indeed a well-defined probability measure.
This generalizes the setting of Section \ref{sec:intro_setupresults}, since we may write the distribution
of BRW endpoints conditioned to lie in $[-r,r]$ as $\mubnhr$, where
\begin{equation}
\label{eq:hamr}
    \hamr(\phi) = \begin{cases}
        0 &\text{if } |\phi| \leq r, \\
        \infty &\text{otherwise.}
    \end{cases}
\end{equation}
For notational convenience, we will use $\mubnh$ to denote both the probability and the expectation
under this measure.
In other words, for $F : \R^{[b]^n} \to \R$, the expression $\mubnh\left[ F(\Phi) \right]$
denotes the expectation of $F(\Phi)$ for $\Phi \sim \mubnh$, and for $A \subseteq \R^{[b]^n}$,
the expression $\mubnh\left[ A \right]$ denotes the probability that $\Phi \in A$.

Our main result, Theorem \ref{thm:main_brw}, can be generalized quite broadly to bound covariances
under the measures $\mubnh$.
The general version of the result will be stated as Theorem \ref{thm:main_general} in Section \ref{sec:oop_oucc}
below, after some necessary preparation.
For now let us state a few consequences of the general theorem in a few particular cases of interest.
Here and in the sequel, we will only consider potentials $\ham$ which are symmetric about $0$,
which implies that $\mubnh[\phi_\tau] = 0$ for all $\tau \in [b]^n$.
Thus the covariance of $\phi_{\tau_1}$ and $\phi_{\tau_2}$ under $\mubnh$ is equal to
the two-point function $\mubnh \left[ \phi_{\tau_1} \phi_{\tau_2} \right]$, and our results
are stated as bounds on this latter quantity.

First, we consider the discrete hierarchical version of the sinh--Gordon model.

\begin{theorem}
\label{thm:main_sinhgordon}
For any $\lambda, \beta > 0$, let $\hambl(\phi) = \lambda \cosh \left( \sqrt{\beta} \phi \right)$.
There are constants $C, c > 0$ depending only on $b$ such that
\begin{equation}
    \left| \mubnhbl \left[ \phi_{\tau_1} \phi_{\tau_2} \right] \right|
    \leq C \Exp{- c \d(\tau_1, \tau_2) + C \log_+ \frac{1}{\lambda \beta}}
\end{equation}
for all $\tau_1, \tau_2 \in [b]^n$, where $\log_+ x = \max\{0,\log x\}$.
\end{theorem}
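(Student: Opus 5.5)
Theorem \ref{thm:main_sinhgordon} will follow from the general covariance bound, Theorem \ref{thm:main_general}, once its hypotheses are verified for $\hambl$ and we check that the relevant scale entering that theorem (the analogue of $r$ for the square well) is $O(1 + \log_+ \frac{1}{\lambda\beta})$. In outline, the renormalization group runs as follows. Going backwards from the endpoints, the effective potential at depth $n-k$ is
\begin{equation}
    \ham^{(k+1)}(\phi) = -b \log \E\left[\Exp{-\ham^{(k)}(\phi+\zeta)}\right], \qquad \zeta \sim \cN(0,1).
\end{equation}
Conditionally on its parent, each increment $\zeta_\tau$ then has density proportional to $\exp(-\zeta^2/2 - \ham^{(k)}(\phi+\zeta))$. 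Because each map preserves convexity (Prékopa--Leindler), every $\ham^{(k)}$ is convex and symmetric. We would split $\ham^{(k)}$ into a relevant quadratic part and an irrelevant remainder. The conclusion we want is that after $K = O(1+\log_+\frac{1}{\lambda\beta})$ renormalization steps, the effective potential is uniformly strongly convex, with $(\ham^{(k)})'' \geq \kappa(b) > 0$ and $\kappa$ depending only on $b$. The conditioned increments are then uniformly contracting OU-type steps, and the general theorem gives correlation decay with rate $c(b)$ and prefactor $e^{CK}$.

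Concretely, the steps are as follows.

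\textbf{Step 1: curvature of $\hambl$.} We have $\hambl'' = \lambda\beta\cosh(\sqrt\beta\phi) \geq \lambda\beta$. So $\hambl$ is $\lambda\beta$-strongly convex everywhere, and $\hambl''$ grows exponentially away from the origin.

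\textbf{Step 2: growth of the curvature under the flow.} For a quadratic potential $\frac{a}{2}\phi^2$, one step gives $\frac{b}{2}\cdot\frac{a}{1+a}\phi^2$. So the curvature evolves under $a \mapsto \frac{ba}{1+a}$, which has attracting fixed point $a^* = b-1$. For $a$ small the curvature multiplies by roughly $b$ each step, so it reaches order $1$ after about $\log_b \frac{1}{\lambda\beta}$ steps.

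\textbf{Step 3: comparison for the actual potential.} We cannot simply replace $\hambl$ by its quadratic lower bound. Instead we use a Brascamp--Lieb / Hessian comparison along the flow. If $\ham'' \geq a$, then the new potential satisfies $(\ham^{(k+1)})'' \geq \frac{ba}{1+a}$, because the second derivative equals $b(1 - \Var_{\text{tilted}}(\zeta))$ and Brascamp--Lieb gives $\Var \leq \frac{1}{1+a}$. Iterating from $a_0=\lambda\beta$ yields $a_k \geq \min\{(b/2)^k\lambda\beta \text{-ish},\, \kappa\}$, so the curvature is at least $\kappa(b)$ after $K = O(1+\log_+\frac{1}{\lambda\beta})$ steps. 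A strictly convex potential thus behaves at the level of covariances like a massive (quadratic) one, as the abstract indicates.

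\textbf{Step 4: checking the remaining hypotheses.} We must verify whatever smoothness or irrelevant-part hypotheses Theorem \ref{thm:main_general} imposes on the remainder. For the square well, the delicate horizontal-shrinking argument was needed. Here the uniform strong convexity should make the remainder easier to control: its contribution only improves the Brascamp--Lieb bound, so lower curvature bounds alone suffice. For the last $K$ layers the covariance is bounded crudely by the free covariance $K$, which is absorbed into $e^{CK}$.

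\textbf{Main obstacle.} The main difficulty is matching Step 3 to the exact form of the input Theorem \ref{thm:main_general} requires. If that theorem is stated in terms of a confinement scale rather than curvature, we must translate the uniform curvature bound $\kappa$ at depth $n-K$ into the required statement. The first thing to try is to restart the general theorem at level $n-K$: apply it with the effective potential $\ham^{(K)}$, which is $\kappa$-convex uniformly, and sum the $e^{-c\,\d}$ decay against the at most $e^{CK}$ loss from the top $K$ layers.
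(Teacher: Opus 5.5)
\textbf{Verdict: there is a genuine gap.} Your Step 3 has the right estimate, but the proposal never actually applies Theorem \ref{thm:main_general}, and the bridge you sketch does not work.

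\textbf{What the general theorem actually requires.} Its only hypothesis is $\plat(\hambl,T,\eps)$ (Definition \ref{def:plateau}). This asks for two things: $\hambl$ must be confining in the sense of Definition \ref{def:confining}, and $\rem_1''(T)\le b-\eps$ for some $T\in\N$. You never verify confinement. It is a monotonicity condition on $\ham''$, not a lower curvature bound, so your Step 4 claim that lower curvature bounds alone suffice is false for this black box. For $\hambl$ the check is
$\frac{d^2}{dx^2}\bigl[\hambl(y+x)-\hambl(y-x)\bigr]=\lambda\beta\bigl(\cosh(\sqrt\beta(y+x))-\cosh(\sqrt\beta(y-x))\bigr)\ge 0$ for $x,y\ge 0$, which is the paper's Lemma \ref{lem:hambl_confining}.

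\textbf{Why the restart does not work.} Restarting the general theorem at depth $n-K$ with endpoint potential $\pot_K$ would only bound $\Cov(\phi_{\sigma_1},\phi_{\sigma_2})$ for the ancestors $\sigma_i=\tau_i^{n-K}$. The leaf covariance is $\Cov(m(\phi_{\sigma_1}),m(\phi_{\sigma_2}))$, where $m$ is the nonlinear conditional-mean map of the last $K$ layers. A bound on the ancestors' covariance does not control this, and bounding the last $K$ layers by the free covariance $K$ does not address it.

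(Minor: the bound $a_k\ge\min\{(b/2)^k\lambda\beta,\kappa\}$ gives no growth when $b=2$. Use the exact linearization $a_{k+1}^{-1}=b^{-1}(1+a_k^{-1})$ instead.)

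\textbf{How to close the gap along the paper's route.} The $K$-step iteration is already built into Theorem \ref{thm:main_general}. The $\log\frac1\eps$ term there is exactly the number of steps that Lemma \ref{lem:exp}, inside Proposition \ref{prop:shrinking}, needs for the curvature to recover. So you only need the $k=0$ case of your Step 3. Brascamp--Lieb with $-(\log g)''\ge 1+\lambda\beta$ gives, for every $\phi$,
$\rem_1''(\phi)=b\Var_{\walk_0^\phi}(\rho)\le\frac{b}{1+\lambda\beta}\le b-\frac b2\min\{1,\lambda\beta\}$.
Hence $\plat(\hambl,1,\frac b2\min\{1,\lambda\beta\})$ holds and $T+\log\frac1\eps\le C+\log_+\frac{1}{\lambda\beta}$. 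Together with confinement, this is the paper's proof.

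\textbf{An alternative self-contained route.} Your full Step 3 can instead be completed into an argument that avoids Theorem \ref{thm:main_general} and confinement entirely.
\begin{itemize}
\item Since $\walk_k^x$ is the tilt of $\walk_k^0$ by $e^{x\rho}$, the kernels are stochastically monotone, with $\frac{d}{dx}\walk_k^x[\rho]=\Var_{\walk_k^x}(\rho)\le(1+a_k)^{-1}$.
\item So the monotone coupling gives $\mathrm{Lip}(P_kf)\le(1+a_k)^{-1}\mathrm{Lip}(f)$ for $P_kf(x)=\walk_k^x[f(\rho)]$.
\item Conditioning on the common ancestor $\tau'$ gives $\Cov(\phi_{\tau_1},\phi_{\tau_2})=\Var(m(\phi_{\tau'}))\le\mathrm{Lip}(m)^2\Var(\phi_{\tau'})$.
\item Here $\mathrm{Lip}(m)\le(1+\kappa)^{-(\d(\tau_1,\tau_2)-K)_+}$, and the analogous recursion for variances gives $\Var(\phi_{\tau'})\le C_b+K$.
\end{itemize}
Either way, the step you labelled the main obstacle is exactly what is missing. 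It has to be supplied, not delegated to a theorem whose hypotheses the proposal does not state.
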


\begin{remark}
\label{rmk:strict_convexity}
Theorem \ref{thm:main_sinhgordon} is actually somewhat easier than Theorem \ref{thm:main_brw},
as will be discussed in more detail in Section \ref{sec:init_sinhgordon}.
This is because $\hambl(\phi) = \lambda \cosh \left( \sqrt{\beta} \phi \right)$
is strictly convex, and its second derivative is always at least $\lambda \beta$.
For this reason, at the level of covariances, the sinh--Gordon model behaves similarly to the case of
a quadratic potential $m^2 \frac{\phi^2}{2}$ with $m^2 = \lambda \beta$.
In the physics lingo, the sinh--Gordon model has \emph{mass} at least $\sqrt{\lambda \beta}$.
\end{remark}

Applying a quadratic potential as mentioned in Remark \ref{rmk:strict_convexity} would make the entire system Gaussian and thus exactly solvable;
we will describe the behavior of such a system in Section \ref{sec:oop_quadratic}, and there
we will see that the correlation length under the quadratic potential $m^2 \frac{\phi^2}{2}$ is
of order $\log \frac{1}{m}$ for small $m$, agreeing with the obtained correlation
length in Theorem \ref{thm:main_sinhgordon}.
Our next result improves upon this by giving a bound on the correlation length for potentials
$\lambda |\phi|^\alpha$ with any power $\alpha \geq 2$; in particular, this includes the hierarchical $\phi^4$ model.
Unlike the case of the sinh--Gordon model, for $\alpha > 2$ this potential is not strictly convex,
and the following theorem makes use of the full strength of our general Theorem \ref{thm:main_general}.

\begin{theorem}
\label{thm:main_phialpha}
For any $\lambda > 0$ and $\alpha \geq 2$, let $\hamal(\phi) = \lambda |\phi|^\alpha$.
There are constants $C, c > 0$ depending only on $b$, and for each $\alpha_* \geq 2$
a constant $D_{\alpha_*} > 0$, such that for all $\alpha \in [2,\alpha_*]$ and all $\lambda > 0$
we have
\begin{equation}
    \left| \mubnhal \left[ \phi_{\tau_1} \phi_{\tau_2} \right] \right|
    \leq C \Exp{- c \d(\tau_1, \tau_2) + C \left( \lambda^{-\frac{\alpha-2}{\alpha^2}} + \frac{1}{\alpha} \log_+ \frac{1}{\lambda} + D_{\alpha_*} \right)}
\end{equation}
for all $\tau_1, \tau_2 \in [b]^n$, where $\log_+ x = \max\{0,\log x\}$.
\end{theorem}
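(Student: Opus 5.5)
The plan is to derive Theorem \ref{thm:main_phialpha} from the general result, Theorem \ref{thm:main_general}, in the same way as the other two applications. That theorem should say the following. Run the renormalization group backwards from the endpoints: start from $\ham_0 = \hamal$ and let $\ham_{k+1}(\phi) = -b\log \E[e^{-\ham_k(\phi+Z)}]$ with $Z$ standard Gaussian. If, after some number $k_0$ of steps, the effective potential $\ham_{k_0}$ splits into a relevant quadratic part with curvature bounded below by a constant depending only on $b$, plus an irrelevant remainder that is suitably small, then correlations decay at rate $e^{-c\d + Ck_0}$. So the whole task is to show that we can take
\begin{equation}
k_0 \lesssim \lambda^{-\frac{\alpha-2}{\alpha^2}} + \frac{1}{\alpha}\log_+\frac1\lambda + D_{\alpha_*}.
\end{equation}

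The proof has two phases, corresponding to the two terms. First, identify the natural length scale of the potential: $\lambda|\phi|^\alpha$ reaches order one at $\phi \approx r_\lambda := \lambda^{-1/\alpha}$. On this scale the potential acts like a soft square well of radius $r_\lambda$.

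In the second phase I would compare with the infinite square well $\hamr$ at radius $r \approx r_\lambda$, via the monotonicity and convexity-preservation of the renormalization map.
\begin{itemize}
\item Convexity is preserved by Prékopa–Leindler, and the map is monotone in the potential, so each $\ham_k$ stays convex and symmetric.
\item Sandwich $\hamal$ between multiples of the square-well potential plus quadratic corrections.
\item The square-well case has already shown that $O(r)$ steps suffice to reach the relevant regime.
\end{itemize}
This reasoning alone gives $k_0 = O(r_\lambda) = O(\lambda^{-1/\alpha})$, which is too weak.

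The improvement to $\lambda^{-(\alpha-2)/\alpha^2}$ should come from the curvature of the potential. At the boundary of the well, the second derivative of $\lambda|\phi|^\alpha$ is about $\lambda r_\lambda^{\alpha-2} = \lambda^{2/\alpha}$. This provides an effective mass $m^2 \approx \lambda^{2/\alpha}$ in the region where the field actually lives. For a quadratic potential the correlation length is $\log\frac1m$, which gives the $\frac1\alpha\log_+\frac1\lambda$ term. The extra term $\lambda^{-(\alpha-2)/\alpha^2} = r_\lambda^{(\alpha-2)/\alpha}$ interpolates between $O(1)$ at $\alpha=2$ and $r_\lambda$ as $\alpha\to\infty$. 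I expect it to arise as the number of steps the irrelevant (non-quadratic) part of $\ham_k$ needs to shrink horizontally before the uniform decay estimate of Theorem \ref{thm:main_general} applies.

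Concretely, I would track $\ham_k$ through the quantity $\inf_{|\phi|\leq R_k} \ham_k''$ on a window $[-R_k, R_k]$, where $R_k$ is roughly $r_\lambda$ reduced by $O(1)$ each step. I would show that the Gaussian convolution increases the minimal curvature on the shrinking window by a factor $b$ per step, up to errors. The curvature then becomes order one after $\log_b(\lambda^{-2/\alpha}) = \frac{2}{\alpha}\log_b\frac1\lambda$ steps, while the window remains large compared with the Gaussian spread. The pieces would be assembled as follows.
\begin{enumerate}
\item Scale: set $\tilde\phi = \phi/r_\lambda$, reducing to $\lambda = 1$ with the Gaussian variance rescaled by $r_\lambda^{-2}$.
\item Curvature growth: apply a Brascamp–Lieb lower bound on the curvature of $-\log\E e^{-\ham(\phi+Z)}$ on compact windows to obtain the $\frac1\alpha\log_+\frac1\lambda$ steps.
\item Horizontal shrinking: bound the number of steps during which the flat central region, where $\ham_k'' \ll 1$, has not yet been absorbed, by $\lambda^{-(\alpha-2)/\alpha^2}$.
\item Constants: take $D_{\alpha_*}$ to absorb the $\alpha$-dependent constants in the Brascamp–Lieb and tail estimates, uniformly for $\alpha\in[2,\alpha_*]$.
\end{enumerate}

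The main obstacle is step 3. Near the origin the curvature is $\lambda\alpha(\alpha-1)|\phi|^{\alpha-2}$, which degenerates for $\alpha>2$, so one must show that the renormalization averages this degeneracy out. Averaging over a Gaussian of spread $\sqrt{k}$ gives curvature about $\lambda k^{(\alpha-2)/2}$, and amplification by $b^k$ wins quickly. I would first try to balance these two effects to derive the exponent, and if that fails, fall back to the square-well comparison, which gives $O(r_\lambda)$ steps.
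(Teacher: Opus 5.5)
Your proposal has a genuine gap, and it comes from misreading what Theorem \ref{thm:main_general} asks for. Its only hypothesis is the one-step condition $\plat(\hamal,T,\eps)$ of Definition \ref{def:plateau}: $\hamal$ is confining and $\rem_1''(T)\le b-\eps$. Its conclusion is correlation length $O(T+\log\frac1\eps)$.

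Everything you describe in your steps 2--3 is already proved in Proposition \ref{prop:shrinking} for any potential satisfying that condition. This includes curvature on shrinking windows, absorption of the flat region, and eventual uniform decay; there the plateau of $\rem_k''$ loses width one per step (Lemma \ref{lem:linear}). So you are redoing the general machinery for one potential. The step you yourself flag as the main obstacle is left without an argument. The ``Gaussian averaging versus $b^k$ amplification'' balance is only a heuristic, and it does not produce the exponent $\frac{\alpha-2}{\alpha^2}$; if anything it suggests an $O(\log\frac1\lambda)$ count.

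Your other ingredients do not repair this:
\begin{itemize}
\item Pointwise sandwiching of $\hamal$ plus monotonicity of $\pot_k\mapsto\pot_{k+1}$ controls the values of $\pot_k$, not $\rem_k''$ or covariances.
\item In any case, $O(\lambda^{-1/\alpha})$ is not dominated by the claimed exponent, already at $\alpha=2$.
\item The rescaling $\phi\mapsto\phi/r_\lambda$ changes the increment variance, which leaves the unit-variance setting of the general theorem.
\item You never check the confining property of Definition \ref{def:confining}, which is part of the hypothesis. It holds because $\frac{d^2}{dx^2}\bigl(\hamal(y+x)-\hamal(y-x)\bigr)=\lambda\alpha(\alpha-1)\bigl(|y+x|^{\alpha-2}-|y-x|^{\alpha-2}\bigr)\ge0$ for $x,y\ge0$.
\end{itemize}

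The missing idea is that what must be proved is a variance bound for one one-dimensional log-concave law. By \eqref{eq:remder_walkot}, $\rem_1''(T)=b\Var_{\walk_0^T}(\rho)$, where $\walk_0^T(d\rho)\propto e^{-(\rho-T)^2/2-\lambda|\rho|^\alpha}\,d\rho$.

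For $\lambda\ge1$, the paper bounds $\rem_1''(1)\le\rem_1''(0)$ by unimodality. It then drops the Gaussian factor via Lemma \ref{lem:tailcomparison} and computes the variance as $\lambda^{-2/\alpha}\Gamma(\frac3\alpha)/\Gamma(\frac1\alpha)\le\frac12$, giving $\plat(\hamal,1,\frac b2)$.

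For $\lambda<1$, it uses Brascamp--Lieb in the form $\Var_{\walk_0^T}(\rho)\le 1-\walk_0^T\bigl[\frac{x}{1+x}\bigr]$, with $x=\lambda\alpha(\alpha-1)|\rho|^{\alpha-2}$. It picks $T=\lceil\lambda^{-(\alpha-2)/\alpha^2}+D_{\alpha_*}\rceil$, so that the mode $\rho_*=T-\lambda\alpha\rho_*^{\alpha-1}$ exceeds $\rho_0+q$ with $\rho_0=(\lambda\alpha)^{-(\alpha-2)/\alpha^2}$. A log-concave tail bound around the mode then gives $\walk_0^T[\rho\ge\rho_0]\ge\frac12$. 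Since $\lambda\alpha(\alpha-1)\rho_0^{\alpha-2}\ge\lambda^{(4\alpha-4)/\alpha^2}$, this yields $\eps=\frac b4\lambda^{(4\alpha-4)/\alpha^2}$ and $\log\frac1\eps\le C+\frac4\alpha\log\frac1\lambda$.

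So the term $\lambda^{-(\alpha-2)/\alpha^2}$ is not extracted from the dynamics of the flow. It is a choice of the point $T$ at which the plateau condition is checked, traded against $\log\frac1\eps$. Because $\eps$ enters only logarithmically, $T$ need not reach the edge $r_\lambda=\lambda^{-1/\alpha}$ of the well, which is why the bound beats your $O(r_\lambda)$. Remark \ref{rmk:nonoptimal_power} notes that other exponents work as well, at the cost of constants.
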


This result can be seen as an interpolation between the massive (quadratic) potential
and the hard conditioning potential $\hamr$ defined in \eqref{eq:hamr}; indeed, when $\alpha = 2$ we recover the logarithmic correlation length
since $\frac{\alpha-2}{\alpha^2} = 0$, and if we let $\alpha \to \infty$ and $\lambda \to 0$ keeping $\lambda^{-\frac{1}{\alpha}} = r$,
then we see that
\begin{equation}
    \hamal(\phi) = \left| \frac{\phi}{\lambda^{-\frac{1}{\alpha}}} \right|^\alpha
    = \left| \frac{\phi}{r} \right|^\alpha
    \to \begin{cases}
        0 &\text{if } |\phi| < r, \\
        1 &\text{if } |\phi| = r, \\
        \infty &\text{if } |\phi| > r,
    \end{cases}
\end{equation}
which is equal to $\hamr$ except at $\phi = \pm r$.
As $\alpha \to \infty$ the power $\frac{\alpha-2}{\alpha^2}$ is approximately $\frac{1}{\alpha}$,
so we heuristically recover the correlation length $\lambda^{-\frac{1}{\alpha}} = r$, which is
consistent with Theorem \ref{thm:main_brw}.

\begin{remark}
\label{rmk:alphabehavior}
The constant $D_{\alpha_*}$ in the statement of Theorem \ref{thm:main_phialpha} is exponentially large in
$\alpha_*$, ruling out any direct comparison
along the lines of the heuristic just mentioned above.
Additionally, the precise form of the power $\frac{\alpha-2}{\alpha^2}$ is not optimal,
and in our arguments may be replaced by a variety of similar-looking expressions
at the cost of changing the constants (see Remark \ref{rmk:nonoptimal_power}).
We expect that a more careful analysis may be able to elucidate
the true optimal behavior, as well as tame the dependence of $D_{\alpha_*}$
on $\alpha_*$ to obtain a unified treatment of all $\alpha \geq 2$ and $\lambda > 0$,
but we do not pursue this presently.
\end{remark}
\subsection{Couplings via the renormalization group}
\label{sec:intro_renormalization}

As hinted at in Section \ref{sec:intro_setupresults}, our proofs will be based on couplings
with a branching Ornstein--Uhlenbeck walk, which are obtained via a renormalization group method
which will be described in Section \ref{sec:oop}.
Broadly speaking, the renormalization group is a family of techniques for performing
multiscale analysis, going back in the physics literature to the work of
Kadanoff, Wilson, and Polchinski \cite{K66Scaling,W71Renormalization,W75Renormalization,P84Renormalization}.
Mathematically rigorous formulations of the renormalization group have seen success
in a diverse array of applications across mathematical physics where multiscale analysis
is relevant, the references \cite{B87Renormalization,BY90Grad,DH00Sine,BBS15Logarithmic,H14Theory,ADC21Marginal} representing just a
small sample of the vast literature.
The reader may also consult the excellent surveys \cite{B09Lectures,BBS19Introduction,BBD24Stochastic}
for more background on these methods.

An idea which has gained traction recently is to use renormalization group techniques to construct couplings
between a model of interest (such as a field theory as described in Section \ref{sec:intro_fieldtheories})
and a baseline model (such as the GFF) for comparison.
Often such couplings are built scale-by-scale, starting at the coarsest scale and then proceeding to finer scales.
These couplings have been constructed in a variety of settings, allowing for a variety of features
of the baseline model, such as the behavior of maxima or covariances,
to be observed also in the model of interest \cite{BH22Maximum,BGH23Multiscale,H25Coupling,H25Extreme,HZ25Liouville,BH23limit,BH24Phase}.

Our formulation of the renormalization group, which will be introduced in
Section \ref{sec:oop_polchinski} below, is most directly inspired
by the work of Biskup and Huang \cite{BH23limit,BH24Phase} in this direction.
They studied hierarchical versions of the \emph{discrete Gaussian}
(DG) model, where the endpoints of a BRW are conditioned to take integer values,
and showed in \cite{BH23limit} that there is a subcritical regime of parameters
where this conditioning does not affect the BRW much until the very end, when
all particles quickly snap to integer values.
This results in a coupling between the hierarchical DG model and the standard BRW
which gives sufficiently tight control for the analysis of the maximum value
of the DG model.
In \cite{BH24Phase}, they also studied the critical regime where the conditioning
actually has a nontrivial effect on the earlier steps of the walk, leading to an interesting family of
non-Gaussian step distributions; the effect of this change is noticeable at the level of covariances.
We also mention the related work of Bauerschmidt, Park, and Rodriguez \cite{BPR24Discretea,BPR24Discreteb}
which studied the lattice version of the DG model in the subcritical regime.

In the present article, rather than coupling to a standard BRW
or a branching walk with other non-mean-reverting increments,
our baseline model will be a branching Ornstein--Uhlenbeck walk (to be described
in Section \ref{sec:oop_oucc}), leading to a more drastic change in the
covariance structure.
For this reason, although they use different methods,
the work of Hofstetter and Zeitouni \cite{HZ24decay} bears a closer resemblance
to our results than the work of Biskup and Huang.
In \cite{HZ24decay}, the authors study a version of the hierarchical sinh--Gordon model (and the related asymmetric
\emph{Liouville model}) in the \emph{infinite-volume} limit,
where the potential arises from the Gaussian multiplicative chaos of
the full infinite-volume BRW rather than the finite discrete setting we consider,
and prove, among other things, a bound on the decay of correlations.
We also mention the forthcoming work of Abdelghani, Bauerschmidt, Hofstetter and Zeitouni
\cite{ABHZForthcoming} which constructs and analyzes a hierarchical version of the
continuum sinh--Gordon model in $\R^2$ via renormalization group methods.
\subsection{Acknowledgements and AI use statement}
\label{sec:intro_acknowledgements}

I would like to thank my advisor, Shirshendu Ganguly, for a series of stimulating
discussions which led to the initiation of this project, as well as Roland Bauerschmidt
for feedback on a draft of this article.
This work was supported by the NSF GRF grant DGE 2146752.

Large language models suggested some ideas for the proofs of
Lemmas \ref{lem:unimodality}, \ref{lem:tailcomparison}, and \ref{lem:hamal_plateau_big}.
While these lemmas are important for our results, they are all relatively simple
technical points and do not constitute main ideas of this work.
Large language models also provided the Python code which generated
Figure \ref{fig:frontpage}, helped to locate some references in the literature,
and were used to proofread the article.
All writing was done by the author.
\section{Outline of the proof}
\label{sec:oop}

In this section we give a broad overview of our arguments,
stating the main propositions along the way but deferring their proofs to later sections.
We will begin in Section \ref{sec:oop_polchinski} below with our formulation of the renormalization group,
and then in Section \ref{sec:oop_quadratic} we will examine the important exactly-solvable case
of the quadratic potential.
After that, (sub)Sections \ref{sec:oop_flow}, \ref{sec:oop_shrinking}, and \ref{sec:oop_oucc}
will give overviews of Sections \ref{sec:flow}, \ref{sec:shrink}, and \ref{sec:ou} of the paper respectively,
each focusing on one main step of the proof.
The generalized version of our main result
will be stated as Theorem \ref{thm:main_general} in Section \ref{sec:oop_oucc}.
Theorems \ref{thm:main_brw}, \ref{thm:main_sinhgordon}, and \ref{thm:main_phialpha}
will all be proven in Section \ref{sec:init} of the article, as corollaries of 
Theorem \ref{thm:main_general}.

\subsubsection*{Notational conventions}

We work in the general setting, fixing a convex potential $\ham : \R \to \R \cup \{\infty\}$
throughout this section, which is applied to each endpoint of a standard BRW on $\treebn$ resulting in the 
measure $\mubnh$ of Definition \ref{def:mubnh}.
Recall from below that definition that we will use
\begin{equation}
    \mubnh \left[ F(\Phi) \right]
    \qquad \text{and} \qquad
    \mubnh \left[ A \right]
\end{equation}
to denote the expectation under $\Phi \sim \mubnh$ of $F(\Phi)$ for any $F : \R^{[b]^n} \to \R$,
and the probability that $\Phi \in A$ for any $A \subseteq \R^{[b]^n}$ respectively.
We will adopt a similar convention for measures on $\R$, such as the Gaussian measure
$\cN_{\sigma^2}^\mu$ with mean $\mu$ and variance $\sigma^2$.
So in particular
\begin{equation}
    \cN_{\sigma^2}^\mu \left[ F(\rho) \right]
    \qquad \text{and} \qquad
    \cN_{\sigma^2}^\mu \left[ A \right]
\end{equation}
will denote the expectation with respect to $\rho \sim \cN_{\sigma^2}^\mu$
of $F(\rho)$ for $F : \R \to \R$, and the probability that $\rho \in A$ for $A \subseteq \R$
respectively.
This convention will simplify the notation greatly, since we will shortly have a variety
of other measures on $\R$ to work with, changing them frequently and comparing one to another.
If not otherwise specified, the symbol $\rho$ will be used for the sample;
a notable exception is that we frequently use $\zeta$ in the special case of $\cN_1^0$.
The symbols $\E$ and $\P$ will also be used throughout to denote expectations and probabilities
under distributions which are specified in context.

\subsection{Renormalization group setup}
\label{sec:oop_polchinski}

Let us first set up some notation for the renormalization group method
we will use, and prove a basic lemma demonstrating its usefulness.
The particular formulation of the renormalization group which we adhere to
is based most directly on the work of \cite{BH23limit,BH24Phase} which studied
the BRW conditioned to take values in a discrete lattice of points in $\R$,
as mentioned in Section \ref{sec:intro_renormalization} above.
In particular, the main lemma of this subsection, Lemma \ref{lem:treemc} below,
is the same as \cite[Lemma 3.2]{BH23limit}.
Nevertheless, we give a (slightly different) proof here for the reader's convenience.
We begin with the fundamental definition.

\begin{definition}
\label{def:vk}
The \emph{renormalized potentials} $\pot_k$ for $k \in \N \cup \{0\}$
are defined recursively.
First, set $\pot_0 = \ham$ and then define $\pot_k$ for $k \geq 1$ by
\begin{equation}
\label{eq:def_vk}
    e^{- \pot_k(\phi)} \coloneqq \left( \cN_1^\phi \left[ e^{- \pot_{k-1}(\rho)} \right] \right)^b,
\end{equation}
where the expression inside the parentheses denotes an expectation with respect to $\rho \sim \cN_1^\phi$,
a Gaussian with mean $\phi$ and variance $1$, recalling our convention just above.
\end{definition}

The renormalized potentials capture the effect of the potential (which is only applied
at the endpoints of the BRW) on the previous steps of the walk.
Specifically, $\pot_k$ gives the distribution $k$ steps \emph{before} the endpoints.
Intuitively, the impact on a parent with value $\phi$ is simply the amalgamated impacts
on each of the $b$ independent children of that parent, which before tilting would each have
distribution $\cN_1^\phi$.

The following definitions and subsequent lemma make this precise, and represent
the tilted measure $\mubnh$ as the distribution of the endpoints of a branching
Markov chain with increments determined by the renormalized potentials
$\pot_{n-1}, \pot_{n-2}, \dotsc, \pot_1, \pot_0$, in that order.

\begin{definition}
\label{def:wk}
The \emph{renormalized walk distributions} $\walk_k^\phi$ for $k \in \N \cup \{0\}$
and $\phi \in \R$ are probability measures which have the following Radon--Nikodym
derivative with respect to $\cN_1^\phi$, the Gaussian distribution with mean $\phi$
and variance $1$:
\begin{equation}
    \frac{\walk_k^\phi(d\rho)}{\cN_1^\phi(d\rho)}
    \propto e^{- \pot_k(\rho)}.
\end{equation}
\end{definition}

The renormalized walk distributions allow us to view the measure $\mubnh$ as the
endpoint distribution of a tree-indexed Markov chain, much like $\mubno$
is the distribution of the endpoints of the BRW constructed in \eqref{eq:def_brw}.
To do this, let us introduce the notation of \emph{concatenation of strings}:
for $\tau \in [b]^k$ and $\sigma \in [b]$, we set
$\tau \sigma = (\tau_1, \dotsc, \tau_k, \sigma) \in [b]^{k+1}$.
In other words, the children of $\tau$ in $\treebn$ are $\{ \tau\sigma : \sigma \in [b] \}$.

Note that conditional on $\phi_\tau$, the children $\phi_{\tau\sigma}$ under the standard
BRW measure are independent, each having distribution $\cN_1^{\phi_\tau}$.
As the next definition and lemma show, if we replace $\cN_1^{\phi_\tau}$ by $\walk_k^{\phi_\tau}$
appropriately (noting that the $k$ in $\walk_k^\phi$ corresponds to the distance from the \emph{leaves} of the tree, not the root),
then the resulting distribution of endpoints is exactly the tilted measure $\mubnh$.

\begin{definition}
\label{def:treemc}
Define a tree-indexed Markov chain $\left( \phi_\tau \right)_{\tau \in \treebn}$
by first setting $\phi_\emptyset = 0$ and then, having defined $\phi_\tau$ for all $\tau \in [b]^{k-1}$, sample
$\phi_{\tau\sigma} \sim \walk_{n-k}^{\phi_\tau}$ independently for all
$\tau \in [b]^{k-1}$ and $\sigma \in [b]$.
\end{definition}

\begin{lemma}
\label{lem:treemc}
Let $\left( \phi_\tau \right)_{\tau \in \treebn}$ be as in Definition \ref{def:treemc}.
Then $\Phi = \left(\phi_\tau\right)_{\tau \in [b]^n}$ has the distribution $\mubnh$.
\end{lemma}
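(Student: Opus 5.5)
The plan is to compute the joint density of the whole tree-indexed chain $(\phi_\tau)_{\tau \in \treebn}$ from Definition \ref{def:treemc} and show that the normalizing constants telescope. What remains is the standard BRW density tilted by $e^{-\sum_{\tau\in[b]^n}\ham(\phi_\tau)}$. Integrating out the internal vertices then yields $\mubnh$.

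The first step is to make the normalization in Definition \ref{def:wk} explicit. For $\phi\in\R$ we have
\begin{equation}
    \frac{\walk_k^\phi(d\rho)}{\cN_1^\phi(d\rho)} = \frac{e^{-\pot_k(\rho)}}{\cN_1^\phi\left[e^{-\pot_k}\right]} = \frac{e^{-\pot_k(\rho)}}{e^{-\pot_{k+1}(\phi)/b}},
\end{equation}
where the last equality is exactly \eqref{eq:def_vk}. I would first check that these quantities are finite and positive wherever needed. Convexity of $\ham$ gives $e^{-\pot_0}\leq e^{A-B|\rho|}$ for some constants, and this bound propagates inductively under Gaussian convolution, so every $\cN_1^\phi[e^{-\pot_k}]$ is finite. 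Positivity is also inherited: if $e^{-\pot_0}$ is positive on a set of positive measure, then each $e^{-\pot_k}$ is everywhere positive for $k\ge1$.

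There is one edge case to handle. The measures $\walk_{n-1}^{\phi}$ are only ever evaluated at $\phi=\phi_\emptyset=0$, so that case needs no extra care. At deeper levels, the values $\phi_\tau$ sampled from $\walk$ lie almost surely where $e^{-\pot}>0$, so the later normalizations are also well defined.

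Next, I would multiply the transition kernels. A vertex $\tau\sigma$ at depth $k$ is sampled from $\walk_{n-k}^{\phi_\tau}$. Its factor relative to the Gaussian kernel is therefore $e^{-\pot_{n-k}(\phi_{\tau\sigma})}\,e^{\pot_{n-k+1}(\phi_\tau)/b}$. Each vertex $\tau$ at depth $k-1<n$ has exactly $b$ children, so it collects the factor $e^{\pot_{n-k+1}(\phi_\tau)}$ from its children. As a child itself, it contributes $e^{-\pot_{n-k+1}(\phi_\tau)}$, and these two factors cancel.

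After this cancellation only the root and leaf terms survive. The Radon--Nikodym derivative of the chain's law with respect to the full BRW law is
\begin{equation}
    e^{\pot_n(0)}\prod_{\tau\in[b]^n} e^{-\pot_0(\phi_\tau)} = e^{\pot_n(0)}\exp\Bigl(-\sum_{\tau\in[b]^n}\ham(\phi_\tau)\Bigr).
\end{equation}
This depends only on the endpoints $\Phi$. Marginalizing over the internal vertices therefore shows that the law of $\Phi$ has density proportional to $e^{-\sum\ham(\phi_\tau)}$ with respect to $\mubno$, which is $\mubnh$. As a consistency check, the constant $e^{\pot_n(0)}$ must equal $1/\mubno[e^{-\sum\ham}]$, and this follows by iterating \eqref{eq:def_vk} from the leaves.

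The main obstacle is not the algebra but the measure-theoretic bookkeeping when $\ham$ takes the value $\infty$, as for $\hamr$. In that case $e^{-\pot_0}$ vanishes outside an interval, and one must confirm that every normalizing constant encountered almost surely is strictly positive and finite so that the cancellation is legitimate. I would handle this via the positivity argument above: Gaussian convolution of a nonnegative function that is not almost everywhere zero is everywhere positive.
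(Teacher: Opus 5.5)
Your argument is correct and is essentially the paper's. The paper performs the same telescoping of the normalizers $\cN_1^\phi[e^{-\pot_k}] = e^{-\pot_{k+1}(\phi)/b}$ one generation at a time through the hybrid walks $\Psi^m$, with Lemma \ref{lem:partitionfunction} supplying the normalization, while you write the full joint Radon--Nikodym derivative at once, which gives the partition-function identity for free.

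One small correction: convexity only bounds $\ham$ below by an affine function, so the right integrability bound is $e^{-\pot_0}\leq e^{A+B|\rho|}$. This is at most exponential growth (and plain boundedness for symmetric $\ham$), not exponential decay, but it still suffices for finiteness.
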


To prove this, we first represent the partition function
(normalization constant) of $\mubnh$ in terms of the final
renormalized potential.
The argument of the following lemma may also be viewed as a warm-up
for the argument of Lemma \ref{lem:treemc}.
By an abuse of notation, we set
\begin{equation}
    \ham(\Phi) = \sum_{\tau \in [b]^n} \ham(\phi_\tau)
\end{equation}
for any $\Phi = \left( \phi_\tau \right)_{\tau \in [b]^n} \in \R^{[b]^n}$.

\begin{lemma}
\label{lem:partitionfunction}
We have
\begin{equation}
    \mubno \left[ e^{- \ham(\Phi)} \right] = e^{- \pot_n(0)}.
\end{equation}
\end{lemma}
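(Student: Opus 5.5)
We prove the more general claim that for every $k \in \{0,\dotsc,n\}$ and every $\phi \in \R$,
\begin{equation}
    \mathcal{M}_{b,k}^{0,\phi}\left[ e^{-\ham(\Phi)} \right] = e^{-\pot_k(\phi)},
\end{equation}
where $\mathcal{M}_{b,k}^{0,\phi}$ denotes the law of the endpoints of a standard depth-$k$ BRW started from $\phi_\emptyset = \phi$ rather than $0$. This is the same as the standard BRW of Definition \ref{def:mubnh} shifted by $\phi$. The lemma is then the case $k=n$, $\phi=0$. The argument is an induction on $k$ that peels off the \emph{first} generation of the tree.

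For the base case $k=0$, the tree is just the root, so $\Phi = (\phi)$ and both sides equal $e^{-\ham(\phi)} = e^{-\pot_0(\phi)}$.

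For the inductive step, suppose the claim holds for $k-1$. For a depth-$k$ BRW started at $\phi$, condition on the first-generation values $\phi_{(\sigma)} = \phi + \zeta_{(\sigma)}$ for $\sigma \in [b]$. By \eqref{eq:def_brw}, given these values, the subtrees rooted at the children $(\sigma)$ are independent standard depth-$(k-1)$ BRWs started at $\phi_{(\sigma)}$. Moreover, the endpoint set $[b]^k$ is the disjoint union over $\sigma$ of the leaves of these subtrees. Hence $e^{-\ham(\Phi)}$ factors as a product over $\sigma$. By conditional independence and the inductive hypothesis, the conditional expectation equals $\prod_{\sigma} e^{-\pot_{k-1}(\phi_{(\sigma)})}$. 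Since the $\phi_{(\sigma)}$ are i.i.d.\ $\cN_1^\phi$, taking expectations (tower property) gives
\[
\left(\cN_1^\phi\left[e^{-\pot_{k-1}(\rho)}\right]\right)^b,
\]
which equals $e^{-\pot_k(\phi)}$ by \eqref{eq:def_vk}.

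The only point needing care is justifying these manipulations when $\ham$ takes the value $\infty$, as for $\hamr$. All integrands lie in $[0,1]$ after the convention $e^{-\infty}=0$ only if $\ham\ge 0$, which need not hold. In general the integrands are nonnegative, so Tonelli's theorem justifies the conditioning and factorization steps even if some quantities are a priori infinite. Finiteness of all quantities follows from convexity: a convex $\ham$ is bounded below by an affine function, so each expectation is finite by Gaussian integrability. Beyond this, the proof is a direct Fubini/Markov-property computation, and I expect no genuine obstacle.
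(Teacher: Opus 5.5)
Your proof is correct. It rests on the same two facts as the paper's argument: conditional independence of the $b$ children given their parent, and the defining recursion \eqref{eq:def_vk}. The difference is that you run the recursion in the opposite direction.

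The paper fixes the single depth-$n$ tree rooted at $0$ and integrates out the \emph{deepest} generation at each step. It shows that $\E\bigl[\prod_{\tau \in [b]^{n-k}} e^{-\pot_k(\psi_\tau)}\bigr]$ is the same for every $k \in \{0,\dotsc,n\}$, and then reads off $e^{-\pot_n(\psi_\emptyset)} = e^{-\pot_n(0)}$. You instead strengthen the claim to all depths and all starting points, and peel off the \emph{first} generation using the self-similarity of the BRW.

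Your route yields a stronger and more conceptual statement: $e^{-\pot_k(\phi)}$ is exactly the partition function of a depth-$k$ BRW started at $\phi$. This makes precise the informal interpretation of $\pot_k$ given after Definition~\ref{def:vk}. The paper's bottom-up organization is chosen as a warm-up for Lemma~\ref{lem:treemc}. There the same generation-by-generation integration is carried out with a test function $F(\Psi)$ carried along, and the step laws are swapped from $\cN_1$ to $\walk_{n-m}$ one layer at a time.

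Your measure-theoretic remark is fine. Tonelli gives the identity in $[0,\infty]$ with no finiteness needed. For finiteness of the $\pot_k$, the affine minorant of $\ham$ (which requires $\ham \not\equiv \infty$) propagates through \eqref{eq:def_vk} to an affine minorant of each $\pot_k$; you should say this explicitly rather than only citing Gaussian integrability for $\ham$.
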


\begin{proof}[Proof of Lemma \ref{lem:partitionfunction}]
In this proof let us use $\left( \psi_\tau \right)_{\tau \in \treebn}$
to denote the standard BRW as in the definition \eqref{eq:def_brw}, also using
$\left( \zeta_\tau \right)_{\tau \in \treebn \setminus \{\emptyset\}}$
to denote the increments which are i.i.d.\ standard Gaussians.
Then, by definition, we have
\begin{equation}
    \mubno \left[ e^{- \ham(\Phi)}\right]
    = \E \left[ \prod_{\tau \in [b]^n} e^{- \ham(\psi_\tau)} \right]
    = \E \left[ \prod_{\tau \in [b]^n} e^{- \pot_0(\psi_\tau)} \right],
\end{equation}
and also for any $k \in \{1, \dotsc,n\}$ we have
\begin{equation}
    \E \left[ \prod_{\tau\sigma \in [b]^{n-(k-1)}} e^{-\pot_{k-1}(\psi_{\tau\sigma})} \right]
    = \E \left[
        \E \left[
            \prod_{\tau \in [b]^{n-k}} \prod_{\sigma \in [b]}
            e^{-\pot_{k-1}(\psi_\tau + \zeta_{\tau \sigma})}
        \,\middle|\,
            \left\{ \psi_\tau : \tau \in [b]^{n-k} \right\}
        \right]
    \right].
\end{equation}
The factors in the product above are conditionally independent,
so we may pull the conditional expectation inside, and then
invoke Definition \ref{def:vk} to simplify the expression:
\begin{align}
    \E \left[ \prod_{\tau\sigma \in [b]^{n-(k-1)}} e^{-\pot_{k-1}(\psi_{\tau\sigma})} \right]
    &= \E \left[
        \prod_{\tau \in [b]^{n-k}} \prod_{\sigma \in [b]}
        \E \left[
            e^{-\pot_{k-1}(\psi_\tau + \zeta_{\tau \sigma})}
        \,\middle|\,
            \left\{ \psi_\tau : \tau \in [b]^{n-k} \right\}
        \right]
    \right] \\
    &= \E \left[
        \prod_{\tau \in [b]^{n-k}} \prod_{\sigma \in [b]}
        e^{- \frac{1}{b} \pot_k(\psi_\tau)}
    \right] \\
    &= \E \left[
        \prod_{\tau \in [b]^{n-k}}
        e^{- \pot_k(\psi_\tau)}
    \right].
\end{align}
Iterating the above $n$ times finishes the proof as $\psi_\emptyset = 0$
deterministically by definition.
\end{proof}

\begin{proof}[Proof of Lemma \ref{lem:treemc}]
Let $F : \R^{[b]^n} \to \R$ be any function which is bounded and continuous.
Applying Lemma \ref{lem:partitionfunction} and Definitions \ref{def:mubnh} and \ref{def:vk} of
$\mubnh$ and $\pot_0$, we have
\begin{equation}
\label{eq:treemc_init}
    \mubnh \left[ F(\Phi) \right]
    = \frac{\mubno\left[ F(\Phi) e^{- \ham(\Phi)} \right]}
    {\mubno \left[ e^{- \ham(\Phi)} \right]}
    = e^{\pot_n(0)} \cdot \mubno \left[ F(\Phi) \prod_{\tau \in [b]^n} e^{-\pot_0(\phi_\tau)} \right].
\end{equation}
Now let us use $\left( \psi_\tau \right)_{\tau \in \treebn}$
to denote the standard BRW of \eqref{eq:def_brw}, and define a sequence of interpolations between
this and $\left( \phi_\tau \right)_{\tau \in \treebn}$, the walk of Definition \ref{def:treemc}.
Specifically, for each $m \in \{0,\dotsc,n\}$, let us define
$\left( \psi_\tau^m \right)_{\tau \in \treebn}$ recursively as follows:
first set $\psi_\emptyset^m = 0$ and then, having defined $\psi_\tau^m$
for all $\tau \in [b]^{k-1}$, subsequently sampling $\psi_{\tau\sigma}$
independently for all $\tau \in [b]^{k-1}$ and $\sigma \in [b]$ via
\begin{equation}
    \psi_{\tau\sigma}^m \sim \begin{cases}
        \cN_1^{\psi_\tau^m} & \text{if } k \leq m, \\
        \walk_{n-k}^{\psi_\tau^m} & \text{if } k > m.
    \end{cases}
\end{equation}
In other words, we use standard BRW steps for the first $m$ steps
and then apply the renormalized potentials after that.
Then, defining $\Psi^m = \left( \psi_\tau^m \right)_{\tau \in [b]^n}$,
we have $\Psi^0 = \Phi = \left( \phi_\tau \right)_{\tau \in [b]^n}$,
the endpoints of the walk of Definition \ref{def:treemc},
and $\Psi^n = \Psi = \left( \psi_\tau \right)_{\tau \in [b]^n}$, the endpoints of the standard BRW.
Let us again use
$\left( \zeta_\tau \right)_{\tau \in \treebn \setminus \{\emptyset\}}$
to denote i.i.d.\ standard Gaussians.
Now for each $m \in \{1,\dotsc,n\}$, since we may write
\begin{equation}
    \left( \psi_{\tau\sigma}^m : \tau \in [b]^{m-1}, \sigma \in [b] \right)
    =
    \left( \psi_\tau^m + \zeta_{\tau\sigma} : \tau \in [b]^{m-1}, \sigma \in [b] \right),
\end{equation}
we find that
\begin{equation}
\label{eq:treemc_step}
    \E \left[ F(\Psi^m) \prod_{\tau\sigma \in [b]^m} e^{-\pot_{n-m}(\psi_{\tau\sigma}^m)} \right]
    = \E \left[
        F(\Psi^m)
        \prod_{\tau \in [b]^{m-1}}
        \prod_{\sigma \in [b]}
        e^{-\pot_{n-m}(\psi_\tau^m + \zeta_{\tau\sigma})}
    \right].
\end{equation}
Now as in the proof of Lemma \ref{lem:partitionfunction},
the factors in the product in the right-hand side of \eqref{eq:treemc_step}
are conditionally independent after conditioning on
$\left\{ \psi_\tau^m : \tau \in [b]^{m-1} \right\}$,
but they are \emph{not} conditionally independent from $F(\Psi^m)$.
We will next show, though, that when we perform a similar manipulation
as in the proof of Lemma \ref{lem:partitionfunction}, the distribution of
$\Psi^m$ becomes that of $\Psi^{m-1}$.

For this, first note that by Definition \ref{def:wk},
the measure $\walk_{n-m}^{\psi_\tau^m}(d\rho)$ has density
proportional to $e^{- \pot_{n-m}(\rho)}$ with respect to
$\cN_1^{\psi_\tau^m}(d\rho)$,
and by Definition \ref{def:vk}, the normalization constant is
\begin{equation}
    \E \left[ e^{- \pot_{n-m}(\psi_\tau^m + \zeta_{\tau\sigma})} \right]
    = e^{-\frac{1}{b} \pot_{n-(m-1)}(\psi_\tau^m)}.
\end{equation}
So if we replace each factor of $e^{- \pot_{n-m}(\psi_\tau^m + \zeta_{\tau\sigma})}$
in \eqref{eq:treemc_step} by $e^{- \frac{1}{b} \pot_{n-(m-1)}(\psi_\tau^m)}$
and change the distribution of $\psi_{\tau\sigma}^m$
from $\cN_1^{\psi_\tau^m}$ to $\walk_{n-m}^{\psi_\tau^m}$,
then the expectation in \eqref{eq:treemc_step} will be unchanged.
This amounts to changing the overall distribution of the full walk
$\left( \psi_\tau^m \right)_{\tau \in \treebn}$ into the distribution
of $\left( \psi_\tau^{m-1} \right)_{\tau \in \treebn}$.
In other words, we have
\begin{align}
    \E \left[ F(\Psi^m) \prod_{\tau\sigma \in [b]^m} e^{-\pot_{n-m}(\psi_{\tau\sigma}^m)} \right]
    &= \E \left[
        F(\Psi^{m-1})
        \prod_{\tau \in [b]^{m-1}} \prod_{\sigma \in [b]}
        e^{-\frac{1}{b} \pot_{n-(m-1)}(\psi_\tau^{m-1})}
    \right] \\
    &= \E \left[ F(\Psi^{m-1}) \prod_{\tau \in [b]^{m-1}} e^{-\pot_{n-(m-1)}(\psi_\tau^{m-1})} \right].
\end{align}
Iterating this equality $n$ times finishes the proof by \eqref{eq:treemc_init}
and the fact that $\psi_\emptyset^0 = 0$.
\end{proof}

The definitions and lemmas in this subsection have been quite general,
not relying on any properties of the initial potential $\ham$
other than the existence of the measure $\mubnh$.
It is worthwhile to pause now and see exactly how the renormalized
potentials and walk measures behave under a specific choice of $\ham$
where everything can be computed explicitly.
\subsection{The case of a quadratic potential}
\label{sec:oop_quadratic}

As mentioned below Remark \ref{rmk:strict_convexity},
in the case of a quadratic potential $\ham(\phi) = m^2 \frac{\phi^2}{2}$, Gaussian computations
elucidate the structure of the renormalization group flow exactly.
This corresponds to adding a \emph{mass term} ($m$ is the mass), and it is known \cite[Theorem 8.34]{FV12Statistical}
that in the 2D discrete Gaussian free field discussed in Section \ref{sec:intro_gff},
adding a mass induces exponential decay of correlations, with correlation length $\Theta\left(m^{-1}\right)$
for small $m$.
Since the distance $\d(\tau_1,\tau_2)$ between leaves of $\treebn$
should be thought of as the logarithm of the corresponding distance in the 2D lattice,
we should expect to find a correlation length of order $\log \frac{1}{m}$ for small $m$.
This will be proven rigorously as the $\alpha=2$ case of Theorem \ref{thm:main_phialpha}, so
we will be somewhat informal in the present section, aiming only to get the main ideas across quickly.

Let us calculate the renormalized potentials.
By Definition \ref{def:vk}, with $\zeta \sim \cN_1^0$, the first one satisfies
\begin{align}
    e^{- \frac{1}{b} \pot_1(\phi)}
    &= \cN_1^0 \left[ \Exp{- m^2 \frac{\left( \phi + \zeta \right)^2}{2} } \right] \\
    &= \frac{1}{\sqrt{2\pi}} \int_{-\infty}^\infty \,d\zeta \Exp{- \frac{\zeta^2}{2} - m^2 \frac{\left( \phi + \zeta \right)^2}{2} } \\
    &= \frac{1}{\sqrt{2\pi}} \int_{-\infty}^\infty \,d\zeta \Exp{- (1+m^2) \frac{\zeta^2}{2} - m^2 \phi \zeta - m^2 \frac{\phi^2}{2} } \\
    &= \frac{1}{\sqrt{2\pi}} \int_{-\infty}^\infty \,d\zeta \Exp{- \frac{1 + m^2}{2} \left( \zeta + \frac{m^2}{1 + m^2} \phi \right)^2
        + \frac{m^4}{1+m^2} \frac{\phi^2}{2} - m^2 \frac{\phi^2}{2} } \\
    &= \Exp{- \frac{m^2}{1+m^2} \frac{\phi^2}{2}} 
        \frac{1}{\sqrt{2\pi}} \int_{-\infty}^\infty \,d\zeta \Exp{- \frac{1 + m^2}{2} \left( \zeta + \frac{m^2}{1 + m^2} \phi \right)^2} \\
    &= \Exp{- \frac{m^2}{1 + m^2} \frac{\phi^2}{2} + \log \sqrt{\frac{1}{1+m^2}} }.
\end{align}
The constant term $\log \sqrt{\frac{1}{1+m^2}}$ will not affect the rest of our analysis and we will simply ignore it.
By the same derivation we find by induction that we may express
\begin{equation}
    \pot_k(\phi) = \mm_k^2 \frac{\phi^2}{2} + C
\end{equation}
for some constants $C = C_{m,k}$ which we will ignore, where the ``renormalized mass'' $\mm_k > 0$ satisfies
\begin{equation}
\label{eq:mrec}
    \mm_{k+1}^2 = \frac{b \mm_k^2}{1 + \mm_k^2}
    \qquad \text{with} \qquad
    \mm_0^2 = m^2.
\end{equation}
Now the map $x \mapsto \frac{b x}{1 + x}$ on the interval $(0,\infty)$ has a unique attracting fixed point at $x = b-1$
(see Figure \ref{fig:rational_function}), and it may easily be shown that $\mm_k^2 \to b-1$ as $k$ increases.
So, for large $k$, the renormalized potential $\pot_k(\rho)$ is
approximately $(b-1) \frac{\rho^2}{2}$ up to a constant shift, and so
the renormalized walk distribution $\walk_k^\phi(d\rho)$ has density approximately
proportional to
\begin{equation}
\label{eq:ou_source}
    \Exp{- \frac{(\rho - \phi)^2}{2} - (b-1) \frac{\rho^2}{2}}
    \propto \Exp{ - b \frac{\rho^2}{2} + \rho \phi}
    \propto \Exp{- \frac{b}{2} \left( \rho - \frac{\phi}{b} \right)^2};
\end{equation}
here our proportionality constants can depend on anything other than $\rho$.
In other words, the distribution of the \emph{increment} $\rho - \phi$
is a Gaussian with mean approximately $- \frac{b-1}{b} \phi$ and variance
approximately $\frac{1}{b}$.

This is exactly the behavior of an Ornstein--Uhlenbeck (OU) walk as alluded to
in Section \ref{sec:intro_setupresults}.
The parameters of the OU walk depend only on the branching
rate $b$, and when $b$ increases the walk feels a stronger pull back towards
the origin with smaller fluctuations.
As we will see in Lemma \ref{lem:oud_stationary}, the stationary distribution
of this OU walk is $\cN_{b/(b^2-1)}^0$, a centered normal distribution 
with variance $\frac{b}{b^2-1}$.
This dependence on $b$ makes sense heuristically, as with higher branching rate
each step of the walk has more descendants whose behavior must be controlled.

To find the correlation length, then, it suffices to see how large $k$ must be
for this approximation to be valid (recall the discussion below the statement of
Theorem \ref{thm:main_brw} for an explanation of this relationship).
The closeness of $\walk_{n-k}^\phi$ to the OU step distribution is directly related
to the closeness of $\mm_k^2$ to $b-1$.
Recall that $\mm_0^2 = m^2$; when $m^2$ is closer to zero, the recurrence
\eqref{eq:mrec} will take a longer time to get close to $b-1$,
since $0$ is also a fixed point of $x \mapsto \frac{bx}{1+x}$, although it is a
repelling fixed point (see Figure \ref{fig:rational_function}).
The derivative at $0$ is $b$, and so while $\mm_k^2$ is small, each iteration 
effectively multiplies it by $b$;
thus it will take $\Theta \left( \log_b \frac{1}{m^2} \right)$ iterations to reach an $\Omega(1)$
value, at which point it will only take $O(1)$ more steps to get close to $b-1$.
Thus we recover a correlation length of $O\left( \log \frac{1}{m} \right)$ as expected.

\begin{figure}
    \centering
    \includegraphics[width=0.25\textwidth]{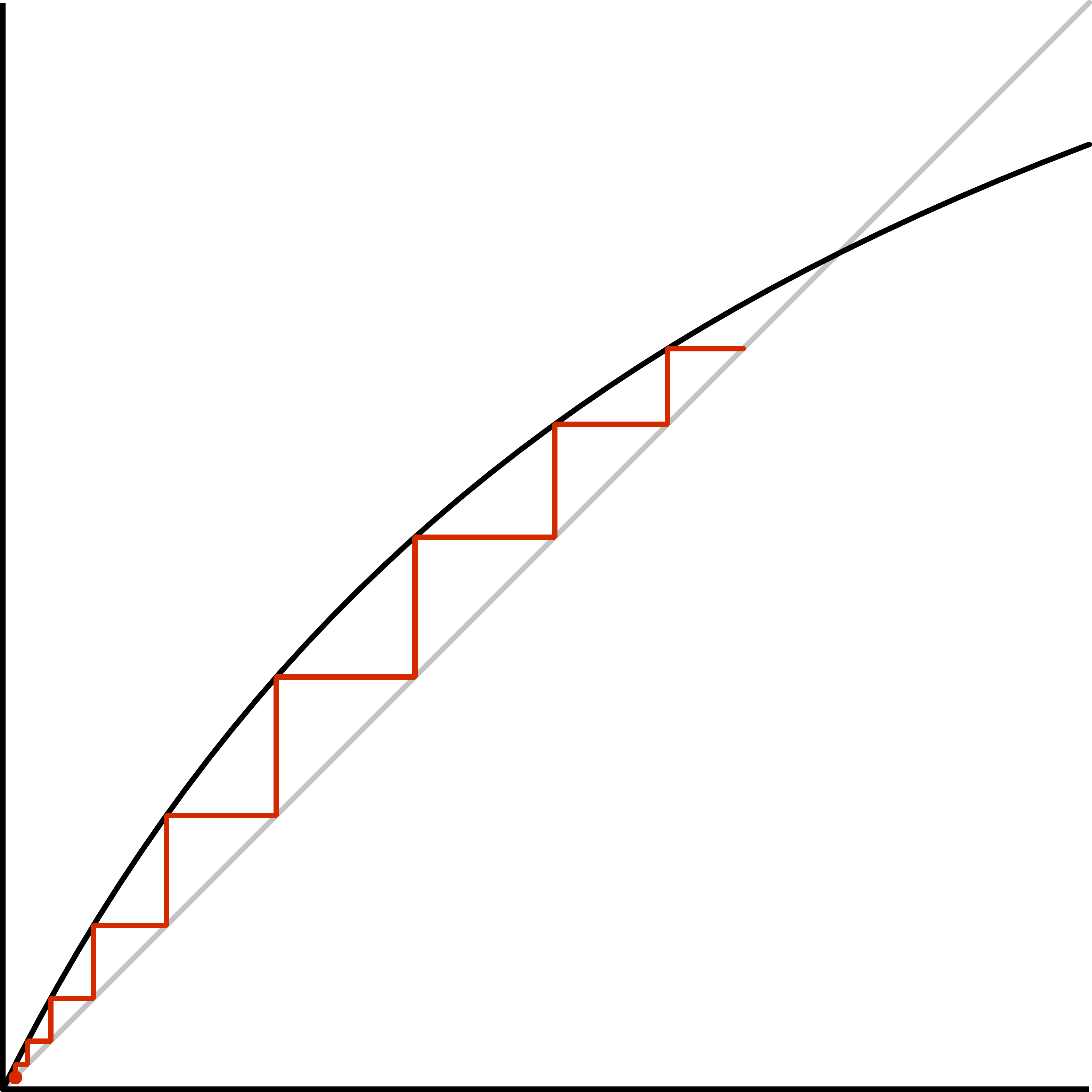}
    \hspace{0.2\textwidth}
    \includegraphics[width=0.25\textwidth]{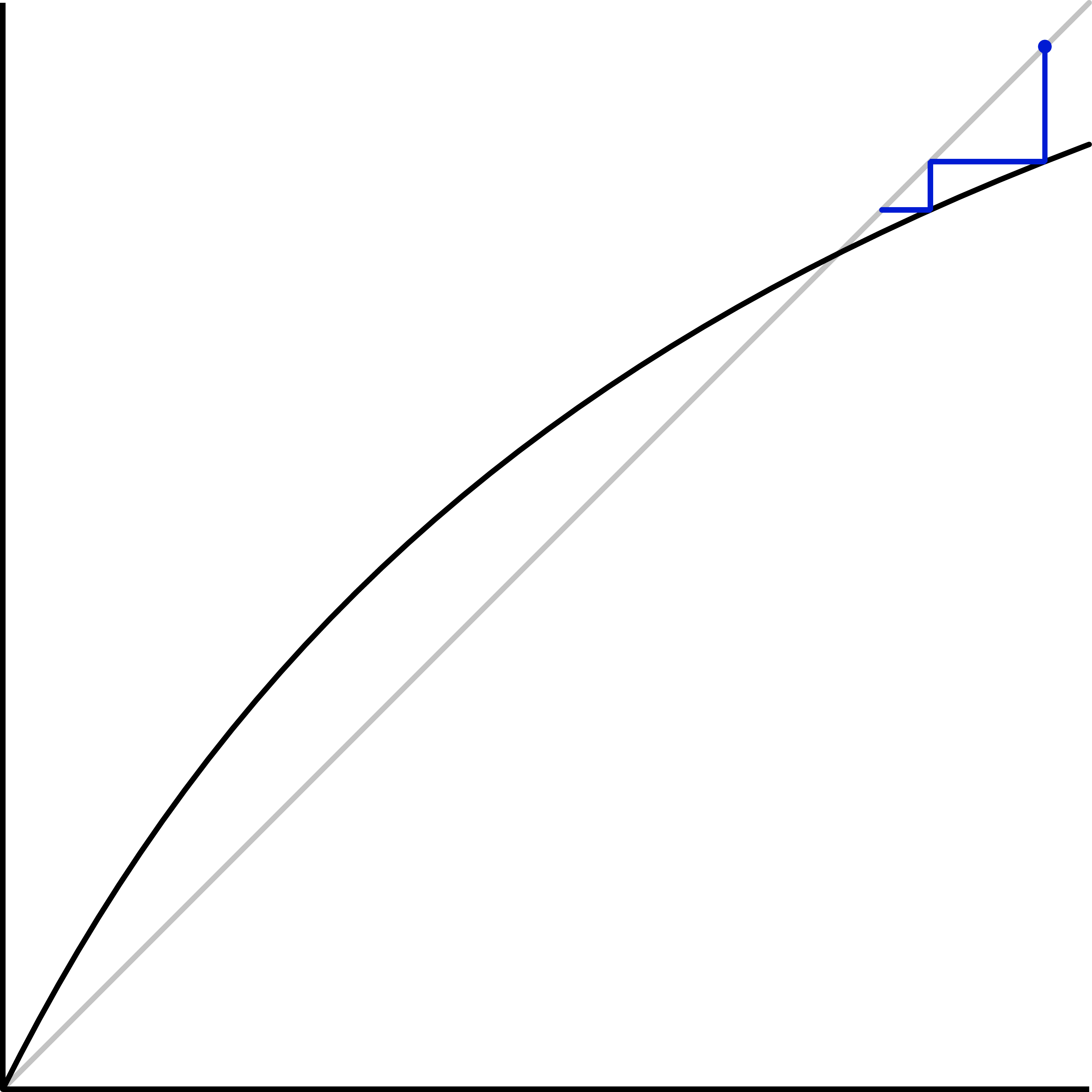}
    \caption{
        Plots of the function $x \mapsto \frac{bx}{1+x}$ with iterations starting
        below (left) and above (right) the fixed point $b-1$.
        Starting at $\mm_0^2 = m^2 \approx 0$ (red point in bottom left) as in Section \ref{sec:oop_quadratic},
        it will take $\Theta\left(\log_b\frac{1}{m^2}\right)$ iterations to
        get close to the fixed point.
        Starting at $\bb_1-1 = b$ (blue point in top right) as in Section \ref{sec:oop_flow},
        it will take $\Theta(1)$ iterations to get close to the fixed point.
    }
    \label{fig:rational_function}
\end{figure}
\subsection{The renormalization group flow}
\label{sec:oop_flow}

Let us now return to the general setup of a convex potential $\ham$.
Now it is not possible in general to obtain explicit formulas for the
renormalized potentials $\pot_k$ as in the quadratic case studied
in Section \ref{sec:oop_quadratic}.
Instead, we will separate $\pot_k$ into two terms.
One will behave in a similar manner to the quadratic potentials
described in Section \ref{sec:oop_quadratic}, and the other
will be a remainder term which we will bound.
In language which is common in other works using the renormalization group,
the quadratic part
is the \emph{relevant} part and the remainder term is the
\emph{irrelevant} part.
Despite this terminology, most of our analysis will be focused on the remainder
term, which displays interesting behavior which will be commented
on further in Section \ref{sec:oop_shrinking} below.

This behavior is only observed when $\ham$ is at least as strongly
confining as the quadratic potential.
We formalize this with the following definition.

\begin{definition}
\label{def:confining}
A potential function $\ham : \R \to \R \cup \{\infty\}$ is said to be
\emph{confining} if $\ham$ is convex and symmetric (i.e.\ $\ham(-x) = \ham(x)$),
and if for all $y \geq 0$ the function $x \mapsto \ham(y+x) - \ham(y-x)$ is
convex on the interval
\begin{equation}
    J_y = \left\{ x \geq 0 : \ham(y+x) < \infty \text{ or } \ham(y-x) < \infty \right\};
\end{equation}
note that this definition also requires that $J_y$ is indeed an interval
for all $y \geq 0$.
\end{definition}

The somewhat strange-looking condition is a technical one used
only at one point of the proof, but
it is easily checked for a variety of potentials of interest,
and in particular we will show later in Section \ref{sec:init} that the
infinite square well (small ball conditioning) potential of Theorem \ref{thm:main_brw}, the sinh--Gordon
potential of Theorem \ref{thm:main_sinhgordon}, and the power potential of Theorem \ref{thm:main_phialpha}
are all confining by this definition.
However, for instance, the absolute-value potential $\ham(x) = |x|$
is not confining by this definition.

We may now turn to the main structural proposition outlining
the two parts of the renormalized potentials as discussed above, including a
precise description of the remainder term's second derivative.
This second derivative turns out to be the crucial object
of study controlling the behavior of the renormalized walk distributions
$\walk_k^\phi$ of Definition \ref{def:wk}.
The following proposition will be proved in Section \ref{sec:flow}.

\begin{proposition}
\label{prop:flow}
If $\ham : \R \to \R \cup \{\infty\}$ is confining as in
Definition \ref{def:confining}, then for all $k \in \N$ we may write
\begin{equation}
\label{eq:structure}
    \pot_k(\phi) = (\bb_k-1) \frac{\phi^2}{2} - \rem_k(\phi),
\end{equation}
where $\bb_k = \frac{b^{k+1}-1}{b^k-1}$, and the remainder terms
$\rem_k : \R \to \R$ satisfy the following properties:
\begin{enumerate}
    \item
    \label{item:rem_initial}
    The first remainder term is given by
    \begin{equation}
    \label{eq:r1}
        \rem_1(\phi) = b \log \cN_1^0 \left[ e^{\phi \zeta - \ham(\zeta)} \right],
    \end{equation}
    using our convention for the expectation over $\zeta \sim \cN_1^0$, a standard Gaussian.
    \item
    \label{item:rem_symconleqbk}
    Each remainder term $\rem_k$ is a symmetric function
    with $0 \leq \rem_k''(\phi) \leq \bb_k - 1$ for all $\phi \in \R$.
    \item
    \label{item:rem_rec}
    The second derivatives satisfy the following recursive upper bound:
    \begin{equation}
    \label{eq:rem_rec_bl}
        \rem_{k+1}''(\phi)
        \leq \frac{b}{\bb_k} \cdot \walk_k^\phi \left[
            \frac{\rem_k''(\rho)}{\bb_k - \rem_k''(\rho)}
            \right],
    \end{equation}
    using our convention for the expectation over $\rho \sim \walk_k^\phi$
    as in Definition \ref{def:wk}.
    \item
    \label{item:rem_unimodal}
    The second derivatives $\rem_k''$ are unimodal, i.e.\ $\rem_k''(\phi)$ is
    decreasing in $|\phi|$.
\end{enumerate}
\end{proposition}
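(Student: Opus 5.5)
The approach is to rewrite the recursion of Definition \ref{def:vk} as a recursion for $\rem_k$ alone. Each $\rem_{k+1}''$ is then a variance under $\walk_k^\phi$ minus an explicit constant, and every item of Proposition \ref{prop:flow} reduces to a statement about variances of a one-dimensional log-concave measure.

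\emph{Step 1: the recursion for $\rem_k$ (item \ref{item:rem_initial}).} For $k=1$, write $\rho=\phi+\zeta$. Expanding $-\frac{(\rho-\phi)^2}{2}=-\frac{\phi^2}{2}+\phi\zeta-\frac{\zeta^2}{2}$ should give
\begin{equation}
e^{-\frac1b\pot_1(\phi)}=e^{-\phi^2/2}\,\cN_1^0\left[e^{\phi\zeta-\ham(\zeta)}\right].
\end{equation}
Since $\bb_1-1=b$, this yields \eqref{eq:r1}. For the step $k\to k+1$, insert \eqref{eq:structure} into \eqref{eq:def_vk}:
\begin{equation}
e^{-\frac1b\pot_{k+1}(\phi)}=e^{-\phi^2/2}\int \frac{d\rho}{\sqrt{2\pi}}\,e^{-\bb_k\rho^2/2+\rho\phi+\rem_k(\rho)}.
\end{equation}
Completing the square and using the identity $b(\bb_k-1)/\bb_k=\bb_{k+1}-1$ (a direct check), I would set, up to additive constants,
\begin{equation}
\rem_{k+1}(\phi)=b\log\cN^{\phi/\bb_k}_{1/\bb_k}\left[e^{\rem_k(\rho)}\right]=b\log\int e^{-\bb_k\rho^2/2+\rho\phi+\rem_k(\rho)}\,d\rho-\frac{b\phi^2}{2\bb_k}.
\end{equation}
The tilted measure appearing here is exactly $\walk_k^\phi$, because its density is proportional to $e^{-(\rho-\phi)^2/2-\pot_k(\rho)}\propto e^{\rho\phi-\bb_k\rho^2/2+\rem_k(\rho)}$. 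Differentiating the log-Laplace transform twice gives
\begin{equation}
\rem_{k+1}''(\phi)=b\left(\Var_{\walk_k^\phi}[\rho]-\tfrac1{\bb_k}\right),
\end{equation}
and similarly $\rem_1''(\phi)=b\Var[\rho]$, where $\rho$ has density proportional to $e^{\phi\rho-\rho^2/2-\ham(\rho)}$. Symmetry of every $\rem_k$ follows by induction from symmetry of $\ham$.

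\emph{Step 2: the bounds (items \ref{item:rem_symconleqbk} and \ref{item:rem_rec}).} Write $U_k(\rho)=\bb_k\rho^2/2-\rem_k(\rho)$, so that $\walk_k^\phi\propto e^{-U_k(\rho)+\rho\phi}$. By induction, $1\le U_k''\le \bb_k$.
\begin{itemize}
\item \emph{Upper bound and recursion.} The Brascamp--Lieb inequality gives $\Var_{\walk_k^\phi}[\rho]\le \walk_k^\phi[1/U_k'']$. Since $\frac1{\bb_k-x}-\frac1{\bb_k}=\frac{x}{\bb_k(\bb_k-x)}$, this is exactly \eqref{eq:rem_rec_bl}. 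Using $\rem_k''\le\bb_k-1$ inside the expectation then gives $\rem_{k+1}''\le b(\bb_k-1)/\bb_k=\bb_{k+1}-1$.
\item \emph{Lower bound.} I would use the Cramér--Rao-type inequality: integration by parts gives $\Cov(\rho,U_k'(\rho)-\phi)=1$ and $\E[(U_k'-\phi)^2]=\E[U_k'']$, so $\Var[\rho]\ge 1/\E[U_k'']\ge 1/\bb_k$. Hence $\rem_{k+1}''\ge0$.
\item \emph{Base case.} For $\rem_1$ the same Brascamp--Lieb argument applies with $U=\rho^2/2+\ham$, whose curvature is at least $1$, giving $0\le\rem_1''\le b=\bb_1-1$. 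Because $\ham$ may take the value $\infty$, I would first approximate $\ham$ by finite smooth convex potentials and pass to the limit.
\end{itemize}

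\emph{Step 3: unimodality (item \ref{item:rem_unimodal}), the main obstacle.} By Step 1, $\frac{d}{d\phi}\rem_{k+1}''(\phi)=b\,\kappa_3(\phi)$, where $\kappa_3(\phi)$ is the third cumulant of $\walk_k^\phi$ (respectively of the tilt of $e^{-\rho^2/2-\ham}$ when $k=1$). So the task is to show that $\kappa_3(\phi)\le 0$ for $\phi\ge0$, which I would split into two parts.
\begin{itemize}
\item \emph{Propagation of the confining property.} If $\rem_k''$ is decreasing in $|\rho|$, then $U_k''$ is increasing in $|\rho|$. For $y\ge0$ and $x\ge0$ we have $|y+x|\ge|y-x|$, so the second $x$-derivative of $U_k(y+x)-U_k(y-x)$, namely $U_k''(y+x)-U_k''(y-x)$, is nonnegative. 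Thus $U_k$ satisfies Definition \ref{def:confining}; this is where confinement of $\ham$ enters at $k=1$, since $\rho^2/2+\ham$ is confining whenever $\ham$ is.
\item \emph{The key lemma.} If $U$ is even, log-concave and confining, then every tilt $e^{-U(\rho)+\rho\phi}$ with $\phi\ge0$ has nonpositive third central moment.
\end{itemize}
To prove the key lemma, let $m\ge0$ be the mean of the tilt and compare the density at $m+x$ and $m-x$. The log-ratio of these two densities is $-(U(m+x)-U(m-x))+2\phi x$, which by the confining property is concave in $x\ge0$ and vanishes at $x=0$. It should therefore change sign at most once, from positive to negative. The first-moment identity $\E[\rho-m]=0$ then pins down this sign pattern, and a single-crossing (Karlin-type) argument against the odd weight $x^3$, compared with $x$, should give $\E[(\rho-m)^3]\le0$.

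Making this sign-change argument airtight, including the degenerate case of the infinite square well where $J_y$ is a bounded interval, is where I expect the real work to lie.
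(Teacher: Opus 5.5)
Your proposal is correct and follows essentially the same route as the paper. You express $\rem_{k+1}''$ as $b(\Var_{\walk_k^\phi}[\rho]-1/\bb_k)$ (with no $1/\bb_k$ term at $k=0$) and $\rem_{k+1}'''$ as $b$ times the third cumulant of $\walk_k^\phi$, use Brascamp--Lieb and Cram\'er--Rao for item~\ref{item:rem_symconleqbk} and the recursion, and get unimodality by propagating the confining property and applying the same single-sign-change comparison of $x^3$ against $x$ (the paper's Lemma~\ref{lem:sst}) to the density symmetrized about its mean. The differences are cosmetic: you complete the square to obtain a recursion for $\rem_{k+1}$ directly and handle the possibly infinite $\ham$ in the base case by smooth approximation, whereas the paper works with the cumulant generating function of $\walk_k^0$ and cites a nonsmooth Brascamp--Lieb inequality.
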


\begin{remark}
\label{rmk:confining}
In fact, the assumption that $\ham$ is confining is only needed
for item \ref{item:rem_unimodal} in this proposition.
The other items only use the assumption that $\ham$ is symmetric and convex,
and some mileage may be gotten from the recursive upper bound
\eqref{eq:rem_rec_bl} in particular in these situations, although we do not
study them in the present article.
\end{remark}

\begin{remark}
\label{rmk:bbk}
The constants $\bb_k-1$ appearing in the statement of Proposition
\ref{prop:flow} are \emph{not} the same as $\mm_k^2$ appearing
in Section \ref{sec:oop_quadratic}.
They do satisfy the same recurrence \eqref{eq:mrec}, but they are initialized
differently; namely, we do not define $\bb_0$ and instead start with
$\bb_1 = \frac{b^2-1}{b-1} = b+1$.
In particular, $\bb_1 - 1 = b$ begins \emph{above} the limiting value of $b-1$,
and does not need to spend any time escaping the well of the fixed point
$0$ of the function $x \mapsto \frac{bx}{1+x}$ (see Figure \ref{fig:rational_function}).
Thus, the evolution of $\bb_k$ is not the bottleneck in our analysis in general.
We could of course have chosen to separate out a remainder term in
Section \ref{sec:oop_quadratic}, which would yield the same formula $\bb_k$,
since indeed Proposition \ref{prop:flow} applies to a quadratic potential.
However, this would not simplify the analysis as the remainder term would
itself just be another quadratic term.
\end{remark}

As Remark \ref{rmk:bbk} hints, it is now the remainder terms which play the primary role.
Moreover, it will be crucial for us that under
potentials $\ham$ with stronger than quadratic confinement, 
the remainder term will grow slower than quadratically at infinity.
For instance, just looking at the formula \eqref{eq:r1} for the first
remainder term, if we impose the hard conditioning potential \eqref{eq:hamr}
then for $\phi \gg r$ the expectation in \eqref{eq:r1} is dominated by
the contributions from $\zeta \approx r$, leading to \emph{linear} growth
of $\rem_1(\phi)$ as $\phi \to \infty$.

The slower-than-quadratic growth of $\rem_k(\phi)$ implies that the second
derivative $\rem_k''(\phi)$ (which is nonnegative by item \ref{item:rem_symconleqbk}
of Proposition \ref{prop:flow}) tends to $0$ as $|\phi| \to \infty$.
This is the crucial behavior which we exploit as the starting point of our
bounds on $\rem_k''$, which for instance allows us to obtain the optimal $O(r)$
bound on the correlation length in the model of BRW conditioned on
$\| \Phi \|_\infty \leq r$, rather than the suboptimal bound of $O(r^2)$.
Let us now see in more detail a sketch of how this argument will proceed.
\subsection{How the remainder term shrinks}
\label{sec:oop_shrinking}

As mentioned above, we will control the remainder term $\rem_k$ via its second
derivative $\rem_k''$;
since it is symmetric and the constant term can be ignored for our purposes,
this second derivative actually captures all of the relevant information
about $\rem_k$.
For this reason, the recursive upper bound \eqref{eq:rem_rec_bl}
is of fundamental importance to our analysis.

A simple consequence of this recurrence is an iterative uniform bound;
in other words, a uniform bound on $\rem_K''$ yields another uniform bound on
$\rem_{K+k}''$ for all $k \geq K$ as follows:
\begin{equation}
\label{eq:exp}
    \text{if } \qquad \sup_{\phi \in \R} \rem_K''(\phi) \leq M,
    \qquad
    \text{then } \qquad \sup_{\phi \in \R} \rem_{K+k}''(\phi)
    \leq \frac{C}{b^k (\bb_K - 1 - M)}.
\end{equation}
This fact will be proved as Lemma \ref{lem:exp} in Section \ref{sec:shrink_exponential} below.
The bound on $\rem_{K+k}''$ above decays exponentially as $k$ increases, but if the initial bound $M$
is very close to $\bb_K - 1$, then the result will be ineffective unless $k$ is very large.

For example, let us consider the case of the square-well potential \eqref{eq:hamr},
and take $K=1$.
Then, using items \ref{item:rem_initial} and \ref{item:rem_unimodal} of
Proposition \ref{prop:flow} and the fact that $b = \bb_1 - 1$, we
will show in Section \ref{sec:flow_derivatives} below (see \eqref{eq:2dr_cum} specifically)
that
\begin{equation}
\label{eq:variance_formulation_1}
    \sup_{\phi \in \R} \rem_1''(\phi) = \rem_1''(0) = (\bb_1 - 1) \Var\left[ \zeta \,\middle|\, |\zeta| \leq r \right],
\end{equation}
where $\zeta \sim \cN_1^0$ is a standard Gaussian variable.
As $\P[|\zeta| > r] = e^{-\Omega(r^2)}$ and
the variance of the unconditioned variable is $1$, we find that
\begin{equation}
    \rem_1''(0) = \bb_1 - 1 - e^{-\Omega(r^2)}.
\end{equation}
Thus, the uniform bound \eqref{eq:exp} with $K=1$ will \emph{not} be
effective until $k \gtrsim r^2$.
This does lead to an upper bound of $O(r^2)$ on the correlation
length of the conditioned model, but as mentioned in Section \ref{sec:intro_confined},
this is not optimal.

To obtain the optimal $O(r)$ correlation length in this model,
we need to go beyond a uniform bound.
The reasoning in the above paragraph extends to $|\phi| \lesssim r$,
i.e.\ $\rem_1''(\phi)$ is very close to $\bb_1 - 1$ for such $\phi$.
However, as mentioned at the end of the previous subsection,
$\rem_1''(\phi) \to 0$ as $|\phi| \to \infty$, and using
\eqref{eq:variance_formulation_1} we can see that it begins to decrease
as soon as $|\phi| \gtrsim r$.
Thus we find that $\rem_1''$ has a plateau of width $\Theta(r)$
at height $\approx \bb_1 - 1$, and quickly tends to zero away from this plateau;
similarly, $\rem_k''$ has a plateau at height $\approx \bb_k - 1$.
Our main analysis will be to show that the \emph{width} of this plateau
shrinks linearly as $k$ increases.
Thus, in the setting of the hard conditioning potential, the plateau
will disappear after $O(r)$ steps, at which point the uniform bound
\eqref{eq:exp} is effective.
This linear shrinking of the plateau which starts with width $O(r)$
is what leads to the optimal $O(r)$ bound on the correlation length which
we obtain.
Figure \ref{fig:plateau} illustrates this behavior.

\begin{figure}
    \centering
    \includegraphics[width=0.95\textwidth]{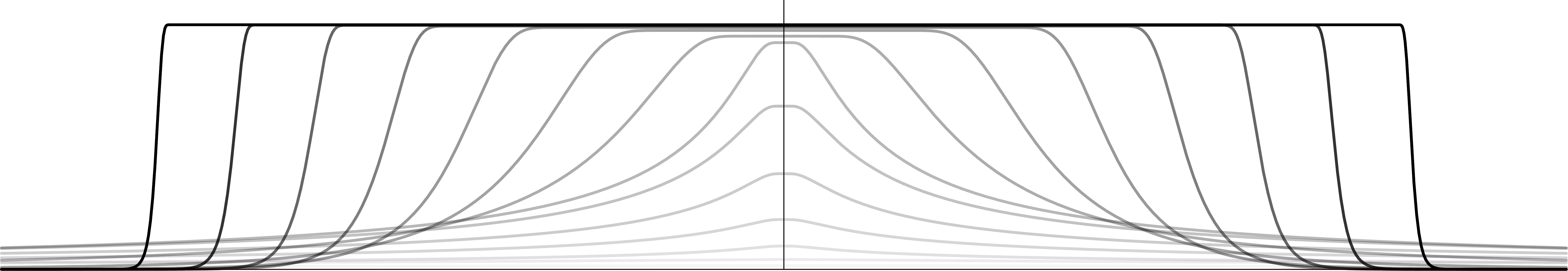}
    \caption{
        Illustrative depiction of the shrinking behavior of $\rem_k''$,
        lighter curves representing higher values of $k$.
        There is a plateau which first shrinks horizontally by $\Omega(1)$
        at each step (while also shrinking by a very small amount vertically).
        Eventually, the uniform bound on the vertical height begins to
        decrease exponentially.
    }
    \label{fig:plateau}
\end{figure}

As this plateau-shrinking behavior will drive most of our analysis,
we codify the initial behavior which allows it to work
in the following definition.

\begin{definition}
\label{def:plateau}
For any potential $\ham : \R \to \R \cup \{\infty\}$,
$T \in \N$, and $\eps > 0$, we say that $\plat(\ham,T,\eps)$
holds if the potential $\ham$ is confining in the sense of Definition \ref{def:confining},
and we have $\rem_1''(T) \leq b - \eps$,
where $\rem_1$ is the first remainder term defined in \eqref{eq:r1}.
\end{definition}

To give some intuition for the shrinking behavior described above, let us recall again
the recursive upper bound \eqref{eq:rem_rec_bl} from Proposition \ref{prop:flow}.
Since $\rem_k''(\phi) \approx \bb_k - 1$ for $\phi$ in the plateau,
the renormalized potential $\pot_k(\phi)$ is roughly constant in the plateau
by \eqref{eq:structure}.
So when $\phi$ is well enough inside of the plateau, $\walk_k^\phi$
is close to a $\cN_1^\phi$, a Gaussian with mean $\phi$ and variance $1$.
This behavior continues to hold approximately,
even for $\phi$ close to the edge of the plateau, and so in computing the expectation
over $\walk_k^\phi$ in the bound
\begin{equation}
    \rem_{k+1}''(\phi) \leq \frac{b}{\bb_k} \cdot \walk_k^\phi \left[ \frac{\rem_k''(\rho)}{\bb_k - \rem_k''(\rho)} \right],
\end{equation}
the variable $\rho \sim \walk_k^\phi$ will have some chance to overshoot the edge of the plateau,
leading to the expectation picking up some lower values.

This shrinks the edge of the subsequent plateau back towards the origin.
However, since there may only be a small chance for $\rho \sim \walk_k^\phi$ to overshoot the edge of the $k$th plateau,
the height which qualifies for the $(k+1)$st plateau must actually be slightly increased.
More precisely, the gap between $\bb_k - 1$ and the height at the edge of the plateau must shrink by some factor $\delta > 0$.
These ideas lead to the following proposition, which will be proved in Section \ref{sec:shrink}.

\begin{proposition}
\label{prop:shrinking}
There are some constants $C, \delta > 0$ depending only on $b$ such that
if $\plat(\ham,T,\eps)$ holds with some choice of $T \in \N$ and $\eps > 0$ satisfying $\eps T^2 \leq 1$,
then for all $k \geq 0$ we have
\begin{equation}
    \sup_{\phi \in \R} \rem_{2T+2+k}''(\phi) \leq \frac{C}{b^k \delta^T \eps},
\end{equation}
where $\rem_k$ is the remainder term of Proposition \ref{prop:flow}.
\end{proposition}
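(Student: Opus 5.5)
Write $g_k = \bb_k - 1$ and $u_k(\phi) = \rem_k''(\phi)$. The plan is to show that the plateau of $u_k$ shrinks horizontally by a constant amount per step, so that after $2T+O(1)$ steps the global maximum $u_{2T+2}(0)$ sits a definite distance below $g_{2T+2}$. Then the uniform bound \eqref{eq:exp} (Lemma \ref{lem:exp}) finishes.

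The inductive claim to propagate is: for $j = 0,1,\dots,2T$,
\begin{equation}
    u_{1+j}\left(T - \lfloor j/2 \rfloor\right) \leq g_{1+j} - \eps \delta^{j}.
\end{equation}
Equivalently, the edge of the plateau moves inward by $1/2$ per step on average, while the gap at the edge decays by at most a factor $\delta$ per step. At $j=0$ this is exactly $\plat(\ham,T,\eps)$, since $g_1 = b$. By unimodality (item \ref{item:rem_unimodal} of Proposition \ref{prop:flow}), the bound at the edge point $x_j$ propagates to all $|\phi| \geq x_j$. Once $x_j = 0$, after $j \approx 2T$ steps, we get $\sup u_{2T+1} \leq g_{2T+1} - \eps\delta^{2T}$. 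One more step, using the recursion, gives a uniform gap of size comparable to $\eps\delta^{2T}$ at index $2T+2$. Then \eqref{eq:exp} with $K = 2T+2$ and $M = g_K - c\,\eps\delta^{2T}$ yields $\sup u_{2T+2+k} \leq C b^{-k}/(\eps \delta^{2T})$. Renaming $\delta^2$ as $\delta$ gives the statement.

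For the inductive step, fix $\phi$ near the current edge $x$, say $\phi \in [x-1, x]$, and use item \ref{item:rem_rec}. The map $u \mapsto \frac{u}{\bb_k - u}$ is increasing and convex, and equals $g_k$ at $u = g_k$. Note also that $\frac{b}{\bb_k}\cdot g_k = g_{k+1}$ by the recurrence \eqref{eq:mrec}. Hence
\begin{equation}
    g_{k+1} - u_{k+1}(\phi) \geq \frac{b}{\bb_k}\, \walk_k^\phi\left[ g_k - \frac{u_k(\rho)}{\bb_k - u_k(\rho)} \right].
\end{equation}
The integrand is nonnegative, since $u_k \leq g_k$ by item \ref{item:rem_symconleqbk}. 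It is at least of order $\eps\delta^j$ on $\{\rho \geq x\}$, by unimodality and the slope of the map at $g_k$, which is $\bb_k/(\bb_k-g_k) = \bb_k$. So it suffices to show
\begin{equation}
    \walk_k^\phi[\rho \geq x] \geq c_0 > 0
\end{equation}
uniformly, with $c_0$ depending only on $b$. This gives the factor $\delta = c_0 \cdot \frac{b}{\bb_k}\cdot(\text{slope}) \wedge 1$.

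The main obstacle is this overshoot estimate. By Definition \ref{def:wk} and \eqref{eq:structure}, the density of $\walk_k^\phi$ is proportional to
\begin{equation}
    \Exp{-\tfrac{(\rho-\phi)^2}{2} - g_k\tfrac{\rho^2}{2} + \rem_k(\rho)}.
\end{equation}
To control it I would lower-bound $\rem_k(\rho) - g_k\rho^2/2$ on the plateau, using $u_k \geq g_k - \eta_k$ on $[0,x]$, where $\eta_k$ is the current gap bound. Since $u_k$ is an upper-bounded quantity only from the inductive claim, a lower bound on $u_k$ inside the plateau must also be carried along. I would get it from the exact formula for $\rem_k''$ as a variance, as in the computation behind \eqref{eq:variance_formulation_1}, or I would prove it inductively from a matching lower recursion.

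Given that lower bound, the tilt $\pot_k$ has curvature at most $\eta_k \lesssim \eps$ over an interval of length $\leq T$. The quadratic distortion is therefore $\lesssim \eps T^2 \leq 1$, which is exactly where the hypothesis $\eps T^2 \leq 1$ enters. Outside the plateau, $\pot_k$ is convex with $\pot_k'' \geq 0$. Together these show that $\walk_k^\phi$ is comparable to $\cN_1^\phi$ up to constant factors on $[\phi-2,\phi+2]$. Hence $\rho$ overshoots $x \leq \phi+1$ with probability bounded below.

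Alternating half-steps, that is, moving the edge only every other step, gives slack so that $\phi$ can be taken within distance $1$ of the edge. This is why the statement has index $2T+2$.
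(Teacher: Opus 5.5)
\textbf{There is a genuine gap in the overshoot estimate.} Your outer architecture matches the paper's: a gap at the edge, unimodality to spread it outward, the edge moving inward while the gap shrinks by a factor $\delta$, then Lemma \ref{lem:exp}. Your final bookkeeping is also fine. The problem is the estimate $\walk_k^\phi[\rho \geq x] \geq c_0$.

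As you note, this estimate needs your $u_k$ to be close to $g_k$ on $[0,x]$. But your invariant only bounds $u_k$ from above at the prescribed points $x_j = T - \lfloor j/2 \rfloor$. Neither of your proposed sources for the lower bound works:
\begin{itemize}
\item $\plat(\ham,T,\eps)$ says nothing from below about $\rem_1''$, so there is nothing to initialize a carried lower bound with. The variance formula only gives one with information about $\ham$ you do not have.
\item A Cram\'er--Rao lower recursion would in any case let the lower gap grow by a factor of about $b$ per step.
\end{itemize}

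The estimate genuinely fails without this lower bound. Take the sinh--Gordon potential with $\lambda\beta \geq 1$. Then $\plat(\hambl,T,T^{-2})$ holds for every $T \in \N$, yet $\rem_1'' \leq b/2$ everywhere. Hence $\pot_1'' \geq b/2$, so by Lemma \ref{lem:tailcomparison} the measure $\walk_1^{T}$ is dominated in absolute value by $\oud_{1+b/2}^{T}$. This forces $\walk_1^{T}[\rho \geq T] \leq 2e^{-T^2/4}$, and $\phi = T-1$ is only worse. The desired conclusion at the next step is still true in this example, but only because the gap is already present much closer to the origin. A fixed-edge induction cannot see this.

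\textbf{The missing idea is to make the edge adaptive.} Track the actual threshold $T_{k,\eps} = \sup\{\phi > 0 : \rem_k''(\phi) > \bb_k - 1 - \eps\}$. By unimodality this one definition gives both bounds, with the same $\eps$:
\begin{itemize}
\item $\rem_k'' \leq \bb_k - 1 - \eps$ for $|\rho| \geq T_{k,\eps}$;
\item $\rem_k'' > \bb_k - 1 - \eps$ for $|\rho| < T_{k,\eps}$.
\end{itemize}
So the interior lower bound comes for free, and $\eps T_{k,\eps}^2 \leq \eps T^2 \leq 1$ controls exactly the distortion you describe.

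The paper then proves $\walk_k^{T_{k,\eps}-1}[\rho \geq T_{k,\eps}] \geq \delta$. It does this by comparing, via Lemma \ref{lem:tailcomparison}, with the worst-case remainder $\trem_{k,\eps}$ (curvature $\bb_k - 1 - \eps$ inside the threshold, $0$ outside) and evaluating Gaussian integrals. Your direct density comparison would also work at this point. The result is $T_{k+1,\delta\eps} \leq T_{k,\eps} - 1$. Unimodality of $\rem_{k+1}''$ then transfers the bound to any prescribed point further out, such as your $x_{j+1}$.

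Two further consequences:
\begin{itemize}
\item The edge moves a full unit per step, so the half-step slack is unnecessary. The index $2T+2$ in the paper comes from having a uniform gap at index $T+1$ and then invoking Lemma \ref{lem:exp}.
\item Once the threshold drops below $1$, the last step uses $\walk_K^0[|\rho| \geq 1] \geq \eta$. This follows from comparing $\walk_K^0$ with $\cN_{\bb_K^{-1}}^0$ and needs no interior information.
\end{itemize}
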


\begin{remark}
\label{rmk:lowerbounds}
We expect this plateau analysis idea to be useful for a variety of other applications.
For instance, as mentioned in Remark \ref{rmk:lowerbounds_intro}, we expect that a lower bound on the covariance
may be obtained in the model conditioned on a small ball event.
Such a bound may for instance be proved by showing that the width of the plateau does not shrink by too much at each step,
so that it really does take $\Omega(r)$ steps to shrink.
This seems plausible as when $\phi$ is well within the plateau, the average on the right-hand side
of \eqref{eq:rem_rec_bl} will only pick up contributions from high values of $\rem_k''$.
We leave such analysis to future work.
\end{remark}
\subsection{Coupling with a branching Ornstein--Uhlenbeck walk}
\label{sec:oop_oucc}

Now let us see what the bound in Proposition \ref{prop:shrinking} can say about the distribution of
the tree-indexed Markov chain $\left( \phi_\tau \right)_{\tau \in \treebn}$ of Definition \ref{def:treemc},
whose endpoints $\Phi = \left( \phi_\tau \right)_{\tau \in [b]^n}$ are distributed according to $\mubnh$.
As alluded to in Sections \ref{sec:intro_setupresults} and \ref{sec:oop_quadratic}, all but the last few steps of this chain should
behave like a branching Ornstein--Uhlenbeck (OU) walk.
For the OU walk we consider, given that the current position is $\psi$, the distribution
of the next position will be Gaussian with mean $\frac{\psi}{b}$ and variance $\frac{1}{b}$; let us introduce
some convenient shorthand notation for this.

\begin{definition}
\label{def:oud}
For $a > 1$ and $\psi \in \R$, let $\oud_a^\psi = \cN_{a^{-1}}^{a^{-1} \psi}$,
where we remind the reader that $\cN_{\sigma^2}^\mu$ always denotes a Gaussian distribution with mean
$\mu$ and variance $\sigma^2$.
\end{definition}

We will usually use this notation with $a = b$, but in the proofs it will also be helpful to consider
other values of $a$.
Now a branching OU walk is a tree-indexed Markov chain $\left( \psi_\tau \right)_{\tau \in \treebn}$,
where
\begin{enumerate}
    \item $\psi_\emptyset = 0$, and 
    \item given $\psi_\tau$ for some $\tau \in \treebn$, each $\psi_{\tau\sigma}$ for $\sigma \in [b]$ 
    is an independent sample from $\oud_b^{\psi_\tau}$.
\end{enumerate}
Let us now introduce a coupling between this and the tree-indexed Markov chain $\left( \phi_\tau \right)_{\tau \in \treebn}$
of Definition \ref{def:treemc}.
This construction will also include a set of auxiliary indicator variables
$\left( \chi_\tau \right)_{\tau \in \treebn}$ such that $\chi_\tau = 1$ if and only if
$\phi_{\tau'} = \psi_{\tau'}$ for all $\tau' \preceq \tau$.

\begin{definition}
\label{def:joint}
First set both $\phi_\emptyset = \psi_\emptyset = 0$ and $\chi_\emptyset = 1$.
Then inductively, if we have defined $(\phi_\tau, \psi_\tau, \chi_\tau)$ for all $\tau \in [b]^{k-1}$
for some $k \in [n]$, we define $(\phi_{\tau\sigma}, \psi_{\tau\sigma}, \chi_{\tau\sigma})$
independently for all $\tau \in [b]^{k-1}$ and $\sigma \in [b]$ by:
\begin{enumerate}
    \item If $\chi_\tau = 0$, then set $\chi_{\tau\sigma} = 0$ and sample
    $\phi_{\tau\sigma} \sim \walk_{n-k}^{\phi_\tau}$ and $\psi_{\tau\sigma} \sim \oud_b^{\psi_\tau}$
    independently.
    \item If $\chi_\tau = 1$, which means by induction that $\phi_\tau = \psi_\tau = \phi$ for some
    $\phi \in \R$, then sample $(\phi_{\tau\sigma}, \psi_{\tau\sigma})$ from the coupling
    between $\walk_{n-k}^\phi$ and $\oud_b^\phi$ which maximizes $\P[\phi_{\tau\sigma} = \psi_{\tau\sigma}]$.
    If the samples do indeed satisfy $\phi_{\tau\sigma} = \psi_{\tau\sigma}$, then set $\chi_{\tau\sigma} = 1$,
    otherwise set $\chi_{\tau\sigma} = 0$.
\end{enumerate}
\end{definition}

The next proposition, which will be proved in Section \ref{sec:ou}, states that with high probability
the ancestors of a single endpoint will agree under this coupling, until near the end of the walk.

\begin{proposition}
\label{prop:coupling}
There are some constants $C, c > 0$ depending only on $b$ such that
if $\plat(\ham,T,\eps)$ holds as in Definition \ref{def:plateau},
then the coupling of Definition \ref{def:joint} satisfies
\begin{equation}
    \P \left[ \phi_{\tau'} = \psi_{\tau'} \text{ for all } \tau' \preceq \tau \text{ with } |\tau'| \leq n - k \right]
    \geq 1 - C \Exp{-c k + C \left( T + \log \frac{1}{\eps} \right)}
\end{equation}
for all $k \in [n]$ and $\tau \in [b]^n$.
\end{proposition}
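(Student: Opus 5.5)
The plan is to reduce the statement to a union bound, along the single lineage $\tau' \preceq \tau$, of total variation distances between the renormalized walk step $\walk_m^\phi$ and the OU step $\oud_b^\phi$, and then to control these distances with Proposition \ref{prop:shrinking}.

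\emph{Preliminary reduction.} If $\plat(\ham,T,\eps)$ holds, then so does $\plat(\ham,T,\eps')$ with $\eps' = \min\{\eps, T^{-2}\}$, because the condition $\rem_1''(T) \leq b-\eps$ only weakens as $\eps$ decreases. Since $\log\frac{1}{\eps'} \leq \log\frac1\eps + 2\log T$, we may assume $\eps T^2 \leq 1$ at no cost. Proposition \ref{prop:shrinking} then gives
\[
\sup_\rho \rem''_{2T+2+j}(\rho) \leq \frac{C}{b^j\delta^T\eps}
\qquad \text{for all } j \geq 0 .
\]

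\emph{Step 1: one-step comparison.} By Proposition \ref{prop:flow}, $\walk_m^\phi$ has density proportional to
\[
\exp\Bigl(-\tfrac{(\rho-\phi)^2}{2} - (\bb_m-1)\tfrac{\rho^2}{2} + \rem_m(\rho)\Bigr),
\]
while $\oud_b^\phi$ is proportional to $\exp(-\frac{(\rho-\phi)^2}{2} - (b-1)\frac{\rho^2}{2})$, as in \eqref{eq:ou_source}. So $d\walk_m^\phi/d\oud_b^\phi \propto e^{f_m}$ with
\[
f_m(\rho) = -(\bb_m-b)\tfrac{\rho^2}{2} + \rem_m(\rho).
\]
Here $f_m$ is symmetric, so $f_m'(0)=0$. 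Moreover $|f_m''| \leq \eta_m \coloneqq (\bb_m - b) + \sup\rem_m''$, where $\bb_m - b = \frac{b-1}{b^m-1} \lesssim b^{-m}$. Hence $|f_m'(\rho)| \leq \eta_m|\rho|$.

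The potential of $\walk_m^\phi$ has second derivative $1 + \bb_m - 1 - \rem_m'' \geq 1$ by item \ref{item:rem_symconleqbk} of Proposition \ref{prop:flow}. Thus $\walk_m^\phi$ is $1$-strongly log-concave, and the Bakry--\'Emery log-Sobolev inequality applies. Combined with Pinsker's inequality it gives
\[
\dtv(\oud_b^\phi, \walk_m^\phi) \leq \sqrt{\tfrac12 \mathrm{KL}}
\lesssim \bigl(\oud_b^\phi[f_m'(\rho)^2]\bigr)^{1/2}
\leq \eta_m \bigl(\oud_b^\phi[\rho^2]\bigr)^{1/2}
\lesssim \eta_m (1 + |\phi|).
\]
If the log-Sobolev route is awkward in the direction needed, the fallback is to bound $\dtv$ directly by estimating $|f_m(\rho) - f_m(\phi/b)|$ and the normalizing constant under a Gaussian of variance $1/b$. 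Either way the target bound is $\dtv \lesssim \eta_m(1+|\phi|)$ (possibly with $(1+\phi^2)$ instead, which is equally fine).

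\emph{Step 2: union bound along the lineage.} In the coupling of Definition \ref{def:joint}, the step from level $\ell-1$ to level $\ell$ uses $\walk_{n-\ell}$. Given $\chi = 1$ at the parent, the probability of decoupling equals the total variation distance, by maximality of the coupling. On the event that the lineage is still coupled, the parent value equals $\psi_{\tau'}$. Marginally, $(\psi_{\tau'})_{\tau' \preceq \tau}$ is exactly a single OU walk started at $0$, whose variance stays below $\frac{b}{b^2-1}$. Hence $\E|\psi_{\tau'}| \leq C$ uniformly, and
\[
\P[\text{decouple at some level } \ell \leq n-k] \leq \sum_{\ell \leq n-k} \E\bigl[C\eta_{n-\ell}(1+|\psi|)\bigr] \leq C\sum_{m\geq k}\eta_m .
\]

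\emph{Step 3: summation.} For $m = 2T+2+j$ we have $\eta_m \leq C b^{-j}\delta^{-T}\eps^{-1}$. Summing over $m \geq k$ gives a bound of order $b^{-(k-2T-2)}\delta^{-T}\eps^{-1} = \exp(-ck + C(T+\log\frac1\eps))$. When $k < 2T+2$, the claimed bound exceeds $1$ once $C$ is chosen large, so it holds trivially.

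The main obstacle I expect is Step 1, namely getting a total variation bound that is \emph{linear} in $\eta_m$ and has only polynomial dependence on $\phi$, so that the unbounded quadratic tilt $f_m$ causes no trouble. The strong log-concavity of $\walk_m^\phi$ is what should make this work.
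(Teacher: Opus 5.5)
Your argument is correct, but it differs from the paper's at both key steps.

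\textbf{One-step comparison.} The paper never compares $\walk_m^\phi$ with $\oud_b^\phi$ directly. It goes through the intermediate Gaussian $\oud_{\bb_m}^\phi$. It bounds $\dtv(\walk_m^\phi,\oud_{\bb_m}^\phi)\le e^{\eps(1+\phi^2)}-1$ by explicit control of the normalizing constant (Lemmas \ref{lem:wk_pf} and \ref{lem:steptv_bk}), and bounds $\dtv(\oud_b^\phi,\oud_{\bb_m}^\phi)$ by a Jensen computation (Lemma \ref{lem:steptv_b}). You instead use the Bakry--\'Emery log-Sobolev inequality for the $1$-strongly log-concave measure $\walk_m^\phi$ together with Pinsker. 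This gives a one-step bound linear in $\eta_m$ and in $|\phi|$, with no intermediate Gaussian.

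\textbf{Union bound along the lineage.} The paper's one-step bounds grow like $e^{C\eta_m\phi^2}$, so it cannot simply average over the lineage. It instead proves the envelope estimate of Lemma \ref{lem:ou_bound}: $|\psi_{\tau^k}|\le C\alpha^{n-k}$ with high probability, via stationary domination and reversibility of the OU walk. It then balances $\alpha=b^{1/4}$ against the $b^{-m}$ decay of the coefficients. With your linear-in-$|\phi|$ bound, the trivial estimate $\E|\psi_{\tau^{\ell-1}}|\le\sqrt{b/(b^2-1)}$ suffices. Writing the union bound as $\sum_\ell \E\bigl[\ind{\chi_{\tau^{\ell-1}}=1}\,\dtv(\walk_{n-\ell}^{\psi_{\tau^{\ell-1}}},\oud_b^{\psi_{\tau^{\ell-1}}})\bigr]$ also avoids conditioning on an event like $B_\ell$, which involves later positions.

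\textbf{What each route buys.} The paper's route is self-contained, using only explicit Gaussian integrals. Yours is shorter and makes Lemma \ref{lem:ou_bound} unnecessary, at the price of importing a standard functional inequality.

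Your worry about the direction of the log-Sobolev inequality is unfounded. For $\mu=\walk_m^\phi$ and $\nu=\oud_b^\phi$, Bakry--\'Emery with constant $1$ gives $\mathrm{KL}(\nu\,\|\,\mu)\le\frac12\int|(\log\frac{d\nu}{d\mu})'|^2\,d\nu=\frac12\,\oud_b^\phi[f_m'(\rho)^2]$. That is exactly the quantity you bound, so the fallback is not needed.

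One cosmetic point: the inequality $\log\frac1{\eps'}\le\log\frac1\eps+2\log T$ requires $\eps\le 1$. In general, use $\eps\le b$, which follows from $\rem_1''\ge 0$, and absorb the extra $\log b$ into the constants. This is the same normalization the paper performs in Remark \ref{rmk:epsTsquared}.
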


\begin{remark}
\label{rmk:epsTsquared}
The condition $\eps T^2 \leq 1$, which was a technical assumption in Proposition \ref{prop:shrinking},
is not stated here.
This is because we may always simply reduce $\eps$ until it is $\leq e^{-T} \leq T^{-2}$
without affecting the bound (up to some universal constant), since $\plat(\ham,T,\eps')$ holds
for any $\eps' \in (0,\eps)$ if $\plat(\ham,T,\eps)$ holds.
\end{remark}

\begin{remark}
Of course, it may also be of interest to know that the walks agree not just for the ancestors of a single endpoint but
indeed for a large portion of the entire tree.
Such a result could also be proved using the bound of Proposition \ref{prop:shrinking} to analyze the branching
process $\left\{ \tau : \chi_\tau = 1 \right\}$; for instance, an analogous type of analysis was done by
\cite{BH23limit,BH24Phase} to understand the \emph{maximum} of a BRW conditioned to take integer values at the end,
as mentioned in Section \ref{sec:intro_renormalization}.
However, since our main goal is to understand the covariance between only two endpoints, the above result will
suffice for our purposes.
\end{remark}

Another important consequence of Proposition \ref{prop:shrinking} is the following Gaussian tail bound.

\begin{proposition}
\label{prop:tailbound}
There are some constants $C, c > 0$ depending only on $b$ such that
if $\plat(\ham,T,\eps)$ holds as in Definition \ref{def:plateau},
then we have the following tail bound for any $\tau \in [b]^n$:
\begin{equation}
    \mubnh \left[ |\phi_\tau| \geq s \right] \leq C \Exp{- \frac{c s^2}{T + \log \frac{1}{\eps}}}.
\end{equation}
\end{proposition}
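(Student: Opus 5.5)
We would bound the tail of a single endpoint $\phi_\tau$ by following its ancestral line in the tree-indexed Markov chain of Definition \ref{def:treemc}. By Lemma \ref{lem:treemc}, the path $\phi_\emptyset=0,\phi_{\tau_{\le 1}},\dots,\phi_\tau$ is a time-inhomogeneous Markov chain whose step from time $j-1$ to $j$ has law $\walk_{n-j}^{\phi}$. We set $K = 2T+2+k_0$, where $k_0 = C'\log\frac1\eps$ is chosen so that Proposition \ref{prop:shrinking} gives $\sup_\phi \rem_{m}''(\phi) \le \frac{C}{b^{k_0}\delta^T\eps}\le \theta$ for every $m\ge K$. Here $\theta>0$ is a small constant depending on $b$; we take $C'$ large compared with $T/\log(1/\eps)$ type terms, i.e.\ $K\lesssim T+\log\frac1\eps$. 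The path then splits into the \emph{OU part}, the steps with $n-j\ge K$, and the \emph{free part}, the last $K$ steps.

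\emph{Step 1: the OU part.} For $m\ge K$, the function $\pot_m(\rho)-\frac{\rho^2}{2}\cdot(\bb_m-1-\theta)$ is concave by \eqref{eq:structure}, and $\pot_m$ is convex since $\rem_m''\le\theta<\bb_m-1$. Hence $\walk_m^\phi$ has density $\propto \exp(-\frac{(\rho-\phi)^2}{2}-\pot_m(\rho))$, which is strongly log-concave with parameter at least $1+\bb_m-1-\theta\ge b-\theta$, and at most $\bb_m$. By symmetry and log-concavity, its mean is a contraction of $\phi$ by the factor $\le 1/(1+\bb_m-1-\theta)\le (b-\theta)^{-1}<1$. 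By Bakry--\'Emery / Brascamp--Lieb type concentration (or the Caffarelli contraction theorem), the centered step is sub-Gaussian with variance proxy $\le 1/(b-\theta)$. Iterating gives $\E e^{\lambda\phi_{\tau'}}\le e^{C\lambda^2}$ uniformly along the OU part, because the linear contraction preserves the sub-Gaussian proxy $\sigma^2\mapsto \sigma^2/(b-\theta)^2+1/(b-\theta)$, whose fixed point is $O(1)$. So the ancestor at height $n-K$ has $O(1)$ sub-Gaussian tails. Proposition \ref{prop:coupling} gives a softer version of the same conclusion, but the moment generating function route is cleaner.

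\emph{Step 2: the free part.} For $m<K$ we only know $0\le\rem_m''\le\bb_m-1$, so $\pot_m$ is convex. The density of $\walk_m^\phi$ is then $\cN_1^\phi$ times a symmetric log-concave factor, and it is $1$-strongly log-concave. The key claim is that each such step is sub-Gaussian with proxy $1$ around a mean lying between $0$ and $\phi$: convexity and symmetry of $\pot_m$ push the mean toward the origin, and the step does not overshoot past $-\phi$ in a way that costs growth. Then $\E e^{\lambda \phi_{\text{next}}}\le \E e^{\lambda \phi}\cdot e^{\lambda^2/2}$ after controlling $\cosh$-type terms by symmetry. More carefully, we would show $\walk_m^\phi[e^{\lambda\rho}]\le e^{\lambda^2/2}\cosh$-dominated by $e^{\lambda\phi+\lambda^2/2}$ for $\lambda\phi\ge0$, using the fact that tilting by a symmetric log-concave weight decreases $\E e^{\lambda\rho}$ when $\lambda\phi\ge 0$ (a correlation inequality for even log-concave weights). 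Applying this to the symmetric MGF $\E\cosh(\lambda\phi)$ over $K$ steps yields $\E\cosh(\lambda\phi_\tau)\le e^{(C+K)\lambda^2/2}$. Chernoff then gives $\mubnh[|\phi_\tau|\ge s]\le 2e^{-s^2/(2(C+K))}$, which is the claim since $K\lesssim T+\log\frac1\eps$.

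The main obstacle is Step 2's inequality $\walk_m^\phi[\cosh(\lambda\rho)]\le e^{\lambda^2/2}\cosh(\lambda\phi)$ for symmetric convex $\pot_m$ (which may take value $\infty$, as for $\hamr$). We would first try to prove it via the identity that $\cN_1^\phi[\cosh(\lambda\rho)e^{-\pot_m(\rho)}]/\cN_1^\phi[e^{-\pot_m}]$ is controlled by the FKG/Gaussian correlation inequality: $\cosh(\lambda\rho)$ is increasing in $|\rho|$ while $e^{-\pot_m}$ is decreasing in $|\rho|$. For the symmetrized measure this gives negative correlation, and symmetrizing over $\pm\phi$ (legitimate since the recursion for $\E\cosh$ only needs the even part) should suffice. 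If this fails, a fallback is a cruder bound with proxy $\bb_m\le b+1$, which still suffices up to constants.
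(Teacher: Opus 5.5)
Your argument is correct. It uses the same decomposition as the paper: cut the ancestral line at distance $K\asymp T+\log\frac{1}{\eps}$ from the leaves, where Proposition \ref{prop:shrinking} makes $\rem_m''$ small, with OU-like steps above the cut and Gaussian-dominated steps below it. Where you differ is in how the bound is propagated. The paper proves a pathwise statement: by Lemma \ref{lem:tailcomparison}, each step $\walk_\ell^\phi$ is dominated in absolute value by $\oud_{b-1/2}^\phi$ or by $\cN_1^\phi$. Lemma \ref{lem:normal_domination} then chains these into a coupling with $|\phi_\tau|\le|\xi_n|$, for an explicit Gaussian $\xi_n$ of variance $O(T+\log\frac{1}{\eps})$. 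You instead iterate even exponential moments through the tower property and finish with Chernoff. This gives only an MGF bound rather than stochastic domination. However, that is all the proposition needs (and all the fourth-moment bound in Theorem \ref{thm:main_general} needs), and it avoids building couplings.

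Your stated main obstacle is easier than you fear. Take $\rho\sim\cN_1^\phi$. Both $\cosh(\lambda\rho)$ and $e^{-\pot_m(\rho)}$ are functions of $|\rho|$: the first is nondecreasing, and the second is nonincreasing by symmetry plus convexity. Chebyshev's one-variable association inequality therefore gives $\walk_m^\phi[\cosh(\lambda\rho)]\le\cN_1^\phi[\cosh(\lambda\rho)]=e^{\lambda^2/2}\cosh(\lambda\phi)$ for each fixed $\phi$. No symmetrization over $\pm\phi$ is needed, and it causes no trouble when $\pot_0=\ham$ takes the value $\infty$. This is exactly the $\cosh$ test-function case of the paper's domination by $\cN_1^\phi$.

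The same inequality can replace Bakry--\'Emery in Step 1. Apply it against $\oud_\kappa^\phi$ with $\kappa=b-\theta$, which is legitimate because $\pot_m-(\kappa-1)\frac{\rho^2}{2}$ is convex and symmetric. This gives $\E\cosh(\lambda\phi_j)\le e^{\lambda^2/(2\kappa)}\E\cosh(\lambda\phi_{j-1}/\kappa)$, which iterates to $\exp\bigl(\frac{\lambda^2}{2}\frac{\kappa}{\kappa^2-1}\bigr)$.

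There are three small corrections.
\begin{enumerate}
\item $\pot_m-(\bb_m-1-\theta)\frac{\rho^2}{2}=\theta\frac{\rho^2}{2}-\rem_m$ is convex, not concave. Your conclusion of $(b-\theta)$-strong log-concavity is still right.
\item The mean map $\mu(\phi)=\walk_m^\phi[\rho]$ is not linear. The bound $|\mu(\phi)|\le|\phi|/\kappa$ comes from $\mu'(\phi)=\Var_{\walk_m^\phi}[\rho]\le 1/\kappa$ (Brascamp--Lieb) together with $\mu(0)=0$. Iterating the variance proxy also uses the symmetry of $\phi_{\tau'}$, via $\E e^{\lambda\mu(X)}=\E\cosh(\lambda\mu(X))\le\E\cosh(\lambda X/\kappa)$.
\item Proposition \ref{prop:shrinking} requires $\eps T^2\le 1$, so first reduce $\eps$ to at most $e^{-T}$ as in Remark \ref{rmk:epsTsquared}. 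Then $\log\frac{1}{\eps}\ge T$, and $k_0=C'\log\frac{1}{\eps}$ does absorb the $\delta^{-T}$ factor. Without this reduction, that factor forces $K\gtrsim T$ regardless of $\eps$.
\end{enumerate}
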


Propositions \ref{prop:coupling} and \ref{prop:tailbound} allow us to derive our main result in its general form,
which, as just remarked, is a bound on the covariance under $\mubnh$ between $\phi_{\tau_1}$ and $\phi_{\tau_2}$
for $\tau_1, \tau_2 \in [b]^n$.
Note that by our symmetry assumption on $\ham$, we have $\mubnh[\phi_\tau] = 0$ for all $\tau \in [b]^n$,
and so the covariance is simply $\mubnh \left[ \phi_{\tau_1} \phi_{\tau_2} \right]$.

\begin{theorem}
\label{thm:main_general} 
There are some constants $C, c > 0$ depending only on $b$ such that
if $\plat(\ham,T,\eps)$ holds as in Definition \ref{def:plateau},
then for all $\tau_1, \tau_2 \in [b]^n$ we have
\begin{equation}
    \left| \mubnh \left[ \phi_{\tau_1} \phi_{\tau_2} \right] \right|
    \leq C \Exp{- c \d(\tau_1, \tau_2) + C \left( T + \log \frac{1}{\eps} \right)},
\end{equation}
where the distance $\d(\tau_1,\tau_2)$ was defined above the statement of Theorem \ref{thm:main_brw}.
\end{theorem}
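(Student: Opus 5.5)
We derive Theorem \ref{thm:main_general} from Propositions \ref{prop:coupling} and \ref{prop:tailbound}, using the coupling of Definition \ref{def:joint} applied simultaneously along the two ancestral lines of $\tau_1$ and $\tau_2$. Write $d = \d(\tau_1,\tau_2)$, let $\tau'$ be the longest common ancestor of $\tau_1$ and $\tau_2$, so $|\tau'| = n-d$, and set $L = T + \log\frac{1}{\eps}$. We may assume $d \geq C' L$ for a large constant $C'$; otherwise the bound follows from Cauchy--Schwarz and the second moment bound $\mubnh[\phi_\tau^2] \leq C L$ from Proposition \ref{prop:tailbound}, since $L \leq e^{L}$.

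\textbf{Conditioning at a midpoint.} Pick an intermediate level $j = n - \lceil d/2 \rceil$, which lies strictly below $\tau'$ on both lines. Let $\tau_i^*$ be the ancestor of $\tau_i$ at level $j$. By the Markov property of the chain in Definition \ref{def:treemc}, $\phi_{\tau_1}$ and $\phi_{\tau_2}$ are conditionally independent given $\mathcal{F} = \sigma(\phi_\sigma : |\sigma| \leq j)$. Hence
\[
\mubnh[\phi_{\tau_1}\phi_{\tau_2}] = \E\bigl[ g(\phi_{\tau_1^*})\, g(\phi_{\tau_2^*}) \bigr],
\]
where $g(x)$ is the conditional mean of an endpoint $n-j$ levels below a vertex with value $x$.

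By symmetry of $\ham$, $g$ is odd, so $g(0) = 0$. It suffices to show $|\E[g(\phi_{\tau_1^*}) g(\phi_{\tau_2^*})]|$ is exponentially small in $d$. We use the coupling once more, this time between levels $|\tau'|$ and $j$. Given $\phi_{\tau'}$, the two subtrees evolve independently. On the event $E$ that both lines from $\tau'$ to $\tau_i^*$ stay coupled to the OU walk, each $\phi_{\tau_i^*}$ is an OU walk run for $\lceil d/2 \rceil$ steps from $\phi_{\tau'}$. Its law is then $\cN$ with mean $b^{-d/2}\phi_{\tau'}$ and variance close to stationary, and it is within total variation $O(b^{-d/2}|\phi_{\tau'}|)$ of the stationary law $\cN^0_{b/(b^2-1)}$.

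\textbf{Decomposition.} Write $\E[g(\phi_{\tau_1^*})g(\phi_{\tau_2^*})] = \E[\E[g(\phi_{\tau_1^*})\mid\phi_{\tau'}]\,\E[g(\phi_{\tau_2^*})\mid\phi_{\tau'}]]$. Each inner conditional mean should be close to $\nu(g)$, where $\nu$ is the stationary OU law. Since $\nu$ is symmetric and $g$ is odd, $\nu(g) = 0$, so each inner mean is small. The error has two sources.
\begin{itemize}
\item The OU mixing error, of order $b^{-d/2}|\phi_{\tau'}|$ times a bound on $g$.
\item The coupling failure probability, controlled by applying Proposition \ref{prop:coupling} with $k = n-j \approx d/2$. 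This gives failure probability $C e^{-cd + CL}$.
\end{itemize}
Each error is controlled by Cauchy--Schwarz against moments of $g$ and $\phi$. These moments are bounded by $CL$ using Proposition \ref{prop:tailbound}, since $|g(x)|^2 \leq \E[\phi_{\tau}^2 \mid \cdot]$ and Jensen's inequality applies. Multiplying the two small factors, or just using one small factor times a bounded one, gives the claim.

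\textbf{Main obstacle.} The delicate point is that Proposition \ref{prop:coupling} is stated for the coupling started at the root, not conditionally on $\phi_{\tau'}$. The cleanest fix is to avoid conditioning. Apply the coupling from the root along both lines up to level $j$. With probability at least $1 - 2Ce^{-cd+CL}$ (by the proposition with $k = n-j$ and a union bound), both $\phi_{\tau_i^*}$ equal $\psi_{\tau_i^*}$, the branching OU values. We then compare $\E[g(\phi_{\tau_1^*})g(\phi_{\tau_2^*})]$ with $\E[g(\psi_{\tau_1^*})g(\psi_{\tau_2^*})]$. The error is bounded by Hölder's inequality: the failure probability to the power $1/2$ times fourth moments of $g$. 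These fourth moments are polynomial in $L$ under both laws, by Proposition \ref{prop:tailbound} and by Gaussianity of $\psi$.

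The OU term is then computed directly. The pair $(\psi_{\tau_1^*},\psi_{\tau_2^*})$ is jointly Gaussian with covariance of order $b^{-d}$, so it is within $O(b^{-d})$ in a suitable sense of an independent stationary pair. That pair gives $\nu(g)^2 = 0$.

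A remaining technicality is that $g$ is only known to satisfy moment bounds, not to be Lipschitz. To bound $\E[g(X)g(Y)]$ for a jointly Gaussian pair $(X,Y)$ with small correlation $\rho$, expand via Hermite polynomials or Mehler's formula. This gives $|\E[g(X)g(Y)]| \leq |\rho|\,\|g\|_{L^2(\nu)}^2$ when $\nu(g) = 0$, which finishes the argument up to constants absorbed into $C e^{CL}$.
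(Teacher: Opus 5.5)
Your final argument (the ``Main obstacle'' paragraph) is correct, but it takes a different route from the paper's. The paper proves Lemma \ref{lem:coupling_independent}: it couples $(\phi_{\tau_1},\phi_{\tau_2})$ with a pair $(\xi_n^1,\xi_n^2)$ of \emph{independent} chains that have the same step laws $\walk_{n-k}$. To do this it couples everything to OU walks through Proposition \ref{prop:coupling}, and then makes the branching OU walk coalesce with two independent linear OU walks (Lemma \ref{lem:ou_coalescence}, which relies on a Bernstein inequality for Markov chains and the OU spectral gap). The covariance then becomes $\E[(\phi_{\tau_1}\phi_{\tau_2}-\xi_n^1\xi_n^2)\mathbf{1}_{A^\c}]$, which is handled by Cauchy--Schwarz and the fourth moment from Proposition \ref{prop:tailbound}. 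You instead project onto level $j$ through the odd conditional-mean function $g$ and transfer to the Gaussian pair $(\psi_{\tau_1^*},\psi_{\tau_2^*})$ using only Proposition \ref{prop:coupling} and H\"older. You finish with Gebelein's inequality (the maximal correlation of a bivariate Gaussian is $|r|$), which is what your Mehler/Hermite expansion gives. So you need neither the coalescence lemma nor any Markov-chain concentration. The paper's route gives a stronger structural statement: the joint law of the two endpoints is within the coupling error of the product of its marginals in total variation. Yours is shorter and uses the exact Gaussian structure of the OU walk, giving the explicit rate $b^{-2\lfloor d/2\rfloor}$ for the idealized term.

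Two details need tightening.

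First, ``Gaussianity of $\psi$'' alone does not control $\E[g(\psi_{\tau_i^*})^4]$ or the $L^2$ norm of $g$ under the Gaussian law; you need a growth bound on $g$. One is available from the paper's tools. Each $\walk_\ell^\phi$ is dominated in absolute value by $\cN_1^\phi$ (Lemma \ref{lem:tailcomparison}), and Lemma \ref{lem:normal_domination} lets you iterate this, giving $|g(x)|\le |x|+\sqrt{n-j}$. That bound is polynomial in $d$ rather than in $T+\log\frac1\eps$, which is harmless against $e^{-cd}$.

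Second, $\psi_{\tau_i^*}$ is not stationary: its law is $\cN^0_{v_j}$ with $v_j=\frac{b}{b^2-1}(1-b^{-2j})$. You should therefore apply Mehler/Gebelein in $L^2(\cN^0_{v_j})$ rather than with respect to $\nu$. Here $g$ is still mean-zero by oddness, and the correlation coefficient is $b^{-2\lfloor d/2\rfloor}v_{n-d}/v_j\le b^{-2\lfloor d/2\rfloor}$. No comparison with an independent stationary pair is needed.
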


With this theorem in hand, in order to prove Theorems \ref{thm:main_brw}, \ref{thm:main_sinhgordon}, and \ref{thm:main_phialpha},
it just suffices to check the plateau condition for appropriate choices of $T \in \N$ and $\eps > 0$;
this will be done in Section \ref{sec:init}.
\section{Structure of the renormalization group flow}
\label{sec:flow}

In this section we prove Proposition \ref{prop:flow}.
The necessary ingredients will be laid out in the subsequent subsections,
and the following proof simply refers to the relevant statements which may
be checked quickly.

Following our conventions laid out at the beginning of Section \ref{sec:oop},
we will use $\rho$ to denote the variable of integration when taking expectations
with respect to a measure on $\R$.
For instance, $\cN_1^\phi[F(\rho)]$ denotes the expectation of $F(\rho)$
when $\rho \sim \cN_1^\phi$ is a Gaussian with mean $\phi$ and variance $1$,
and $\walk_k^\phi[F(\rho)]$ denotes the expectation when $\rho$ is distributed
according to the renormalized walk distribution
$\walk_k^\phi$ of Definition \ref{def:wk}.

\begin{proof}[Proof of Proposition \ref{prop:flow}]
For each $k \in \N$, we simply define
\begin{equation}
\label{eq:rem_def}
    \rem_k(\phi) \coloneqq (\bb_k-1) \frac{\phi^2}{2} - \pot_k(\phi),
\end{equation}
where $\pot_k$ is defined recursively in Definition \ref{def:vk},
and $\bb_k = \frac{b^{k+1}-1}{b^k-1}$.
So by definition we have \eqref{eq:structure},
and it only remains to show that this definition of $\rem_k$ satisfies
the four items of Proposition \ref{prop:flow}.
Item \ref{item:rem_initial} will be shown in \eqref{eq:rem_initial_proof}.
Items \ref{item:rem_symconleqbk} and \ref{item:rem_rec} will be shown
in Lemma \ref{lem:conv_rec}, except for the symmetry in item \ref{item:rem_symconleqbk}
which holds by induction: $\ham$ is symmetric, and Definition \ref{def:vk} retains
that symmetry for each $\pot_k$.
Finally, item \ref{item:rem_unimodal} will be shown in Lemma \ref{lem:unimodality}.
\end{proof}

\subsection{The first renormalized potential}
\label{sec:flow_init}

We begin by examining $\pot_1(\phi)$, the first renormalized potential.
By Definition \ref{def:vk}, with $\rho \sim \walk_1^\phi$, we have
\begin{equation}
    e^{- \frac{1}{b} \pot_1(\phi)}
    = \cN_1^\phi \left[ e^{- \ham(\rho)} \right].
\end{equation}
Now since we have
\begin{equation}
\label{eq:normal_rnd}
    \frac{\cN_1^\phi(d\rho)}{\cN_1^0(d\rho)}
    = \frac{e^{-\frac{(\rho-\phi)^2}{2}}}
    {e^{-\frac{\rho^2}{2}}}
    = e^{\phi \rho - \frac{\phi^2}{2}},
\end{equation}
we see, with $\rho \sim \cN_1^0$ now, that
\begin{equation}
    e^{-\frac{1}{b} \pot_1(\phi)}
    = e^{- \frac{\phi^2}{2}} \cdot \cN_1^0 \left[ e^{\phi \rho - \ham(\rho)} \right],
\end{equation}
and thus that
\begin{equation}
    \pot_1(\phi) = b \frac{\phi^2}{2} - b \log \cN_1^0 \left[ e^{\phi \rho - \ham(\rho)} \right].
\end{equation}
Since $\bb_1 = \frac{b^2-1}{b-1} = b+1$ so that $\bb_1 - 1 = b$, we thus find
by the definition \eqref{eq:rem_def} that
\begin{equation}
\label{eq:rem_initial_proof}
    \rem_1(\phi) = b \log \cN_1^0 \left[ e^{\phi \rho - \ham(\rho)} \right].
\end{equation}
This is exactly the form described in item \ref{item:rem_initial}
of Proposition \ref{prop:flow}.
\subsection{Derivatives of renormalized potentials via cumulants}
\label{sec:flow_derivatives}

Let us now exhibit a few recursive formulas for the
derivatives of the renormalized potentials in terms of the
walk measures $\walk_k^\phi$ of Definition \ref{def:wk}.
These formulas will allow us to prove the remaining items of
Proposition \ref{prop:flow}.
To begin with, since
\begin{equation}
    \frac{\walk_k^\phi(d\rho)}{\cN_1^\phi(d\rho)} \propto e^{-\pot_k(\rho)}
\end{equation}
for all $\phi$, the recursive Definition \ref{def:vk} of the renormalized potentials
as well as \eqref{eq:normal_rnd} yield that
\begin{equation}
    e^{-\frac{1}{b} \pot_{k+1}(\phi)}
    = \cN_1^\phi \left[ e^{- \pot_k(\rho)} \right] 
    = e^{-\frac{\phi^2}{2}} \cdot \cN_1^0 \left[ e^{\phi \rho - \pot_k(\rho)} \right] 
    \propto e^{-\frac{\phi^2}{2}} \cdot \walk_k^0 \left[ e^{\phi \rho} \right],
\end{equation}
where the proportionality constant does not depend on $\phi$.
Therefore we have
\begin{equation}
\label{eq:vklogw}
    \pot_{k+1}(\phi)
    = b \frac{\phi^2}{2} - b \log \walk_k^0 \left[ e^{\phi \rho} \right]
    + C,
\end{equation}
where $C$ is some constant which will be unimportant for our analysis.

Now to access the derivatives of $\pot_{k+1}(\phi)$ we use the fact that
$\log \walk_k^0 \left[ e^{\phi \rho} \right]$ is a cumulant generating
function, and so its derivatives may be expressed in terms of cumulants.
However, we will be taking a derivative at any $\phi \in \R$, not just $\phi=0$,
and so it will be useful to know that the following equality holds
for any measurable function $F : \R \to \R$:
\begin{equation}
    \frac{\walk_k^0 \left[ e^{\phi \rho} F(\rho) \right]}
    {\walk_k^0 \left[ e^{\phi \rho} \right]}
    = \frac{\cN_1^0 \left[ e^{\phi \rho - \pot_k(\rho)} F(\rho) \right]}
    {\cN_1^0 \left[ e^{\phi \rho - \pot_k(\rho)} \right]}
    = \frac{\cN_1^\phi \left[ e^{-\pot_k(\rho)} F(\rho) \right]}
    {\cN_1^\phi \left[ e^{-\pot_k(\rho)} \right]}
    = \walk_k^\phi \left[ F(\rho) \right].
\end{equation}
Now we can express the $\phi$-derivatives of
$\log \walk_k^0 \left[ e^{\phi \rho} \right]$
in terms of cumulants of $\walk_k^\phi$ as follows:
\begin{align}
\label{eq:cum1}
    \frac{d}{d\phi} \log \walk_k^0 \left[ e^{\phi \rho} \right]
    &= \frac{\walk_k^0 \left[ e^{\phi \rho} \rho \right]}
    {\walk_k^0 \left[ e^{\phi \rho} \right]}
    = \walk_k^\phi \left[ \rho \right], \\
\label{eq:cum2}
    \frac{d^2}{d\phi^2} \log \walk_k^0 \left[ e^{\phi \rho} \right]
    &= \frac{\walk_k^0 \left[ e^{\phi \rho} \rho^2 \right]}
    {\walk_k^0 \left[ e^{\phi \rho} \right]}
    - \left( \frac{\walk_k^0 \left[ e^{\phi \rho} \rho \right]}
    {\walk_k^0 \left[ e^{\phi \rho} \right]} \right)^2
    = \walk_k^\phi \left[ (\rho - \walk_k^\phi[\rho])^2 \right], \\
\label{eq:cum3}
    \frac{d^3}{d\phi^3} \log \walk_k^0 \left[ e^{\phi \rho} \right]
    &= \frac{\walk_k^0 \left[ e^{\phi \rho} \rho^3 \right]}
    {\walk_k^0 \left[ e^{\phi \rho} \right]}
    - 3 \frac{\walk_k^0 \left[ e^{\phi \rho} \rho^2 \right]
    \walk_k^0 \left[ e^{\phi \rho} \rho \right]}
    {\walk_k^0 \left[e^{\phi \rho} \right]^2}
    + 2 \left( \frac{\walk_k^0 \left[ e^{\phi \rho} \rho \right]}
    {\walk_k^0 \left[ e^{\phi \rho} \right]} \right)^3
    = \walk_k^\phi \left[ (\rho - \walk_k^\phi[\rho])^3 \right].
\end{align}
We may now access the derivatives of the remainder terms $\rem_k$
as defined in \eqref{eq:rem_def}.
We will not need to consider the first derivative in what follows.
For the second derivative, by \eqref{eq:vklogw} and \eqref{eq:cum2} we have
\begin{align}
    \rem_{k+1}''(\phi) &= (\bb_{k+1} - 1) - \pot_{k+1}''(\phi) \\
    &= \frac{b^{k+2}-1}{b^{k+1}-1} - 1 - b + b \cdot \frac{d^2}{d\phi^2} \log \walk_k^0 \left[ e^{\phi \rho} \right] \\
    &= \frac{b - b^{k+1}}{b^{k+1}-1} + b \cdot \walk_k^\phi \left[ (\rho - \walk_k^\phi[\rho])^2 \right].
\label{eq:2dr_cum}
\end{align}
Note that for $k \geq 1$ we have $\frac{b - b^{k+1}}{b^{k+1} - 1} = - \frac{b}{\bb_k}$,
and for $k = 0$ the fraction is $0$.
For the third derivative, by \eqref{eq:vklogw} and \eqref{eq:cum3} we have
\begin{equation}
    \rem_{k+1}'''(\phi)
    = - \pot_{k+1}'''(\phi)
    = b \cdot \walk_k^\phi \left[ (\rho - \walk_k^\phi[\rho])^3 \right].
\label{eq:3dr_cum}
\end{equation}
\subsection{Recursive bounds for the remainder term}
\label{sec:flow_blcr}

We will now use \eqref{eq:2dr_cum} and the Brascamp--Lieb inequality
\cite[Theorem 4.1]{BL76Extensions} (and \cite[Theorem 12]{HC18Dimension} for a nonsmooth version)
to derive the recursive upper bound for $\rem_k''$ in item \ref{item:rem_rec}
of Proposition \ref{prop:flow}.
A special case of this inequality, which is all we will need,
states that if a real-valued random
variable $\rho$ has density $e^{-f(\rho)}$ for some strictly convex function
$f : \R \to \R$, then we have
\begin{equation}
\label{eq:bl}
    \Var[\rho] \leq \E \left[ \frac{1}{f''(\rho)} \right].
\end{equation}
We will also use a special case of the Cram\'er--Rao inequality \cite[Section 32.3]{C99Mathematical}
(see also \cite{R45Information}), which is a partial reversal of \eqref{eq:bl}:
\begin{equation}
\label{eq:cr}
    \Var[\rho] \geq \frac{1}{\E[f''(\rho)]}.
\end{equation}
This will be useful for item \ref{item:rem_symconleqbk} of Proposition \ref{prop:flow},
in particular the nonnegativity of $\rem_k''$.

\begin{lemma}
\label{lem:conv_rec}
If $\ham$ is convex, then for all $k \in \N$ and $\phi \in \R$, we have
$0 \leq \rem_k''(\phi) \leq \bb_k-1$ and
\begin{equation}
\label{eq:rec_ub}
    \rem_{k+1}''(\phi) \leq
    \frac{b}{\bb_k} \cdot \walk_k^\phi \left[ \frac{\rem_k''(\rho)}{\bb_k - \rem_k''(\rho)} \right].
\end{equation}
\end{lemma}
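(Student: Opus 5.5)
Throughout, write $\pot_k = (\bb_k - 1)\frac{\rho^2}{2} - \rem_k$. By Definition \ref{def:wk}, $\walk_k^\phi$ has density proportional to $e^{-f(\rho)}$ with
\[
f(\rho) = \frac{(\rho-\phi)^2}{2} + \pot_k(\rho), \qquad f''(\rho) = 1 + \pot_k''(\rho) = \bb_k - \rem_k''(\rho).
\]
The plan is induction on $k$, using the variance formula \eqref{eq:2dr_cum}, which expresses $\rem_{k+1}''(\phi)$ as $-\frac{b}{\bb_k} + b\Var_{\walk_k^\phi}[\rho]$ for $k \geq 1$, and as $b\Var_{\walk_0^\phi}[\rho]$ for $k=0$.

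\emph{Base case.} Here $\walk_0^\phi$ has density proportional to $e^{-(\rho-\phi)^2/2 - \ham(\rho)}$. This is $1$-strongly log-concave, since $\ham$ is convex. The nonsmooth Brascamp--Lieb inequality \eqref{eq:bl} therefore gives $\Var \leq 1$, so $\rem_1'' \leq b = \bb_1 - 1$. Nonnegativity is immediate because a variance is nonnegative.

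\emph{Inductive step.} Assume $0 \leq \rem_k'' \leq \bb_k - 1$. The upper bound gives $f'' \geq 1 > 0$, so $f$ is strictly convex and \eqref{eq:bl} yields
\[
\Var_{\walk_k^\phi}[\rho] \leq \walk_k^\phi\left[\frac{1}{\bb_k - \rem_k''(\rho)}\right].
\]
Substituting into \eqref{eq:2dr_cum} and using $\frac{1}{\bb_k - x} - \frac{1}{\bb_k} = \frac{x}{\bb_k(\bb_k - x)}$, we obtain exactly \eqref{eq:rec_ub}.

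The upper bound $\rem_{k+1}'' \leq \bb_{k+1} - 1$ then follows from \eqref{eq:rec_ub}. The map $x \mapsto \frac{x}{\bb_k - x}$ is increasing, so the right-hand side is at most
\[
\frac{b}{\bb_k}\cdot\frac{\bb_k - 1}{1} = \frac{b(\bb_k - 1)}{\bb_k}.
\]
Since $\bb_k - 1$ obeys the recurrence \eqref{eq:mrec} by Remark \ref{rmk:bbk}, this equals $\bb_{k+1} - 1$.

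For nonnegativity we use Cram\'er--Rao \eqref{eq:cr}:
\[
\Var \geq \frac{1}{\E[\bb_k - \rem_k'']} \geq \frac{1}{\bb_k},
\]
because $\rem_k'' \geq 0$. Hence $\rem_{k+1}'' \geq -\frac{b}{\bb_k} + \frac{b}{\bb_k} = 0$.

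\emph{Main obstacle: regularity.} The difficulty is that $\ham$ may take the value $\infty$ (as $\hamr$ does), so the base-case density is not smooth. This is handled by the nonsmooth Brascamp--Lieb of \cite{HC18Dimension}, or by approximating $\ham$ with smooth convex potentials and passing to the limit. For $k \geq 1$, $\pot_k$ is smooth and finite, being a Gaussian convolution, so the smooth statements apply. In particular, the Cram\'er--Rao step needs integration by parts with vanishing boundary terms, which holds because $e^{-f}$ decays at least like a Gaussian. Finally, the relation $f'' = \bb_k - \rem_k''$ must be justified pointwise, which follows from the differentiability of $\pot_k$.
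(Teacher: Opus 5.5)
Your proof is correct and follows the paper's argument essentially step for step. You use the nonsmooth Brascamp--Lieb inequality for the base case, Brascamp--Lieb together with the variance formula \eqref{eq:2dr_cum} for the recursion \eqref{eq:rec_ub}, monotonicity of $x\mapsto x/(\bb_k-x)$ together with $b(\bb_k-1)/\bb_k=\bb_{k+1}-1$ for the upper bound, and Cram\'er--Rao for nonnegativity. The differences are cosmetic: you cite Remark~\ref{rmk:bbk} for the $\bb_k$ recurrence where the paper checks it by direct computation, and you spell out regularity points (smoothness of $\pot_k$ for $k\geq 1$, boundary terms in Cram\'er--Rao) that the paper leaves implicit.
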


\begin{proof}[Proof of Lemma \ref{lem:conv_rec}]
We proceed by induction.
First, by Definition \ref{def:wk},
the density $g$ of $\walk_0^\phi$ is proportional to
$e^{- \frac{(\rho-\phi)^2}{2} - \ham(\rho)}$,
and so $- \log g$ may be written as a smooth function with second derivative $\geq 1$,
plus the possibly nonsmooth convex function $\ham$.
Thus the nonsmooth Brascamp--Lieb inequality of \cite[Theorem 12]{HC18Dimension} (see also \cite{H04Convex}) yields
\begin{equation}
    0 \leq \walk_0^\phi \left[ (\rho - \walk_0^\phi[\rho])^2 \right]
    \leq 1.
\end{equation}
So by \eqref{eq:2dr_cum} with $k = 0$, we have
\begin{equation}
\label{eq:conv_basecase}
    0 \leq \rem_1''(\phi) \leq b = \bb_1 - 1.
\end{equation}
This completes the base case.
Now, assuming that $\rem_k''(\rho) \leq \bb_k - 1$ for all $\rho \in \R$,
we have $\pot_k''(\rho) \geq 0$ by \eqref{eq:rem_def},
and so the density of $\walk_k^\phi$, which is proportional to
$\rho \mapsto e^{-\frac{(\rho-\phi)^2}{2} - \pot_k(\rho)}$, is strictly log-concave.
So the Brascamp--Lieb inequality \eqref{eq:bl} yields
\begin{equation}
    \walk_k^\phi \left[ (\rho - \walk_k^\phi[\rho])^2 \right]
    \leq \walk_k^\phi \left[ \frac{1}{1 + \pot_k''(\rho)} \right]
    = \walk_k^\phi \left[ \frac{1}{\bb_k - \rem_k''(\rho)} \right].
\end{equation}
Therefore by \eqref{eq:2dr_cum} again, for $k \geq 1$ we have
\begin{align}
    \rem_{k+1}''(\phi)
    &\leq - \frac{b}{\bb_k}
    + b \cdot \walk_k^\phi \left[ \frac{1}{\bb_k - \rem_k''(\rho)} \right] \\
    &= \frac{b}{\bb_k} \cdot \walk_k^\phi \left[ \frac{\bb_k}{\bb_k - \rem_k''(\rho)}
    - \frac{\bb_k - \rem_k''(\rho)}{\bb_k - \rem_k''(\rho)} \right] \\
    &= \frac{b}{\bb_k} \cdot \walk_k^\phi \left[ \frac{\rem_k''(\rho)}{\bb_k - \rem_k''(\rho)} \right].
\end{align}
This proves \eqref{eq:rec_ub}.
Now since $x \mapsto \frac{x}{\bb_k - x}$ is increasing on $(-\infty,\bb_k)$ and
$\rem_k''(\rho) \leq \bb_k - 1$ for all $\rho \in \R$, we have
\begin{align}
    \rem_{k+1}''(\phi)
    &\leq \frac{b}{\bb_k} \frac{\bb_k-1}{\bb_k - (\bb_k - 1)} \\
    &= \frac{b \bb_k - b + \bb_k}{\bb_k} - 1 \\
    &= \frac{\frac{b^{k+2} - b}{b^k-1} - \frac{b^{k+1} - b}{b^k-1} + \frac{b^{k+1} - 1}{b^k-1}}
    {\frac{b^{k+1}-1}{b^k - 1}} - 1 \\
    &= \bb_{k+1} - 1. \label{eq:bk_rec}
\end{align}
This completes the inductive step for both upper bounds in the statement of the lemma.

As for the nonnegativity, we have already seen the base case in \eqref{eq:conv_basecase}.
Now, if $0 \leq \rem_k''(\rho) \leq \bb_k - 1$ for all $\rho \in \R$ then \eqref{eq:2dr_cum} and the
Cram\'er--Rao inequality \eqref{eq:cr} yield
\begin{equation}
    \rem_{k+1}''(\phi)
    \geq - \frac{b}{\bb_k} + \frac{b}{\walk_k^\phi \left[ 1 + \pot_k''(\rho) \right]}
    = - \frac{b}{\bb_k} + \frac{b}{\walk_k^\phi \left[ \bb_k - \rem_k''(\rho) \right]}
    \geq 0,
\end{equation}
which finishes the proof.
\end{proof}

\subsection{Unimodality of the second derivative}
\label{sec:flow_unimodality}

Now we turn to proving item \ref{item:rem_unimodal} of Proposition
\ref{prop:flow}, which states that the second derivative $\rem_k''$
is unimodal.
To prove this, we will show that the third derivative satisfies $\rem_k'''(\phi) \leq 0$
for $\phi \geq 0$ and $\rem_k'''(\phi) \geq 0$ for $\phi \leq 0$.
By symmetry and the fact that $\rem_k'''(\phi) = - \pot_k'''(\phi)$,
it suffices to show that $\pot_k'''(\phi) \geq 0$ for $\phi \geq 0$.

\begin{lemma}
\label{lem:unimodality}
If $\ham$ is confining as in Definition \ref{def:confining},
then for all $k \in \N$ and all
$\phi \geq 0$, we have $\pot_k'''(\phi) \geq 0$.
\end{lemma}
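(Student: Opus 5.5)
The plan is to prove the statement by induction on $k$, using the third-cumulant formula \eqref{eq:3dr_cum}. For $k \geq 0$ that formula reads
\begin{equation}
    \pot_{k+1}'''(\phi) = - b \cdot \walk_k^\phi \left[ (\rho - \walk_k^\phi[\rho])^3 \right],
\end{equation}
so it suffices to show that $\walk_k^\phi$ has nonpositive third central moment whenever $\phi \geq 0$. I will carry along the following inductive hypothesis, which for $k = 0$ is exactly the confining assumption of Definition \ref{def:confining} on $\pot_0 = \ham$:
\begin{quote}
(H$_k$): for all $y \geq 0$, the map $x \mapsto \pot_k(y+x) - \pot_k(y-x)$ is convex on the interval $J_y$ defined with $\pot_k$ in place of $\ham$.
\end{quote}

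\textbf{Step 1: negative skewness from (H$_k$).} Fix $\phi \geq 0$ and let $g$ be the density of $\walk_k^\phi$, so $g(\rho) \propto e^{-(\rho-\phi)^2/2 - \pot_k(\rho)}$, and let $m = \walk_k^\phi[\rho]$. I first check that $m \geq 0$. The family $\walk_k^\phi$ has monotone likelihood ratio in $\phi$, since the ratio of densities for $\phi$ and $0$ is $e^{\phi\rho}$ up to a constant, so $m$ is increasing in $\phi$. By symmetry of $\pot_k$, $m = 0$ at $\phi = 0$.

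Next set $h(x) = \log g(m+x) - \log g(m-x)$ for $x \geq 0$. The Gaussian part contributes the linear term $-2x(m-\phi)$, so
\begin{equation}
    h(x) = -2x(m-\phi) - \left[ \pot_k(m+x) - \pot_k(m-x) \right].
\end{equation}
Since $m \geq 0$, (H$_k$) makes $h$ concave on $J_m$, with $h(0) = 0$ (allowing the value $-\infty$ outside). Hence $D(x) = g(m+x) - g(m-x)$ is $\geq 0$ on some $[0, x_0]$ and $\leq 0$ on $[x_0, \infty)$. The definition of $m$ gives $\int_0^\infty x D(x)\,dx = 0$. Therefore
\begin{equation}
    \walk_k^\phi\left[(\rho-m)^3\right] = \int_0^\infty x^3 D(x)\,dx = \int_0^\infty (x^3 - x_0^2 x) D(x)\,dx \leq 0,
\end{equation}
because $x^3 - x_0^2 x$ and $D$ have opposite signs on each side of $x_0$. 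This yields $\pot_{k+1}'''(\phi) \geq 0$ for $\phi \geq 0$.

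\textbf{Step 2: propagate (H$_{k+1}$).} For $k+1 \geq 1$, $\pot_{k+1}$ is finite and smooth everywhere, being the log of a Gaussian convolution, so $J_y = [0,\infty)$. By Step 1 and symmetry, $\pot_{k+1}''$ is even and increasing on $[0,\infty)$, i.e.\ $\pot_{k+1}''$ is increasing in $|\phi|$. For $y, x \geq 0$ we have $|y+x| \geq |y-x|$, so
\begin{equation}
    \frac{d^2}{dx^2}\left[\pot_{k+1}(y+x) - \pot_{k+1}(y-x)\right] = \pot_{k+1}''(y+x) - \pot_{k+1}''(y-x) \geq 0.
\end{equation}
This is (H$_{k+1}$) and closes the induction. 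It also shows that Step 1 only needs the confining hypothesis at $k = 0$.

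\textbf{Main obstacle.} The delicate step is the base case $k = 0$, where $\ham$ may take the value $\infty$ (e.g.\ $\hamr$) and need not be smooth. There I must check that the single-crossing argument survives:
\begin{itemize}
    \item $D$ is supported where $h$ is defined, and $h = -\infty$ off $J_m$ is still consistent with concavity and single crossing, which is presumably why Definition \ref{def:confining} insists that $J_y$ be an interval;
    \item the moments are finite, which follows from the Gaussian factor;
    \item differentiating under the integral in \eqref{eq:cum3} is justified.
\end{itemize}
I would handle these by working directly with the densities rather than derivatives of $\ham$, so no smoothness of $\ham$ is ever used.
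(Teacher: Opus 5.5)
Your proposal is correct and follows essentially the same route as the paper. Both arguments run the induction through the third-cumulant formula \eqref{eq:3dr_cum} and use the single sign-change trick; your step $\int (x^3 - x_0^2 x) D \leq 0$ is exactly Lemma~\ref{lem:sst}. Both also rely on convexity of $x \mapsto \pot_k(y+x) - \pot_k(y-x)$, supplied by Definition~\ref{def:confining} at $k=0$ and by monotonicity of $\pot_k''$ in $|\phi|$ thereafter.

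The differences are cosmetic. You carry that convexity as the inductive hypothesis, where the paper carries $\pot_k''' \geq 0$ on $[0,\infty)$. You also justify $\walk_k^\phi[\rho] \geq 0$ via monotone likelihood ratio in $\phi$, where the paper uses right-skewness of the density.
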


We will proceed by induction using the cumulant form \eqref{eq:3dr_cum}
of the third derivative.
To prove the negativity of the third cumulant of $\walk_k^\phi$, we use the
following ``single sign-change trick'' lemma.
The applicability of this lemma in the present context was brought to the attention of the
author by a publicly available large language model in May 2026.
One possible source for the lemma itself is as an easy special case of
\cite[Lemma B]{KN63Generalized}, but since it is simple we provide
the short proof for completeness.

\begin{lemma}
\label{lem:sst} 
Let $f(x) : [0,\infty) \to \R$ such that
$\int_0^\infty x f(x) \,dx = 0$, and such that $f(x)$ changes sign exactly once from
negative to positive.
More precisely, we require that there exists some $x_0 \geq 0$ such that
$f(x) \leq 0$ for $x < x_0$ and $f(x) \geq 0$ for $x > x_0$.
Then we have $\int_0^\infty x^3 f(x) \,dx \geq 0$.
\end{lemma}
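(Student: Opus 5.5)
The approach is the standard single sign-change trick: subtract a multiple of the constraint $\int_0^\infty x f(x)\,dx = 0$ so that the weight multiplying $f$ has the same sign as $f$ everywhere. Concretely, I will compare $x^3$ with $x_0^2 x$, using the sign-change point $x_0$ from the hypothesis. Since $\int_0^\infty x f(x)\,dx = 0$, we may write
\begin{equation}
    \int_0^\infty x^3 f(x)\,dx = \int_0^\infty \left( x^3 - x_0^2 x \right) f(x)\,dx = \int_0^\infty x\left( x^2 - x_0^2 \right) f(x)\,dx .
\end{equation}

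Next I check the sign of the integrand pointwise on $[0,\infty)$. For $x < x_0$ we have $x \geq 0$, $x^2 - x_0^2 < 0$ and $f(x) \leq 0$, so the product is $\geq 0$. For $x > x_0$ we have $x > 0$, $x^2 - x_0^2 > 0$ and $f(x) \geq 0$, so again the product is $\geq 0$. The single point $x = x_0$ has measure zero. Hence the integrand is nonnegative almost everywhere, and the integral is $\geq 0$, which is the claim.

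The only real obstacle is integrability, so that splitting off $x_0^2 \int x f$ is legitimate. I will interpret the statement as assuming $\int_0^\infty x|f(x)|\,dx < \infty$, which is needed for the hypothesis to make sense, and allow $\int_0^\infty x^3 f(x)\,dx$ to possibly be $+\infty$. On $[0,x_0]$ the function $x^3 f$ is integrable because it is bounded by $x_0^2\, x|f|$. On $(x_0,\infty)$ the integrand $x^3 f$ is nonnegative, so its integral is well defined in $[0,\infty]$. The subtraction is then valid as an identity in $(-\infty,\infty]$, and the conclusion follows. In the intended application $f$ will have Gaussian-type tails, so all moments are finite anyway.
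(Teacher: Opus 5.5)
Your proof is correct and is essentially the paper's argument: the paper also observes that $x^3 f(x) \geq x_0^2\, x f(x)$ for every $x \neq x_0$, and then integrates using $\int_0^\infty x f(x)\,dx = 0$. Your added integrability remark, which splits at $x_0$ and allows the value $+\infty$, is a harmless refinement of a point the paper leaves implicit.
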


\begin{proof}[Proof of Lemma \ref{lem:sst}]
We claim that $x^3 f(x) \geq x_0^2 x f(x)$ for all $x \in [0,\infty) \setminus \{x_0\}$.
Indeed, for $x < x_0$ we have $f(x) \leq 0$ and $x^2 < x_0^2$,
and for $x > x_0$ we have $f(x) \geq 0$ and $x^2 > x_0^2$.
Thus
\begin{equation}
    \int_0^\infty x^3 f(x) \,dx \geq x_0^2 \int_0^\infty x f(x) \,dx = 0
\end{equation}
as required.
\end{proof}

\begin{proof}[Proof of Lemma \ref{lem:unimodality}]
Fix some $k \in \N \cup \{0\}$ and $\phi \in \R$, and
let $g$ denote the density of the centered random variable
$\eta = \rho - \walk_k^\phi[\rho]$, where $\rho \sim \walk_k^\phi$, i.e.\
\begin{equation}
    g(\eta) \propto \Exp{- \frac{(\eta + \walk_k^\phi[\rho] - \phi)^2 }{2}
    - \pot_k (\eta + \walk_k^\phi[\rho])}.
\end{equation}
Then by \eqref{eq:3dr_cum} we have
\begin{equation}
    \pot_{k+1}'''(\phi) = - \int_{-\infty}^\infty \eta^3 g(\eta) \,d\eta
    = \int_0^\infty \eta^3 \left( g(-\eta) - g(\eta) \right) \,d\eta.
\end{equation}
Now letting $f = g(-\eta) - g(\eta)$, since the random variable
$\eta$ is centered we have $\int_0^\infty \eta f(\eta) \,d\eta = 0$.
So by Lemma \ref{lem:sst}, it suffices to show that there is some $\eta_0 \geq 0$
such that $f(\eta) \leq 0$ for $0 \leq \eta < \eta_0$ and $f(\eta) \geq 0$
for $\eta > \eta_0$.
For this, note that $f(\eta)$ has the same sign as
\begin{align}
    \log g(-\eta) - \log g(\eta)
    &= - \frac{((\walk_k^\phi[\rho] - \phi) - \eta)^2}{2}
    + \frac{((\walk_k^\phi[\rho] - \phi) + \eta)^2}{2}
    - \pot_k(\walk_k^\phi[\rho] - \eta)
    + \pot_k(\walk_k^\phi[\rho] + \eta) \\
    &= 2 (\walk_k^\phi[\rho] - \phi) \eta 
    - \pot_k(\walk_k^\phi[\rho] - \eta)
    + \pot_k(\walk_k^\phi[\rho] + \eta).
\label{eq:tobeconvex}
\end{align}
Note that even in the case $k=0$ where we may have $\pot_0(\phi) = \ham(\phi) = \infty$
for some $\phi$, the above makes sense for all $\eta \in J_{\walk_0^\phi[\rho]}$,
where $J_y$ is as in Definition \ref{def:confining}; for this we use the fact that $\walk_k^\phi[\rho] \geq 0$
for all $k$, since the density $h$ of $\walk_k^\phi$ is right-skewed, i.e.\ $h(\rho) \geq h(-\rho)$ for $\rho \geq 0$.
By its definition, outside of this interval we have $f(\eta) = 0$ in the case $k=0$.
So for the base case, Definition \ref{def:confining} shows that
\eqref{eq:tobeconvex} is convex on $J_{\walk_0^\phi[\rho]}$, which implies the single sign-change property
for $f$ since $f(0) = 0$ and $\int_0^\infty \eta f(\eta) = 0$;
thus we obtain $\pot_1'''(\phi) \geq 0$ for $\phi \geq 0$
by Lemma \ref{lem:sst}.

Now for the inductive case, we again aim to show \eqref{eq:tobeconvex} is convex in $\eta \geq 0$.
Its second derivative is
\begin{equation}
    \pot_k''(|\walk_k^\phi[\rho] + \eta|) - \pot_k''(|\walk_k^\phi[\rho] - \eta|)
\end{equation}
and we have $|\walk_k^\phi[\rho] + \eta| \geq |\walk_k^\phi[\rho] - \eta|$
for $\eta \geq 0$ (again using the fact that $\walk_k^\phi[\rho] \geq 0$),
so this is nonnegative if $\pot_k'''(\phi) \geq 0$ for $\phi \geq 0$.
Under this inductive assumption, Lemma \ref{lem:sst} yields
that $\pot_{k+1}'''(\phi) \geq 0$ for $\phi \geq 0$ as well, finishing the proof.
\end{proof}
\section{Decay of the remainder}
\label{sec:shrink}

In this section we prove Proposition \ref{prop:shrinking}.
As described in Section \ref{sec:oop_shrinking}, the second derivative
$\rem_k''$ of the remainder term looks like a plateau as illustrated
in Figure \ref{fig:plateau}, which shrinks horizontally as $k$ increases.
This horizontal shrinking will be proven in Section \ref{sec:shrink_linear}.
After the plateau has shrunk, a recursive uniform bound will be effective,
yielding uniform exponential decay; as this is somewhat easier, it will 
be discussed first in Section \ref{sec:shrink_exponential}.
Both behaviors will be based on the recursive upper bound \eqref{eq:rec_ub}
of Proposition \ref{prop:flow}, but with more careful analysis required
for the linear behavior.
Finally, in Section \ref{sec:shrink_combination}, we will combine the
linear and exponential behavior to prove Proposition \ref{prop:shrinking}.

\subsection{Eventual exponential decay}
\label{sec:shrink_exponential}

The following lemma giving a recursive uniform bound on $\rem_k''$
was previously alluded to in \eqref{eq:exp}.

\begin{lemma}
\label{lem:exp}
If there are some $K \in \N$ and $M < b-1$ for which
\begin{equation}
    \sup_{\phi \in \R} \rem_K''(\phi) \leq M,
\end{equation}
then for all $k \geq K$ we have
\begin{equation}
    \sup_{\phi \in \R} \rem_{K+k}''(\phi) \leq \frac{1}{b^{k-2}(\bb_K - 1 - M)}.
\end{equation}
\end{lemma}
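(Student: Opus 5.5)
The plan is to track the \emph{gap} between $\sup_\phi \rem_{K+j}''(\phi)$ and its a priori ceiling $\bb_{K+j}-1$, rather than $\sup \rem_{K+j}''$ itself. Set $M_j = \sup_{\phi} \rem_{K+j}''(\phi)$ and $g_j = \bb_{K+j} - 1 - M_j \geq 0$; the latter is nonnegative by item \ref{item:rem_symconleqbk} of Proposition \ref{prop:flow}. By hypothesis $g_0 \geq \bb_K - 1 - M > 0$, and we may take equality after replacing $M_0$ by $M$ (monotonicity below makes this harmless). I read ``for all $k \geq K$'' as ``for all $k \geq 0$''; the argument gives the bound for every $k \geq 0$.

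\emph{Step 1: a recursion for the gap.} Write $c = \bb_{K+j}$. Since $x \mapsto \frac{x}{c-x}$ is increasing on $(-\infty, c)$, taking the supremum of the recursive bound \eqref{eq:rec_ub} gives
\begin{equation}
    M_{j+1} \leq \frac{b}{c}\cdot\frac{M_j}{c-M_j}.
\end{equation}
Substitute $M_j = c-1-g_j$ and use the identity $\frac{b}{c}(c-1) = \bb_{K+j+1}-1$, which is exactly the computation \eqref{eq:bk_rec}. This yields
\begin{equation}
    g_{j+1} \geq \frac{b}{c}\Bigl[(c-1) - \frac{c-1-g_j}{1+g_j}\Bigr] = \frac{b\, g_j}{1+g_j}.
\end{equation}
So the gap obeys the same map $x \mapsto \frac{bx}{1+x}$ as in Section \ref{sec:oop_quadratic}. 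In reciprocal form, $h_j = 1/g_j$ satisfies $h_{j+1} \leq \frac{h_j + 1}{b}$. Iterating this affine inequality gives
\begin{equation}
    h_j \leq \frac{b^{-j}}{g_0} + \frac{1}{b-1},
\end{equation}
and hence
\begin{equation}
    g_j \geq \frac{1}{\frac{1}{b-1} + \frac{b^{-j}}{g_0}} \geq (b-1) - (b-1)^2 \frac{b^{-j}}{g_0},
\end{equation}
where the last step uses $\frac{1}{a+\delta} \geq \frac{1}{a} - \frac{\delta}{a^2}$.

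\emph{Step 2: convert back to $M_j$.} We have
\begin{equation}
    M_j = (\bb_{K+j} - b) + (b-1) - g_j \leq \frac{b-1}{b^{K+j}-1} + (b-1)^2\frac{b^{-j}}{g_0}.
\end{equation}
For the first term, use $b^{K+j}-1 \geq \frac12 b^{K+j}$ and $g_0 \leq \bb_K - 1 \leq b$ (since $\bb_K \leq \bb_1$). This gives
\begin{equation}
    \frac{b-1}{b^{K+j}-1} \leq \frac{2(b-1)}{b^{K}}\, b^{-j} \leq 2(b-1)\, \frac{b^{-j}}{g_0},
\end{equation}
using $K \geq 1$ in the last step. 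Adding the two contributions,
\begin{equation}
    M_j \leq \bigl((b-1)^2 + 2(b-1)\bigr)\frac{b^{-j}}{g_0} \leq \frac{b^{2-j}}{g_0},
\end{equation}
which is the claimed bound with $k=j$.

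\emph{Main obstacle.} The only delicate point is Step 1: getting the gap recursion cleanly, with the exact cancellation coming from \eqref{eq:bk_rec}, and keeping track that the monotonicity of $x\mapsto \frac{x}{c-x}$ lets us pass to suprema. The remaining constant-chasing in Step 2 is routine, and the slack allowed by the factor $b^{2}$ absorbs the $\bb_{K+j} - b$ correction.
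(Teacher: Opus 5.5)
Your proof is correct, and it takes a genuinely different and sturdier route than the paper's. The paper first weakens the recursion to $\sup_\phi\rem_{k+1}''(\phi)\le \frac{M'}{b-M'}$ using $\bb_k>b$. It then iterates the autonomous map $x\mapsto\frac{x}{b-x}$ in closed form, obtaining $M_k=\frac{1}{b^k}\frac{(\bb_k-1)M}{\bb_k-1-M}$, where $k$ counts iterations rather than the level $K+k$. Finally it replaces $\bb_k$ by $\bb_K$ for $k\ge K$.

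Once the level-dependence is discarded, that computation only sees the gap $b-1-M$, which is why it needs $M<b-1$. Moreover, the final replacement goes the wrong way: $\bb_k$ is decreasing, so $\bb_k-1-M\le\bb_K-1-M$ for $k\ge K$. What the paper's computation actually proves is the bound with $\bb_k-1-M$ in the denominator, which is much weaker than the claim when $M$ is close to $b-1$.

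Your gap variable $g_j=\bb_{K+j}-1-M_j$ is the right coordinate. Because of the exact cancellation in \eqref{eq:bk_rec}, the recursion $g_{j+1}\ge\frac{bg_j}{1+g_j}$ loses nothing and does not depend on $\bb_{K+j}$. So the initial gap $\bb_K-1-M$ propagates faithfully, and the bound holds for every $k\ge 0$. You also only need $0\le M<\bb_K-1$; the lower bound $M\ge0$, used tacitly in $g_0\le\bb_K-1$, is automatic from $\rem_K''\ge0$.

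This weaker hypothesis is exactly what the application in the proof of Proposition \ref{prop:shrinking} requires. There $M=\bb_{T+1}-1-\delta^{T+1}\eps$, which exceeds $b-1$ whenever $\delta^{T+1}\eps<\bb_{T+1}-b=\frac{b-1}{b^{T+1}-1}$. The paper's route gives an explicit closed form for the iterates; yours is the argument that actually delivers the stated bound.
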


\begin{proof}[Proof of Lemma \ref{lem:exp}]
First note that by item \ref{item:rem_rec} of Proposition \ref{prop:flow},
if
\begin{equation}
    \sup_{\phi \in \R} \rem_k''(\phi) \leq M'
\end{equation}
for some $M' < b < \bb_k$, then since $x \mapsto \frac{x}{\bb_k - x}$ is increasing
on $(-\infty,\bb_k)$, we have
\begin{equation}
\label{eq:exp_rec}
    \sup_{\phi \in \R} \rem_{k+1}''(\phi) \leq \frac{b}{\bb_k}
    \frac{M'}{\bb_k - M'} \leq \frac{M'}{b - M'},
\end{equation}
using again the fact that $\bb_k > b$.
Now for $k \geq 0$ let us define
\begin{align}
\label{eq:mkdef1}
    M_k &= \frac{M}{b^k - M \cdot \sum_{j=0}^{k-1} b^j} \\
    &= \frac{M}{b^k - M \cdot \frac{b^k-1}{b-1}} \\
    &= \frac{1}{b^k} \frac{(b-1) M}{b - 1 - M \cdot (1 - b^{-k})} \\
    &= \frac{1}{b^k} \frac{(\bb_k - 1) M}{\bb_k - 1 - M},
\label{eq:mkdef2}
\end{align}
using the fact that $\frac{b-1}{1 - b^{-k}} = \frac{b^{k+1} - b^k}{b^k - 1} = \bb_k - 1$.
We now claim that $M_k$ satisfies the recurrence
\begin{equation}
\label{eq:mkrec}
    M_{k+1} = \frac{M_k}{b - M_k},
    \qquad \text{with}
    \qquad M_0 = M.
\end{equation}
Indeed, the base case is given and we simply check using \eqref{eq:mkdef1} that
\begin{align}
    \frac{M_k}{b - M_k}
    &= \frac{\frac{M}{b^k - M \cdot \sum_{j=0}^{k-1} b^j}}
    {b - \frac{M}{b^k - M \cdot \sum_{j=0}^{k-1} b^j}} \\
    &= \frac{M}
    {b^{k+1} - b M \cdot \sum_{j=0}^{k-1} b^j - M} \\
    &= \frac{M}
    {b^{k+1} - M \cdot \sum_{j=0}^{k} b^j} \\
    &= M_{k+1}.
\end{align}
By \eqref{eq:exp_rec}, this shows that
\begin{equation}
    \sup_{\phi \in \R} \rem_{K+k}''(\phi) \leq M_k
\end{equation}
for $k \geq 0$.
Now, for all $k \geq 0$, the expression
\eqref{eq:mkdef2} and the fact that $M < b$, so that $M (\bb_k - 1) \leq b^2$, yield
\begin{equation}
    M_k \leq \frac{1}{b^k} \frac{b^2}{\bb_k - 1 - M}.
\end{equation}
Finally, using the fact that $\bb_k$ is decreasing in $k$, we may replace $\bb_k$ with
$\bb_K$ for all $k \geq K$ and we find that
\begin{equation}
    M_k \leq \frac{1}{b^{k-2}} \frac{1}{\bb_K - 1 - M},
\end{equation}
finishing the proof.
\end{proof}
\subsection{Monotonicity of spread in log-concavity}
\label{sec:shrink_tailcomparison}

In this subsection we present a general lemma which will allow us to control
tail probabilities via comparison with simpler measures which are more or less
log-concave than the measure we are after.
This will be useful in Sections \ref{sec:shrink_linear} and \ref{sec:shrink_combination}
below, as well as Sections \ref{sec:ou} and \ref{sec:init} later.
The idea for this lemma was provided by a publicly available large language model in August 2026;
as in the case of Lemma \ref{lem:sst} in Section \ref{sec:flow_unimodality}, one possible source
of related results seems to be the work of Karlin and Novikoff on convex inequalities \cite{KN63Generalized}.
Following our conventions, for $\cL$ a probability distribution on $\R$, we use $\cL[|\rho| \geq T]$ to denote
the probability that $\rho \sim \cL$ satisfies $|\rho| \geq T$.

\begin{lemma}
\label{lem:tailcomparison}
Suppose that $\cL_1$ and $\cL_2$ are two probability distributions on $\R$ with densities
\begin{equation}
    f_1(\rho) \propto e^{-\qua(\rho) + \ext_1(\rho)}
    \qquad \text{and} \qquad
    f_2(\rho) \propto e^{-\qua(\rho) + \ext_2(\rho)}
\end{equation}
respectively, for some continuous functions $\qua, \ext_1, \ext_2 : \R \to \R$.
If $\ext_1 - \ext_2$ is convex and symmetric, then for all $T \geq 0$ we have
$\cL_1 \left[ |\rho| \geq T \right] \geq \cL_2 \left[ |\rho| \geq T \right]$.
In particular, $\cL_1$ stochastically dominates $\cL_2$ in absolute value
(meaning $\rho_1 \sim \cL_1$ and $\rho_2 \sim \cL_2$ may be coupled so that $|\rho_1| \geq |\rho_2|$).
\end{lemma}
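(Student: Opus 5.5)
Set $h = \ext_1 - \ext_2$, which is convex and symmetric, and write $f_1 \propto h_0 e^{h}$ where $h_0 = e^{-\qua+\ext_2}$ is (up to normalization) the density $f_2$. Thus $f_1 = f_2 e^{h}/Z$ with $Z = \cL_2[e^{h(\rho)}]$, assuming integrability (the continuity hypotheses and the existence of both probability measures give finiteness of $Z$). The claim $\cL_1[|\rho|\geq T] \geq \cL_2[|\rho|\geq T]$ is then equivalent to
\begin{equation}
    \cL_2\left[ e^{h(\rho)} \ind{|\rho| \geq T} \right] \geq \cL_2\left[ e^{h(\rho)} \right] \cdot \cL_2\left[ |\rho| \geq T \right].
\end{equation}
In other words, we must show that the random variables $e^{h(\rho)}$ and $\ind{|\rho|\geq T}$ are nonnegatively correlated under $\cL_2$.

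The key structural step is that a convex symmetric function $h$ is nondecreasing in $|\rho|$. Indeed, by symmetry and convexity, $h(0) \leq \frac{1}{2}(h(x)+h(-x)) = h(x)$, and convexity then makes $h$ nondecreasing on $[0,\infty)$. Hence $e^{h(\rho)} = G(|\rho|)$ for a nondecreasing function $G$ on $[0,\infty)$, and $\ind{|\rho|\geq T} = H(|\rho|)$ with $H$ nondecreasing as well. Passing to the law of $X = |\rho|$ under $\cL_2$, both variables are nondecreasing functions of the single real random variable $X$. Chebyshev's association inequality (Harris/FKG in one dimension) then gives $\E[G(X)H(X)] \geq \E[G(X)]\E[H(X)]$. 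I would prove this directly: take an independent copy $X'$ and observe that $(G(X)-G(X'))(H(X)-H(X')) \geq 0$ pointwise, then take expectations. This yields the displayed inequality, and hence the tail comparison for every $T \geq 0$.

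For the final assertion, the tail inequality says exactly that the law of $|\rho_1|$ stochastically dominates that of $|\rho_2|$. The standard quantile coupling gives $|\rho_1| \geq |\rho_2|$ almost surely: set $|\rho_i| = F_i^{-1}(U)$ with a common uniform $U$, where $F_i$ is the distribution function of $|\rho_i|$, and attach independent symmetric signs, or any appropriate conditional sign laws, to recover the marginals of $\rho_1$ and $\rho_2$. Since $\qua$ and $\ext_i$ need not be symmetric, the sign must be sampled from the conditional law of $\operatorname{sign}(\rho_i)$ given $|\rho_i|$. This is possible by disintegration and does not affect the absolute values.

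The only mild obstacle is bookkeeping: integrability, and the case where $e^{h}$ is unbounded. The correlation inequality holds for nonnegative nondecreasing functions whenever the expectations are finite, and finiteness of $Z$ is implicit in $f_1$ being a probability density proportional to $f_2 e^{h}$. So I expect no genuine difficulty beyond stating the monotonicity of $h$ in $|\rho|$ carefully.
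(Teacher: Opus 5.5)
Your proof is correct, but it takes a different route from the paper.

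The paper works directly with the densities. It sets $g=\log(f_2/f_1)$, which equals $\ext_2-\ext_1$ up to an additive constant and so is concave and symmetric. Unless $g\equiv 0$, there is a $t_*>0$ with $f_2>f_1$ on $(-t_*,t_*)$ and $f_2\le f_1$ outside. Hence $t\mapsto\int_{-t}^t (f_2-f_1)\,d\rho$ rises from $0$ and falls back to $0$, so it is nonnegative.

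You instead write $\cL_1$ as the tilt of $\cL_2$ by $e^{h}$. This reduces the claim to nonnegative covariance of $e^{h(\rho)}$ and $\ind{|\rho|\geq T}$ under $\cL_2$. That follows from Chebyshev's association inequality, because both are nondecreasing functions of $|\rho|$.

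Both arguments rest on the same structural fact: the log-likelihood ratio $\ext_1-\ext_2$ is nondecreasing in $|\rho|$, and likelihood-ratio order of $|\rho|$ implies stochastic order. Your version avoids the sign-change bookkeeping. It needs no continuity, no separate handling of the degenerate case $h\equiv$ const, and no location of the crossing point $t_*$. It uses only finiteness of $\cL_2[e^{h}]$, which is automatic because $\cL_1$ is a probability measure.

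The paper's version matches the single sign-change technique it uses elsewhere (Lemma \ref{lem:sst}). It also shows explicitly where the two densities cross.

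Your quantile coupling with signs drawn from the conditional law of $\operatorname{sign}(\rho_i)$ given $|\rho_i|$ spells out the ``standard fact'' the paper only cites for the final assertion. Noting that the signs need this care, since $\qua$ and $\ext_i$ need not be symmetric, is correct.
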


\begin{proof}[Proof of Lemma \ref{lem:tailcomparison}]
First, we have
\begin{equation}
    \cL_1 \left[ |\rho| \geq T \right] - \cL_2 \left[ |\rho| \geq T \right]
    = \cL_2 \left[ |\rho| < T \right] - \cL_1 \left[ |\rho| < T \right]
    = \int_{-T}^T \left( f_2(\rho) - f_1(\rho) \right) \,d\rho,
\end{equation}
and we aim to show that the right-hand side is nonnegative.
For this, note that
\begin{equation}
\label{eq:intderiv}
    \frac{d}{dt} \int_{-t}^t \left( f_2(\rho) - f_1(\rho) \right) \,d\rho = (f_2(t) - f_1(t)) + (f_2(-t) - f_1(-t)),
\end{equation}
and that the limit of the integral is $0$ as $t \to \infty$ since $f_1$ and $f_2$ are probability densities,
and also as $t \to 0$ since $f_1$ and $f_2$ are continuous.
Thus to show that the integral is nonnegative for all $t$ (in particular for $t=T$), it would suffice to show that the
derivative \eqref{eq:intderiv} changes sign exactly once from positive to negative, so the integral increases from $0$
and then decreases back to $0$ as $t$ increases from $0$ to $\infty$.

To prove this, let us define $g : \R \to \R$ as the logarithm of the ratio of the densities, i.e.\
\begin{equation}
    e^{g(t)} \coloneqq \frac{f_2(t)}{f_1(t)} \propto \Exp{\ext_2(t) - \ext_1(t)}.
\end{equation}
By our assumption, the function $g$ is concave and symmetric.
If $g$ is identically zero then the proof is finished, so let us suppose this is not the case.
Under this assumption, $g$ may not be nonnegative on all of $\R$ or nonpositive on all of $\R$,
since $f_1$ and $f_2$ are probability densities
and either of these conditions would imply that they do not have the same integral.
So $g$ takes both positive and negative values, and the concavity and symmetry of $g$ implies that there is a unique $t_* > 0$
for which $g(t) > 0$ when $|t| < t_*$ and $g(t) \leq 0$ when $|t| \geq t_*$.
So $f_2(t) - f_1(t) > 0$ for $|t| < t_*$ and $f_2(t) - f_1(t) \leq 0$ for $|t| \geq t_*$,
finishing the proof of the inequality between tail probabilities.

The stochastic domination in absolute value follows immediately from the inequality between tails
by standard facts, see e.g.\  \cite[Theorem 4.3]{R24Modern}.
\end{proof}
\subsection{Initial linear horizontal shrinking}
\label{sec:shrink_linear}

In this section we prove that the plateau of the remainder term shrinks linearly,
as illustrated in Figure \ref{fig:plateau}.
To make this precise, for any $\eps > 0$ and $k \in \N$, let us define the threshold
\begin{equation}
\label{eq:def_threshold}
    T_{k,\eps} \coloneqq \sup \left\{ \phi > 0 : \rem_k''(\phi) > \bb_k - 1 - \eps \right\}.
\end{equation}
The main result of this section is that under the renormalization group flow
this threshold decreases by a constant amount, which may be taken
to be $1$, after an appropriate decrease of the tolerance $\eps$ is allowed.

\begin{lemma}
\label{lem:linear}
There is some constant $\delta = \delta_b \in (0,1)$ such that for all $k \in \N$, if $\eps > 0$ satisfies $T_{k,\eps} \geq 1$
and $\eps T_{k,\eps}^2 \leq 1$, then we have $T_{k+1,\delta\eps} \leq T_{k,\eps} - 1$.
\end{lemma}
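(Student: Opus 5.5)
The plan is to verify directly that $\rem_{k+1}''(\phi) \leq \bb_{k+1} - 1 - \delta\eps$ for every $\phi \geq T - 1$, where $T \coloneqq T_{k,\eps}$. This is exactly the claim $T_{k+1,\delta\eps} \leq T - 1$. The tool is the recursive bound \eqref{eq:rec_ub} of Lemma \ref{lem:conv_rec}, combined with the observation that $\rho \sim \walk_k^\phi$ lands beyond the plateau edge with probability bounded below.

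\textbf{Step 1: the gain from overshooting.} Write $h(x) = \frac{x}{\bb_k - x}$, which is increasing on $(-\infty,\bb_k)$. By the computation \eqref{eq:bk_rec}, $\frac{b}{\bb_k} h(\bb_k - 1) = \bb_{k+1} - 1$. By the definition \eqref{eq:def_threshold} and symmetry, $\rem_k''(\rho) \leq \bb_k - 1 - \eps$ for $|\rho| > T$. Everywhere else we only use $\rem_k''(\rho) \leq \bb_k - 1$. Let $p = \walk_k^\phi[\rho > T]$. Then \eqref{eq:rec_ub} gives
\begin{equation}
    \rem_{k+1}''(\phi) \leq \bb_{k+1} - 1 - \frac{b}{\bb_k}\, p \, \big( h(\bb_k - 1) - h(\bb_k - 1 - \eps) \big).
\end{equation}
Since $T \geq 1$ and $\eps T^2 \leq 1$, we have $\eps \leq 1$. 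Hence $h' (x)= \bb_k/(\bb_k - x)^2 \geq \bb_k/4$ on $[\bb_k - 1 - \eps, \bb_k - 1]$, so the bracket is at least $\bb_k \eps / 4$. The subtracted term is therefore at least $b p \eps/4$. It then suffices to show $p \geq c_b > 0$ and to set $\delta = \min\{ b c_b/4, 1/2 \}$.

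\textbf{Step 2: lower bound on $p$ (the main point).} The density of $\walk_k^\phi$ is proportional to $e^{-(\rho-\phi)^2/2 - \pot_k(\rho)}$. Here $\pot_k$ is symmetric and convex, since $\pot_k'' = \bb_k - 1 - \rem_k'' \geq 0$ by Lemma \ref{lem:conv_rec}, so $\pot_k$ is minimized at $0$. The normalizing mass is therefore at most $\sqrt{2\pi}\, e^{-\pot_k(0)}$.

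Next we control the growth of $\pot_k$, using unimodality (item \ref{item:rem_unimodal} of Proposition \ref{prop:flow}) together with continuity. For $|\rho| \leq T$ we have $\rem_k''(\rho) \geq \bb_k - 1 - \eps$, so $\pot_k'' \leq \eps$ on $[0,T]$. Everywhere else $\pot_k'' \leq \bb_k - 1 \leq b$. Integrating twice from $0$, with $\pot_k'(0) = 0$, gives
\begin{equation}
    \pot_k(\rho) - \pot_k(0) \leq \tfrac{\eps T^2}{2} + \eps T + \tfrac{b}{2} \leq 2 + b
    \qquad \text{for } \rho \in [T, T+1].
\end{equation}
Because $\phi \geq T - 1$, we have $(\rho - \phi)^2 \leq 4$ on $[T,T+1]$. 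Hence the unnormalized mass of $[T,T+1]$ is at least $e^{-2 - (2+b) - \pot_k(0)}$. Dividing by the normalizing mass yields $p \geq c_b \coloneqq e^{-4-b}/\sqrt{2\pi}$, uniformly in $k$, $\phi$ and $\eps$.

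\textbf{Step 3: conclusion.} Combining the two steps gives $\rem_{k+1}''(\phi) \leq \bb_{k+1} - 1 - \delta \eps$ for all $\phi \geq T - 1$. Therefore the supremum defining $T_{k+1,\delta\eps}$ is at most $T - 1$.

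I expect Step 2 to be the only delicate point. The hypothesis $\eps T^2 \leq 1$ is exactly what keeps the accumulated tilt of $\pot_k$ across the plateau $O(1)$. Without it, the walk could be pulled back inside the plateau and $p$ would degenerate. A minor care point is the case where $\rem_k''$ equals $\bb_k - 1 - \eps$ exactly at $T$. This is harmless, since only $|\rho| > T$ is used in Step 1 and the estimate $\pot_k'' \leq \eps$ on $[0,T]$ is a non-strict inequality.
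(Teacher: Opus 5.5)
There is one step that fails as written. In Step 2 you assert that $\phi \geq T-1$ gives $(\rho-\phi)^2 \leq 4$ for all $\rho \in [T,T+1]$. This holds only for $T-1 \leq \phi \leq T+2$. For larger $\phi$ the Gaussian factor $e^{-(\rho-\phi)^2/2}$ on $[T,T+1]$ becomes arbitrarily small. Your upper bound $\sqrt{2\pi}\,e^{-\pot_k(0)}$ on the normalizing mass does not improve with $\phi$, so your lower bound on $p = \walk_k^\phi[\rho > T]$ degenerates as $\phi \to \infty$. Because you chose to verify $\rem_{k+1}''(\phi) \leq \bb_{k+1} - 1 - \delta\eps$ for \emph{every} $\phi \geq T-1$, this regime is genuinely needed.

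The repair is easy, in either of two ways:
\begin{itemize}
\item Prove the bound only at $\phi = T-1$, where your estimate is valid. Then invoke unimodality of $\rem_{k+1}''$ (item \ref{item:rem_unimodal} of Proposition \ref{prop:flow} at level $k+1$) to propagate it to all $\phi \geq T-1$. This is what the paper does implicitly.
\item Observe that $\walk_k^{\phi'}(d\rho)/\walk_k^{\phi}(d\rho) \propto e^{(\phi'-\phi)\rho}$ is increasing in $\rho$ for $\phi' > \phi$. By monotone likelihood ratio, $\phi \mapsto \walk_k^\phi[\rho > T]$ is nondecreasing, so $p \geq \walk_k^{T-1}[\rho > T] \geq c_b$ for all $\phi \geq T-1$.
\end{itemize}

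With this fix the proof is correct. Step 1 is essentially the paper's argument. The only difference is that the paper bounds $\frac{\bb_k-1-\eps}{1+\eps} \leq \bb_k-1-\eps$ directly instead of integrating $h'$, obtaining the gain $\frac{b}{\bb_k}p\eps$.

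Step 2, however, is a genuinely different and more elementary proof of the overshoot estimate, i.e.\ of Lemma \ref{lem:biggerthanT}. The paper argues in three steps:
\begin{itemize}
\item it compares $\walk_k^{T-1}$ with the worst-case measure built from the piecewise quadratic/linear remainder $\trem_{k,\eps}$, via Lemma \ref{lem:tailcomparison} (this is Lemma \ref{lem:worstcase});
\item it uses right-skewness to pass from $|\rho|$ to $\rho$;
\item it explicitly computes and compares Gaussian integrals.
\end{itemize}
You instead bound the normalizer from above using only that $\pot_k$ is convex and symmetric, hence minimized at $0$. You then bound the mass of $[T,T+1]$ from below by showing that $\pot_k$ rises by at most $2+b$ between $0$ and $T+1$. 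This is exactly where $\pot_k'' \leq \eps$ on the plateau (from unimodality of $\rem_k''$, which both arguments use) and $\eps T^2 \leq 1$ enter.

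Your route avoids the comparison lemma entirely, at the cost of a worse constant $c_b = e^{-4-b}/\sqrt{2\pi}$. That cost is irrelevant, since only some $b$-dependent $\delta$ is needed.
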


The main technical portion of the above lemma is in proving that the random variable $\rho \sim \walk_k^{T_{k,\eps}-1}$
has some uniformly positive chance to exceed $T_{k,\eps}$, as mentioned below Definition \ref{def:plateau}.
Heuristically, this is because $T_{k,\eps}-1$ is within the plateau, so the variable $\rho \sim \walk_k^{T_{k,\eps}-1}$
still has nonnegligible rightwards fluctuations from $T_{k,\eps}-1$.
The following lemma makes this precise.

\begin{lemma}
\label{lem:biggerthanT}
There is some constant $\delta = \delta_b > 0$ such that for all $k \in \N$, if $\eps > 0$ satisfies $T_{k,\eps} \geq 1$
and $\eps T_{k,\eps}^2 \leq 1$, then we have
\begin{equation}
    \walk_k^{T_{k,\eps}-1} \left[ \rho \geq T_{k,\eps} \right] \geq \delta.
\end{equation}
\end{lemma}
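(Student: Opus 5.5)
Write $T = T_{k,\eps}$ and $\phi = T-1 \geq 0$. By the definition \eqref{eq:def_threshold} and unimodality (item \ref{item:rem_unimodal} of Proposition \ref{prop:flow}), we have $\rem_k''(x) \geq \bb_k - 1 - \eps$ for all $|x| < T$. Hence on $[-T,T]$ the renormalized potential $\pot_k = (\bb_k-1)\frac{x^2}{2} - \rem_k$ has $\pot_k'' \leq \eps$. By symmetry $\pot_k'(0)=0$, so $\pot_k'(x) \leq \eps |x| \leq \eps T$ for $x \in [0,T]$. It follows that $\pot_k(x) - \pot_k(\phi) \leq \eps T (x-\phi)$ for $x \in [\phi,T]$, and more crudely $|\pot_k(x) - \pot_k(y)| \leq \eps T^2 \leq 1$ for $x,y \in [-T,T]$. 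So on the window $[-T,T]$ the density of $\walk_k^\phi$ is, up to a factor $e^{\pm 1}$, the $\cN_1^\phi$ density.

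The plan is to lower bound the numerator and upper bound the denominator in
\begin{equation}
    \walk_k^\phi[\rho \geq T] = \frac{\cN_1^\phi[e^{-\pot_k(\rho)} \ind{\rho \geq T}]}{\cN_1^\phi[e^{-\pot_k(\rho)}]}.
\end{equation}
For the denominator, use convexity of $\pot_k$ (note $\pot_k'' \geq 0$ by Lemma \ref{lem:conv_rec}) together with symmetry. The potential is minimized at $0$, and for $|x| \geq T$ convexity gives $\pot_k(x) \geq \pot_k(T) + \pot_k'(T)(|x|-T) \geq \pot_k(T)$. Meanwhile, for $|x|\leq T$ we have $\pot_k(x) \geq \pot_k(0) \geq \pot_k(T) - 1$. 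Thus $e^{-\pot_k} \leq e^{1-\pot_k(T)}$ everywhere, and the denominator is at most $e^{1-\pot_k(T)}$.

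For the numerator, the key issue is to control $\pot_k$ just beyond $T$, on $[T,T+1]$. There $\pot_k''$ may exceed $\eps$, since we are outside the plateau. However, $\pot_k'' = 1 - (\rem_k'' - (\bb_k - 2)) \leq \bb_k - 1 \leq b$ always, because $\rem_k'' \geq 0$. Hence $\pot_k'(x) \leq \pot_k'(T) + b(x-T) \leq 1 + b$ for $x \in [T,T+1]$, using $\pot_k'(T) \leq \eps T \leq 1$ (which holds since $\eps T \leq \eps T^2 \leq 1$ when $T \geq 1$). Therefore $\pot_k(x) \leq \pot_k(T) + b + 1$ on $[T,T+1]$, and
\begin{equation}
    \cN_1^\phi[e^{-\pot_k(\rho)}\ind{\rho\geq T}] \geq e^{-\pot_k(T) - b - 1}\, \cN_1^0[1 \leq \zeta \leq 2].
\end{equation}
Dividing gives $\delta = e^{-b-2}\,\cN_1^0[1\leq\zeta\leq 2]$, which depends only on $b$.

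The main obstacle is checking that the derivative bounds survive the nonsmooth $k=0$ case. This is avoided because the lemma takes $k \in \N$, so $\pot_k$ is smooth for $k \geq 1$. The other point to verify carefully is the use of unimodality to obtain the plateau on all of $(-T,T)$, not merely near $T$.
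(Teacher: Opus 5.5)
Your proof is correct, and it takes a more direct route than the paper. The paper first replaces $\walk_k^{T-1}$ by the extremal measure $\twalk_{k,\eps}$ of Lemma \ref{lem:worstcase}, whose remainder $\trem_{k,\eps}$ has second derivative $\bb_k-1-\eps$ on the plateau and $0$ off it. This step uses the monotonicity-of-spread Lemma \ref{lem:tailcomparison} together with right-skewness of $\walk_k^{T-1}$, losing a factor $\frac{1}{2}$. It then evaluates $I_{[-T,T]}/I_{[T,\infty)}$ by completing squares: $\eps T^2\le 1$ absorbs the $O(\eps T^2)$ errors, and $\eps T\le 1$ keeps the Gaussian tail $\cN_1^0[\zeta>1+\eps T]$ bounded below.

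You instead bound the ratio $\cN_1^{T-1}[e^{-\pot_k(\rho)}\ind{\rho\ge T}]/\cN_1^{T-1}[e^{-\pot_k(\rho)}]$ by hand, from the same structural inputs. Unimodality gives $\pot_k''\le\eps$ on all of $(-T,T)$, so $\pot_k(T)-\pot_k(0)\le \eps T^2/2$, and convexity plus symmetry bound the denominator by $e^{-\pot_k(0)}$. The bound $\rem_k''\ge 0$, i.e.\ $\pot_k''\le\bb_k-1\le b$, together with $\pot_k'(T)\le\eps T\le 1$, controls the growth of $\pot_k$ on $[T,T+1]$. This is exactly the information the paper encodes by setting $\trem_{k,\eps}''=0$ off the plateau.

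Your version is shorter, dispenses with Lemmas \ref{lem:tailcomparison} and \ref{lem:worstcase} and the skewness step, and yields the explicit constant $\delta=e^{-b-2}\cN_1^0[1\le\zeta\le 2]$. The paper's exact Gaussian computation is sharper in $b$: its ratio bound grows only like $\sqrt{b+1}$, whereas your curvature bound over a unit window costs $e^{-O(b)}$. You could avoid that loss by using the window $[T,T+b^{-1/2}]$ instead, and in any case only the $b$-dependence of $\delta$ matters here. On the other hand, the comparison Lemma \ref{lem:tailcomparison} underlying the paper's route is reused later, in the proofs of Lemmas \ref{lem:linear_iterated} and \ref{lem:sd_coupling}.
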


Let us now see how Lemma \ref{lem:biggerthanT} proves Lemma \ref{lem:linear}, after which we will prove
Lemma \ref{lem:biggerthanT} itself.

\begin{proof}[Proof of Lemma \ref{lem:linear}]
By the definition \eqref{eq:def_threshold} of the thresholds,
our aim is to show that there is some constant $\delta = \delta_b \in (0,1)$ for which
\begin{equation}
    \rem_{k+1}''(T_{k,\eps}-1) \leq \bb_{k+1} - 1 - \delta \eps.
\end{equation}
Recall the recursive upper bound from item \ref{item:rem_rec} of Proposition \ref{prop:flow},
which states that
\begin{equation}
    \rem_{k+1}''(T_{k,\eps} - 1) \leq \frac{b}{\bb_k} \cdot \walk_k^{T_{k,\eps}-1} \left[ \frac{\rem_k''(\rho)}{\bb_k - \rem_k''(\rho)} \right].
\end{equation}
Now recall that $\rem_k''(\rho) \leq \bb_k - 1$ for all $\rho$ by item \ref{item:rem_symconleqbk} of Proposition \ref{prop:flow}
and $0 \leq \rem_k''(\rho) \leq \bb_k - 1 - \eps$ for all $\rho \geq T_{k,\eps}$ by the definition \eqref{eq:def_threshold} as well as
the unimodality of $\rem_k''$ as stated in item \ref{item:rem_unimodal} of Proposition \ref{prop:flow}.
So, since $x \mapsto \frac{x}{\bb_k - x}$ is increasing on $[0,\bb_k-1]$, Lemma \ref{lem:biggerthanT} implies that
\begin{align}
    \rem_{k+1}''(T_{k,\eps}-1) &\leq \frac{b}{\bb_k} \left( (1-\delta) \frac{\bb_k - 1}{\bb_k - (\bb_k - 1)} + \delta \frac{\bb_k - 1 - \eps}{\bb_k - (\bb_k - 1 - \eps)} \right) \\
    &\leq \frac{b}{\bb_k} \left( (1-\delta) (\bb_k - 1) + \delta (\bb_k - 1 - \eps) \right) \\
    &= \frac{b (\bb_k - 1)}{\bb_k} - \frac{b \delta}{\bb_k} \eps \\
    &\leq \bb_{k+1} - 1 - \frac{b \delta}{b+1} \eps,
\label{eq:derivation1}
\end{align}
using at the last step the fact that $\bb_k \leq b+1$ as well as the fact that $\frac{b(\bb_k-1)}{\bb_k} = \bb_{k+1} - 1$ as
previously derived in \eqref{eq:bk_rec}.
Thus, after adjusting the value of the constant $\delta = \delta_b > 0$, the proof is finished;
note that we of course may ensure that $\delta < 1$ simply by reducing the value if this does not hold already.
\end{proof}

Now we turn to the proof of Lemma \ref{lem:biggerthanT}.
For this, it will be useful to replace the remainder $\rem_k$ with a function which represents the worst-case
scenario under the assumptions that we have.
Namely, the replacement $\trem_{k,\eps}$ will have $\trem_{k,\eps}''(\rho) = \bb_k - 1 - \eps$ for $|\rho| < T_{k,\eps}$
and $\trem_{k,\eps}''(\rho) = 0$ for $|\rho| > T_{k,\eps}$.
This is the worst case because reducing the value of the second derivative of the remainder term
pushes the renormalized potential closer to the Ornstein--Uhlenbeck mean-reverting potential,
thus decreasing the probability of upwards fluctuations.
The following lemma shows that we obtain a lower bound on the desired probability via a distribution with this adjusted remainder term.

\begin{lemma}
\label{lem:worstcase}
Suppose that $k \in \N$ and $\eps > 0$ are such that $T_{k,\eps} \geq 1$.
Define
\begin{equation}
\label{eq:trem_def}
    \trem_{k,\eps}(\rho) \coloneqq \begin{cases}
        \frac{1}{2} \left( \bb_k - 1 - \eps \right) \rho^2 & \text{for } |\rho| \leq T_{k,\eps}, \\
        \frac{1}{2} \left( \bb_k - 1 - \eps \right) T_{k,\eps}^2 + \left( \bb_k - 1 - \eps \right) T_{k,\eps} \left( |\rho| - T_{k,\eps} \right) & \text{for } |\rho| > T_{k,\eps},
    \end{cases}
\end{equation}
and define the measure $\twalk_{k,\eps}$ to have the following Radon--Nikodym derivative with respect to $\walk_k^{T_{k,\eps}-1}$:
\begin{equation}
\label{eq:twalk_def}
    \frac{\twalk_{k,\eps}(d\rho)}{\walk_k^{T_{k,\eps}-1}(d\rho)}
    \propto \Exp{\trem_{k,\eps}(\rho) - \rem_k(\rho)}.
\end{equation}
Then we have
\begin{equation}
    \walk_k^{T_{k,\eps}-1} \left[ \rho \geq T_{k,\eps} \right] \geq \frac{1}{2} \twalk_{k,\eps} \left[ \rho \geq T_{k,\eps} \right].
\end{equation}
\end{lemma}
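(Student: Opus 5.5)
The plan is to compare the two measures directly through Lemma \ref{lem:tailcomparison}, and then to pass from two-sided to one-sided tails using the right-skewness of $\walk_k^{T_{k,\eps}-1}$. Write $T = T_{k,\eps}$ and $\phi = T-1 \geq 0$. By Definition \ref{def:wk} and \eqref{eq:structure}, the density of $\walk_k^{\phi}$ is proportional to
\begin{equation}
    \Exp{- \frac{(\rho-\phi)^2}{2} - (\bb_k - 1)\frac{\rho^2}{2} + \rem_k(\rho)}.
\end{equation}
By \eqref{eq:twalk_def}, the density of $\twalk_{k,\eps}$ is therefore proportional to the same expression with $\rem_k$ replaced by $\trem_{k,\eps}$. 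This is exactly the setting of Lemma \ref{lem:tailcomparison}, with $\qua(\rho) = \frac{(\rho-\phi)^2}{2} + (\bb_k-1)\frac{\rho^2}{2}$, $\ext_1 = \rem_k$ and $\ext_2 = \trem_{k,\eps}$. All three functions are continuous, since $k \geq 1$ makes $\rem_k$ finite and smooth, and $\trem_{k,\eps}$ is $C^1$ by construction.

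The main step is to check that $\rem_k - \trem_{k,\eps}$ is convex and symmetric. Symmetry is immediate from item \ref{item:rem_symconleqbk} of Proposition \ref{prop:flow} and the definition \eqref{eq:trem_def}. For convexity, I would note that $\trem_{k,\eps}$ is $C^1$: at $|\rho| = T$ both its value and its slope $(\bb_k-1-\eps)T$ match across the two pieces. Its second derivative is piecewise constant, equal to $\bb_k-1-\eps$ on $(-T,T)$ and to $0$ outside. It therefore suffices to show that $\rem_k''(\rho) - \trem_{k,\eps}''(\rho) \geq 0$ away from $\pm T$, since a $C^1$ function with a.e.\ nonnegative second derivative (and absolutely continuous first derivative) is convex.
\begin{itemize}
    \item For $|\rho| > T$, we have $\rem_k'' \geq 0 = \trem_{k,\eps}''$ by item \ref{item:rem_symconleqbk}.
    \item For $|\rho| < T$, the definition \eqref{eq:def_threshold} gives points $\phi' \in (|\rho|, T]$ arbitrarily close to $T$ with $\rem_k''(\phi') > \bb_k - 1 - \eps$. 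Unimodality (item \ref{item:rem_unimodal}) then gives $\rem_k''(\rho) \geq \rem_k''(\phi') > \bb_k - 1 - \eps = \trem_{k,\eps}''(\rho)$.
\end{itemize}
Lemma \ref{lem:tailcomparison} then yields
\begin{equation}
    \walk_k^{\phi}\left[|\rho| \geq T\right] \geq \twalk_{k,\eps}\left[|\rho| \geq T\right] \geq \twalk_{k,\eps}\left[\rho \geq T\right].
\end{equation}

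It remains to show $\walk_k^{\phi}[\rho \geq T] \geq \frac{1}{2}\walk_k^{\phi}[|\rho| \geq T]$. Let $h$ be the density of $\walk_k^{\phi}$. Since $\rem_k$ and the quadratic term are symmetric, $h(\rho)/h(-\rho) = e^{2\phi\rho} \geq 1$ for $\rho \geq 0$, using $\phi = T - 1 \geq 0$ (this is where the hypothesis $T_{k,\eps} \geq 1$ enters). Hence $\walk_k^\phi[\rho \geq T] \geq \walk_k^\phi[\rho \leq -T]$, so the right tail carries at least half of the two-sided tail. Chaining the inequalities gives the claim.

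I do not expect a serious obstacle. The only delicate point is the convexity verification near $\pm T$, where one must use the $C^1$ gluing of $\trem_{k,\eps}$ so that no negative delta mass appears in its second derivative. One should also confirm that \eqref{eq:twalk_def} defines a normalizable measure; this holds because $\trem_{k,\eps}$ grows only linearly while $\qua$ grows quadratically.
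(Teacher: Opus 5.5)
Your proposal is correct and follows the paper's proof. You apply Lemma~\ref{lem:tailcomparison} with $\ext_1 = \rem_k$ and $\ext_2 = \trem_{k,\eps}$ to compare the two-sided tails, then use the right-skewness of $\walk_k^{T_{k,\eps}-1}$ (which holds since $T_{k,\eps}-1 \geq 0$) to pass to the one-sided tail. Your explicit convexity check of $\rem_k - \trem_{k,\eps}$, via the $C^1$ gluing of $\trem_{k,\eps}$ and the unimodality of $\rem_k''$, fills in the step the paper states in a single sentence.
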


\begin{proof}[Proof of Lemma \ref{lem:worstcase}]
First we will apply Lemma \ref{lem:tailcomparison} with $\cL_1 = \walk_k^{T_{k,\eps}-1}$ and $\cL_2 = \twalk_{k,\eps}$,
for which we may take $\ext_1 = \rem_k$ and $\ext_2 = \trem_{k,\eps}$.
By the definition \eqref{eq:def_threshold} of the threshold $T_{k,\eps}$ and the definition \eqref{eq:trem_def} of $\trem_{k,\eps}$,
we observe that the derivative of $\rem_k - \trem_{k,\eps}$ is increasing, so that $\ext_1 - \ext_2$ is convex.
This difference is also symmetric as both $\rem_k$ and $\trem_{k,\eps}$ are; $\trem_{k,\eps}$ by definition
and $\rem_k$ by item \ref{item:rem_symconleqbk} of Proposition \ref{prop:flow}.
So Lemma \ref{lem:tailcomparison} implies that
\begin{equation}
    \walk_k^{T_{k,\eps}-1} \left[ |\rho| \geq T_{k,\eps} \right] \geq \twalk_{k,\eps} \left[ |\rho| \geq T_{k,\eps} \right].
\end{equation}
Now to conclude we note that since $T_{k,\eps} \geq 1$, the distribution $\walk_k^{T_{k,\eps} - 1}$ is right-skewed, i.e.\ its density
$f$ satisfies $f(\rho) \geq f(-\rho)$ for all $\rho \geq 0$.
Therefore
\begin{equation}
    \walk_k^{T_{k,\eps} - 1} \left[ \rho \geq T_{k,\eps} \right]
    \geq 
    \frac{1}{2} \walk_k^{T_{k,\eps} - 1} \left[ |\rho| \geq T_{k,\eps} \right]
    \geq 
    \frac{1}{2} \twalk_{k,\eps} \left[ |\rho| \geq T_{k,\eps} \right]
    \geq
    \frac{1}{2} \twalk_{k,\eps} \left[ \rho \geq T_{k,\eps} \right],
\end{equation}
finishing the proof.
\end{proof}

Lemma \ref{lem:worstcase} reduces Lemma \ref{lem:biggerthanT}
to lower bounding $\twalk_{k,\eps} \left[ \rho \geq T_{k,\eps} \right]$;
we now turn to this.

\begin{proof}[Proof of Lemma \ref{lem:biggerthanT}]
For ease of notation, let us set $T = T_{k,\eps}$ throughout this proof.
Now first note that by the definition \eqref{eq:twalk_def} of $\twalk_{k,\eps}$
as well as Definition \ref{def:wk} of $\walk_k^{T-1}$ and the representation
\eqref{eq:structure} of the renormalized potential $\pot_k$,
the density $f_{k,\eps}$ of $\twalk_{k,\eps}$ satisfies
\begin{align}
    f_{k,\eps}(\rho) &\propto \Exp{- \frac{(\rho-(T-1))^2}{2} - (\bb_k - 1)\frac{\rho^2}{2} + \trem_{k,\eps}(\rho)} \\
    &\propto \Exp{ - \frac{\bb_k}{2} \left( \rho - \frac{T-1}{\bb_k} \right)^2 + \trem_{k,\eps}(\rho)},
\end{align}
by a similar derivation as in \eqref{eq:ou_source},
where the proportionality constant may depend on $T$, but not $\rho$.
Since $T \geq 1$ and $\trem_k$ is symmetric, this satisfies $f_{k,\eps}(\rho) \geq f_{k,\eps}(-\rho)$,
for all $\rho \geq 0$, meaning that, since $T \geq 0$, we have $\twalk_{k,\eps}\left[\rho \leq - T\right] \leq \twalk_{k,\eps}\left[ \rho \geq T \right]$.
So we find that
\begin{equation}
\label{eq:twalk_integrals}
    \twalk_{k,\eps} \left[ \rho \geq T \right]
    = \frac{I_{[T,\infty)}}{I_{(-\infty,-T]} + I_{[-T,T]} + I_{[T,\infty)}}
    \geq \frac{I_{[T,\infty)}}{I_{[-T,T]} + 2 I_{[T,\infty)}},
\end{equation}
where we have defined
\begin{equation}
    I_J = \int_J \Exp{- \frac{\bb_k}{2} \left( \rho - \frac{T-1}{\bb_k} \right)^2 + \trem_{k,\eps}(\rho)} \,d\rho
\end{equation}
for any interval $J$.

Thus to finish the proof it suffices to show that there is some constant $C$ for which $I_{[-T,T]} \leq C I_{[T,\infty)}$.
To show this, let us recall the definition of $\trem_{k,\eps}$ from \eqref{eq:trem_def} (noting again that we
have identified $T = T_{k,\eps}$), and observe that
\begin{align}
    I_{[-T,T]} &= \int_{-T}^T \Exp{-\frac{\bb_k}{2} \left( \rho - \frac{T-1}{\bb_k} \right)^2 + \frac{\bb_k - 1 - \eps}{2} \rho^2 } \,d\rho, \\
    \text{and} \qquad I_{[T,\infty)} &= \int_T^\infty \Exp{- \frac{\bb_k}{2} \left( \rho - \frac{T-1}{\bb_k} \right)^2 - \frac{\bb_k - 1 - \eps}{2} T^2  + ( \bb_k - 1 - \eps ) T \rho } \,d\rho.
\end{align}
Now let us expand the expressions inside of these exponentials.
For $I_{[-T,T]}$, we have
\begin{align}
    &- \frac{\bb_k}{2} \left( \rho - \frac{T-1}{\bb_k} \right)^2 + \frac{\bb_k - 1 - \eps}{2} \rho^2 \\
    &\qquad \qquad = - \frac{1 + \eps}{2} \rho^2 + \rho (T-1) - \frac{(T-1)^2}{2 \bb_k} \\
    &\qquad \qquad = - \frac{1 + \eps}{2} \left( \rho - \frac{T-1}{1+\eps} \right)^2 + \frac{(T-1)^2}{2(1+\eps)} - \frac{(T-1)^2}{2 \bb_k} \\
    &\qquad \qquad = - \frac{1 + \eps}{2} \left( \rho - \frac{T-1}{1+\eps} \right)^2 + \frac{\bb_k-1}{2 \bb_k} (T-1)^2 + O(\eps T^2).
\label{eq:ITT_simplified}
\end{align}
And for $I_{[T,\infty)}$ we have
\begin{align}
    &- \frac{\bb_k}{2} \left( \rho - \frac{T-1}{\bb_k} \right)^2 - \frac{\bb_k - 1 - \eps}{2} T^2 + (\bb_k - 1 - \eps) T \rho \\
    &\qquad \qquad = - \frac{\bb_k}{2} \rho^2 + \rho (T-1) - \frac{(T-1)^2}{2\bb_k} - \frac{\bb_k - 1}{2} T^2 + (\bb_k - 1 - \eps) T \rho + O(\eps T^2) \\
    &\qquad \qquad = - \frac{\bb_k}{2} \rho^2 + \rho (\bb_k T - 1 - \eps T ) - \frac{(T-1)^2}{2\bb_k} - \frac{\bb_k -1}{2} T^2 + O(\eps T^2) \\
    &\qquad \qquad = - \frac{\bb_k}{2} \left( \rho - \left( T - \frac{1 + \eps T}{\bb_k} \right) \right)^2
        + \frac{\bb_k}{2} \left( T - \frac{1}{\bb_k} \right)^2 - \frac{(T-1)^2}{2 \bb_k} - \frac{\bb_k - 1}{2} T^2 + O(\eps T^2) \\
    &\qquad \qquad = - \frac{\bb_k}{2} \left( \rho - \left( T - \frac{1 + \eps T}{\bb_k} \right) \right)^2
        - \frac{(T-1)^2}{2 \bb_k} + \frac{1}{2} T^2 - T + \frac{1}{2\bb_k} + O(\eps T^2) \\
    &\qquad \qquad = - \frac{\bb_k}{2} \left( \rho - \left( T - \frac{1 + \eps T}{\bb_k} \right) \right)^2
        - \frac{(T-1)^2}{2\bb_k} + \frac{(T-1)^2}{2} - \frac{1}{2} + \frac{1}{2\bb_k} + O(\eps T^2) \\
    &\qquad \qquad = - \frac{\bb_k}{2} \left( \rho - \left( T - \frac{1 + \eps T}{\bb_k} \right) \right)^2
        + \frac{\bb_k - 1}{2\bb_k} (T-1)^2 - \frac{\bb_k - 1}{2\bb_k} + O(\eps T^2).
\label{eq:ITinf_simplified}
\end{align}
Note that the $\rho$-independent term in \eqref{eq:ITinf_simplified} differs from that in \eqref{eq:ITT_simplified} only by the constant $\frac{\bb_k - 1}{2 \bb_k}$.
So using these formulas we find that
\begin{equation}
    \frac{I_{[-T,T]}}{I_{[T,\infty)}}
    = \Exp{\frac{\bb_k - 1}{2 \bb_k} + O(\eps T^2)} \cdot
    \frac{\int_{-T}^T \Exp{- \frac{1+\eps}{2} \left( \rho - \frac{T-1}{1+\eps} \right)^2} \,d\rho}
    {\int_T^\infty \Exp{- \frac{\bb_k}{2} \left( \rho - \left( T - \frac{1+\eps T}{\bb_k} \right) \right)^2} \,d\rho}.
\end{equation}
Now the fraction in the right-hand side is a ratio of unnormalized probabilities of events for Gaussian random variables.
With this interpretation, if $\zeta \sim \cN_1^0$ is a standard normal random variable, we have
\begin{align}
    \frac{I_{[-T,T]}}{I_{[T,\infty)}}
    &= \Exp{\frac{\bb_k-1}{2 \bb_k} + O(\eps T^2)} \cdot \sqrt{\frac{\bb_k}{1+\eps}} \cdot
    \frac{\cN_1^0 \left[ \left| \frac{\zeta + T - 1}{1 + \eps} \right| \leq T \right]}
    {\cN_1^0 \left[ \frac{\zeta}{\bb_k} + T - \frac{1-\eps T}{\bb_k} > T \right]} \\
    &\leq \frac{\Exp{\frac{1}{2} + C \eps T^2} \cdot \sqrt{b+1}}{\cN_1^0 \left[ \zeta > 1 + \eps T \right]},
\end{align}
using the fact that $\bb_k \leq b+1$ for all $k$ as well as the fact that the probability in the numerator is $\leq 1$.
We have also upper bounded the $O(\eps T^2)$ term with $C \eps T^2$ for some constant $C = C_b$.
Finally, since $\eps T^2 \leq 1$ and $T \geq 1$, we have $\eps T \leq 1$ as well, so the probability in the denominator
is at least $\P \left[ \zeta > 2 \right]$, which is an absolute constant.
So we find that the ratio $I_{[-T,T]} / I_{[T,\infty)}$ is bounded above by some $b$-dependent constant,
which by \eqref{eq:twalk_integrals} shows that
\begin{equation}
    \twalk_{k,\eps}\left[ \rho \geq T \right] \geq \delta
\end{equation}
for some constant $\delta = \delta_b$, and thus the proof is finished by Lemma \ref{lem:worstcase}.
\end{proof}
\subsection{Combining the behaviors to obtain a bound}
\label{sec:shrink_combination}

In this subsection we combine Lemmas \ref{lem:linear} and \ref{lem:exp}
to prove Proposition \ref{prop:shrinking}, which states that there are some constants $C = C_b > 0$
and $\delta = \delta_b > 0$ such that if $\eps > 0$ and $T \in \N$ are such that $\eps T^2 \leq 1$
and $\rem_1''(T) \leq b - \eps$, then for all $k \geq 0$ we have
\begin{equation}
    \sup_{\phi \in \R} \rem_{2T+2+k}''(\phi) \leq \frac{C}{b^k \delta^T \eps}.
\end{equation}
We begin with a fact which results from iterating Lemma \ref{lem:linear} to its natural conclusion.

\begin{lemma}
\label{lem:linear_iterated}
If $\eps > 0$ and $T \in \N$ are such that $\eps T^2 \leq 1$ and $\rem_1''(T) \leq b - \eps$, then
\begin{equation}
\label{eq:iteration_goal}
    \sup_{\phi \in \R} \rem_{T+1}''(\phi) \leq \bb_{T+1} - 1 - \delta^{T+1} \eps.
\end{equation}
\end{lemma}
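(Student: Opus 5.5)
The plan is to iterate Lemma \ref{lem:linear} along a sequence of tolerances $\eps_j = \delta^{j-1}\eps$, tracking how the threshold $T_{j,\eps_j}$ decreases with $j$. The starting point is that $\rem_1''(T) \leq b - \eps = \bb_1 - 1 - \eps$. Together with the unimodality in item \ref{item:rem_unimodal} of Proposition \ref{prop:flow}, this gives $\rem_1''(\phi) \leq \bb_1 - 1 - \eps$ for all $|\phi| \geq T$. Hence $T_{1,\eps} \leq T$ by the definition \eqref{eq:def_threshold}, with the convention that the supremum of the empty set is $0$.

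The inductive claim is that $T_{j,\delta^{j-1}\eps} \leq T - (j-1)$ for each $j$, as long as the right-hand side is at least $0$. Suppose the claim holds at step $j$, and write $\eps_j = \delta^{j-1}\eps$. There are two cases.
\begin{enumerate}
\item If $T_{j,\eps_j} \geq 1$, we apply Lemma \ref{lem:linear}. Its hypothesis $\eps_j T_{j,\eps_j}^2 \leq 1$ holds because $\delta < 1$ and $T_{j,\eps_j} \leq T$, so that $\eps_j T_{j,\eps_j}^2 \leq \eps T^2 \leq 1$. The lemma then gives $T_{j+1,\delta\eps_j} \leq T_{j,\eps_j} - 1 \leq T - j$.
\item If $T_{j,\eps_j} < 1$, Lemma \ref{lem:linear} does not apply directly. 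Instead I will argue exactly as in the proof of Lemma \ref{lem:linear}, but at the point $\phi = 0$ or at any $\phi \geq 0$; this is the delicate step.
\end{enumerate}

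Case 2 is handled by a cleaner device: once the threshold is below $1$, we simply show that the plateau has vanished at the next step. For this I want a lower bound of the form $\walk_j^{\phi}[\rho \geq T_{j,\eps_j}] \geq \delta$ (or $\walk_j^\phi[|\rho| \geq T_{j,\eps_j}] \geq \delta$) that holds for all $\phi \geq 0$ when $T_{j,\eps_j} < 1$. The guiding heuristic is that $\rho$ has variance of order one and the threshold lies within distance $1$. Given such a bound, the same computation as \eqref{eq:derivation1} yields $\rem_{j+1}''(\phi) \leq \bb_{j+1} - 1 - \delta\eps_j$ for all $\phi$, after shrinking $\delta$ if necessary. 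By symmetry this covers all $\phi$, and it says that $T_{j+1,\delta\eps_j} = 0$.

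To prove that lower bound, I would repeat the worst-case comparison of Lemma \ref{lem:worstcase} with $\phi$ in place of $T_{j,\eps_j}-1$. One compares against the measure whose remainder $\trem$ is quadratic up to the threshold $t = T_{j,\eps_j} \leq 1$ and linear beyond it. The resulting Gaussian-type integrals involve only quantities of size $O(1)$ in the regime $t \leq 1$ and $\phi \leq 1$. For $\phi \geq 1$, right-skewness and a direct bound suffice, or we can reduce to the previous case by monotonicity in $\phi$ via Lemma \ref{lem:tailcomparison}. The main obstacle is the uniform $\delta$ in this small-threshold regime. In the worst case the alternative is to handle it crudely through the Brascamp--Lieb variance bound, which shows that $\walk_j^\phi$ cannot concentrate too tightly inside $[-1,1]$.

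Since the threshold drops by one at each step in case 1, after at most $T$ applications it falls below $1$. Case 2 then makes it $0$ after one more step. So by step $T+1$ we have $T_{T+1,\delta^{T}\eps} = 0$, meaning that $\rem_{T+1}''(\phi) \leq \bb_{T+1} - 1 - \delta^{T}\eps$ for all $\phi > 0$; this extends to $\phi = 0$ by continuity and to all $\phi$ by symmetry. It remains to reconcile the exponent. If case 2 is reached at an earlier step, further steps keep the supremum bound, because by \eqref{eq:derivation1}-type monotonicity a uniform bound $\bb_j - 1 - \eta$ propagates to $\bb_{j+1} - 1 - \frac{b}{b+1}\eta$. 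Taking $\delta \leq \frac{b}{b+1}$, the final bound $\bb_{T+1} - 1 - \delta^{T+1}\eps$ then holds with room to spare, which gives \eqref{eq:iteration_goal}.
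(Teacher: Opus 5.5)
Your architecture matches the paper's: iterate Lemma \ref{lem:linear} with tolerances $\delta^{j-1}\eps$ until the threshold drops below $1$, spend one more step removing the plateau entirely, then carry the uniform gap forward with the factor $\frac{b}{b+1}$. Your step counting, including the choice $\delta \leq \frac{b}{b+1}$, is fine.

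The gap is Case 2. You correctly identify it as the crux, but you never prove it. You need a constant $\eta_b>0$ such that $\rho \sim \walk_j^\phi$ lands in $\{|\rho| \geq 1\} \subseteq \{|\rho| > T_{j,\eps_j}\}$ with probability at least $\eta_b$. What you offer instead is two things:
\begin{itemize}
\item an unexecuted plan to redo the worst-case computation of Lemma \ref{lem:worstcase} for general $\phi$;
\item a fallback through Brascamp--Lieb that cannot work, because \eqref{eq:bl} bounds the variance from \emph{above} and so says nothing against concentration inside $[-1,1]$.
\end{itemize}
Even the variance lower bound of order $\frac{1}{b+1}$ from \eqref{eq:cr} would not suffice by itself. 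A log-concave law with that variance can be supported inside $[-1,1]$ once $b \geq 3$.

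The missing idea is short.
\begin{itemize}
\item You do not need every $\phi \geq 0$. Item \ref{item:rem_unimodal} of Proposition \ref{prop:flow} (unimodality, not mere symmetry) gives $\sup_\phi \rem_{K+1}''(\phi) = \rem_{K+1}''(0)$, so only $\phi = 0$ matters.
\item At $\phi=0$, $\walk_K^0$ has density proportional to $\exp(-\frac{\bb_K}{2}\rho^2 + \rem_K(\rho))$, and $\rem_K$ is convex and symmetric by item \ref{item:rem_symconleqbk}. So Lemma \ref{lem:tailcomparison} with $\ext_1 = \rem_K$ and $\ext_2 = 0$ gives $\walk_K^0[|\rho| \geq 1] \geq \cN^0_{\bb_K^{-1}}[|\rho|\geq 1] \geq \eta_b$, using $\bb_K \leq b+1$.
\end{itemize}
No comparison with the piecewise $\trem$ is needed. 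Here you only need spread of order one around the origin, not an overshoot past a threshold sitting at distance $1$ to the right of the current position.

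The same comparison against the shifted Gaussian $\cN^{\phi/\bb_K}_{\bb_K^{-1}}$ would also give your uniform-in-$\phi$ version. One then uses that moving a Gaussian's mean away from $0$ only increases its mass outside $[-1,1]$. This is unnecessary, though. With the spread bound in hand, the computation \eqref{eq:derivation1} completes Case 2 exactly as you describe.
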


\begin{proof}[Proof of Lemma \ref{lem:linear_iterated}]
By the definition \eqref{eq:def_threshold} of the threshold $T_{1,\eps}$ and the fact that $\bb_1 - 1 = b$,
we have $T_{1,\eps} \leq T$, and so $\eps T_{1,\eps}^2 \leq 1$.
If we also have $T_{1,\eps} \geq 1$ then we may apply Lemma \ref{lem:linear}
and we find that $T_{2,\delta\eps} \leq T_{1,\eps} - 1$.
Since $\delta \in (0,1)$, we have $\delta \eps \leq \eps$ and so
\begin{equation}
    \delta \eps T_{2,\delta \eps}^2 \leq \eps (T_{1,\eps} - 1)^2 \leq \eps T_{1,\eps}^2 \leq 1,
\end{equation}
using the fact that $T_{2,\delta} \geq 0$ by definition and $T_{1,\eps} \geq 1$ by assumption.
So if $T_{2,\delta \eps} \geq 1$ then we may repeat this argument; eventually there will be some
first integer $K \geq 1$ for which $T_{K,\delta^{K-1} \eps} < 1$.
Since $T_{k+1,\delta^k \eps} \leq T_{k, \delta^{k-1} \eps} - 1$ for all $k < K$, we must have
$K \leq \lceil T_{1,\eps} \rceil \leq T$, since $T$ is an integer.

Now by items \ref{item:rem_rec} and \ref{item:rem_unimodal} of Proposition \ref{prop:flow}, we have
\begin{equation}
\label{eq:supub}
    \sup_{\phi \in \R} \rem_{K+1}''(\phi)
    = \rem_{K+1}''(0)
    \leq \frac{b}{\bb_K} \cdot \walk_K^0 \left[ \frac{\rem_K''(\rho)}{\bb_K - \rem_K''(\rho)} \right],
\end{equation}
and we claim that there is some constant $\eta = \eta_b > 0$ for which
\begin{equation}
\label{eq:spread_later}
    \walk_K^0 \left[ |\rho| \geq 1 \right] \geq \eta.
\end{equation}
Assuming \eqref{eq:spread_later} for now, from \eqref{eq:supub} and the definition of
$T_{K, \delta^{K-1} \eps}$, since $x \mapsto \frac{x}{\bb_K - x}$ is increasing on $[0,\bb_K - 1]$
and $\rem_K''(\rho) \in [0,\bb_K - 1]$ for all $\rho \in \R$, we have
\begin{align}
    \sup_{\phi \in \R} \rem_{K+1}''(\phi) &\leq
    \frac{b}{\bb_K} \left( (1-\eta) \frac{\bb_K - 1}{\bb_K - (\bb_K - 1)} +
    \eta \frac{\bb_K - 1 - \delta^{K-1} \eps}{\bb_K - (\bb_K - 1 - \delta^{K-1} \eps)} \right) \\
    &\leq \bb_{K+1} - 1 - \frac{b \eta}{b + 1} \delta^{K-1} \eps
\end{align}
using the same derivation as for \eqref{eq:derivation1}.
We also have
\begin{align}
    \sup_{\phi \in \R} \rem_{K+2}''(\phi)
    &\leq \frac{b}{\bb_{K+1}}
    \frac{\bb_{K+1} - 1 - \frac{b \eta}{b+1} \delta^{K+1} \eps}
    {\bb_{K+1} - (\bb_{K+1} - 1 - \frac{b \eta}{b+1} \delta^{K-1} \eps)} \\
    &\leq \bb_{K+2} - 1 - \frac{b}{b+1} \frac{b \eta}{b+1} \delta^{K-1} \eps,
\end{align}
and, iterating this,
\begin{equation}
    \sup_{\phi \in \R} \rem_{K+k}''(\phi)
    \leq \bb_{K+k} - 1 - \left( \frac{b}{b+1} \right)^k \eta \delta^{K-1} \eps.
\end{equation}
Since $K \leq T$, adjusting $\delta$ to be the minimum of $\eta$, $\frac{b}{b+1}$, and the previous value of $\delta$,
we obtain \eqref{eq:iteration_goal}.

It just remains to show \eqref{eq:spread_later}.
For this, we will apply Lemma \ref{lem:tailcomparison}.
By Definition \ref{def:wk} of $\walk_K^0$ as well as the representation \eqref{eq:structure}
of the renormalized potential $\pot_k$, the density $f_K$ of $\walk_K^0$ satisfies
\begin{equation}
    f_K(\rho) \propto \Exp{- \frac{\rho^2}{2} - \frac{\bb_K - 1}{2} \rho^2 + \rem_K(\rho)}
    = \Exp{- \frac{\bb_K}{2} \rho^2 + \rem_K(\rho)}.
\end{equation}
So, since $\rem_K$ is convex by item \ref{item:rem_symconleqbk} of Proposition \ref{prop:flow},
Lemma \ref{lem:tailcomparison} applies with $\cL_1 = \walk_K^0$ and $\cL_2 = \cN_{\bb_K^{-1}}^0$,
a centered Gaussian distribution with variance $\bb_K^{-1}$.
Since $\bb_K \leq b+1$, this yields
\begin{equation}
    \walk_K^0 \left[ |\rho| \geq 1 \right] \geq \cN_{\bb_K^{-1}}^0 \left[ |\rho| \geq 1 \right]
    \geq \eta
\end{equation}
for some constant $\eta = \eta_b > 0$, finishing the proof.
\end{proof}

Now we apply the exponential decay upper bound of Lemma \ref{lem:exp} after 
the plateau has decayed linearly.

\begin{proof}[Proof of Proposition \ref{prop:shrinking}]
By Lemma \ref{lem:linear_iterated}, we have
\begin{equation}
    \sup_{\phi \in \R} \rem_{T+1}''(\phi) \leq \bb_{T+1} - 1 - \delta^{T+1} \eps,
\end{equation}
and so we may apply Lemma \ref{lem:exp}, which yields
\begin{equation}
    \sup_{\phi \in \R} \rem_{T+1+k}''(\phi) \leq \frac{1}{b^{k-2} \delta^{T+1} \eps}
\end{equation}
for all $k \geq T+1$.
Rewriting $k = T+1+j$ with $j \geq 0$, we have
\begin{equation}
    \sup_{\phi \in \R} \rem_{2(T+1) + j}''(\phi) \leq \frac{1}{b^j b^{T+1-2} \delta^{T+1} \eps},
\end{equation}
and so the proof is finished after changing the value of $\delta$ to $b \delta$ and setting $C = \frac{b}{\delta}$.
\end{proof}
\section{Ornstein--Uhlenbeck couplings}
\label{sec:ou}

In this section we prove Proposition \ref{prop:coupling},
showing that the coupling of Definition \ref{def:joint} between the tree-indexed
Markov chain $\left( \phi_\tau \right)_{\tau \in \treebn}$ of Definition \ref{def:treemc} and 
a branching Ornstein--Uhlenbeck (OU) walk $\left( \psi_\tau \right)_{\tau \in \treebn}$ satisfies
\begin{equation}
    \P \left[ \phi_{\tau'} = \psi_{\tau'} \text{ for all } \tau' \preceq \tau \text{ with }
    |\tau'| \leq n - k \right] \geq
    1 - C e^{- ck + C (T + \log \frac{1}{\eps})}
\end{equation}
for all $k \in [n]$ and $\tau \in [b]^n$, under the assumption of $\plat(\ham,T,\eps)$
as in Definition \ref{def:plateau}.
We will also prove Proposition \ref{prop:tailbound}, giving the following Gaussian tail bound
under the same assumption:
\begin{equation}
    \mubnh \left[ |\phi_\tau| \geq s \right] \leq C \Exp{- \frac{c s^2}{T + \log \frac{1}{\eps}}}.
\end{equation}
This tail bound will arise from a stochastic domination coupling with a modified OU
walk whose last $C (T + \log \frac{1}{\eps})$ increments are standard Gaussians instead.
Finally, we will combine these two propositions to obtain Theorem \ref{thm:main_general},
the general version of our main result, giving exponential decay of correlations under $\mubnh$
with correlation length $O\left( T + \log \frac{1}{\eps} \right)$.

We begin by proving Proposition \ref{prop:tailbound} in Section \ref{sec:ou_tailbound}.
Then in Section \ref{sec:ou_steps} we will bound the total variation distance between individual
steps of the tree-indexed Markov chains, which we will then use to prove Proposition \ref{prop:coupling}
in Section \ref{sec:ou_coupling}.
Finally, we will prove Theorem \ref{thm:main_general} in Section \ref{sec:ou_covariance}.

Throughout this section, for any $\tau \in [b]^n$ and any $k \in \{0,\dotsc,n\}$, we will use
$\tau^k$ to denote the initial segment of length $k$ in $\tau$, i.e.\ if $\tau = (\tau_1, \dotsc, \tau_n)$
then $\tau^k = (\tau_1, \dotsc, \tau_k)$, and in particular $\tau^0 = \emptyset$.
Additionally, the reader should recall that $\cN_{\sigma^2}^\mu$ denotes a normal distribution with mean $\mu$
and variance $\sigma^2$, and recall from Definition \ref{def:oud} that
\begin{equation}
    \oud_a^\phi = \cN_{a^{-1}}^{a^{-1} \phi}.
\end{equation}

\subsection{Gaussian tail bound}
\label{sec:ou_tailbound}

Before we proceed, it will be helpful presently and throughout the remainder of this section
to know that the stationary distribution of the OU walk $\left( \zeta_k \right)$
with increments given by $\zeta_k \sim \oud_a^{\zeta_{k-1}}$ is $\cN_{a/(a^2-1)}^0$ for any $a > 1$,
and furthermore that this walk is reversible with respect to this distribution.

\begin{lemma}
\label{lem:oud_stationary}
For $a > 1$, suppose that $\zeta_1 \sim \cN_{a/(a^2-1)}^0$ and that, conditionally on $\zeta_1$,
we have $\zeta_2 \sim \oud_a^{\zeta_1}$.
Then the marginal distribution of $\zeta_2$ is also $\cN_{a/(a^2-1)}^0$.
Furthermore, conditionally on $\zeta_2$, we have $\zeta_1 \sim \oud_a^{\zeta_2}$.
\end{lemma}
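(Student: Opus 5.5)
Both claims follow from a direct Gaussian computation, since the pair $(\zeta_1,\zeta_2)$ is jointly Gaussian. Write $v = \frac{a}{a^2-1}$ for the claimed stationary variance. By Definition \ref{def:oud} we can realize $\zeta_2 = a^{-1}\zeta_1 + a^{-1/2}\xi$, where $\xi \sim \cN_1^0$ is independent of $\zeta_1$. Then $(\zeta_1,\zeta_2)$ is a centered Gaussian vector, and its law is determined by its covariance matrix. So the whole proof reduces to computing $\Var[\zeta_2]$ and $\Cov[\zeta_1,\zeta_2]$.

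For stationarity we compute
\begin{equation}
    \Var[\zeta_2] = a^{-2} v + a^{-1} = \frac{1}{a(a^2-1)} + \frac{a^2-1}{a(a^2-1)} = \frac{a^2}{a(a^2-1)} = v .
\end{equation}
Since $\zeta_2$ is a centered Gaussian with variance $v$, its marginal law is $\cN_{v}^0$, which is the first claim.

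For reversibility, note that the covariance matrix of $(\zeta_1,\zeta_2)$ has diagonal entries $v, v$ and off-diagonal entry $\Cov[\zeta_1,\zeta_2] = a^{-1}v$. This matrix is symmetric under swapping the two coordinates, so $(\zeta_2,\zeta_1)$ has the same law as $(\zeta_1,\zeta_2)$. It follows that the conditional law of $\zeta_1$ given $\zeta_2$ is the same as the conditional law of $\zeta_2$ given $\zeta_1$, namely $\oud_a^{\zeta_2}$.

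As a cross-check, the Gaussian conditioning formula gives the same answer. The conditional mean is $\frac{a^{-1}v}{v}\zeta_2 = a^{-1}\zeta_2$. The conditional variance is $v - \frac{a^{-2}v^2}{v} = v(1-a^{-2}) = a^{-1}$. Together these say exactly that $\zeta_1 \mid \zeta_2 \sim \cN_{a^{-1}}^{a^{-1}\zeta_2} = \oud_a^{\zeta_2}$.

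There is no real obstacle here; the only step needing care is the algebraic check that $a^{-2}v + a^{-1} = v$ with $v = \frac{a}{a^2-1}$.
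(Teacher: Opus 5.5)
Your proof is correct and follows the paper's argument: realize $\zeta_2 = a^{-1}\zeta_1 + a^{-1/2}\xi$, check $a^{-2}v + a^{-1} = v$, and use joint Gaussianity to read off the conditional law. The paper writes $\zeta_1 = a^{-1}\zeta_2 + x\zeta'$ and solves for $x = a^{-1/2}$, which is your cross-check, while your exchangeability observation is a slightly slicker route to the same conclusion. Incidentally, your value $\Cov[\zeta_1,\zeta_2] = a^{-1}v = \frac{1}{a^2-1}$ is the right one; the paper's text states $1/a$ there, which is really the regression coefficient, but that slip does not affect its conclusion.
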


\begin{proof}[Proof of Lemma \ref{lem:oud_stationary}]
We have
\begin{equation}
    \zeta_2 \stackrel{d}{=} \frac{\zeta_1}{a} + \frac{\zeta}{\sqrt{a}},
\end{equation}
where $\zeta \sim \cN_1^0$ is independent from $\zeta_1$.
Thus $\zeta_2$ is a Gaussian with mean $0$ and variance
\begin{equation}
    \frac{1}{a^2} \frac{a}{a^2-1} + \frac{1}{a}
    = \frac{a}{a^2(a^2-1)} + \frac{a(a^2-1)}{a^2(a^2-1)}
    = \frac{a}{a^2-1},
\end{equation}
proving the stationarity.
Now to prove reversibility, note that $\zeta_1, \zeta_2$ are jointly Gaussian
and satisfy
\begin{equation}
    \Cov[\zeta_1, \zeta_2] = \E \left[ \zeta_1 \left( \frac{\zeta_1}{a} + \frac{\zeta}{\sqrt{a}} \right) \right]
    = \frac{1}{a}.
\end{equation}
So, conditional on $\zeta_2$ we have
\begin{equation}
    \zeta_1 \stackrel{d}{=} \frac{\zeta_2}{a} + x \zeta'
\end{equation}
for some $x \geq 0$, where $\zeta' \sim \cN_1^0$ is independent from $\zeta_2$.
Since we know the variance of $\zeta_1$ and $\zeta_2$, we find that $x$ must satisfy
\begin{equation}
    \frac{a}{a^2 - 1} = \frac{1}{a^2} \frac{a}{a^2-1} + x^2,
\end{equation}
and solving this yields $x = \frac{1}{\sqrt{a}}$.
\end{proof}

As mentioned above, to prove the tail bound of Proposition \ref{prop:tailbound}
we will construct a stochastic domination coupling by a modified OU
walk $\left( \xi_k \right)_{k=0}^n$ defined as follows for some $C, \eps > 0$ and $T \in \N$.
First, $\xi_0 \sim \cN_{(b-\frac{1}{2})/\left((b-\frac{1}{2})^2 - 1 \right)}^0$ and inductively for $k = 1, \dotsc, n$ in order we sample
\begin{equation}
    \xi_k \sim \begin{cases}
        \oud_{b-\frac{1}{2}}^{\xi_k-1} & \text{if } k \leq n - \left\lceil C \left( T + \log \frac{1}{\eps} \right) \right\rceil, \\
        \cN_1^{\xi_{k-1}} & \text{if } k > n - \left\lceil C \left( T + \log \frac{1}{\eps} \right) \right\rceil.
    \end{cases}
\end{equation}
The choice of $b - \frac{1}{2}$ above is incidental; any constant in the open interval $(1,b)$ would suffice.

\begin{lemma}
\label{lem:sd_coupling}
There is some $C = C_b > 0$ such that if $\plat(\ham,T,\eps)$ holds
with $T \in \N$ and $\eps > 0$ which satisfy $\eps T^2 \leq 1$, then
for any $\tau \in [b]^n$ the random variable $|\phi_\tau|$ is stochastically
dominated by $|\xi_n|$, where $\left( \phi_\tau \right)_{\tau \in [b]^n} \sim \mubnh$
and $\left( \xi_k \right)_{k=0}^n$ is as just above.
\end{lemma}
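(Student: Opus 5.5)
The plan is to compare the ancestral line $\phi_{\tau^0}, \phi_{\tau^1}, \dotsc, \phi_{\tau^n}$ of the tree-indexed Markov chain of Definition \ref{def:treemc} with the chain $(\xi_k)$ one step at a time, and to propagate stochastic domination in absolute value by induction on $k$. By Lemma \ref{lem:treemc}, the ancestral line is a Markov chain with $\phi_{\tau^0}=0$ and $\phi_{\tau^k} \sim \walk_{n-k}^{\phi_{\tau^{k-1}}}$. The base case is trivial, since $|\phi_{\tau^0}| = 0 \leq |\xi_0|$.

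For the inductive step I will use the following composition principle. Suppose $X_k \mid X_{k-1} \sim K(X_{k-1},\cdot)$ and $Y_k \mid Y_{k-1} \sim L(Y_{k-1},\cdot)$, where
\begin{enumerate}
    \item for each fixed $\phi$, $|K(\phi,\cdot)|$ is stochastically dominated by $|L(\phi,\cdot)|$, and
    \item the law of $|\rho|$ under $L(\phi,\cdot)$ is stochastically increasing in $|\phi|$.
\end{enumerate}
Then $|X_{k-1}| \leq_{st} |Y_{k-1}|$ implies $|X_k| \leq_{st} |Y_k|$. To see this, first replace $K$ by $L$ using (i) conditionally on the parent, and then use (ii) together with the coupling $|X_{k-1}| \leq |Y_{k-1}|$; the symmetry of $L(\phi,\cdot)$ under $\phi \mapsto -\phi$ handles signs. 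Here $L$ is always a Gaussian kernel, either $\oud_{b-1/2}^\phi$ or $\cN_1^\phi$. Property (ii) for such a kernel is the standard fact that $|\cN_{\sigma^2}^{\mu}|$ is stochastically increasing in $|\mu|$, which follows for instance from Anderson's inequality or a direct monotone-likelihood computation. So the real work is (i).

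For (i), write the density of $\walk_j^\phi$ using \eqref{eq:structure} as
\begin{equation}
    \rho \mapsto \Exp{-\tfrac{\bb_j}{2}\rho^2 + \phi\rho + \rem_j(\rho)}
\end{equation}
up to constants, and apply Lemma \ref{lem:tailcomparison} with $\qua(\rho) = \tfrac{\bb_j}{2}\rho^2 - \phi\rho$. The common non-symmetric linear term goes into $\qua$, so only symmetric pieces enter the $\ext_i$. I split into two regimes according to $j = n-k$.
\begin{itemize}
    \item \emph{Late steps, $j < J$.} Compare with $\cN_1^\phi$, taking $\ext_1 = 0$ and $\ext_2 = -\tfrac{\bb_j-1}{2}\rho^2 + \rem_j(\rho)$. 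Then $\ext_1 - \ext_2 = \tfrac{\bb_j-1}{2}\rho^2 - \rem_j$, which is symmetric and convex because $\rem_j'' \leq \bb_j - 1$ by item \ref{item:rem_symconleqbk} of Proposition \ref{prop:flow}. The same holds for $j=0$, where $\pot_0 = \ham$ is convex and the comparison is direct.
    \item \emph{Early steps, $j \geq J$.} Compare with $\oud_{b-1/2}^\phi$, whose density is $\propto \Exp{-\tfrac{b-1/2}{2}\rho^2 + \phi\rho}$. Take $\ext_1 = \tfrac{\bb_j - b + 1/2}{2}\rho^2$ and $\ext_2 = \rem_j$. Their difference is symmetric and convex as soon as $\rem_j'' \leq \bb_j - b + \tfrac12$, and it suffices that $\sup \rem_j'' \leq \tfrac12$ since $\bb_j > b$.
\end{itemize}
Proposition \ref{prop:shrinking}, which is applicable since $\eps T^2 \leq 1$, gives
\begin{equation}
    \sup \rem_{2T+2+m}'' \leq \frac{C}{b^m \delta^T \eps}.
\end{equation}
This is at most $\tfrac12$ once $m \geq C'(T + \log\tfrac1\eps)$. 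So $J = 2T+2+C'(T+\log\tfrac1\eps) \leq \lceil C(T+\log\tfrac1\eps)\rceil$ for a suitable $C=C_b$, using $T \geq 1$.

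This matches exactly the definition of $(\xi_k)$: OU steps with parameter $b-\tfrac12$ for $k \leq n - \lceil C(T+\log\tfrac1\eps)\rceil$, and standard Gaussian steps afterwards. On the intermediate indices with $J \leq j < \lceil C(\cdot)\rceil$, where $\xi$ takes Gaussian steps, the late-steps comparison still applies, since it holds for every $j$. Running the induction from $k=1$ to $n$ then yields $|\phi_\tau| = |\phi_{\tau^n}| \leq_{st} |\xi_n|$. If $n$ is smaller than the cutoff, all steps are Gaussian and the same argument works.

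I expect the main obstacle to be justifying the composition principle cleanly, that is, checking that property (ii) holds for both kernels and that the comparison in Lemma \ref{lem:tailcomparison} can be applied conditionally on an arbitrary parent value $\phi$ of either sign. Closely related is the index bookkeeping between the depth $k$ and the renormalization level $j = n-k$ when matching the cutoff in the definition of $\xi$. The analytic input is entirely supplied by Proposition \ref{prop:shrinking} and Lemma \ref{lem:tailcomparison}.
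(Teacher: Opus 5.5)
Your proposal is correct and follows essentially the paper's proof. You dominate each step $\walk_{n-k}^\phi$ in absolute value by $\oud_{b-1/2}^\phi$ (once Proposition \ref{prop:shrinking} gives $\sup\rem_{n-k}''\leq\frac{1}{2}$) or by $\cN_1^\phi$ (always) via Lemma \ref{lem:tailcomparison}, and propagate along the ancestral line using that $|\cN_{\sigma^2}^{\mu}|$ is stochastically increasing in $|\mu|$, which the paper packages as Lemma \ref{lem:normal_domination}; your tail-probability form of the induction is just the marginal version of the paper's step-by-step coupling.

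One harmless slip: in the late-step case, $\ext_1=0$ and $\ext_2=-\frac{\bb_j-1}{2}\rho^2+\rem_j$ correspond to $\qua(\rho)=\frac{\rho^2}{2}-\phi\rho$, not the $\qua$ you announced, but only $\ext_1-\ext_2=\frac{\bb_j-1}{2}\rho^2-\rem_j$ enters the lemma, so the conclusion stands.
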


The following fact about Gaussians will be helpful to prove both Lemma \ref{lem:sd_coupling}
and Proposition \ref{prop:coupling} in Section \ref{sec:ou_coupling} below.
As the proof shows, it in fact holds for any symmetric continuous distribution on $\R$,
but we will only need it for $\cN_1^0$.

\begin{lemma}
\label{lem:normal_domination}
If $|a_1| \geq |a_2|$, then $\zeta_1, \zeta_2 \sim \cN_1^0$ may be coupled
so that $|a_1 + \zeta_1| \geq |a_2 + \zeta_2|$ almost surely.
\end{lemma}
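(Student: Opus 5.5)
The plan is to reduce the statement to a comparison of tail probabilities and then apply the standard quantile coupling. By the symmetry $a \mapsto -a$ and $\zeta \mapsto -\zeta$ (which preserves $\cN_1^0$), we may assume $a_1, a_2 \geq 0$. It then suffices to show that $|a_1+\zeta|$ stochastically dominates $|a_2+\zeta|$ for $\zeta \sim \cN_1^0$. Concretely, we want
\begin{equation}
    \P\left[ |a_1 + \zeta| \geq t \right] \geq \P\left[ |a_2 + \zeta| \geq t \right]
\end{equation}
for all $t \geq 0$. Once this holds, the coupling comes from taking $U$ uniform on $[0,1]$ and feeding it into the quantile functions of $|a_1+\zeta_1|$ and $|a_2+\zeta_2|$, as in \cite[Theorem 4.3]{R24Modern} (already used in Lemma \ref{lem:tailcomparison}). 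This produces $|a_1+\zeta_1| \geq |a_2+\zeta_2|$ almost surely. We can then realize $\zeta_1, \zeta_2$ themselves with the correct marginals by attaching independent signs chosen according to the conditional law of the sign of $a_i + \zeta_i$ given $|a_i+\zeta_i|$. This randomization leaves the absolute values untouched.

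For the tail comparison, I would show that $g(a) = \P[|a+\zeta| < t] = \int_{-t}^{t} p(x-a)\,dx$ is nonincreasing in $a \geq 0$, where $p$ is the standard normal density. Differentiating gives
\begin{equation}
    g'(a) = p(-t-a) - p(t-a) = p(t+a) - p(t-a),
\end{equation}
using the symmetry of $p$. For $a, t \geq 0$ we have $|t+a| \geq |t-a|$. Since $p$ is symmetric and decreasing in $|x|$, this gives $g'(a) \leq 0$. Hence $\P[|a+\zeta| \geq t]$ is nondecreasing in $a \geq 0$, which yields the displayed inequality because $a_1 \geq a_2 \geq 0$.

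As the remark before the lemma suggests, only symmetry and unimodality of $p$ are used here, with continuity needed to make the quantile coupling clean. There is no real obstacle. The only point needing care is producing an actual coupling of $\zeta_1$ and $\zeta_2$, not just of their absolute values, and the sign-randomization step handles that. Alternatively, one can simply read the lemma as asserting the coupling of the absolute values, which is all that later uses need.
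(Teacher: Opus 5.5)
Your proof is correct, but it follows a different route from the paper's.

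The paper builds the coupling explicitly. It samples a uniform point $(x,y)$ under the graph of the density $f_1$ of $a_1+\zeta_1$. If the point also lies under the density $f_2$ of $a_2+\zeta_2$, it sets $a_1+\zeta_1=a_2+\zeta_2=x$; otherwise it sets $a_2+\zeta_2=a_1+a_2-x$. Reflection through the midpoint $\frac{a_1+a_2}{2}$ swaps the two densities, so it carries the excess of $f_1$ over $f_2$ onto the excess of $f_2$ over $f_1$. On that excess region, which lies on the $a_1$ side of the midpoint, one checks directly that $|a_1+a_2-x|\le|x|$.

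You instead prove the tail comparison analytically, via $\frac{d}{da}\P[|a+\zeta|<t]=p(t+a)-p(t-a)\le 0$. You then get the coupling abstractly from the quantile coupling plus an independent sign randomization. Your route is modular and mirrors how Lemma \ref{lem:tailcomparison} is concluded. Its cost is that the coupling is non-explicit and needs the extra sign step to produce the signed pair.

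The paper's construction gives the signed pair in one stroke and is also a maximal coupling. As a result, the same picture (Figure \ref{fig:gaussians}) yields the bound $\dtv(\cN_1^{a_1},\cN_1^{a_2})\le 1-2\,\cN_1^0[\zeta\ge |a_1-a_2|/2]$, which is reused in Lemma \ref{lem:ou_coalescence}.

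You correctly identify symmetry \emph{and} unimodality of the density as what is used. The paper's remark that the lemma holds for any symmetric continuous distribution does not say this, and it overstates matters. The reflection argument also needs the density to decrease in $|x|$, so that the excess of $f_1$ sits on the $a_1$ side of the midpoint. The statement itself fails without unimodality. Take a symmetric density concentrated near $\pm 10$, with $a_1=10$ and $a_2=0$. Then $\P[|a_1+\zeta_1|\ge 5]\approx\frac12$ while $\P[|a_2+\zeta_2|\ge 5]\approx 1$, so no such coupling exists.
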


\begin{proof}[Proof of Lemma \ref{lem:normal_domination}]
The densities $f_1$ and $f_2$ of $a_1 + \zeta_1$ and $a_2 + \zeta_2$ are plotted in Figure \ref{fig:gaussians}.
Let us sample a uniformly random point $(x,y)$ from the area under the curve $f_1$; if it also lies under the curve
$f_2$, then we set $a_1 + \zeta_1 = x = a_2 + \zeta_2$.
If not, then we set $a_1 + \zeta_1 = x$ and $a_2 + \zeta_2 = a_1+a_2 - x$; see Figure \ref{fig:gaussians}.
Then $\zeta_1$ and $\zeta_2$ are both distributed according to $\cN_1^0$.
In the first case we have $|a_1 + \zeta_1| = |a_2 + \zeta_2|$.
In the second case the signs of $x$, $a_1$, and $a_1 + a_2$ all agree,
using the fact that $|a_1| \geq |a_2|$.
So we have
\begin{equation}
    \left| a_1 + a_2 - x \right| \leq \min \left\{ \left| a_1 + a_2 \right|, |x| \right\}
    \leq |x|,
\end{equation}
which finishes the proof.
\end{proof}

\begin{figure}
    \centering
    \includegraphics[width=0.95\textwidth]{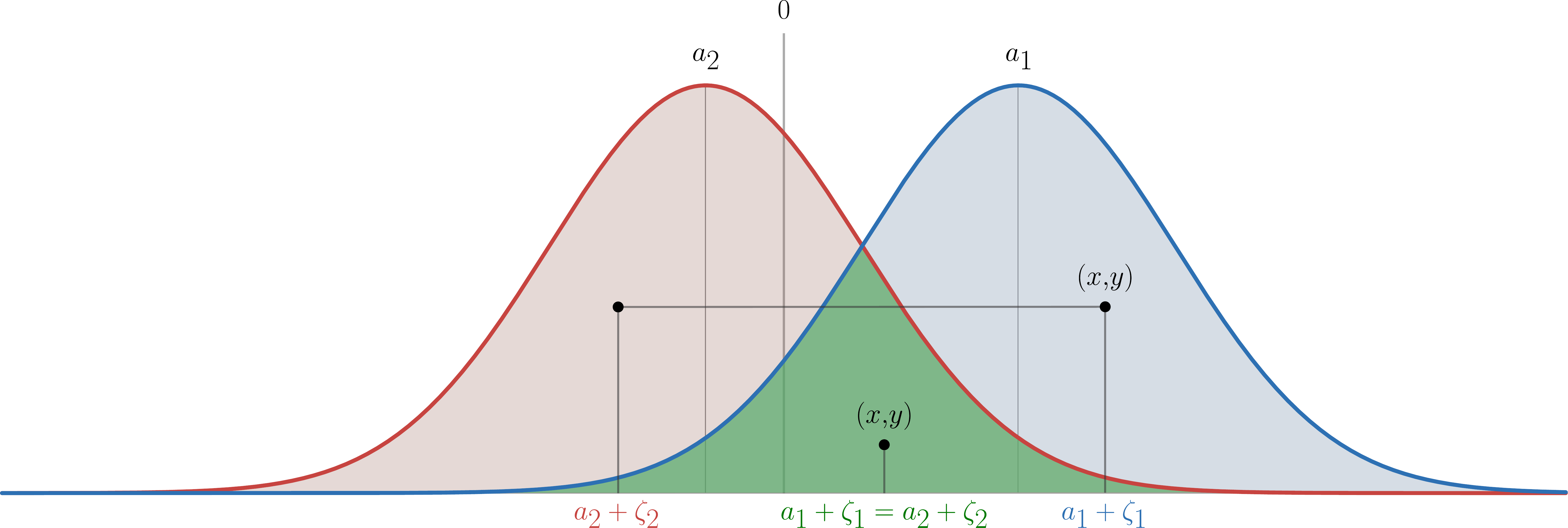}
    \caption{
    The coupling described in the proof of Lemma \ref{lem:normal_domination}.
    A point $(x,y)$ is sampled under the density of $\cN_1^{a_1}$ (blue and green area).
    If it lies under both densities (green area), then we set $a_1 + \zeta_1 = x = a_2 + \zeta_2$.
    Otherwise, we reflect the point to the red area, and we still obtain $|a_1 + \zeta_1| \geq |a_2 + \zeta_2|$.
    The same coupling also shows that the total variation distance between
    $\cN_1^{a_1}$ and $\cN_1^{a_2}$ is at most $1 - 2 \cdot \cN_1^0 \left[ \zeta \geq \frac{|a_1-a_2|}{2} \right]$,
    which will be used in the proof of Lemma \ref{lem:ou_coalescence}.
    }
    \label{fig:gaussians}
\end{figure}

\begin{proof}[Proof of Lemma \ref{lem:sd_coupling}]
We will build the stochastic domination coupling by induction.
By Proposition \ref{prop:shrinking}, the definition of $\bb_\ell$,
and the fact that $T \in \N$ so that $T \geq 1$, there is some $C = C_b > 0$ such that
for all $\ell \in \{0,\dotsc,n\}$ with
$\ell \geq \left\lceil C \left( T + \log \frac{1}{\eps} \right) \right\rceil$
we have $\inf_{\phi \in \R} \pot_\ell''(\phi) \geq b - \frac{3}{2}$.
So we find that
\begin{equation}
    - \left( b - \frac{3}{2} \right) \frac{\rho^2}{2}
    + \pot_\ell(\rho)
\end{equation}
is convex and symmetric for such $\ell$, (using item \ref{item:rem_symconleqbk} of Proposition \ref{prop:flow}
for the symmetry), and thus by Definition \ref{def:wk} and Lemma \ref{lem:tailcomparison}
the renormalized walk measure $\walk_\ell^\phi$ is stochastically dominated in absolute
value by the measure $\oud_{b-\frac{1}{2}}^{\phi}$, which has density
\begin{equation}
    g(\rho) \propto \Exp{- \frac{b - \frac{1}{2}}{2} \left( \rho - \frac{\phi}{b - \frac{1}{2}} \right)^2}
    \propto \Exp{- \frac{(\rho-\phi)^2}{2} - \frac{b - \frac{3}{2}}{2} \rho^2 }.
\end{equation}
If on the other hand $\ell < \left\lceil C \left( T + \log \frac{1}{\eps} \right) \right\rceil$,
we instead have stochastic domination in absolute value simply by $\cN_1^\phi$,
also by Lemma \ref{lem:tailcomparison}, since $\pot_\ell$ is convex and symmetric for all $\ell$ by item \ref{item:rem_symconleqbk}
of Proposition \ref{prop:flow}.

So we begin constructing our stochastic domination coupling by sampling
\begin{equation}
    \xi_0 \sim \cN_{(b-\frac{1}{2})/\left( (b-\frac{1}{2})^2 - 1 \right)}^0 ;
\end{equation}
since $\phi_{\tau^0} = \phi_\emptyset = 0$
we indeed have $|\xi_0| \geq |\phi_{\tau^0}|$.
Then inductively suppose that $|\xi_{k-1}| \geq |\phi_{\tau^{k-1}}|$.

First, if $k \leq n - \left\lceil C \left( T + \log \frac{1}{\eps} \right) \right\rceil$,
then $\xi_k \sim \oud_{b-\frac{1}{2}}^{\xi_{k-1}}$, and so by Lemma \ref{lem:normal_domination}
and the induction hypothesis, $|\xi_k|$ stochastically dominates $|\zeta_k|$,
where $\zeta_k \sim \oud_{b-\frac{1}{2}}^{\phi_{\tau^{k-1}}}$.
And by the above discussion, $|\zeta_k|$ stochastically dominates $|\phi_{\tau^k}|$,
so $|\xi_k|$ may be coupled with $|\phi_{\tau^k}|$ so that $|\xi_k| \geq |\phi_{\tau^k}|$.

On the other hand, if $k > n - \left\lceil C \left( T + \log \frac{1}{\eps} \right) \right\rceil$,
then $\xi_k \sim \cN_1^{\xi_{k-1}}$ and so again by Lemma \ref{lem:normal_domination}
and the induction hypothesis, $|\xi_k|$ stochastically dominates $|\zeta_k|$,
where $\zeta_k \sim \cN_1^{\phi_{\tau^{k-1}}}$, and again this $|\zeta_k|$ stochastically
dominates $|\phi_{\tau^k}|$, completing the inductive step.

So we end up with a coupling such that $|\xi_n| \geq |\phi_{\tau^n}| = |\phi_\tau|$.
\end{proof}

With this coupling in hand, we may now finish the proof of the Gaussian tail bound.

\begin{proof}[Proof of Proposition \ref{prop:tailbound}]
As mentioned in Remark \ref{rmk:epsTsquared}, we may 
shrink $\eps$ until we have $\eps \leq e^{-T}$ since this does not change the bound
we will derive (up to constants), and this allows the assumption $\eps T^2 \leq 1$
in Lemma \ref{lem:sd_coupling} to be satisfied, even if it was not satisfied
before this adjustment.

Since $|\xi_n|$ as defined above Lemma \ref{lem:sd_coupling} stochastically dominates
$|\phi_\tau|$ by that lemma, it suffices to prove a tail bound for $|\xi_n|$ instead.
But $\xi_n$ is a Gaussian and we can easily find its variance.
First, by Lemma \ref{lem:oud_stationary} and the definition of $\left( \xi_k \right)$,
we have
\begin{equation}
    \xi_{n - \left\lceil C \left( T + \log \frac{1}{\eps} \right) \right\rceil}
    \sim \cN_{(b-\frac{1}{2})/\left( (b-\frac{1}{2})^2 - 1 \right)}^0,
\end{equation}
and the remaining $\left\lceil C \left( T + \log \frac{1}{\eps} \right) \right\rceil$
increments are independent standard Gaussians.
Thus the variance of $\xi_n$ is exactly
\begin{equation}
    \frac{b-\frac{1}{2}}{\left( b - \frac{1}{2} \right)^2 - 1}
    + \left\lceil C \left( T + \log \frac{1}{\eps} \right) \right\rceil
    \leq C \left( T + \log \frac{1}{\eps} \right),
\end{equation}
after adjusting the constant, since $T \in \N$ so $T \geq 1$.
This finishes the proof.
\end{proof}
\subsection{Total variation distances between walk steps}
\label{sec:ou_steps}

In this section we prepare to prove Proposition \ref{prop:coupling} by deriving bounds on the total variation
distances between steps of the walks $\left( \phi_\tau \right)_{\tau \in \treebn}$ and
$\left( \psi_\tau \right)_{\tau \in \treebn}$ in the coupling of Definition \ref{def:joint}.
In the sequel, we will use $\dtv$ to denote the total variation distance between probability distributions.

First, in Lemma \ref{lem:steptv_bk} below, we compare $\walk_k^\phi$ not with $\oud_b^\phi$
but with a different Gaussian distribution, namely $\oud_{\bb_k}^\phi$,
under a smallness assumption on $\rem_k''$,
where $\rem_k$ is the remainder term of Proposition \ref{prop:flow}.
Later in Lemma \ref{lem:steptv_b} we will compare $\oud_{\bb_k}^\phi$ with  $\oud_b^\phi$,
obtaining a bound which is effective when $k$ is large.

\begin{lemma}
\label{lem:steptv_bk}
For all $\eps > 0$, $k \in \N$,
if we have $\sup_{\phi \in \R} \rem_k''(\phi) \leq \eps \leq \bb_k - 1$, then
for all $\phi \in \R$ we have
\begin{equation}
    \dtv \left( \walk_k^\phi, \oud_{\bb_k}^\phi \right) \leq e^{\eps(1+\phi^2)} - 1.
\end{equation}
\end{lemma}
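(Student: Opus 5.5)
We will write both measures as tilts of the same reference. By Definition \ref{def:wk} and \eqref{eq:structure}, the density of $\walk_k^\phi$ is proportional to
\begin{equation}
    \Exp{-\frac{(\rho-\phi)^2}{2} - (\bb_k-1)\frac{\rho^2}{2} + \rem_k(\rho)}
    \propto \Exp{-\frac{\bb_k}{2}\left(\rho - \frac{\phi}{\bb_k}\right)^2 + \rem_k(\rho)},
\end{equation}
by the same completion of the square as in \eqref{eq:ou_source}. The Gaussian factor is exactly the density of $\oud_{\bb_k}^\phi$. Hence $\walk_k^\phi(d\rho) = Z^{-1} e^{\rem_k(\rho)}\,\oud_{\bb_k}^\phi(d\rho)$, where $Z = \oud_{\bb_k}^\phi[e^{\rem_k(\rho)}]$. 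Since adding a constant to $\rem_k$ changes nothing, we normalize $\rem_k(0)=0$. Because $\rem_k$ is symmetric, $\rem_k'(0)=0$. Together with $0 \le \rem_k'' \le \eps$ from item \ref{item:rem_symconleqbk} of Proposition \ref{prop:flow} and the hypothesis, Taylor's theorem gives
\begin{equation}
    0 \le \rem_k(\rho) \le \frac{\eps}{2}\rho^2 \qquad \text{for all } \rho.
\end{equation}

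Next we bound total variation through the density ratio. Write $\nu = \oud_{\bb_k}^\phi$ and $h = e^{\rem_k}/Z$. Then $\dtv = \nu[(1-h)_+]$. Since $\rem_k \ge 0$, we have $Z \ge 1$, so $h \ge e^{\rem_k(\rho)}/Z \ge 1/Z$. Therefore $\dtv \le 1 - 1/Z \le Z - 1$. It remains to bound $Z$. We have $Z \le \nu[e^{\eps\rho^2/2}]$, and this is an explicit Gaussian integral. For $\rho \sim \cN_{\sigma^2}^{\mu}$ with $\sigma^2 = 1/\bb_k$ and $\mu = \phi/\bb_k$,
\begin{equation}
    \nu\left[e^{\eps\rho^2/2}\right] = (1-\eps\sigma^2)^{-1/2}\Exp{\frac{\eps\mu^2}{2(1-\eps\sigma^2)}}.
\end{equation}
This is finite because $\eps\sigma^2 = \eps/\bb_k \le (\bb_k-1)/\bb_k < 1$.

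The main, though routine, obstacle is bounding this expression by $e^{\eps(1+\phi^2)}$ uniformly in the allowed range $\eps \le \bb_k - 1$. Here $t := \eps/\bb_k \le 1 - 1/\bb_k$, and $\bb_k \le b+1$. For the prefactor we want $-\frac12\log(1-t) \le \eps = t\bb_k$. Since $-\log(1-t)/t$ is increasing, it suffices to check this at $t = 1-1/\bb_k$, where it reads $\frac12\log \bb_k \le \bb_k - 1$, which is true. For the exponent, $\frac{\eps\phi^2}{2\bb_k^2(1-t)}$ and $1-t \ge 1/\bb_k$ give at most $\frac{\eps\phi^2}{2\bb_k} \le \eps\phi^2$. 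Combining yields $Z \le e^{\eps(1+\phi^2)}$, and hence $\dtv(\walk_k^\phi, \oud_{\bb_k}^\phi) \le e^{\eps(1+\phi^2)} - 1$. If the constants turn out tighter than expected, the fallback is to use $1-1/Z$ rather than $Z-1$, which only gives more room.
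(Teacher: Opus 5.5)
Your proof is correct and is essentially the paper's argument. Both write $\walk_k^\phi$ as the tilt of $\oud_{\bb_k}^\phi$ by $e^{\rem_k(\rho)-\rem_k(0)}$, use $0 \le \rem_k(\rho)-\rem_k(0) \le \eps\rho^2/2$, and control the normalizing constant through the explicit Gaussian integral $\oud_{\bb_k}^\phi[e^{\eps\rho^2/2}]$; the bound on this constant is Lemma \ref{lem:wk_pf} in the paper. The only difference is the final total-variation step: you use the one-sided formula $\dtv = \oud_{\bb_k}^\phi[(1-h)_+] \le 1 - 1/Z \le Z-1$, which needs only the upper bound on $Z$, whereas the paper bounds $\frac{1}{2}\oud_{\bb_k}^\phi\bigl[|1 - A e^{\rem_k(\rho)-\rem_k(0)}|\bigr]$ by treating separately the regions where the ratio is above or below $1$ and using both sides of Lemma \ref{lem:wk_pf}, so your route is somewhat shorter.
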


Definition \ref{def:wk} only gives a formula for the distribution $\walk_k^\phi$ up to a normalizing constant,
so the first order of business is to control this constant in terms of $\rem_k''$.

\begin{lemma}
\label{lem:wk_pf}
Suppose that $\sup_{\phi \in \R} \rem_k''(\phi) \leq \eps \leq \bb_k - 1$.
Then for any $\phi \in \R$, we have
\begin{equation}
    \sqrt{\frac{2\pi}{\bb_k}} \leq
    \int_{-\infty}^\infty \Exp{-\frac{\bb_k}{2} \left( \rho - \frac{\phi}{\bb_k} \right)^2 + \rem_k(\rho) - \rem_k(0)} \,d\rho
    \leq \sqrt{\frac{2\pi}{\bb_k-\eps}} \Exp{\frac{\eps \phi^2}{2 \bb_k}}.
\end{equation}
\end{lemma}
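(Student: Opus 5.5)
The plan is to bound the difference $h(\rho) \coloneqq \rem_k(\rho) - \rem_k(0)$ from both sides by elementary quadratic functions, and then compute the resulting Gaussian integrals in closed form.

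First, record the structure of $\rem_k$. By item \ref{item:rem_symconleqbk} of Proposition \ref{prop:flow}, $\rem_k$ is symmetric and convex, so $\rem_k'(0) = 0$. Convexity then gives $h(\rho) \geq 0$ for all $\rho$. Taylor's theorem with the uniform bound $\rem_k'' \leq \eps$ gives $h(\rho) \leq \frac{\eps}{2}\rho^2$. Consequently the integrand lies between
\begin{equation}
    \Exp{-\frac{\bb_k}{2}\left(\rho - \frac{\phi}{\bb_k}\right)^2}
    \qquad \text{and} \qquad
    \Exp{-\frac{\bb_k}{2}\left(\rho - \frac{\phi}{\bb_k}\right)^2 + \frac{\eps}{2}\rho^2}.
\end{equation}

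The lower bound is immediate: integrating the left-hand function gives exactly $\sqrt{2\pi/\bb_k}$.

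For the upper bound, expand the exponent on the right:
\begin{equation}
    -\frac{\bb_k - \eps}{2}\rho^2 + \phi\rho - \frac{\phi^2}{2\bb_k}.
\end{equation}
Since $\eps \leq \bb_k - 1 < \bb_k$, the quadratic coefficient is negative, so we can complete the square. This gives
\begin{equation}
    -\frac{\bb_k-\eps}{2}\left(\rho - \frac{\phi}{\bb_k - \eps}\right)^2 + \frac{\phi^2}{2}\left(\frac{1}{\bb_k - \eps} - \frac{1}{\bb_k}\right).
\end{equation}
Integrating in $\rho$ produces the factor $\sqrt{2\pi/(\bb_k - \eps)}$, multiplied by the exponential of
\begin{equation}
    \frac{\eps\phi^2}{2\bb_k(\bb_k-\eps)}.
\end{equation}
Because $\eps \leq \bb_k - 1$, we have $\bb_k - \eps \geq 1$, so this exponent is at most $\frac{\eps\phi^2}{2\bb_k}$. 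That is exactly the claimed upper bound.

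There is no real obstacle here. The only points needing care are that the completion of the square requires $\bb_k - \eps \geq 1$, which is where the hypothesis $\eps \leq \bb_k - 1$ enters, and that the bound $h \geq 0$ requires $\rem_k'(0) = 0$, which follows from symmetry and differentiability of $\rem_k$ for $k \geq 1$.
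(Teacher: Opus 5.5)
Your proof is correct and follows essentially the same route as the paper: you sandwich $\rem_k(\rho)-\rem_k(0)$ between $0$ and $\eps\rho^2/2$ using symmetry and $0 \leq \rem_k'' \leq \eps$, then evaluate both Gaussian integrals by completing the square and use $\bb_k - \eps \geq 1$ to simplify the exponent. One small quibble with your closing remark: completing the square only needs $\bb_k - \eps > 0$, and the stronger hypothesis $\bb_k - \eps \geq 1$ is used only (as you correctly do in the body) to bound $\frac{\eps\phi^2}{2\bb_k(\bb_k-\eps)}$ by $\frac{\eps\phi^2}{2\bb_k}$.
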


\begin{proof}[Proof of Lemma \ref{lem:wk_pf}]
First, since $\rem_k$ is symmetric and $\rem_k''(\rho) \geq 0$ for all $\rho \in \R$ by item \ref{item:rem_symconleqbk}
of Proposition \ref{prop:flow}, we have $\rem_k(\rho) - \rem_k(0) \geq 0$ for all $\rho \in \R$, and so the integral in question is
at least
\begin{equation}
    \int_{-\infty}^\infty \Exp{-\frac{\bb_k}{2} \left( \rho - \frac{\phi}{\bb_k} \right)^2} \,d\rho = \sqrt{\frac{2\pi}{\bb_k}}.
\end{equation}
For the upper bound, again using symmetry and the assumed bound on $\rem_k''$, we have
$\rem_k(\rho) - \rem_k(0) \leq \eps \frac{\rho^2}{2}$ for all $\rho \in \R$.
Thus the integral is at most
\begin{align}
    \int_{-\infty}^\infty \Exp{-\frac{\bb_k}{2} \left( \rho - \frac{\phi}{\bb_k} \right)^2 + \eps \frac{\rho^2}{2}} \,d\rho
    &= \int_{-\infty}^\infty \Exp{-\frac{\bb_k - \eps}{2} \rho^2 + \phi \rho - \frac{\phi^2}{2 \bb_k} } \,d\rho \\
    &= \int_{-\infty}^\infty \Exp{-\frac{\bb_k - \eps}{2} \left( \rho - \frac{\phi}{\bb_k - \eps} \right)^2 + \frac{\phi^2}{2(\bb_k - \eps)} - \frac{\phi^2}{2\bb_k}} \,d\rho \\
    &= \sqrt{\frac{2\pi}{\bb_k-\eps}} \Exp{\frac{\eps \phi^2}{2 \bb_k (\bb_k-\eps)}},
    \label{eq:derivation2}
\end{align}
and the final upper bound is obtained from the fact that $\bb_k - \eps \geq 1$.
\end{proof}

\begin{proof}[Proof of Lemma \ref{lem:steptv_bk}]
Let $f_1$ and $f_2$ denote the densities of $\walk_k^\phi$ and $\oud_{\bb_k}^\phi = \cN_{\bb_k^{-1}}^{\bb_k^{-1} \phi}$ respectively.
Namely, 
\begin{equation}
    f_2(\rho) = \sqrt{\frac{\bb_k}{2\pi}} \Exp{- \frac{\bb_k}{2} \left( \rho - \frac{\phi}{\bb_k} \right)^2},
\end{equation}
and by Lemma \ref{lem:wk_pf} there is some constant $A = A_\phi > 0$ satisfying
$\sqrt{\frac{\bb_k-\eps}{\bb_k}} \Exp{-\frac{\eps \phi^2}{2\bb_k}} \leq A \leq 1$
such that
\begin{equation}
    f_1(\rho) = A \cdot \sqrt{\frac{\bb_k}{2\pi}} \Exp{- \frac{\bb_k}{2} \left( \rho - \frac{\phi}{\bb_k} \right)^2 + \rem_k(\rho) - \rem_k(0)}.
\end{equation}
Therefore we have
\begin{align}
    \dtv \left( \walk_k^\phi, \oud_{\bb_k}^\phi \right)
    &= \frac{1}{2} \int_{-\infty}^\infty \left| f_1(\rho) - f_2(\rho) \right| \,d\rho \\
    &= \frac{1}{2} \sqrt{\frac{\bb_k}{2\pi}} \int_{-\infty}^\infty e^{- \frac{\bb_k}{2} \left( \rho - \frac{\phi}{\bb_k} \right)^2}
    \left| 1 - A e^{\rem_k(\rho) - \rem_k(0)} \right| \,d\rho.
\end{align}
Now note that by upper bounding both factors (which are positive) we have
\begin{equation}
    A e^{\rem_k(\rho) - \rem_k(0)} \leq e^{\eps \frac{\rho^2}{2}},
\end{equation}
and lower bounding both factors, using $e^{-x} \geq 1 - x$ and $\sqrt{1 - x} \geq 1 - x$ for $x \in [0,1]$, we have
\begin{align}
    A e^{\rem_k(\rho) - \rem_k(0)}
    &\geq \sqrt{\frac{\bb_k - \eps}{\bb_k}} \Exp{- \frac{\eps \phi^2}{2\bb_k}} \\
    &\geq \left( 1 - \frac{\eps}{\bb_k} \right) \left( 1 - \frac{\eps \phi^2}{2\bb_k} \right)
    \geq 1 - \eps \frac{1 + \phi^2}{\bb_k}.
\end{align}
So, recalling that $\oud_{\bb_k}^\phi = \cN_{\bb_k^{-1}}^{\bb_k^{-1} \phi}$, we have
\begin{equation}
    \dtv \left( \walk_k^\phi, \oud_{\bb_k}^\phi \right)
    \leq \frac{1}{2} \cdot \oud_{\bb_k}^\phi \left[ \max\left\{ e^{\eps \frac{\rho^2}{2}} - 1, \eps \frac{1 + \phi^2}{\bb_k} \right\} \right],
\end{equation}
where the right-hand side denotes an expectation with respect to $\rho \sim \oud_{\bb_k}^\phi$.
Since $\max\{x,y\} \leq x + y$ for $x,y \geq 0$, we find
\begin{equation}
    \dtv \left( \walk_k^\phi, \oud_{\bb_k}^\phi \right)
    \leq \eps \frac{1 + \phi^2}{2\bb_k} + \frac{1}{2} \cdot \oud_{\bb_k}^\phi \left[ e^{\eps \rho^2} - 1 \right].
\label{eq:coup_inter1}
\end{equation}
Now the right-most term may be computed exactly:
\begin{align}
    \oud_{\bb_k}^\phi \left[ e^{\eps \frac{\rho^2}{2}} \right]
    &= \sqrt{\frac{\bb_k}{2\pi}} \int_{-\infty}^\infty \Exp{- \frac{\bb_k}{2} \left( \rho - \frac{\phi}{\bb_k} \right)^2 + \eps \frac{\rho^2}{2}} \,d\rho \\
    &= \sqrt{\frac{\bb_k}{\bb_k-\eps}} \Exp{\frac{\eps \phi^2}{2 \bb_k (\bb_k - \eps)}},
\end{align}
by the same calculation as in \eqref{eq:derivation2}.
Since again $\bb_k - \eps \geq 1$ and since $\sqrt{1 + x} \leq 1 + \frac{x}{2}$ for all $x \geq 0$, we find
\begin{equation}
    \oud_{\bb_k}^\phi \left[ e^{\eps \rho^2} \right]
    \leq \left( 1 + \frac{\eps}{2(\bb_k-\eps)} \right) \Exp{\frac{\eps \phi^2}{2 \bb_k}}
    \leq \left( 1 + \frac{\eps}{2} \right) \Exp{\frac{\eps \phi^2}{2 \bb_k}}
    \leq \Exp{\eps (1+\phi^2)},
\end{equation}
simply bounding $\bb_k > b \geq 1$.
Using the same loose bound on the first term of \eqref{eq:coup_inter1}
and using the fact that $x \leq e^x - 1$, we ultimately find that
\begin{equation}
\label{eq:coup_bound1}
    \dtv \left( \walk_k^\phi, \oud_{\bb_k}^\phi \right)
    \leq e^{\eps(1+\phi^2)} - 1,
\end{equation}
as desired.
\end{proof}

Now we turn to the comparison of the two Gaussian distributions $\oud_{\bb_k}^\phi$ and $\oud_b^\phi$, recalling this 
notation from Definition \ref{def:oud}.
Of course, there are a variety of standard bounds on the total variation distance between two Gaussians
\cite{B87Estimates,DMR18Total}, but we present the calculation here for completeness.

\begin{lemma}
\label{lem:steptv_b} 
For all $k \in \N$ and all $\phi \in \R$, we have
\begin{equation}
    \dtv \left( \oud_b^\phi, \oud_{\bb_k}^\phi \right)
    \leq \frac{1}{2} \sqrt{ \sqrt{\frac{\bb_k}{b}} \Exp{\frac{\bb_k - b}{2 b \bb_k} \phi^2} - 1 }.
\end{equation}
\end{lemma}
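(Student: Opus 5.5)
We bound the total variation distance by a chi-squared–type quantity via Cauchy--Schwarz, since both measures are explicit Gaussians. Write $f_b$ and $f_{\bb_k}$ for the densities of $\oud_b^\phi = \cN_{b^{-1}}^{b^{-1}\phi}$ and $\oud_{\bb_k}^\phi = \cN_{\bb_k^{-1}}^{\bb_k^{-1}\phi}$. We have
\begin{equation}
    \dtv\left(\oud_b^\phi, \oud_{\bb_k}^\phi\right)
    = \frac{1}{2}\, \oud_{\bb_k}^\phi\left[ \left| \frac{f_b(\rho)}{f_{\bb_k}(\rho)} - 1 \right| \right]
    \leq \frac{1}{2} \sqrt{ \oud_{\bb_k}^\phi\left[ \left(\frac{f_b(\rho)}{f_{\bb_k}(\rho)}\right)^2 \right] - 1 }.
\end{equation}
Here we use that $\oud_{\bb_k}^\phi[f_b/f_{\bb_k}] = 1$. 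It is convenient to take the reference measure to be $\oud_{\bb_k}^\phi$, the narrower Gaussian, since $\bb_k > b$. Then $(f_b/f_{\bb_k})^2 f_{\bb_k}$ has Gaussian exponent coefficient $-(2b - \bb_k)/2$ in $\rho^2$. This is integrable provided $2b > \bb_k$, which holds because $\bb_k \leq b+1 < 2b$ for $b \geq 2$. So it remains only to evaluate $\int f_b^2/f_{\bb_k}\,d\rho$ in closed form.

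The computation is a single Gaussian integral, done as in \eqref{eq:ou_source}. Both densities arise from the exponent $-\frac{(\rho-\phi)^2}{2} - (a-1)\frac{\rho^2}{2}$ with $a \in \{b, \bb_k\}$, normalized. This gives
\begin{equation}
    f_a(\rho) = \sqrt{\tfrac{a}{2\pi}}\, \Exp{-\tfrac{a}{2}\rho^2 + \phi\rho - \tfrac{\phi^2}{2a}}.
\end{equation}
Hence
\begin{equation}
    \frac{f_b^2}{f_{\bb_k}} = \frac{b}{\sqrt{2\pi \bb_k}}\, \Exp{-\tfrac{2b-\bb_k}{2}\rho^2 + \phi\rho - \tfrac{\phi^2}{b} + \tfrac{\phi^2}{2\bb_k}}.
\end{equation}
Completing the square, the integral over $\rho$ equals
\begin{equation}
    \frac{b}{\sqrt{\bb_k(2b-\bb_k)}}\, \Exp{\phi^2\left(\tfrac{1}{2(2b-\bb_k)} - \tfrac{1}{b} + \tfrac{1}{2\bb_k}\right)}.
\end{equation}
The $\phi^2$ coefficient simplifies to $\frac{(\bb_k-b)^2}{b\bb_k(2b-\bb_k)}$.

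The last step is to compare this exact expression with the claimed bound $\sqrt{\bb_k/b}\,\exp\left(\frac{\bb_k-b}{2b\bb_k}\phi^2\right)$. Set $x = \bb_k - b \in (0,1]$.

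For the prefactor, we need $b/\sqrt{\bb_k(2b-\bb_k)} \leq \sqrt{\bb_k/b}$. This is equivalent to $b^3 \leq \bb_k^2(2b-\bb_k)$. Write $g(t) = t^2(2b-t)$. Then $g(b) = b^3$ and $g'(t) = t(4b-3t) > 0$ for $t \in [b, b+1]$. So the prefactor inequality holds.

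For the exponent, we need $\frac{x^2}{b\bb_k(2b-\bb_k)} \leq \frac{x}{2b\bb_k}$. This is equivalent to $2x \leq 2b - \bb_k = b - x$, i.e.\ $3x \leq b$. Since $x = \bb_k - b = \frac{b-1}{b^k-1} \leq 1$ and $b \geq 3$, this holds whenever $b\ge3$. For $b = 2$ it holds once $x \leq 2/3$, i.e.\ for $k \geq 2$.

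The remaining case, $b=2$ and $k=1$, is where I expect the main obstacle. There $x=1$, so $\bb_1 = 3$ and $2b - \bb_1 = 1$, and the exact exponent $\phi^2/6$ exceeds the claimed $\phi^2/12$. For this case I would avoid Cauchy--Schwarz in the form above and use instead the Hellinger bound $\dtv \leq \sqrt{1 - \mathrm{BC}^2}$, where $\mathrm{BC}$ is the Bhattacharyya coefficient. It is also a closed-form Gaussian integral, and its $\phi^2$ coefficient is smaller. Alternatively, I would run the Cauchy--Schwarz step with $\oud_b^\phi$ as the reference measure (its density $f_{\bb_k}^2/f_b$ is always integrable) and check which version of the inequality gives the stated exponent $\frac{\bb_k-b}{2b\bb_k}$. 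Checking that the chosen route produces exactly the stated constants in all cases is the delicate point; the rest is routine Gaussian algebra.
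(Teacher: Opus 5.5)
Your $\chi^2$ computation with $\oud_{\bb_k}^\phi$ as the reference measure is correct: integrability, the closed form, and the exponent $\frac{(\bb_k-b)^2}{b\bb_k(2b-\bb_k)}$ all check out. It proves the lemma whenever $3(\bb_k-b)\le b$. As you suspected, however, it cannot cover $b=2$, $k=1$, and this is a real hole rather than a bookkeeping issue. There your bound is $\frac12\sqrt{\frac{2}{\sqrt3}e^{\phi^2/6}-1}$, while the target is $\frac12\sqrt{\sqrt{3/2}\,e^{\phi^2/12}-1}$. Your bound is the larger one for roughly $0.7<\phi^2<17$, and in that range the target is still below $1$; at $\phi^2=4$ your bound is about $0.56$ against a target of about $0.42$. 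So the trivial bound $\dtv\le 1$ does not rescue it. The proposal ends by listing two possible repairs without carrying either out, so as written the lemma is not proved in this case.

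There is also a smaller slip. When $b=2$, $g'(t)=t(4b-3t)$ is negative on $(8/3,3]$, so your monotonicity argument for the prefactor is wrong there. The inequality itself survives, because $g$ is concave on $[b,b+1]$ and $g(b+1)=b^3+b^2-b-1\ge b^3$.

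The repair you list second is exactly the paper's proof, and it removes all casework. Put the reference on the wider Gaussian. With $L=f_{\bb_k}/f_b$ one has $\oud_b^\phi[L]=1$ and $2\dtv=\oud_b^\phi[|1-L|]$, so Jensen gives $4\dtv^2\le\oud_b^\phi[L^2]-1$. The same completing-the-square gives
\[
\oud_b^\phi[L^2] = \frac{\bb_k}{\sqrt{b(2\bb_k-b)}}\,\Exp{\frac{(\bb_k-b)^2}{b\bb_k(2\bb_k-b)}\phi^2}.
\]
Now compare with $\sqrt{\bb_k/b}\,e^{\frac{\bb_k-b}{2b\bb_k}\phi^2}$. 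The prefactor comparison reduces to $b\le\bb_k$, and the exponent comparison reduces to $2(\bb_k-b)\le 2\bb_k-b$; both hold for every $b\ge 2$, $k\ge 1$.

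The paper does not even evaluate this exactly. It expands $(1-L)^2$ and uses $\oud_b^\phi[e^{-(\bb_k-b)\rho^2}]\le\oud_b^\phi[e^{-(\bb_k-b)\rho^2/2}]$. After that the right-hand side collapses to precisely $\sqrt{\bb_k/b}\,e^{\frac{\bb_k-b}{2b\bb_k}\phi^2}-1$.

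Your Hellinger alternative would also close the bad case. It amounts to checking $\sqrt{3/2}\,e^{u/12}+\frac{8\sqrt6}{5}e^{-u/60}\ge 5$ for $u\ge 0$. This holds because the function is convex with positive derivative at $u=0$, where its value is about $5.14$. But it needs its own verification, whereas the reference swap works uniformly.
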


\begin{proof}[Proof of Lemma \ref{lem:steptv_b}]
We have 
\begin{align}
    2 \dtv \left( \oud_b^\phi, \oud_{\bb_k}^\phi \right)
    &= \int_{-\infty}^\infty \left|
        \sqrt{\frac{b}{2\pi}} e^{- \frac{b}{2} \left( \rho - \frac{\phi}{b} \right)^2}
        - \sqrt{\frac{\bb_k}{2\pi}} e^{- \frac{\bb_k}{2} \left( \rho - \frac{\phi}{\bb_k} \right)^2}
    \right| \,d\rho \\
    &= \sqrt{\frac{b}{2\pi}} \int_{-\infty}^\infty e^{-\frac{b}{2} \left( \rho - \frac{\phi}{b} \right)^2}
    \left|
        1 - \sqrt{\frac{\bb_k}{b}} e^{- \frac{\bb_k}{2} \left( \rho - \frac{\phi}{\bb_k} \right)^2 + \frac{b}{2} \left( \rho - \frac{\phi}{b} \right)^2}
    \right| \,d\rho \\
    &= \oud_b^\phi \left[ \left|
        1 - \sqrt{\frac{\bb_k}{b}} \Exp{- \frac{\bb_k}{2} \rho^2 + \rho \phi - \frac{\phi^2}{2 \bb_k}
        + \frac{b}{2} \rho^2 - \rho \phi + \frac{\phi^2}{2 b} }
    \right| \right] \\
    &= \oud_b^\phi \left[ \left|
        1 - \sqrt{\frac{\bb_k}{b}} \Exp{- \frac{\bb_k - b}{2} \rho^2 + \frac{\bb_k - b}{2 b \bb_k} \phi^2 }
    \right| \right],
\end{align}
where the latter denotes the expectation under $\rho \sim \oud_b^\phi$.
Now by Jensen's inequality we find that
\begin{align}
    4 \dtv \left( \oud_b^\phi, \oud_{\bb_k}^\phi \right)^2
    &\leq \oud_b^\phi
        \left[ \left( 1 - \sqrt{\frac{\bb_k}{b}} \Exp{- \frac{\bb_k-b}{2} \rho^2 + \frac{\bb_k-b}{2 b \bb_k} \phi^2} \right)^2 \right] \\
    &= 1 - 2 \sqrt{\frac{\bb_k}{b}} \Exp{\frac{\bb_k-b}{2 b\bb_k} \phi^2}
        \cdot \oud_b^\phi \left[ \Exp{- \frac{\bb_k-b}{2} \rho^2} \right] \\
    &\qquad \quad \; + \frac{\bb_k}{b} \Exp{\frac{\bb_k-b}{b \bb_k} \phi^2}
        \cdot \oud_b^\phi \left[ \Exp{- \frac{2\bb_k-2b}{2} \rho^2 } \right] \\
    &\leq 1 - \left( 2 \sqrt{\frac{\bb_k}{b}} e^{\frac{\bb_k-b}{2 b\bb_k} \phi^2}
        - \frac{\bb_k}{b} e^{\frac{\bb_k-b}{b \bb_k} \phi^2} \right)
        \cdot \oud_b^\phi \left[ \Exp{- \frac{\bb_k-b}{2} \rho^2} \right] \label{eq:normex}
\end{align}
and we may now calculate the expectation in \eqref{eq:normex} explicitly as follows:
\begin{align}
    \oud_b^\phi \left[ \Exp{- \frac{\bb_k - b}{2} \rho^2} \right]
    &= \sqrt{\frac{b}{2\pi}} \int_{-\infty}^\infty
        \Exp{- \frac{b}{2} \left( \rho - \frac{\phi}{b} \right)^2 - \frac{\bb_k - b}{2} \rho^2} \,d\rho \\
    &= \sqrt{\frac{b}{2\pi}} \int_{-\infty}^\infty
        \Exp{- \frac{b}{2} \rho^2 + \rho \phi - \frac{\phi^2}{2b} - \frac{\bb_k - b}{2} \rho^2} \,d\rho \\
    &= \sqrt{\frac{b}{2\pi}} \int_{-\infty}^\infty
        \Exp{- \frac{\bb_k}{2} \left( \rho - \frac{\phi}{\bb_k} \right)^2 + \frac{\phi^2}{2\bb_k} - \frac{\phi^2}{2b}} \,d\rho \\
    &= \sqrt{\frac{b}{\bb_k}} \Exp{- \frac{\bb_k - b}{2 b \bb_k} \phi^2}.
\end{align}
So we ultimately find that
\begin{align}
    4 \dtv \left( \oud_b^\phi, \oud_{\bb_k}^\phi \right)^2
    &\leq 1 - 2 + \sqrt{\frac{\bb_k}{b}} \Exp{\frac{\bb_k - b}{2 b \bb_k} \phi^2},
\end{align}
as required.
\end{proof}
\subsection{Coupling all of the initial steps}
\label{sec:ou_coupling}

Now we will use the total variation distance bounds derived above
to prove Proposition \ref{prop:coupling}.
Throughout this section we will use $\left( \phi_\tau \right)_{\tau \in \treebn}$
to denote the tree-indexed Markov chain of Definition \ref{def:treemc}, and
we will use $\left( \psi_\tau \right)_{\tau \in \treebn}$ to denote the branching 
OU walk which has been coupled to $\left( \phi_\tau \right)_{\tau \in \treebn}$
as in Definition \ref{def:joint}.
Recall that Proposition \ref{prop:coupling} states that if $\plat(\ham,T,\eps)$ holds
then
\begin{equation}
    \P \left[ \phi_{\tau'} = \psi_{\tau'} \text{ for all } \tau' \preceq \tau \text{ with } |\tau'| \leq n - k \right]
    \geq 1 - C e^{- c k + C \left( T + \log \frac{1}{\eps} \right)}
\end{equation}
for all $k \in [n]$ and $\tau \in [b]^n$.

To begin with, since the bounds of Lemmas \ref{lem:steptv_bk} and \ref{lem:steptv_b}
depend on the size of the current $\phi$, we will need to control this position
in a uniform way compatible with the bound of Proposition \ref{prop:shrinking}.
More precisely, since we will have a bound
on the factor in front of the $\phi$ dependence (which is written as $\eps$ in Lemma \ref{lem:steptv_bk}
and as $\frac{\bb_k - b}{2b\bb_k}$ in Lemma \ref{lem:steptv_b}) which is shrinking
exponentially fast as we move towards the root of the tree, the size of $\phi$
may be allowed to \emph{grow} at most exponentially quickly in this direction
without destroying the bound, and the probability of its growth exceeding this exponential
rate is extremely small.

\begin{lemma}
\label{lem:ou_bound}
Let $\left( \psi_\tau \right)_{\tau \in \treebn}$ be the branching OU walk as
in Definition \ref{def:joint}.
For any $\alpha > 1$ there are some $C, c > 0$ such that
\begin{equation}
    \P \left[ \left| \psi_{\tau^k} \right| \leq C \alpha^{n-k} \text{ for all } k \leq n-\ell \right]
    \geq 1 - C e^{-c \alpha^{2\ell}}
\end{equation}
for all $\ell \in \{0,\dotsc,n\}$, and $\tau \in [b]^n$.
\end{lemma}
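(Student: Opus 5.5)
The plan is a union bound over $k \leq n-\ell$ using explicit Gaussian marginals. Along the ancestral line of $\tau$, the sequence $(\psi_{\tau^k})_{k=0}^n$ is a one-dimensional OU walk started at $\psi_{\tau^0} = 0$ with transitions $\oud_b^{\psi}$. So $\psi_{\tau^k}$ is an exact centered Gaussian. Its variance is at most the stationary variance $\frac{b}{b^2-1}$ from Lemma \ref{lem:oud_stationary}: the walk from $0$ can be coupled with a stationary one, or one can compute directly $\Var[\psi_{\tau^k}] = \sum_{j<k} b^{-2j} b^{-1} \leq \frac{b}{b^2-1} \leq 1$. Hence for each fixed $k$,
\begin{equation}
    \P\left[ |\psi_{\tau^k}| > C\alpha^{n-k} \right] \leq 2 \Exp{- \frac{C^2 \alpha^{2(n-k)}}{2}}.
\end{equation}

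Summing over $k \leq n-\ell$ and writing $m = n-k \geq \ell$, the failure probability is at most
\begin{equation}
    2\sum_{m \geq \ell} \Exp{-\tfrac{C^2}{2}\alpha^{2m}}.
\end{equation}
Because $\alpha > 1$, the exponents grow geometrically. We have $\alpha^{2m} - \alpha^{2\ell} \geq \alpha^{2\ell}(\alpha^{2(m-\ell)}-1) \geq (\alpha^2-1)(m-\ell)$, using $\alpha^{2\ell} \geq 1$ and $\alpha^{2j} - 1 \geq j(\alpha^2-1)$. Therefore the sum is bounded by
\begin{equation}
    \Exp{-\tfrac{C^2}{2}\alpha^{2\ell}} \sum_{j \geq 0} \Exp{-\tfrac{C^2}{2}(\alpha^2-1) j},
\end{equation}
which is a convergent geometric series depending only on $\alpha$ and $C$. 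Taking, for instance, $C = 1$ and $c = \frac{1}{2}$, and enlarging the prefactor constant to absorb $2/(1-e^{-(\alpha^2-1)/2})$, gives the claimed bound $C e^{-c\alpha^{2\ell}}$ uniformly in $n$, $\ell$ and $\tau$. Since the statement uses a single letter $C$ for both roles, I will take $C$ to be the maximum of $1$ and that prefactor; enlarging the threshold constant only helps.

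There is no real obstacle here. The only points needing care are the following.
\begin{itemize}
\item The ancestral path of a branching OU walk really is a Markov chain with $\oud_b$ steps, which is immediate from Definition \ref{def:joint} since $\psi$ is sampled marginally as a branching OU walk.
\item The variance bound must be uniform in $k$.
\item The summation must be organized so that the bound depends on $\ell$ only through $\alpha^{2\ell}$, with no dependence on $n$.
\end{itemize}
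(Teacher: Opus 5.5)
Your proof is correct, but it takes a more direct route than the paper. The paper first replaces the ancestral walk started at $0$ by a stationary OU walk $(\theta_k)$ that dominates it in absolute value (Lemma \ref{lem:stationary_domination}). It then uses reversibility (Lemma \ref{lem:oud_stationary}) to generate $\theta_{n-\ell}, \theta_{n-\ell-1}, \dotsc, \theta_0$ backwards. Finally it bounds the failure probability by a first-failure decomposition $\P[A_{n-\ell}^\c] + \sum_k \P[A_{k-1}^\c \mid A_k]$. Each conditional term there is a Gaussian tail estimate for one backward step $\theta_{k-1} = \theta_k/b + \zeta/\sqrt{b}$ given $|\theta_k| \leq C\alpha^{n-k}$; this is where the factor $1 - \frac{1}{\alpha b}$ enters.

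You instead use that each $\psi_{\tau^k}$ is exactly a centered Gaussian with variance $\sum_{j<k} b^{-1-2j} \leq \frac{b}{b^2-1} \leq 1$. You then apply a plain union bound over the marginals, summing the resulting series via Bernoulli's inequality $\alpha^{2j}-1 \geq j(\alpha^2-1)$. This avoids domination, reversibility and conditioning altogether. It gives the same $e^{-c\alpha^{2\ell}}$ tail with explicit constants ($c = \frac{1}{2}$, prefactor $2/(1-e^{-(\alpha^2-1)/2})$). Since the stationary chain's marginals are also explicitly Gaussian, the paper's backward decomposition buys nothing extra here. Its main advantage is that it stays within the stationary-walk framework that the paper reuses in Lemma \ref{lem:ou_coalescence}.

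Your first bullet is also the right justification for the Markov structure. In Definition \ref{def:joint} the maximal coupling has second marginal $\oud_b^{\psi_\tau}$ when $\chi_\tau = 1$. So $\psi_{\tau\sigma} \sim \oud_b^{\psi_\tau}$ conditionally on the full history, including the $\phi$'s and $\chi$'s.
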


For this lemma as well as some later applications in Section \ref{sec:ou_covariance}, it will be helpful to consider instead a stationary
OU walk, which stochastically dominates the walk started at $0$ in absolute value, as the following
lemma shows.
This will allow us to use reversibility in the present application, and a Bernstein-type tail bound of \cite{P15Concentration}
in the latter section (in the proof of Lemma \ref{lem:ou_coalescence} therein).

\begin{lemma}
\label{lem:stationary_domination}
Let $\left( \zeta_k \right)_{k=0}^n$ be an OU walk started at $0$, and let
$\left( \theta_k \right)_{k=0}^n$ be a stationary OU walk.
In other words, $\zeta_0 = 0$ and $\theta_0 \sim \cN_{b/(b^2-1)}^0$
(recalling Lemma \ref{lem:oud_stationary}), and inductively for all $k \in [n]$ we have
$\zeta_k \sim \oud_b^{\zeta_{k-1}}$ and $\theta_k \sim \oud_b^{\theta_{k-1}}$.
There is a Markovian coupling between these walks with $|\zeta_k| \leq |\theta_k|$ for all $k \in \{0,\dotsc,n\}$.
\end{lemma}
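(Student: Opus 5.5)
We construct the coupling inductively, one step at a time, maintaining $|\zeta_k| \leq |\theta_k|$. The first step is initialization: set $\zeta_0 = 0$ and sample $\theta_0 \sim \cN_{b/(b^2-1)}^0$. Then $|\zeta_0| = 0 \leq |\theta_0|$ trivially.

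For the inductive step, suppose $(\zeta_{k-1}, \theta_{k-1})$ have been constructed with $|\zeta_{k-1}| \leq |\theta_{k-1}|$. By Definition \ref{def:oud} we can write the next steps as
\begin{equation}
    \zeta_k = \frac{1}{b}\left( \zeta_{k-1} + \sqrt{b}\,\zeta \right)
    \qquad \text{and} \qquad
    \theta_k = \frac{1}{b}\left( \theta_{k-1} + \sqrt{b}\,\zeta' \right),
\end{equation}
with $\zeta, \zeta' \sim \cN_1^0$. Equivalently, we need to couple $a_2 + \zeta$ and $a_1 + \zeta'$ with $a_2 = \zeta_{k-1}/\sqrt{b}$ and $a_1 = \theta_{k-1}/\sqrt{b}$. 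These satisfy $|a_1| \geq |a_2|$ by the inductive hypothesis. Lemma \ref{lem:normal_domination} then supplies, conditionally on $(\zeta_{k-1},\theta_{k-1})$, a coupling of $\zeta,\zeta'$ with
\begin{equation}
    |a_1 + \zeta'| \geq |a_2 + \zeta| \quad \text{almost surely.}
\end{equation}
Multiplying through by $\sqrt{b}/b > 0$ gives $|\theta_k| \geq |\zeta_k|$. Each marginal has the correct conditional law given the past, because the pair $(\zeta,\zeta')$ depends only on the current positions. Hence the joint process is a Markov chain whose coordinates are OU walks with the stated initial laws, and the coupling is Markovian.

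The only point needing care is measurability and Markovianity of this step-by-step construction. The coupling in the proof of Lemma \ref{lem:normal_domination} is an explicit, measurable function of $(a_1,a_2)$ and auxiliary uniform randomness: sample under the curve, then reflect. It is therefore a valid transition kernel depending only on the current state. I do not expect any real obstacle here, since all the work is already contained in Lemma \ref{lem:normal_domination}.
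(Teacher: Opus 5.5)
Your proposal is correct and follows the paper's proof: start from $|\zeta_0| = 0 \leq |\theta_0|$, write each step as the current position divided by $b$ plus an independent $\cN_1^0/\sqrt{b}$ increment, and apply Lemma \ref{lem:normal_domination} to preserve the ordering in absolute value. Your remark that the coupling kernel depends measurably only on the current state is a small addition that the paper leaves implicit.
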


\begin{proof}[Proof of Lemma \ref{lem:stationary_domination}]
We proceed by induction.
The base case holds by definition, since $|\zeta_0| = 0$.
Now suppose that there is some coupling between $\zeta_k$
and $\theta_k$ such that $|\zeta_k| \leq |\theta_k|$ almost surely.
Then applying Lemma \ref{lem:normal_domination} gives a coupling
between $\zeta_{k+1}$ and $\theta_{k+1}$ which has the same property, since
\begin{equation}
    \zeta_{k+1} \stackrel{d}{=} \frac{\zeta_k}{b} + \frac{\xi_1}{\sqrt{b}}
    \qquad \text{and} \qquad
    \theta_{k+1} \stackrel{d}{=} \frac{\theta_k}{b} + \frac{\xi_2}{\sqrt{b}}
\end{equation}
for $\xi_1, \xi_2 \sim \cN_1^0$, where $\stackrel{d}{=}$ denotes equality in distribution.
\end{proof}

\begin{proof}[Proof of Lemma \ref{lem:ou_bound}]
Let $\left( \theta_k \right)_{k=0}^n$ be a stationary OU walk as in Lemma \ref{lem:stationary_domination}.
Since $\left( \psi_{\tau^k} \right)_{k=0}^n$ is an OU walk started at $0$, by that lemma it suffices to show
instead that there are constants $C, c > 0$ for which
\begin{equation}
\label{eq:phidom_goal}
    \P \left[ \left| \theta_k \right| \leq C \alpha^{n-k} \text{ for all } k \leq n - \ell \right]
    \geq 1 - C e^{- c \alpha^{2\ell}}.
\end{equation}
Let us introduce $C > 0$ whose value will be determined later; $c > 0$ will also be determined later,
depending on this choice of $C$.

Since $\left( \theta_k \right)_{k=0}^n$ is a reversible Markov chain by Lemma \ref{lem:oud_stationary}, we may construct it instead by starting
with $\theta_{n-\ell} \sim \cN_{b/(b^2-1)}^0$ and then sampling $\theta_{k-1} \sim \oud_b^{\theta_k}$ iteratively for $k=n-\ell,\dotsc,1$.
Now for $k \in [n-\ell]$ let us define the events
\begin{equation}
    A_k = \left\{ \left|\theta_k\right| \leq C \alpha^{n-k} \right\}.
\end{equation}
Then if the event in \eqref{eq:phidom_goal} fails there must be some largest $k \leq n - \ell$ for which $A_k$ fails, so
\begin{equation}
\label{eq:sumac}
    \P \left[ \left| \theta_k \right| > C \alpha^{n-k} \text{ for some } k \in \{0,1,\dotsc,n-\ell\} \right]
    \leq \P \left[ A_{n-\ell}^\c \right] + \sum_{k=1}^{n-\ell} \P \left[ A_{k-1}^\c \middle| A_k \right],
\end{equation}
where $A_k^\c$ denotes the complement of $A_k$.
First we have
\begin{align}
    \P \left[ A_{n-\ell}^\c \right] &= \P \left[ \left| \theta_{n-\ell} \right| > C \alpha^\ell \right] \\
    &= \cN_1^0 \left[ |\zeta| > \sqrt{\frac{b^2-1}{b}} C \alpha^\ell \right],
\end{align}
where the latter denotes the probability under $\zeta \sim \cN_1^0$.
By a standard Gaussian tail bound \cite[Theorem 1.2.6]{D19Probability}, this is at most
\begin{equation}
    D \Exp{- c \frac{b^2-1}{b} C^2 \alpha^{2\ell}}
    = D \Exp{- c \alpha^{2\ell}},
\end{equation}
for some $D > 0$ and $c = c_{\alpha,b,C} > 0$ which changes from one side of the equation to the next;
here we use the fact that $\alpha > 1$ to eliminate the extraneous polynomial factors in the tail bound.
Next, we bound $\P \left[ A_{k-1}^\c \middle| A_k \right]$ for $k \in [n]$.
Under the assumption of $A_k$ we have $|\theta_k| \leq C \alpha^{n-k}$.
So, if $\zeta \sim \cN_1^0$ then by another application of the Gaussian tail bound we have
\begin{align}
    \P \left[ A_{k-1}^\c \middle| A_k \right]
    &= \cN_1^0 \left[ \left| \frac{\theta_k}{b} + \frac{\zeta}{\sqrt{b}} \right| > C \alpha^{n-(k-1)} \right] \\
    &\leq \cN_1^0 \left[ \left| \zeta \right| > \sqrt{b} C \alpha^{n-(k-1)} \left( 1 - \frac{1}{\alpha b} \right) \right] \\
    &\leq D \Exp{- c \alpha^{2(n-(k-1))}}
\end{align}
for some $c = c_{\alpha,b,C} > 0$ (using again the fact that $\alpha > 1$) which may not be the same constant as above,
but we may just take the minimum of the two to see that
\begin{equation}
    \P \left[ \left| \theta_k \right| > C \alpha^{n-k} \text{ for some } k \in \{0,1,\dotsc,n-\ell\} \right]
    \leq D \sum_{k=1}^{n-\ell+1} \Exp{- c \alpha^{2 (n-(k-1))}}.
\end{equation}
Finally, since the sequence being summed on the right-hand side decays faster than exponentially as $k$ decreases,
the sum is dominated by the last term (which is the largest).
In other words, there is some $D = D_\alpha > 0$ and $c = c_{\alpha,b,C}$ such that
\begin{equation}
    \P \left[ \left| \theta_k \right| > C \alpha^{n-k} \text{ for some } k \in \{0,1,\dotsc,n-\ell\} \right]
    \leq D \Exp{- c \alpha^{2\ell}}.
\end{equation}
This finishes the proof after adjusting the values of $C$ and $D$ upwards to be the same.
\end{proof}

With this bound in place, we may now apply the total variation bounds on the steps
in Lemmas \ref{lem:steptv_bk} and \ref{lem:steptv_b} to show that the walks $\left( \phi_{\tau^k} \right)_{k=0}^n$
and $\left( \psi_{\tau^k} \right)_{k=0}^n$ may be coupled to stay equal until near the end.

\begin{proof}[Proof of Proposition \ref{prop:coupling}]
As in the proof of Proposition \ref{prop:tailbound}, following Remark \ref{rmk:epsTsquared},
we may shrink the given value of $\eps$ until we have $\eps \leq e^{-T}$ so that
$\eps T^2 \leq 1$.
This allows Proposition \ref{prop:shrinking} to be applied without changing the bound
we will derive presently (up to constants).

Let us also fix some $\alpha > 1$ to be determined later, so that Lemma \ref{lem:ou_bound}
implies that for all $\ell$ we have
\begin{equation}
    \P \left[ \left| \psi_{\tau^k} \right| \leq C \alpha^{n-k}
    \text{ for all } k \leq n-\ell \right] \geq 1 - C \Exp{-c \alpha^{2\ell}}.
\end{equation}
Let us use $B_\ell$ to denote the event inside the probability on the left-hand side above,
and $B_\ell^\c$ to denote its complement.
Now recall the coupling of Definition \ref{def:joint}.
By the construction, our goal is to show
\begin{equation}
    \P \left[ \chi_{\tau^k} = 1 \text{ for all } k \leq n - \ell \right]
    \geq 1 - e^{- c \ell + C \left( T + \log \frac{1}{\eps} \right) }.
\end{equation}
If above event fails then there is some first $k \in [n-\ell]$ for which $\chi_{\tau^k} = 0$
(note that $\chi_{\tau^0} = 1$ always).
So we have
\begin{equation}
\label{eq:coupling_sum}
    \P \left[ \chi_{\tau^k} = 0 \text{ for some } k \leq n - \ell \right]
    \leq \P \left[ B_\ell^\c \right] +
    \sum_{k=1}^{n-\ell} \P \left[ \chi_{\tau^k} = 0 \,\middle|\, \chi_{\tau^{k-1}} = 1 \text{ and } B_\ell \text{ holds} \right].
\end{equation}
Now we will bound the probabilities in the sum on the right-hand side.

If $\chi_{\tau^{k-1}} = 1$ then $\phi_{\tau^{k-1}} = \psi_{\tau^{k-1}}$,
and under the assumption of $B_\ell$ these have absolute value at most $C \alpha^{n-k}$
(adjusting the constant $C$ which depends on $\alpha$ to replace $k-1$ by $k$).
Thus, since we have assumed $\plat(\ham,T,\eps)$,
by Proposition \ref{prop:shrinking} and Lemmas \ref{lem:steptv_bk} and \ref{lem:steptv_b}, since the steps in Definition \ref{def:joint}
are coupled via the coupling which achieves the total variation distance, after adjusting the constants we have
\begin{align}
    \P \left[ \chi_{\tau^k} = 0 \,\middle|\, B_\ell \text{ and } \chi_{\tau^{k-1}} = 1 \right]
    &\leq \dtv \left( \walk_{n-k}^{\psi_{\tau^{k-1}}}, \oud_{\bb_{n-k}}^{\psi_{\tau_{k-1}}} \right)
    + \dtv \left( \oud_{\bb_{n-k}}^{\psi_{\tau^{k-1}}}, \oud_b^{\psi_{\tau_{k-1}}} \right) \\
    &\leq \Exp{\frac{C \alpha^{2\left(n-k\right)}}{b^{n-k} \delta^T \eps}} - 1
    + \frac{1}{2} \sqrt{ \sqrt{\frac{\bb_{n-k}}{b}} \Exp{\frac{\bb_{n-k} - b}{2 b \bb_{n-k}} C \alpha^{2 \left( n-k \right)} } - 1}.
\end{align}
Now note that
\begin{equation}
    \bb_{n-k} - b = \frac{b^{n-k+1} - 1}{b^{n-k} - 1} - \frac{b^{n-k + 1} - b}{b^{n-k} - 1}
    = \frac{b - 1}{b^{n-k} - 1} \leq \frac{C}{b^{n-k}}
\end{equation}
and
\begin{equation}
    \frac{\bb_{n-k}}{b} = \frac{b^{n-k+1} - 1}{b^{n-k+1} - b} \leq 1 + \frac{b - 1}{b^{n-k + 1} - b} \leq \Exp{\frac{C}{b^{n-k}}}
\end{equation}
for some $C = C_b > 0$ as long as $n - k \geq 1$.
So after adjusting the constants again, we have
\begin{equation}
    \P \left[ \chi_{\tau^k} = 0 \,\middle|\, B_\ell \text{ and } \chi_{\tau^{k-1}} = 1 \right]
    \leq \Exp{C \frac{ \left( \alpha^2/b \right)^{n-k} }{\delta^T \eps} } - 1 + \frac{1}{2} \sqrt{ \Exp{C \left( \alpha^2 / b \right)^{n-k}} - 1 }.
\end{equation}
Now let us set $\alpha = b^{1/4}$, although any number in the open interval $(1, \sqrt{b})$ would suffice.
Then as long as $n - k \geq C \left(T + \log \frac{1}{\eps} \right)$ for some $C$, the expressions inside of both exponentials are
at most $1$, so we may use the inequality $e^x - 1 \leq 2 x$ for $0 \leq x \leq 1$, and the fact that $\delta^T \eps \leq 1$, to obtain
\begin{equation}
    \P \left[ \chi_{\tau^k} = 0 \,\middle|\, B_\ell \text{ and } \chi_{\tau^{k-1}} = 1 \right]
    \leq C \frac{\left(\alpha^2 / b \right)^{n-k}}{\delta^T \eps}
    \leq C e^{- c (n-k) + C \left( T + \log \frac{1}{\eps} \right)}
\end{equation}
for some constants $C, c > 0$ depending on $b$.

Now plugging this bound into the sum in \eqref{eq:coupling_sum} under the assumption that $\ell \geq C \left( T + \log \frac{1}{\eps} \right)$,
and applying the bound of Lemma \ref{lem:ou_bound} with $\alpha = b^{1/4}$, we find that
\begin{align}
    \P \left[ \chi_{\tau^k} = 0 \text{ for some } k \leq n - \ell \right]
    &\leq C e^{- c b^{\ell/2}} + C e^{C \left( T + \log \frac{1}{\eps} \right)} \sum_{k=0}^{n-\ell} e^{- c (n-k)} \\
    &\leq C e^{- c b^{\ell/2}} + C e^{C \left( T + \log \frac{1}{\eps} \right)} e^{- c \ell},
\end{align}
after adjusting the constants since the sum is geometric and thus dominated by its largest term, which is the $k=n-\ell$ term.
Since $b^{\ell/2} \geq \ell/2$ for all $\ell \geq 1$, and since the case of $\ell \leq C \left( T + \log \frac{1}{\eps} \right)$
holds trivially by adjusting the constants, this finishes the proof.
\end{proof}
\subsection{Covariance bound}
\label{sec:ou_covariance}

In this section we finally prove Theorem \ref{thm:main_general},
giving a bound on the covariance under $\mubnh$
between $\phi_{\tau_1}$ and $\phi_{\tau_2}$ for $\tau_1, \tau_2 \in [b]^n$.
To do this, we will compare $(\phi_{\tau_1}, \phi_{\tau_2})$ with a pair of
independent variables $(\xi_n^1, \xi_n^2)$ with the same marginals.
More precisely, we construct two independent time-inhomogeneous Markov chains $\left( \xi_k^1 \right)_{k=0}^n$
and $\left( \xi_k^2 \right)_{k=0}^n$ by setting $\xi_0^1 = \xi_0^2 = 0$
and then recursively sampling
\begin{equation}
    \xi_k^1 \sim \walk_{n-k}^{\xi_{k-1}^1}
    \qquad \text{and} \qquad
    \xi_k^2 \sim \walk_{n-k}^{\xi_{k-1}^2}
\end{equation}
independently for each $k = 1, \dotsc, n$ in order.
For the following statement,
we remind the reader that the distance $\d(\tau_1,\tau_2)$ was defined just above the statement of Theorem \ref{thm:main_brw}.

\begin{lemma}
\label{lem:coupling_independent}
There are some constants $C, c > 0$ depending on $b$ such that
if $\plat(\ham,T,\eps)$ holds then there is a coupling between
$\left( \phi_{\tau_1}, \phi_{\tau_2} \right)$ and $\left( \xi_n^1, \xi_n^2 \right)$ so that
\begin{equation}
    \P \left[ \phi_{\tau_1} = \xi_n^1 \text{ and } \phi_{\tau_2} = \xi_n^2 \right]
    \geq 1 - C e^{-c \d(\tau_1,\tau_2) + C (T + \log \frac{1}{\eps})}.
\end{equation}
\end{lemma}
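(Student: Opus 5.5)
Let $\tau'$ be the longest common ancestor of $\tau_1,\tau_2$, with $m = |\tau'| = n - \d(\tau_1,\tau_2)$. The two walks $(\phi_{\tau_1^k})_k$ and $(\phi_{\tau_2^k})_k$ agree up to time $m$ and afterwards evolve by conditionally independent steps $\walk_{n-k}$. The chains $\xi^1,\xi^2$ have the same transition kernels but are independent from the start. The plan is therefore to couple the shared trunk of $(\phi_{\tau_1},\phi_{\tau_2})$, started from $0$ and run to time $m$, with two independent copies $\xi^1_m,\xi^2_m$, so that the three positions $\phi_{\tau'}$, $\xi^1_m$, $\xi^2_m$ coincide with high probability. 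After time $m$ we use identical step randomness on each branch: $\phi_{\tau_i^k}$ is driven by the same kernel samples as $\xi^i_k$ for $k>m$. This works because the Markov kernels are identical, so once positions agree they stay equal.

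To make the three coincide, route everything through the branching OU walk. By Proposition \ref{prop:coupling} applied with $k = n-m' $ for an intermediate time $m' \le m$, each of $\phi_{\tau'}$, $\xi^1$ and $\xi^2$ agrees with an OU walk up to time $m'$ with probability $1 - C e^{-c(n-m') + C(T+\log\frac1\eps)}$. The statement is for the tree chain, but the single-path argument applies verbatim to $\xi^i$, which are the same path law. Choose $m' = m - \lfloor \d/2 \rfloor$ (using $\d = \d(\tau_1,\tau_2)$). Then $n - m' \ge \d/2$, so this error is already of the desired form.

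Between times $m'$ and $m$, three OU walks started from different points must be made to coalesce. The OU kernel $\oud_b^\psi$ contracts by $1/b$ per step. I would couple the independent-start OU walks synchronously, using identical Gaussian noise, for $\d/4$ steps, so that their differences shrink to $|\psi-\psi'| b^{-\d/4}$. Then I would use one maximal coupling step. By the remark under Figure \ref{fig:gaussians}, $\dtv(\cN^{a_1}_{1/b},\cN^{a_2}_{1/b}) \le C\sqrt b|a_1-a_2|$, which makes them equal except with probability $C b^{-\d/4}(|\psi|+|\psi'|)$. Starting positions are controlled by Lemma \ref{lem:ou_bound} or by Gaussian tails via stationary domination (Lemma \ref{lem:stationary_domination}), giving a bound $Ce^{-c\d}$. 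After coalescing, and still before time $m$, run the walks identically. We must also keep the $\phi$ and $\xi^i$ walks equal to their OU counterparts all the way to time $m$, not just to $m'$. This again follows from Proposition \ref{prop:coupling}, since $n-m = \d \ge \d/2$. The coupling is glued using the Markov property at each stage.

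The main obstacle is bookkeeping. Proposition \ref{prop:coupling} is stated for the specific coupling of Definition \ref{def:joint}, where the OU walk is driven jointly with $\phi$, whereas here the OU walk must switch from independent to synchronous noise midway. I would handle this by re-deriving the per-step total variation bound from the proof of Proposition \ref{prop:coupling}, which is a union bound over steps of $\dtv(\walk_{n-k}^\psi,\oud_b^\psi)$ on the event $B_\ell$. The point is that this bound is insensitive to how the OU noise is chosen, as long as each OU step is marginally correct given the past, which holds for synchronous coupling. Summing the three error terms and adjusting constants gives the claimed $1 - Ce^{-c\d + C(T+\log\frac1\eps)}$. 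When $\d \lesssim T+\log\frac1\eps$ the bound is trivial.
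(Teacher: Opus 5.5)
There is a genuine gap at the core of the plan. You aim to make $\phi_{\tau'}$, $\xi^1_m$ and $\xi^2_m$ coincide at the split time $m=n-\d(\tau_1,\tau_2)$. But a coupling as in the lemma must keep $(\xi^1_k)_k$ and $(\xi^2_k)_k$ independent chains; that independence is what is used later, through $\E[\xi^1_n\xi^2_n]=0$, in the proof of Theorem \ref{thm:main_general}. For $m\geq 1$, $\xi^1_m$ and $\xi^2_m$ are then independent with densities, so $\P[\xi^1_m=\xi^2_m]=0$.

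The step that hides this is driving the OU walks attached to $\xi^1$ and $\xi^2$ with identical Gaussian noise between $m'$ and $m$. That makes them, and hence $\xi^1,\xi^2$, dependent, so the pair you produce at time $n$ does not have the law of $(\xi^1_n,\xi^2_n)$. In fact, on your good event both branches are run after time $m$ from one common point with the same kernels as the tree. Your output pair therefore has, up to a small error, the correlated law of $(\phi_{\tau_1},\phi_{\tau_2})$ itself. The argument is circular, and the decorrelation the lemma asserts never occurs in it. So the obstacle is not bookkeeping.

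The missing idea is that decorrelation must be produced \emph{after} the split, during the $\d(\tau_1,\tau_2)$ steps in which the two branches are already conditionally independent. The paper takes the OU path to $\tau_1$ to be literally $\zeta^1$, so the trunk and branch $1$ together form the first independent chain. After time $n-\d(\tau_1,\tau_2)$, the OU path to $\tau_2$, which starts from the trunk value, is coupled to the independent OU walk $\zeta^2$. Lemma \ref{lem:ou_coalescence} shows these coalesce within $m$ further steps except with probability $Ce^{-cm}$. This is combined with Proposition \ref{prop:coupling}, applied to $(\phi,\psi)$ and to $(\xi^i,\zeta^i)$ up to time $n-\ell$. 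Taking $m=\ell=\lfloor \d(\tau_1,\tau_2)/2\rfloor$ ensures coalescence happens before time $n-\ell$, after which each $\phi$-branch is run identically with its $\xi$-chain.

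Your synchronous-noise contraction followed by one maximal-coupling step would be a fine way to get this coalescence, and more elementary than the paper's Bernstein/spectral-gap argument. The post-split noise on branch $2$ is independent of branch $1$, so driving $\zeta^2$ with it keeps $\zeta^2$ independent of $\zeta^1$. But it must be applied between branch $2$ and $\zeta^2$ after the split, not to three walks before it.
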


To prove this we will apply Proposition \ref{prop:coupling} both to couple
$\left( \phi_\tau \right)_{\tau \in \treebn}$ with a branching OU walk
$\left( \psi_\tau \right)_{\tau \in \treebn}$, and additionally to couple the independent linear walks
$\left( \left( \xi_k^1 \right)_{k=0}^n, \left( \xi_k^2 \right)_{k=0}^n \right)$
with a pair of independent linear OU walks
$\left( \left( \zeta_k^1 \right)_{k=0}^n, \left( \zeta_k^2 \right)_{k=0}^n \right)$.
Then the following lemma shows that the OU versions of the branching
and linear walks may be coupled such that the two branches
in $\left( \psi_\tau \right)_{\tau \in \treebn}$ leading to $\tau_1$ and $\tau_2$ may be coupled
to coalesce with $\left( \zeta_k^1 \right)_{k=0}^n$ and $\left( \zeta_k^2 \right)_{k=0}^n$
shortly after they split.

\begin{lemma}
\label{lem:ou_coalescence}
Let $\left( \psi_\tau \right)_{\tau \in \treebn}$ be a branching OU walk
as in Definition \ref{def:joint}, and let $\left( \left( \zeta_k^1 \right)_{k=0}^n, \left( \zeta_k^2 \right)_{k=0}^n \right)$
be two independent linear OU walks started at $\zeta_0^1 = \zeta_0^2 = 0$ and
taking steps $\zeta_k^1 \sim \oud_b^{\zeta_{k-1}^1}$ and $\zeta_k^2 \sim \oud_b^{\zeta_{k-1}^2}$.
Then there are some constants $C, c > 0$ depending only on $b$,
and a Markovian coupling between $\left( \psi_\tau \right)_{\tau \in \treebn}$ and
$\left( \left( \zeta_k^1 \right)_{k=0}^n, \left( \zeta_k^2 \right)_{k=0}^n \right)$,
such that for all $m > 0$ we have
\begin{equation}
    \P \left[ \psi_{\tau_1^k} = \zeta_k^1 \text{ and } \psi_{\tau_2^k} = \zeta_k^2 \text{ for all } k \geq n - \d(\tau_1, \tau_2) + m \right]
    \geq 1 - C e^{- c m}.
\end{equation}
\end{lemma}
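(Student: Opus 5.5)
Let $d = \d(\tau_1,\tau_2)$ and let $\tau'$ be the longest common ancestor, so $|\tau'| = n-d$. The plan is to build the coupling in three stages along the ancestral lines of $\tau_1$ and $\tau_2$, treating the two lines separately. Each stage is Markovian, and at each step the coupling will use the total-variation-optimal coupling, as in Definition \ref{def:joint}.

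\emph{Stage 1 (shared part).} For $k \leq n-d$ the branching walk has $\psi_{\tau_1^k} = \psi_{\tau_2^k}$. The linear walks $\zeta^1$ and $\zeta^2$ will simply run independently of $\psi$ during this stage.

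\emph{Stage 2 (branching and coalescence).} At $k = n-d+1$ the two lines $\psi_{\tau_1^k}$ and $\psi_{\tau_2^k}$ take conditionally independent steps $\oud_b^{\psi_{\tau_i^{k-1}}}$. From then on the pair $(\psi_{\tau_1^k},\psi_{\tau_2^k})$ has exactly the law of two independent linear OU walks started at the common value $\psi_{\tau'}$. So for $i=1,2$ I will couple $\psi_{\tau_i^k}$ with $\zeta_k^i$ step by step. If they already agree, I move them together. Otherwise I use the maximal coupling of $\oud_b^{x}$ and $\oud_b^{y}$, which merges them with probability
\begin{equation}
    1-\dtv\left(\oud_b^x,\oud_b^y\right) \geq 2\,\cN_1^0\left[\zeta \geq \tfrac{|x-y|}{2\sqrt b}\right],
\end{equation}
using the reflection coupling of Lemma \ref{lem:normal_domination} and Figure \ref{fig:gaussians} after rescaling. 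Once merged, the walks stay equal. Independence between $\psi_{\tau_1}$ and $\psi_{\tau_2}$ and between $\zeta^1$ and $\zeta^2$ is preserved, because each line is coupled only with its own $\zeta^i$.

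\emph{Stage 3 (the main estimate).} I need the failure probability to merge within $m$ steps to be at most $Ce^{-cm}$. The difficulty is that $|x-y|$ may be large, so the merge probability at a single step is not uniformly bounded below. I would handle this as follows.
\begin{enumerate}
    \item By Lemma \ref{lem:stationary_domination}, each of the walks $\psi_{\tau_i^k}$ and $\zeta_k^i$ is dominated in absolute value by a stationary OU walk. Writing $\theta_k$ for such a stationary walk, $|\theta_k|^2$ has Gaussian tails uniformly in $k$.
    \item Consider the times $k$ in the window $[n-d+1, n-d+m]$. A Bernstein/Hoeffding-type concentration bound for this Markov chain (the paper cites \cite{P15Concentration}) gives the following: with probability at least $1-Ce^{-cm}$, at least $m/2$ of these times have both walks inside $[-R,R]$ for a suitable constant $R$.
    \item At each such time the merge probability is at least some $p_R>0$. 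Since the step randomness is fresh, the conditional probability of never merging over these good times is at most $(1-p_R)^{m/2}$.
    \item A union bound over $i=1,2$ finishes the argument.
\end{enumerate}

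An alternative to step 2 avoids the concentration inequality. Use the contraction: the difference $x-y$ evolves by a factor $1/b$ per step under a synchronous coupling. So one can first run the walks synchronously for $m/2$ steps, during which the difference shrinks to $|x-y|\, b^{-m/2}$, and then attempt the maximal coupling. The initial difference $|x-y|$ has Gaussian tails by Lemma \ref{lem:stationary_domination}. Hence the failure probability is at most $\P[|x-y| > b^{m/4}] + C b^{-m/4} \leq Ce^{-cm}$. This second route is simpler and is the one I would try first; the expected main obstacle is only verifying that the synchronous-then-maximal coupling remains Markovian and preserves the required independence.
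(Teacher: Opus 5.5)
Your Stage 3 concentration argument is the paper's proof: stationary domination, a Bernstein-type bound for the time spent in $[-R,R]$, then repeated maximal-coupling trials at good times. The one difference is that the paper sets $\psi_{\tau_1^k}=\zeta_k^1$ for \emph{all} $k$, so only the second line has to merge, whereas your Stage 1 makes both lines merge; this costs only a factor $2$. When you write out the trials, index them by the stopping times of successive good times. Do not condition on the future-dependent event of having $m/2$ of them.

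Your preferred synchronous-contraction route is genuinely different and correct in substance.
\begin{itemize}
\item Under the synchronous coupling the difference is multiplied by exactly $1/b$ per step.
\item $\dtv(\oud_b^x,\oud_b^y)\le |x-y|/\sqrt{2\pi b}$.
\item $\E|\psi_{\tau'}-\zeta^i_{n-\d(\tau_1,\tau_2)}|$ is bounded by a constant, since both are OU walks from $0$ with variance at most $\frac{b}{b^2-1}$.
\end{itemize}
So a first-moment bound already gives failure probability $O(b^{-m/2})$, without the tail splitting.

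Markovianity is not the obstacle: the kernels are time-inhomogeneous but depend only on current positions and have the correct marginals. The real issue is the quantifier. The lemma asks for a single coupling that works for every $m>0$, while your schedule (synchronous for $m/2$ steps, then one maximal attempt) depends on $m$. This is harmless for Lemma \ref{lem:coupling_independent}, which uses only $m=\lfloor \d(\tau_1,\tau_2)/2\rfloor$. To prove the statement as written, attempt the maximal coupling only at the times $n-\d(\tau_1,\tau_2)+2^j$ for $j\ge 1$, and couple synchronously at every other unmerged step. The bound $\E|\psi_{\tau_i^k}-\zeta_k^i|\le 2\sqrt{b/(b^2-1)}$ holds at every $k$ regardless of the coupling. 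The last attempt before time $n-\d(\tau_1,\tau_2)+m$ is preceded by at least $m/4-O(1)$ synchronous steps. Hence the walks are unmerged at that time with probability $O(b^{-m/4})$.

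Compared with the paper, your route avoids the Markov-chain Bernstein inequality and the spectral-gap input and gives an explicit rate. The price is that it relies on the exact contraction of the Gaussian OU kernel. The paper's argument uses only the mixing of the chain and yields one time-homogeneous maximal coupling directly.
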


\begin{proof}[Proof of Lemma \ref{lem:ou_coalescence}]
First of all we may simply set $\psi_{\tau_1^k} = \zeta_k^1$ for all $k$.
Then $\psi_{\tau_2^k}$ splits off from $\psi_{\tau_1^k}$ only after $k > n - \d(\tau_1,\tau_2)$,
at which point it does an independent OU walk.
For each $k > n - \d(\tau_1, \tau_2)$, we will couple $\psi_{\tau_2^k} \sim \oud_b^{\psi_{\tau_2^{k-1}}}$
and $\zeta_k^2 \sim \oud_b^{\zeta_{k-1}^2}$ under the optimal coupling which achieves the total variation distance
between these two distributions, and it suffices
to show that under this coupling we have
\begin{equation}
    \P \left[ \psi_{\tau_2^k} = \zeta_k^2 \text{ for some } k \in \left\{ n- \d(\tau_1, \tau_2) + 1, n - \d(\tau_1, \tau_2) + m \right\} \right]
    \geq 1 - C e^{- c m}.
\end{equation}
Now let us bound the total variation distance between $\oud_b^\psi$ and $\oud_b^\zeta$ for any $\psi, \zeta \in \R$.
Consulting Figure \ref{fig:gaussians}, if $\psi \leq \zeta$ we see that, with $\rho \sim \cN_1^0$,
this total variation distance is at most one minus
\begin{equation}
    2 \cdot \cN_1^0 \left[ \frac{\psi}{b} + \frac{\rho}{\sqrt{b}} > \frac{\psi + \zeta}{2 b} \right]
    = 2 \cdot \cN_1^0 \left[ \rho > \frac{\zeta - \psi}{2 \sqrt{b}} \right],
\end{equation}
and similarly for $\zeta \leq \psi$.
Now a standard Gaussian tail lower bound \cite[Theorem 1.2.6]{D19Probability} and the fact that the total variation distance
is increasing in $|\psi - \zeta|$ shows that if $|\psi - \zeta| \leq 2 \alpha \sqrt{b}$ then we have
\begin{equation}
    \dtv \left( \oud_b^\psi, \oud_b^\zeta \right) \leq 1 - 
    2 \left( \alpha^{-1} - \alpha^{-3} \right) e^{- \frac{\alpha^2}{2}}.
\end{equation}
Thus for all $\alpha > 1$ there is some $\eta > 0$ such that if $|\psi - \zeta| \leq 2 \alpha \sqrt{b}$ then $\dtv(\oud_b^\psi, \oud_b^\zeta) \leq 1 - \eta$.
So it suffices to show that under the coupling described above, there are some $\alpha > 1$ and $\gamma, C, c > 0$ for which
\begin{equation}
\label{eq:bernstein_goal}
    \P \left[ | \psi_{\tau_2^k} |, | \zeta_k^2 | \leq \alpha \sqrt{b} \text{ for at least } \gamma m \text{ integers } k \in \{ n - \d(\tau_1, \tau_2) + 1, n - \d(\tau_1, \tau_2) + m \} \right]
    \geq 1 - C e^{-c m}.
\end{equation}
Indeed, if this is the case then with probability $1 - C e^{-c m}$ there will be at least $\gamma m$ independent trials to couple which each have success
probability at least $1 - \eta$, and the probability of all of them failing is at most $C e^{ - c \gamma m}$ for some $C, c > 0$, which would finish the proof.

To show \eqref{eq:bernstein_goal}, we will first use Lemma \ref{lem:stationary_domination} to see that both $\left( \psi_{\tau_2^k} \right)_{k=0}^n$ and $\left( \zeta_k^2 \right)_{k=0}^n$
are stochastically dominated in absolute value by \emph{stationary} OU walks $\left( \theta_k^1 \right)_{k=0}^n$ and $\left( \theta_k^2 \right)_{k=0}^n$
(which are not independent since they are coupled through $\left( \psi_{\tau_2^k} \right)_{k=0}^n$ and $\left( \zeta_k^2 \right)_{k=0}^n$).
Then we will apply the following Bernstein-type inequality of \cite[Theorem 3.3]{P15Concentration}, specifically equation (3.21)
of that result, which shows that
\begin{equation}
    \P \left[ \left| \frac{1}{m} \sum_{k=n-\d(\tau_1,\tau_2)+1}^{n-\d(\tau_1,\tau_2)+m} \ind{|\theta_k^1| \leq \alpha \sqrt{b}}
    - \P \left[ |\theta_0^1| \leq \alpha \sqrt{b} \right] \right| > \lambda \right]
    \leq 2 \Exp{ \frac{- m^2 \lambda^2 \cdot G_b}{4 m + 10 \lambda} },
\end{equation}
where $G_b$ is the spectral gap of the OU walk we are considering.
This is positive by \cite[Proposition 4.1.1]{BGL14Analysis}, so taking $\alpha$ large enough that
\begin{equation}
    \P \left[ |\theta_0^1| \leq \alpha \sqrt{b} \right] > \frac{3}{4}
\end{equation}
and then taking $\lambda \leq \frac{1}{8}$ we find that with probability at least $1 - C e^{- c m}$,
at least $\frac{5}{8} m$ of the integers $k \in \left\{ n - \d(\tau_1,\tau_2) + 1, n - \d(\tau_1,\tau_2) + m \right\}$
satisfy $|\theta_k^1| \leq \alpha\sqrt{b}$.
The same is true for $|\theta_k^2|$, so by the fact that $|\theta_k^1| \geq |\psi_{\tau_2^k}|$ and $|\theta_k^2| \geq |\zeta_k^2|$,
we obtain \eqref{eq:bernstein_goal} with $\gamma = \frac{1}{8}$ for $m$ large enough.
\end{proof}

Now we combine Proposition \ref{prop:coupling} and Lemma \ref{lem:ou_coalescence} to prove
a similar coalescence statement for the time-inhomogeneous walks $\left( \phi_\tau \right)_{\tau \in \treebn}$
and $\left( \left( \xi_k^1 \right)_{k=0}^n, \left( \xi_k^2 \right)_{k=0}^n \right)$, which take steps
according to the renormalized walk measures.

\begin{proof}[Proof of Lemma \ref{lem:coupling_independent}]
First, let $\left( \psi_\tau \right)_{\tau \in \treebn}$ be a branching OU
process starting at $0$ which is coupled to $\left( \phi_\tau \right)_{\tau \in \treebn}$ as in Definition \ref{def:joint}.
Also let $\left( \zeta_k^1 \right)_{k=0}^n$ and $\left( \zeta_k^2 \right)_{k=0}^n$
be independent linear OU walks started at $0$ which are coupled to $\left( \xi_k^1 \right)_{k=0}^n$
and $\left( \xi_k^2 \right)_{k=0}^n$ in the same way, by taking the coupling which maximizes the probability of remaining equal
at each step until the walks separate.
Then Proposition \ref{prop:coupling} (which also applies to the linear walks as they may each be seen as one branch in a tree)
states that for all $\ell \in [n]$ we have
\begin{equation}
\label{eq:coupling1}
    \P \left[
        \phi_{\tau_1^k} = \psi_{\tau_1^k},
        \phi_{\tau_2^k} = \psi_{\tau_2^k},
        \xi_k^1 = \zeta_k^1,
        \text{ and } \xi_k^2 = \zeta_k^2
        \text{ for all } k \leq n - \ell
    \right]
    \geq 1 - C e^{ - c \ell + C \left( T + \log \frac{1}{\eps} \right)}.
\end{equation}
Up to this point, we have not specified the coupling between
$\left( ( \phi_\tau ), ( \psi_\tau ) \right)$ and
$\left( ( \xi_k^1 ), ( \xi_k^2  ), ( \zeta_k^1 ), ( \zeta_k^2 ) \right)$;
the above is simply a union bound across multiple applications of Proposition \ref{prop:coupling}.
Now, however, we will use the Markovian coupling between $\left( \psi_\tau \right)$
and $\left( \left( \zeta_k^1 \right), \left( \zeta_k^2 \right) \right)$ given by Lemma \ref{lem:ou_coalescence},
which satisfies
\begin{equation}
\label{eq:coupling2}
    \P \left[ \psi_{\tau_1^k} = \zeta_k^1 \text{ and } \psi_{\tau_2^k} = \zeta_k^2 \text{ for all } k \geq n - \d(\tau_1, \tau_2) + m \right]
    \geq 1 - C e^{-cm}.
\end{equation}
Now if both events in \eqref{eq:coupling1} and \eqref{eq:coupling2} hold, and we have $n - \d(\tau_1, \tau_2) + m \leq n - \ell$, then
there is some $k$ for which
\begin{equation}
    \phi_{\tau_1^k} = \psi_{\tau_1^k} = \zeta_k^1 = \xi_k^1
    \qquad \text{and} \qquad 
    \phi_{\tau_2^k} = \psi_{\tau_2^k} = \zeta_k^2 = \xi_k^2.
\end{equation}
So, since all of the couplings are Markovian and the subsequent step distributions of $\phi_{\tau_1^k}$ and $\xi_k^1$
as well as $\phi_{\tau_2^k}$ and $\xi_k^2$ are the same, we may switch to coupling these to be equal once they coalesce,
meaning we will end up with $\phi_{\tau_1} = \xi_n^1$ and $\phi_{\tau_2} = \xi_n^2$.

Choosing $m = \ell = \left\lfloor \frac{\d(\tau_1, \tau_2)}{2} \right\rfloor$, we do indeed have
$n - \d(\tau_1,\tau_2) + m \leq n - \ell$, and the probability of the two events \eqref{eq:coupling1} and \eqref{eq:coupling2}
occurring is at least
\begin{equation}
    1 - C e^{-c \left\lfloor \frac{\d(\tau_1, \tau_2)}{2} \right\rfloor + C \left( T + \log \frac{1}{\eps} \right) }
    - C e^{- c \left\lfloor \frac{\d(\tau_1,\tau_2)}{2} \right\rfloor}
    \geq 1 - C e^{- c \d(\tau_1,\tau_2) + C \left( T + \log \frac{1}{\eps} \right)}
\end{equation}
after adjusting the constants.
This finishes the proof.
\end{proof}

Finally we combine the coupling of Lemma \ref{lem:coupling_independent} with the tail bound of Proposition \ref{prop:tailbound}
to conclude the covariance bound in our generalized main theorem.

\begin{proof}[Proof of Theorem \ref{thm:main_general}]
Let $A$ denote the event that $\phi_{\tau_1} = \xi_n^1$ and $\phi_{\tau_2} = \xi_n^2$
under the coupling in Lemma \ref{lem:coupling_independent}, and $A^\c$ denote its complement.
Then we have
\begin{align}
    \mubnh \left[ \phi_{\tau_1} \phi_{\tau_2} \right]
    &= \E \left[ \phi_{\tau_1} \phi_{\tau_2} \left( \ind*{A} + \ind*{A^\c} \right) \right] \\
    &= \E \left[ \xi_n^1 \xi_n^2 \ind*{A} \right] +  \E \left[ \phi_{\tau_1} \phi_{\tau_2} \ind*{A^\c} \right] \\
    &= \E \left[ \xi_n^1 \xi_n^2 \right] - \E \left[ \xi_n^1 \xi_n^2 \ind*{A^\c} \right] + \E \left[ \phi_{\tau_1} \phi_{\tau_2} \ind*{A^\c} \right] \\
    &= \E \left[ \left( \phi_{\tau_1} \phi_{\tau_2} - \xi_n^1 \xi_n^2 \right) \ind*{A^\c} \right],
\end{align}
using the independence and symmetry of $\xi_n^1$ and $\xi_n^2$.
Now applying Cauchy--Schwarz we find that
\begin{align}
    \mubnh \left[ \phi_{\tau_1} \phi_{\tau_2} \right]^2
    &\leq \E \left[ \left( \phi_{\tau_1} \phi_{\tau_2} - \xi_n^1 \xi_n^2 \right)^2 \right] \cdot \P \left[ A^\c \right] \\
    &\leq 2 \left( \E \left[ \phi_{\tau_1}^2 \phi_{\tau_2}^2 \right] + \E \left[ \phi_{\tau_1}^2 \right]^2 \right)
    \cdot C e^{- c \d(\tau_1, \tau_2) + C \left( T + \log \frac{1}{\eps} \right)},
\end{align}
using the inequality $(x+y)^2 \leq 2 (x^2 + y^2)$ as well as independence of $\xi_n^1$ and $\xi_n^2$ and the fact that
$\phi_{\tau_1}, \phi_{\tau_2}, \xi_n^1$, and $\xi_n^2$ all have the same marginal distributions.
Another application of Cauchy--Schwarz yields 
\begin{equation}
    \mubnh \left[ \phi_{\tau_1} \phi_{\tau_2} \right]^2
    \leq 4 C \mubnh \left[ \phi_{\tau_1}^4 \right] \cdot e^{- c \d(\tau_1, \tau_2) + C \left( T + \log \frac{1}{\eps} \right)}.
\end{equation}
Finally we apply the Gaussian tail bound of Proposition \ref{prop:tailbound} which implies that
\begin{equation}
    \mubnh \left[ \phi_{\tau_1}^4 \right] \leq D \left( T + \log \frac{1}{\eps} \right)^2
\end{equation}
for some constant $D = D_b > 0$,
which finishes the proof by putting the above inside the exponent, using the fact that $\log x \leq x$ for $x \geq 0$,
and then adjusting the constants.
\end{proof}
\section{Analysis of initial potentials}
\label{sec:init}

Now we will apply Theorem \ref{thm:main_general} to derive
Theorems \ref{thm:main_brw}, \ref{thm:main_sinhgordon}, and \ref{thm:main_phialpha}.
In all cases, we will just need to verify the condition $\plat(\ham,T,\eps)$
for appropriate choices of $T \in \N$ and $\eps > 0$.
This entails verifying that the potential $\ham$ is confining in the sense of Definition \ref{def:confining},
and showing that $\rem_1''(T) \leq b - \eps$, where $\rem_1$ is the first remainder term
defined in \eqref{eq:r1} in Proposition \ref{prop:flow}.
For this latter point, we will use the fact that
\begin{equation}
\label{eq:remder_walkot}
    \rem_1''(T) = b \cdot \walk_0^T \left[ \left( \rho - \walk_0^T[\rho] \right)^2 \right],
\end{equation}
which is the $k=0$ case of \eqref{eq:2dr_cum}.
Recall also from Definition \ref{def:wk} that $\walk_0^T$ has Radon-Nikodym derivative
$e^{-\ham(\rho)}$ with respect to $\cN_1^T$, a normal distribution with mean $T$ and variance $1$.

\subsection{Small ball events}
\label{sec:init_smallball}

We first consider Theorem \ref{thm:main_brw} which concerns the radius-$r$ small ball event,
i.e.\ the potential
\begin{equation}
    \hamr(\phi) = \begin{cases}
        0 & \text{if } |\phi| \leq r, \\
        \infty & \text{otherwise}
    \end{cases}
\end{equation}
for any $r > 0$.
We begin by showing that this potential is confining; note that in future sections,
this will be much easier, as the only difficulty arises with the fact that here we 
may have $\hamr(\phi) = \infty$.

\begin{lemma}
\label{lem:hamr_confining}
The potential $\hamr$ is confining in the sense of Definition \ref{def:confining}.
\end{lemma}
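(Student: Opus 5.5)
We check the three requirements of Definition \ref{def:confining} in order: convexity and symmetry, that each $J_y$ is an interval, and convexity of $x \mapsto \hamr(y+x) - \hamr(y-x)$ on $J_y$. Convexity and symmetry are immediate, since $\hamr$ is the convex indicator (in the $\{0,\infty\}$ sense) of the symmetric interval $[-r,r]$.

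Fix $y \geq 0$. For $x \geq 0$ we have $\hamr(y-x) < \infty$ iff $|y-x| \leq r$, i.e.\ $x \in [y-r, y+r]$. We also have $\hamr(y+x) < \infty$ iff $x \leq r - y$. We split into two cases.
\begin{enumerate}
    \item If $y \leq r$, then $J_y = [0, r-y] \cup [\max\{0,y-r\}, y+r] = [0, y+r]$, since $y - r \leq 0$. This is an interval.
    \item If $y > r$, the first set is empty, and $J_y = [y-r, y+r]$. This is again an interval.
\end{enumerate}

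Next we verify convexity of $h(x) = \hamr(y+x) - \hamr(y-x)$ on $J_y$, adopting the convention that $\infty - 0 = \infty$. By the definition of $J_y$, at least one term is finite there, so $h$ takes values in $\{-\infty, 0, \infty\}$ and no expression of the form $\infty - \infty$ arises. We will interpret convexity of an extended-valued function in the natural way, through its epigraph. The value $-\infty$ is the delicate point, and this is where the main obstacle lies.
\begin{enumerate}
    \item If $y > r$, then $\hamr(y+x) = \infty$ for all $x \geq 0$, while $\hamr(y-x) = 0$ throughout $J_y = [y-r,y+r]$. Hence $h \equiv \infty$ on $J_y$. This is a degenerate case; it is harmless because $f$ vanishes outside $J_y$ and, in the proof of Lemma \ref{lem:unimodality}, $\walk_0^\phi[\rho] \leq r$ in any case, so this case never arises there.
    \item If $y \leq r$, then $h(x) = 0$ for $x \in [0, r-y]$ (both terms finite), and $h(x) = \infty$ for $x \in (r-y, y+r]$, where $y + x > r$ but $|y - x| \leq r$.
\end{enumerate}
In the second case $h$ is $0$ on an initial subinterval and $+\infty$ afterwards, i.e.\ $h$ is the convex indicator of the interval $[0,r-y]$, so it is convex. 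The value $-\infty$ never occurs, because $x \geq 0$ and $y \geq 0$ give $|y-x| \leq y + x$: whenever $\hamr(y+x)$ is finite, so is $\hamr(y-x)$.

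Finally, we will remark how this matches the use in Lemma \ref{lem:unimodality}. There, what is actually needed is the single sign-change of $\log g(-\eta) - \log g(\eta)$, which becomes $2(\walk_0^\phi[\rho]-\phi)\eta + h(\eta)$. A linear term plus a $\{0,\infty\}$-valued step function that is $0$ and then $\infty$ changes sign at most once, from $\leq 0$ to $\geq 0$. This is exactly the property required by Lemma \ref{lem:sst}.
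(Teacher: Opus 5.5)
Your proof is correct and follows the paper's argument. You use the same case split $y \le r$ versus $y > r$, the same computation $J_y = [0,r+y]$ or $J_y = [y-r,y+r]$, and the same observation that $x \mapsto \hamr(y+x)-\hamr(y-x)$ is either $0$ on $[0,r-y]$ and $+\infty$ on the rest of $J_y$, or identically $+\infty$. The one thing to tighten is the case $y>r$: the definition demands convexity for every $y \ge 0$, so rather than appealing to the later use in Lemma \ref{lem:unimodality}, just state (as the paper does) that the constant $+\infty$ function is convex; under your epigraph convention this is immediate, since its epigraph is empty.
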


\begin{proof}[Proof of Lemma \ref{lem:hamr_confining}]
The symmetry and convexity hypotheses are immediate; it remains to show that for all $y \geq 0$, the set
\begin{equation}
    J_y = \left\{ x \geq 0 : \hamr(y + x) < \infty \text{ or } \hamr(y - x) < \infty \right\}
\end{equation}
is an interval, and that the function $x \mapsto \hamr(y + x) - \hamr(y - x)$ is convex on $J_y$.
For the first point, note that if $r \geq y \geq 0$ then we have
$J_y = [0, r-y] \cup [0,r+y] = [0,r+y]$, and if $r < y$ then we have
$J_y = [-r+y, r+y]$; in both cases, $J_y$ is an interval.
Now if $r \geq y \geq 0$, we have
\begin{equation}
    \hamr(y+x) - \hamr(y-x) =
    \begin{cases}
        0 &\text{if } 0 \leq x \leq r-y, \\
        \infty &\text{if } r-y < x \leq r+y,
    \end{cases}
\end{equation}
which is convex on $[0,r+y]$.
On the other hand, if $r < y$, then $\hamr(y+x) - \hamr(y-x) = \infty$ for all $x \in [-r+y,r+y]$,
which is also convex on this interval.
\end{proof}

Now we prove the plateau condition for $\hamr$.
Recalling \eqref{eq:remder_walkot}, this plateau arises from the fact that the variance
of a Gaussian with mean $T$ conditioned to lie in $[-r,r]$ will steeply drop off once $T \gtrsim r$,
since the Gaussian tail is lighter than exponential.

\begin{lemma}
\label{lem:hamr_plateau}
There is some constant $D = D_b > 0$ such that for any $r > 0$ we have $\plat\left( \hamr, \left\lceil 2r + D \right\rceil, \frac{b}{2} \right)$.
\end{lemma}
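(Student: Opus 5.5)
Confinement of $\hamr$ is Lemma \ref{lem:hamr_confining}, so the plan is to prove the bound $\rem_1''(T) \leq b - \frac{b}{2}$ for $T = \lceil 2r + D \rceil$. By \eqref{eq:remder_walkot} this is equivalent to
\begin{equation}
    \Var\left[ \rho \right] \leq \frac{1}{2}
    \qquad \text{for } \rho \sim \walk_0^T,
\end{equation}
where $\walk_0^T$ is the law of a Gaussian $\cN_1^T$ conditioned to lie in $[-r,r]$. Its density is proportional to $e^{-(\rho - T)^2/2}$ on $[-r,r]$. Since $T \geq 2r + D > r$, this density is increasing on the whole interval, so the mass concentrates near the right endpoint $r$. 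The approach is to compare with an exponential distribution, since the variance of $\rho$ is the variance of $r - \rho$.

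Substituting $u = r - \rho \in [0, 2r]$, the density of $u$ is proportional to
\begin{equation}
    \Exp{-\frac{(T - r + u)^2}{2}} \propto \Exp{-(T-r) u - \frac{u^2}{2}}.
\end{equation}
This is log-concave, and its density relative to the exponential law with rate $\lambda = T - r \geq r + D$, restricted to $[0,2r]$, is the factor $e^{-u^2/2}$.

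For the variance bound, the plan is:
\begin{enumerate}
    \item Use $\Var[u] \leq \E[u^2]$.
    \item Observe that $e^{-u^2/2}$ is decreasing, so $u$ is stochastically dominated by $\mathrm{Exp}(\lambda)$ conditioned to $[0,2r]$. By a Chebyshev/Harris-type monotone likelihood ratio argument, it is then also dominated by the unconditioned $\mathrm{Exp}(\lambda)$.
    \item Conclude $\E[u^2] \leq 2/\lambda^2 \leq 2/D^2$.
    \item Choose $D = 2$ (any $D \geq 2$ works), which gives $\Var[\rho] \leq \frac{1}{2}$.
\end{enumerate}
A cruder alternative is the Brascamp--Lieb inequality \eqref{eq:bl}, but it only gives $\Var \leq 1$, which is insufficient. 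The exponential comparison is what makes the bound work.

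I expect the main obstacle to be justifying the stochastic domination cleanly, in particular the truncation at $2r$ combined with the extra Gaussian factor. This follows from the monotone likelihood ratio: the ratio of the law of $u$ to the full exponential density is $e^{-u^2/2}\mathbf{1}_{[0,2r]}(u)$ up to a constant, which is nonincreasing. Hence $\E[g(u)]$ is at most the exponential expectation for any increasing $g$, in particular $g(u) = u^2$.

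It remains to check integrality and the parameter condition. We have $T \in \N$ by the ceiling, and $\eps = b/2$ is fixed. The condition $\eps T^2 \leq 1$ is not required by Definition \ref{def:plateau}, since it is handled via Remark \ref{rmk:epsTsquared} later. So $\plat(\hamr, \lceil 2r + D\rceil, \frac{b}{2})$ holds. Note that $D$ can even be taken independent of $b$.
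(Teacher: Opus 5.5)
Your proof is correct, and it obtains the key estimate by a different route than the paper. Both arguments start from the same reduction, $\Var[\rho] \leq \walk_0^T[(\rho - r)^2]$, which is your $\E[u^2]$.

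The paper then splits at $r - a$. It bounds $(\rho - r)^2 \leq a^2$ on $\{\rho > r - a\}$ and $(\rho - r)^2 \leq 4r^2$ on the complement. It controls $\walk_0^T[\rho \leq r - a] \leq C e^{-a(T-r)}$ using upper and lower Gaussian tail (Mills ratio) estimates. With $a = \frac{1}{2}$ this gives $\frac{1}{4} + C e^{-c(T-r)}$. The crude factor $4r^2$ is what forces the hypothesis $T - r \geq r + 2$, and hence the $2r$ in the statement. The constant $D$ is then chosen large enough, without being made explicit.

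Your monotone likelihood ratio comparison of $u = r - \rho$ with $\mathrm{Exp}(T-r)$ avoids tail estimates entirely and gives $\Var[\rho] \leq 2/(T-r)^2$. This has the correct order $(T-r)^{-2}$ for the variance near the wall, whereas the paper's bound does not tend to zero. It yields the explicit constant $D = 2$. It also shows the plateau condition already holds at $T = \lceil r + 2 \rceil$, so the $2r$ could be replaced by $r$; this only changes constants in Theorem \ref{thm:main_brw}. The paper's split-and-bound estimate is cruder, but it is all that is needed there.
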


\begin{proof}[Proof of Lemma \ref{lem:hamr_plateau}]
By Lemma \ref{lem:hamr_confining}, it suffices
to bound the second derivative of the first remainder term, recalling \eqref{eq:remder_walkot}.
We claim that there are some absolute constants $C, c > 0$ such that for all $T \geq 2r+2$ we have
\begin{equation}
\label{eq:remder_smallball}
    \walk_0^T \left[ (\rho - \walk_0^T[\rho])^2 \right]
    \leq \frac{1}{4} + C e^{- c (T-r)}.
\end{equation}
This would finish the proof as we may set $D$ large enough so that for $T = \lceil 2r + D \rceil$
the right-hand side above is at most $\frac{1}{2}$, meaning $\rem_1''(T) \leq \frac{b}{2} = b - \frac{b}{2}$.

To prove \eqref{eq:remder_smallball},
let us fix some $a \in [0,1]$ to be determined later.
Letting $\zeta \sim \cN_1^T$, standard upper and lower bounds on Gaussian tails \cite[Theorem 1.2.6]{D19Probability} yield
\begin{equation}
    \frac{\cN_1^T \left[ \zeta \leq r - a \right]}
    {\cN_1^T \left[ \zeta \leq r \right]}
    \leq \frac{\frac{1}{T-r-a} e^{-\frac{(T-r-a)^2}{2}}}
    {\left( \frac{1}{T-r} - \frac{1}{(T-r)^3} \right) e^{- \frac{(T-r)^2}{2}}}
    \leq C e^{- a (T-r) }
\end{equation}
for some absolute constant $C$, using $a \leq 1$ and $T - r \geq r + 2$.
Now since $\walk_0^T[\rho]$ minimizes $x \mapsto \walk_0^T[(\rho-x)^2]$,
\begin{equation}
    \walk_0^T \left[ (\rho - \walk_0^T[\rho])^2 \right]
    \leq \walk_0^T \left[ (\rho - r)^2 \right]
    \leq a^2 + 4r^2 \walk_0^T\left[\rho \leq r - a \right]
    \leq a^2 + 4 C r^2 e^{-a (T-r)}.
\end{equation}
Setting $a = \frac{1}{2}$ and adjusting the constants (using $T - r \geq r + 2$)
finishes the proof of \eqref{eq:remder_smallball}.
\end{proof}

\begin{proof}[Proof of Theorem \ref{thm:main_brw}]
Theorem \ref{thm:main_general} and Lemma \ref{lem:hamr_plateau} yield
\begin{align}
    \mubnhr \left[ \phi_{\tau_1} \phi_{\tau_2} \right]
    &\leq C \Exp{- c \d(\tau_1,\tau_2) + C \left( \left\lceil 2r + D \right\rceil + \log \frac{2}{b} \right)} \\
    &\leq C \Exp{- c \d(\tau_1,\tau_2) + C r}
\end{align}
after adjusting the constants.
\end{proof}
\subsection{The sinh--Gordon model}
\label{sec:init_sinhgordon}

Next we turn to Theorem \ref{thm:main_sinhgordon}, which concerns the sinh--Gordon potential
\begin{equation}
    \hambl(\phi) = \lambda \cosh\left( \sqrt{\beta} \phi \right)
\end{equation}
for any $\lambda, \beta > 0$.
As mentioned in Remark \ref{rmk:strict_convexity}, this theorem does not use the full
power of our general Theorem \ref{thm:main_general}, as evidenced by the
fact that we may take $T=1$ in Lemma \ref{lem:hambl_plateau} below.
In other words, the uniform bound suffices for this model, and the plateau-type behavior
described in Section \ref{sec:oop_shrinking} is not relevant.

\begin{lemma}
\label{lem:hambl_confining}
The potential $\hambl$ is confining in the sense of Definition \ref{def:confining}.
\end{lemma}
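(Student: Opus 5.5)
We verify the three requirements of Definition~\ref{def:confining} directly, using that $\hambl(\phi) = \lambda \cosh(\sqrt{\beta}\phi)$ is finite and smooth everywhere. Symmetry is immediate since $\cosh$ is even. Convexity follows from $\hambl''(\phi) = \lambda\beta\cosh(\sqrt{\beta}\phi) > 0$. Since $\hambl < \infty$ everywhere, for each $y \geq 0$ we have $J_y = [0,\infty)$, which is an interval.

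It therefore remains to show that, for each fixed $y \geq 0$, the function
\begin{equation}
    g_y(x) = \hambl(y+x) - \hambl(y-x)
\end{equation}
is convex in $x \geq 0$. Using the identity $\cosh(u+v) - \cosh(u-v) = 2\sinh u \sinh v$, this becomes
\begin{equation}
    g_y(x) = 2\lambda \sinh(\sqrt{\beta} y)\sinh(\sqrt{\beta} x).
\end{equation}
Differentiating twice in $x$ gives
\begin{equation}
    g_y''(x) = 2\lambda\beta \sinh(\sqrt{\beta} y)\sinh(\sqrt{\beta} x).
\end{equation}
This is nonnegative for $x \geq 0$, because $y \geq 0$ and so both $\sinh$ factors are nonnegative. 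Hence $g_y$ is convex on $J_y = [0,\infty)$.

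As a cross-check that avoids the identity, one can compute $g_y''(x) = \hambl''(y+x) - \hambl''(y-x)$ directly. Since $\hambl''$ is even and increasing in $|\cdot|$, and $|y+x| \geq |y-x|$ for $x,y \geq 0$, this difference is nonnegative.

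No step here is a genuine obstacle. The only point needing care is that the convexity is required in $x$ on $[0,\infty)$ with $y \geq 0$ fixed, and the sign argument above handles exactly that range.
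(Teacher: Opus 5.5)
Your proof is correct, and its cross-check is exactly the paper's argument: differentiate twice to get $\hambl''(y+x)-\hambl''(y-x)$, then use that $\cosh$ is even and increasing in absolute value together with $|y+x|\geq|y-x|$. Your main route through $\cosh(u+v)-\cosh(u-v)=2\sinh u\sinh v$ is an equally valid closed-form shortcut, but it is specific to $\cosh$, whereas the monotonicity argument carries over unchanged to the power potentials in Lemma \ref{lem:hamal_confining}.
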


\begin{proof}[Proof of Lemma \ref{lem:hambl_confining}]
The symmetry and convexity hypotheses are immediate,
and the interval $J_y = [0,\infty)$ for all $y \geq 0$ since $\hambl(\phi) < \infty$
for all $\phi \in \R$.
It remains to show that the map $x \mapsto \hambl(y+x) - \hambl(y-x)$ 
is convex on this interval, for which we take two derivatives:
\begin{equation}
    \hambl''(y+x) - \hambl''(y-x)
    = \beta \lambda \cosh\left( \sqrt{\beta} (y+x) \right)
    - \beta \lambda \cosh\left( \sqrt{\beta} (y-x) \right);
\end{equation}
now since $\cosh$ is a convex symmetric function and $|y+x| \geq |y-x|$ for $x,y\geq 0$,
the right-hand side above is nonnegative, showing that $\hambl$ satisfies Definition \ref{def:confining}.
\end{proof}

Now the plateau condition is a simple application of the Brascamp--Lieb inequality \eqref{eq:bl}.

\begin{lemma}
\label{lem:hambl_plateau}
For any $\lambda, \beta > 0$ we have $\plat\left(\hambl, 1, \frac{b}{2} \min\{ 1, \lambda \beta \} \right)$.
\end{lemma}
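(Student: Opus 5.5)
By Lemma \ref{lem:hambl_confining}, the potential $\hambl$ is confining. It therefore remains to check the second clause of Definition \ref{def:plateau} with $T = 1$ and $\eps = \frac{b}{2}\min\{1,\lambda\beta\}$, namely $\rem_1''(1) \leq b - \eps$.

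By \eqref{eq:remder_walkot} we have $\rem_1''(1) = b \cdot \walk_0^1\left[ (\rho - \walk_0^1[\rho])^2 \right]$. It thus suffices to show that the variance of $\walk_0^1$ is at most $1 - \frac12\min\{1,\lambda\beta\}$.

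The density of $\walk_0^1$ is proportional to $e^{-f(\rho)}$ with
\begin{equation}
    f(\rho) = \frac{(\rho-1)^2}{2} + \lambda\cosh\left(\sqrt{\beta}\rho\right).
\end{equation}
This $f$ is smooth and strictly convex, with
\begin{equation}
    f''(\rho) = 1 + \lambda\beta\cosh\left(\sqrt\beta\rho\right) \geq 1 + \lambda\beta,
\end{equation}
since $\cosh \geq 1$. The Brascamp--Lieb inequality \eqref{eq:bl} then gives
\begin{equation}
    \Var_{\walk_0^1}[\rho] \leq \walk_0^1\left[\frac{1}{f''(\rho)}\right] \leq \frac{1}{1+\lambda\beta}.
\end{equation}

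We finish with an elementary inequality. Set $m = \min\{1,\lambda\beta\}$. Because $x \mapsto \frac{1}{1+x}$ is decreasing, $\frac{1}{1+\lambda\beta} \leq \frac{1}{1+m}$. For $m \in [0,1]$ we have $\frac{1}{1+m} \leq 1 - \frac m2$, which is equivalent to $(1+m)(1-\frac m2) = 1 + \frac m2 - \frac{m^2}{2} \geq 1$, i.e.\ to $m(1-m) \geq 0$. Hence
\begin{equation}
    \rem_1''(1) \leq b\left(1 - \frac m2\right) = b - \eps,
\end{equation}
which establishes $\plat\left(\hambl, 1, \frac{b}{2}\min\{1,\lambda\beta\}\right)$.

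None of these steps is a real obstacle. The only point requiring care is the elementary inequality at the end, which is why $\min\{1,\lambda\beta\}$ appears in place of $\lambda\beta$.
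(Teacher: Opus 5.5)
Your proof is correct and is essentially the paper's argument: confinement from Lemma \ref{lem:hambl_confining}, then the Brascamp--Lieb inequality with $\cosh \geq 1$ to bound the variance by $\frac{1}{1+\lambda\beta}$, followed by the same elementary inequality. The only difference is that the paper first uses unimodality (item \ref{item:rem_unimodal} of Proposition \ref{prop:flow}) to reduce to $\rem_1''(0)$ and applies Brascamp--Lieb to $\walk_0^0$, whereas you apply it directly to $\walk_0^1$. This is a harmless simplification, since the lower bound $f'' \geq 1 + \lambda\beta$ does not depend on the mean shift.
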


\begin{proof}[Proof of Lemma \ref{lem:hambl_plateau}]
By Lemma \ref{lem:hambl_confining}, it suffices to bound the second derivative of the remainder term; note that we have
\begin{equation}
    \rem_1''(1) \leq \rem_1''(0) = b \cdot \walk_0^0 \left[ (\rho - \walk_0^0[\rho])^2 \right],
\end{equation}
using item \ref{item:rem_unimodal} of Proposition \ref{prop:flow}.
Now the density of $\walk_0^0$ is
\begin{equation}
    g(\rho) \propto \Exp{- \frac{\rho^2}{2} - \lambda \cosh \left( \sqrt{\beta} \rho \right)},
\end{equation}
and so the Brascamp--Lieb inequality \eqref{eq:bl} implies that
\begin{equation}
    \walk_0^0 \left[ (\rho - \walk_0^0[\rho])^2 \right]
    \leq \walk_0^0 \left[ \frac{1}{1 + \lambda \beta \cosh \left( \sqrt{\beta} \rho \right)} \right]
    \leq \frac{1}{1 + \lambda \beta},
\end{equation}
since $\cosh(x) \geq 1$ for all $x \in \R$.
Now if $\lambda \beta \leq 1$ then the above is at most $1 - \frac{\lambda \beta}{2}$,
and otherwise it is at most $\frac{1}{2}$.
So we find that
\begin{equation}
    \rem_1''(1) \leq b \max \left\{ \frac{1}{2}, 1 - \frac{\lambda \beta}{2} \right\}
    = b - \min \left\{ \frac{b}{2}, \frac{b \lambda \beta}{2} \right\},
\end{equation}
which finishes the proof.
\end{proof}

\begin{proof}[Proof of Theorem \ref{thm:main_sinhgordon}]
Theorem \ref{thm:main_general} and Lemma \ref{lem:hambl_plateau} yield
\begin{align}
    \mubnhbl \left[ \phi_{\tau_1} \phi_{\tau_2} \right]
    &\leq C \Exp{- c \d(\tau_1, \tau_2) + C \left( 1 + \log \frac{1}{\frac{b}{2} \min \{1,\lambda\beta\}} \right)} \\
    &\leq C \Exp{- c \d(\tau_1,\tau_2) + C \log_+ \frac{1}{\lambda \beta}}
\end{align}
after adjusting the constants, recalling that $\log_+ x = \max\{0,\log x\}$.
\end{proof}

\subsection{Power potentials}
\label{sec:init_polynomial}

Finally we turn to Theorem \ref{thm:main_phialpha}, which concerns the power potentials
\begin{equation}
    \hamal(\phi) = \lambda |\phi|^\alpha
\end{equation}
for $\lambda > 0$ and $\alpha \geq 2$.
These potentials interpolate between the strict convexity of the sinh--Gordon model
and the square well of the conditioned model, as discussed in Section \ref{sec:intro_fieldtheories}.

\begin{lemma}
\label{lem:hamal_confining}
The potential $\hamal$ is confining in the sense of Definition \ref{def:confining}.
\end{lemma}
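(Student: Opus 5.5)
The plan follows the proof of Lemma \ref{lem:hambl_confining} for the sinh--Gordon potential. Symmetry of $\hamal(\phi) = \lambda|\phi|^\alpha$ is immediate. Convexity follows because $|\phi|^\alpha$ is convex for $\alpha \geq 1$. Since $\hamal < \infty$ everywhere, $J_y = [0,\infty)$ for every $y \geq 0$, so $J_y$ is trivially an interval. It remains to show that for fixed $y \geq 0$ the map $x \mapsto \hamal(y+x) - \hamal(y-x)$ is convex on $[0,\infty)$.

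For $\alpha \geq 2$ the function $h(\phi) = |\phi|^\alpha$ is $C^2$ on $\R$, since $h'(\phi) = \alpha\,\mathrm{sign}(\phi)|\phi|^{\alpha-1}$ is $C^1$ when $\alpha - 1 \geq 1$. Its second derivative is $h''(\phi) = \alpha(\alpha-1)|\phi|^{\alpha-2}$, continuous for $\alpha \geq 2$ and constant when $\alpha = 2$. We can therefore differentiate twice in $x$ and obtain
\begin{equation}
    \frac{d^2}{dx^2}\left( \hamal(y+x) - \hamal(y-x) \right)
    = \lambda \alpha (\alpha-1) \left( |y+x|^{\alpha-2} - |y-x|^{\alpha-2} \right).
\end{equation}
For $x, y \geq 0$ we have $|y+x| \geq |y-x|$. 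Since $t \mapsto t^{\alpha-2}$ is nondecreasing on $[0,\infty)$ for $\alpha \geq 2$, with the convention $0^0 = 1$ making it constant when $\alpha = 2$, the right-hand side is nonnegative. This gives convexity on $[0,\infty)$ and completes the verification of Definition \ref{def:confining}.

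The only delicate point is regularity. We must justify using the second derivative through $\phi = 0$, where $h''$ either vanishes (for $\alpha > 2$) or is constant (for $\alpha = 2$). Because $h \in C^2(\R)$ for all $\alpha \geq 2$, the pointwise second-derivative criterion for convexity applies, so no case analysis is needed. For $\alpha \in (1,2)$ this argument would break down, but that range is not needed here.
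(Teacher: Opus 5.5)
Your proof is correct and follows the paper's argument: $J_y = [0,\infty)$, and the second derivative $\lambda\alpha(\alpha-1)\left(|y+x|^{\alpha-2} - |y-x|^{\alpha-2}\right)$ is nonnegative because $|y+x| \geq |y-x|$ for $x,y \geq 0$. Your extra remark that $\lambda|\phi|^\alpha \in C^2(\R)$ for $\alpha \geq 2$ justifies using the second derivative through $\phi = 0$, a point the paper leaves implicit.
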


\begin{proof}[Proof of Lemma \ref{lem:hamal_confining}]
The symmetry and convexity hypotheses are immediate, and for all $y \geq 0$ the set $J_y$ is
the full interval $[0,\infty)$.
Now the second derivative of the function $x \mapsto \hamal(y + x) - \hamal(y - x)$ is
\begin{equation}
    \lambda \alpha(\alpha-1) \left( \left| y + x \right|^{\alpha-2} - \left| y - x \right|^{\alpha-2} \right),
\end{equation}
and since $\lambda > 0$ and $\alpha \geq 2$, the latter is nonnegative for all $x,y \geq 0$
since $|y+x| \geq |y-x|$.
\end{proof}

Now to verify the quantitative part of the plateau condition of Definition \ref{def:plateau}
for the power potential, we consider two separate cases.
First, if $\lambda \geq 1$, then although the potential is not strictly convex, an easy calculation
using the gamma function (an idea suggested by a publicly available large language model
in August 2026) shows that we may take $T = 1$ as was the case for the sinh--Gordon model.

\begin{lemma}
\label{lem:hamal_plateau_big}
For any $\alpha \geq 2$ and $\lambda \geq 1$, we have
$\plat\left( \hamal, 1, \frac{b}{2} \right)$.
\end{lemma}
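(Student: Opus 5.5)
We need to show $\rem_1''(1) \leq b/2$, i.e.\ that the variance of $\walk_0^1$ is at most $\tfrac12$; confinement of $\hamal$ is already given by Lemma \ref{lem:hamal_confining}. By item \ref{item:rem_unimodal} of Proposition \ref{prop:flow}, $\rem_1''(1) \leq \rem_1''(0) = b \cdot \walk_0^0[\rho^2]$, where we used the symmetry of $\walk_0^0$ to drop the mean. So it suffices to prove $\walk_0^0[\rho^2] \leq \tfrac12$ whenever $\alpha \geq 2$ and $\lambda \geq 1$.

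First we reduce to $\lambda = 1$. The measure $\walk_0^0$ has density proportional to $e^{-\rho^2/2 - \lambda|\rho|^\alpha}$. Increasing $\lambda$ adds the convex symmetric function $(\lambda-1)|\rho|^\alpha$ to the exponent. By Lemma \ref{lem:tailcomparison}, with $\ext_1 = -|\rho|^\alpha$ and $\ext_2 = -\lambda|\rho|^\alpha$, the $\lambda$-measure is stochastically dominated in absolute value by the $\lambda=1$ measure. Hence its second moment is no larger, and we may take $\lambda = 1$.

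For $\lambda = 1$ we bound the density's exponent using $\rho^2/2 \geq 0$, and, following the gamma-function hint, compare $\int \rho^2 e^{-\rho^2/2-|\rho|^\alpha}$ with $\int e^{-\rho^2/2-|\rho|^\alpha}$. A clean route is to use the Gaussian factor only loosely. By Lemma \ref{lem:tailcomparison} again (adding the convex symmetric $\rho^2/2$), the measure is dominated in absolute value by the law with density proportional to $e^{-|\rho|^\alpha}$. Its second moment is
\[
\frac{\int_0^\infty \rho^2 e^{-\rho^\alpha}\,d\rho}{\int_0^\infty e^{-\rho^\alpha}\,d\rho} = \frac{\Gamma(3/\alpha)}{\Gamma(1/\alpha)}.
\]
At $\alpha=2$ this equals $\Gamma(3/2)/\Gamma(1/2) = \tfrac12$, which is exactly the needed bound.

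The main obstacle is showing that $\Gamma(3/\alpha)/\Gamma(1/\alpha) \leq \tfrac12$ for all $\alpha \geq 2$. Writing $s = 1/\alpha \in (0,\tfrac12]$, we need $\Gamma(3s) \leq \tfrac12 \Gamma(s)$. As $s \to 0$ the ratio behaves like $s/(3s) = \tfrac13$. We would try to prove that $s \mapsto \log\Gamma(3s) - \log\Gamma(s)$ is increasing on $(0,\tfrac12]$. Its derivative is $3\psi(3s) - \psi(s)$, and positivity could follow from the series $\psi(x) = -\gamma + \sum_n \bigl(\frac1{n+1} - \frac1{n+x}\bigr)$ or from the multiplication formula. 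The endpoint value $\tfrac12$ then gives the bound.

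If this monotonicity proves awkward, there is a fallback that keeps some Gaussian factor for slack. Use $|\rho|^\alpha \geq \rho^2$ for $|\rho| \geq 1$ together with an explicit estimate on $[-1,1]$, accepting the constant $\tfrac12$ replaced by any constant $<1$. Adjusting $\eps$ to $b(1-\text{that constant})$ would still suffice for Theorem \ref{thm:main_phialpha}, but the stated $\eps = b/2$ needs exactly the bound $\tfrac12$.
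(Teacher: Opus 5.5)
Your reduction is the same as the paper's. Unimodality gives $\rem_1''(1)\le\rem_1''(0)=b\,\walk_0^0[\rho^2]$, Lemma~\ref{lem:tailcomparison} strips off the Gaussian factor, and everything comes down to $\Gamma(3/\alpha)/\Gamma(1/\alpha)\le\frac12$ for $\alpha\ge2$. Your preliminary reduction to $\lambda=1$ is valid; it replaces the paper's scaling factor $\lambda^{-2/\alpha}\le1$. The paper simply asserts that this Gamma inequality can be checked numerically.

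The gap is in the argument you propose for that inequality: $s\mapsto\log\Gamma(3s)-\log\Gamma(s)$ is \emph{not} increasing on $(0,\frac12]$. Since $\Gamma(x)=x^{-1}-\gamma+O(x)$ with $\gamma$ Euler's constant, one has $\Gamma(3s)/\Gamma(s)=\frac13\bigl(1-2\gamma s+O(s^2)\bigr)$. So the ratio approaches its limit $\frac13$ from below, and equivalently $3\psi(3s)-\psi(s)\to-2\gamma<0$ as $s\to0^+$. Concretely, $\Gamma(0.3)/\Gamma(0.1)\approx0.3145<\frac13$. Positivity of the derivative therefore fails near $s=0$, i.e.\ for large $\alpha$. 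Your fallback, as you note yourself, does not give $\eps=b/2$.

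The property that does hold is convexity. Put $f(s)=\log\Gamma(3s)-\log\Gamma(s)$. Using $\psi'(x)=\sum_{n\ge0}(x+n)^{-2}$,
\[
f''(s)=9\psi'(3s)-\psi'(s)=\sum_{n\ge0}\Bigl[\bigl(s+\tfrac n3\bigr)^{-2}-(s+n)^{-2}\Bigr]\ge0,
\]
so $f$ is convex on $(0,\infty)$. We have $f(0^+)=\log\frac13$ and $f(\frac12)=\log\frac12$. Convexity then gives, for all $s\in(0,\frac12]$,
\[
f(s)\le(1-2s)\log\tfrac13+2s\log\tfrac12\le\log\tfrac12 .
\]
With $s=1/\alpha$, this is exactly $\Gamma(3/\alpha)\le\frac12\Gamma(1/\alpha)$ for $\alpha\ge2$. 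With this replacement your proof is complete, and it gives a rigorous proof of the one step the paper leaves to a numerical check.
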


\begin{proof}[Proof of Lemma \ref{lem:hamal_plateau_big}]
It suffices to bound $\rem_1''(0)$,
using the unimodality of item \ref{item:rem_unimodal} in Proposition \ref{prop:flow}.
By Lemma \ref{lem:tailcomparison}, and since $\walk_0^0[\rho] = 0$ by symmetry,
the variance under $\walk_0^0$ can only increase if we remove the \emph{quadratic} term, i.e.\ we have
\begin{equation}
\label{eq:gammastart}
    \walk_0^0 \left[ (\rho - \walk_0^0[\rho])^2 \right]
    \leq \frac{\int_{-\infty}^\infty \rho^2 e^{- \lambda |\rho|^\alpha \,d\rho}}
    {\int_{-\infty}^\infty e^{-\lambda |\rho|^\alpha} \,d\rho} \\
    = \frac{\int_0^\infty \rho^2 e^{- \lambda \rho^\alpha} \,d\rho}
    {\int_0^\infty e^{- \lambda \rho^\alpha} \,d\rho}.
\end{equation}
Now we claim that the latter can be expressed via a ratio of values of the gamma function
\begin{equation}
    \Gamma(z) = \int_0^\infty t^{z-1} e^{-t} \,dt.
\end{equation}
Indeed, after making the change of variables $t = \lambda \rho^\alpha$,
the ratio in the right-hand side of \eqref{eq:gammastart} becomes
\begin{equation}
    \frac{\int_0^\infty \left( \frac{t}{\lambda} \right)^{\frac{2}{\alpha}} e^{- t} \frac{dt}{\alpha \lambda \left( \frac{t}{\lambda} \right)^{\frac{\alpha-1}{\alpha}}}}
    {\int_0^\infty e^{-t} \frac{dt}{\alpha \lambda \left( \frac{t}{\lambda} \right)^{\frac{\alpha-1}{\alpha}}}}
    = \lambda^{- \frac{2}{\alpha}} \frac{\int_0^\infty t^{\frac{2}{\alpha} - \frac{\alpha - 1}{\alpha}} e^{-t} \,dt}
    {\int_0^\infty t^{- \frac{\alpha-1}{\alpha}} e^{-t} \,dt}
    = \lambda^{-\frac{2}{\alpha}} \frac{\Gamma\left( \frac{3}{\alpha} \right)}{\Gamma \left( \frac{1}{\alpha} \right)}.
\end{equation}
It may now be easily checked, numerically or otherwise, that the latter ratio of gamma function values is at most $\frac{1}{2}$
for all $\alpha \geq 2$.
So since $\lambda \geq 1$, we find that the variance under $\walk_0^0$ is at most $\frac{1}{2}$, and so $\rem_1''(T) \leq \frac{b}{2}$
for all $T \in \N$ in this case.
\end{proof}

Now for the case of small $\lambda$, we will need to choose a larger value of $T$,
and this will be done by carefully balancing the choice of $T \in \N$ against $\eps > 0$,
which was not necessary for $\hamr$ in Section \ref{sec:init_smallball}.
We will apply the Brascamp--Lieb inequality again, but to get an effective bound we will need
to increase $T$ to skew the distribution towards regions where the second derivative 
is larger.
To achieve this, we will bound the \emph{mode} of the distribution and apply
a Gaussian-type tail bound for log-concave distributions due to \cite{DLR09Maximum,DLR14Maximum}.

Since this can get somewhat technical, we do not give a uniform bound for all $\alpha \geq 2$,
but as mentioned in Remark \ref{rmk:alphabehavior},
we expect that a more careful analysis may be able to remove this restriction.

\begin{lemma}
\label{lem:hamal_plateau_small}
For any $\alpha_* \geq 2$ there is a constant $D_{\alpha_*}$ such that
for all $\alpha \in [2,\alpha_*]$ and $\lambda \in (0,1)$, we have
\begin{equation}
    \plat \left( \hamal,
    \left\lceil \lambda^{- \frac{\alpha-2}{\alpha^2}} + D_{\alpha_*} \right\rceil, 
    \frac{b}{4} \lambda^{\frac{4\alpha-4}{\alpha^2}} \right).
\end{equation}
\end{lemma}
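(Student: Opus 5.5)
Confinement of $\hamal$ is Lemma \ref{lem:hamal_confining}, so by \eqref{eq:remder_walkot} it remains to show $\Var_{\walk_0^T}[\rho] \leq 1 - \frac14 \lambda^{\frac{4\alpha-4}{\alpha^2}}$ for $T = \lceil \lambda^{-\frac{\alpha-2}{\alpha^2}} + D_{\alpha_*}\rceil$. The density of $\walk_0^T$ is proportional to $e^{-f(\rho)}$ with $f(\rho) = \frac{(\rho-T)^2}{2} + \lambda|\rho|^\alpha$, so $f''(\rho) = 1 + \lambda\alpha(\alpha-1)|\rho|^{\alpha-2}$. The Brascamp--Lieb inequality \eqref{eq:bl} gives
\begin{equation}
    \Var_{\walk_0^T}[\rho] \leq \walk_0^T\left[ \frac{1}{1+\lambda\alpha(\alpha-1)|\rho|^{\alpha-2}} \right].
\end{equation}
It therefore suffices to show that $\rho$ is at least of order $R = \lambda^{-\frac{\alpha-2}{\alpha^2}}$ with probability at least $\frac12$, say. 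On that event the integrand is at most $1 - c\lambda R^{\alpha-2}$ for $\lambda R^{\alpha-2}\lesssim 1$. Since $\lambda R^{\alpha-2} = \lambda^{1 - (\alpha-2)^2/\alpha^2} = \lambda^{\frac{4\alpha-4}{\alpha^2}}$, this is exactly the claimed gap, after fixing constants (for instance $\rho \geq R/2$ and $\alpha(\alpha-1)2^{2-\alpha} \geq$ some constant depending on $\alpha_*$, absorbed into $D_{\alpha_*}$ by slightly enlarging $T$).

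To localize $\rho$, I first locate the mode $m$ of $\walk_0^T$, the root of $f'(m) = m - T + \lambda\alpha m^{\alpha-1} = 0$, so $m\in(0,T)$. I need $\lambda\alpha m^{\alpha-1}\leq T/2$, i.e.\ $m \geq T/2$. It is enough that $\lambda\alpha (T/2)^{\alpha-1} \leq T/2$, which means $T^{\alpha-2} \lesssim_\alpha \lambda^{-1}$. Our $T\approx \lambda^{-\frac{\alpha-2}{\alpha^2}}$ gives $T^{\alpha-2} \approx \lambda^{-(\alpha-2)^2/\alpha^2} \leq \lambda^{-1}$, with an $\alpha$-dependent constant margin that $D_{\alpha_*}$ handles; when $\lambda$ is near $1$ I will adjust $D_{\alpha_*}$ appropriately. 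Hence $m \geq T/2 \geq R/2 + D_{\alpha_*}/2$.

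Next, I use that $f'' \geq 1$, so $\walk_0^T$ is $1$-strongly log-concave with mode $m$. The tail bound of \cite{DLR09Maximum,DLR14Maximum}, or directly a comparison via Lemma \ref{lem:tailcomparison}-type arguments, gives $\walk_0^T[\rho \leq m - s] \leq C e^{-s^2/2}$. Taking $s = D_{\alpha_*}/2$ large gives $\walk_0^T[\rho \geq R/2]\geq \frac12$. On the complementary event the integrand is at most $1$, so
\begin{equation}
    \Var \leq 1 - \tfrac12 \cdot \frac{\lambda\alpha(\alpha-1)(R/2)^{\alpha-2}}{1+\lambda\alpha(\alpha-1)(R/2)^{\alpha-2}}.
\end{equation}
Using $\lambda R^{\alpha-2}\leq 1$ and $\alpha\leq\alpha_*$, this is $\leq 1-\frac14\lambda^{\frac{4\alpha-4}{\alpha^2}}$, with the constant absorbed by making $R$ slightly larger via $D_{\alpha_*}$, or by noting $\alpha(\alpha-1)2^{2-\alpha}\geq 1$ on $[2,\alpha_*]$ up to a constant. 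Multiplying by $b$ yields $\rem_1''(T)\leq b - \frac b4\lambda^{\frac{4\alpha-4}{\alpha^2}}$.

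The main obstacle is the bookkeeping of constants: making the single factor $\frac14$ exact while keeping $D_{\alpha_*}$ depending only on $\alpha_*$. I expect to need $2^{2-\alpha}$-type factors, which is where the exponential dependence on $\alpha_*$ enters. The mode estimate $m \geq T/2$ also needs care uniformly over $\lambda\in(0,1)$.
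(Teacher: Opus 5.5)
Your overall route is the paper's: Brascamp--Lieb, then locating $\rho$ through the mode of $\walk_0^T$ and a log-concave tail bound below the mode, then $\lambda R^{\alpha-2}=\lambda^{\frac{4\alpha-4}{\alpha^2}}$. However, the mode step fails as written. Since $x\mapsto x+\lambda\alpha x^{\alpha-1}$ is increasing and equals $T$ at the mode $m$, the claim $m\ge T/2$ is equivalent to $\lambda\alpha(T/2)^{\alpha-2}\le 1$. For every $\lambda\in(\frac12,1)$ and every $\alpha\ge2$ this is false as soon as $T\ge2$, because then $\lambda\alpha(T/2)^{\alpha-2}\ge\lambda\alpha>1$. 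For $\alpha=2$ it reads $\lambda\le\frac12$ regardless of $T$. Your remedy of enlarging $D_{\alpha_*}$ goes the wrong way: it increases $T$, and hence increases $\lambda\alpha(T/2)^{\alpha-2}$. The heuristic $T^{\alpha-2}\approx\lambda^{-(\alpha-2)^2/\alpha^2}\le\lambda^{-1}$ has slack only for small $\lambda$. The conclusion you actually need, that $\rho$ exceeds your threshold with probability at least $\frac12$, is true, but it cannot be obtained through $m\ge T/2$.

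The threshold $R/2$ is a second, independent problem. The constant $\frac14$ in the prescribed $\eps=\frac b4\lambda^{\frac{4\alpha-4}{\alpha^2}}$ may not depend on $\alpha_*$, so nothing can be absorbed into $D_{\alpha_*}$ there. On $\{\rho\ge R/2\}$ your gain is $\frac{x_0}{1+x_0}$ with $x_0=c_\alpha\lambda^{\frac{4\alpha-4}{\alpha^2}}$ and $c_\alpha=\alpha(\alpha-1)2^{2-\alpha}$. Since $\frac{x_0}{1+x_0}\le x_0$ and the probability is at most $1$, your bound is never better than $1-c_\alpha\lambda^{\frac{4\alpha-4}{\alpha^2}}$. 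This is weaker than required whenever $c_\alpha<\frac14$, e.g.\ $c_{11}=\frac{110}{512}$. Additive enlargements of $T$ or of the threshold do not help: as $\lambda\to0$ one has $R\to\infty$ and $(R/2+D)^{\alpha-2}/R^{\alpha-2}\to2^{2-\alpha}$.

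Both issues are fixed by the paper's choices. First, take the threshold $\rho_0=R$ itself (the paper uses $(\lambda\alpha)^{-\frac{\alpha-2}{\alpha^2}}\le R$). Then the factor is $\alpha(\alpha-1)\ge1$, and $\frac12\cdot\frac{y}{1+y}\ge\frac14 y$ for $y=\lambda^{\frac{4\alpha-4}{\alpha^2}}\le1$. Second, prove $m\ge R+q$ directly instead of $m\ge T/2$. By monotonicity it suffices that $R+q+\lambda\alpha(R+q)^{\alpha-1}\le T$. Since $R\ge1$, we have $\lambda\alpha(R+q)^{\alpha-1}\le\alpha(1+q)^{\alpha-1}\lambda R^{\alpha-1}=\alpha(1+q)^{\alpha-1}\lambda^{\frac{3\alpha-2}{\alpha^2}}\le\alpha_*(1+q)^{\alpha_*-1}$, and this is where $D_{\alpha_*}$ (and its exponential dependence on $\alpha_*$) enters. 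With $q$ an absolute constant, your tail bound below the mode then gives $\walk_0^T[\rho\ge R]\ge\frac12$.
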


\begin{proof}[Proof of Lemma \ref{lem:hamal_plateau_small}]
For any $T \in \N$, the density of $\walk_0^T$ is
\begin{equation}
    g(\rho) \propto \Exp{- \frac{(\rho-T)^2}{2} - \lambda |\rho|^\alpha}.
\end{equation}
So, by the Brascamp--Lieb inequality \eqref{eq:bl}, we have
\begin{equation}
    \walk_0^T \left[ (\rho - \walk_0^T[\rho])^2 \right]
    \leq \walk_0^T \left[ \frac{1}{1 + \lambda \alpha (\alpha-1) |\rho|^{\alpha-2}} \right]
    = 1 - \walk_0^T \left[ \frac{\lambda \alpha (\alpha-1) |\rho|^{\alpha-2}}{1 + \lambda \alpha (\alpha-1) |\rho|^{\alpha-2}} \right],
\end{equation}
and we will lower bound the latter expectation.
Now since $x \mapsto \frac{x}{1 + x}$ is an increasing function of $x \geq 0$, for any $\rho_0 > 0$ we have
\begin{equation}
\label{eq:wkinter}
    \walk_0^T \left[ \frac{\lambda \alpha (\alpha-1) |\rho|^{\alpha-2}}{1 + \lambda \alpha (\alpha-1) |\rho|^{\alpha-2}} \right]
    \geq \walk_0^T \left[ \rho \geq \rho_0 \right] \cdot \frac{\lambda \alpha (\alpha-1) \rho_0^{\alpha-2}}
    {1 + \lambda \alpha (\alpha-1) \rho_0^{\alpha-2}}.
\end{equation}
To find an appropriate choice of $\rho_0$, we will analyze the density $g$.
In particular, its maximizer is the minimizer of $\frac{(\rho - T)^2}{2} + \lambda |\rho|^\alpha$.
If $T \geq 0$ then the maximizer is also nonnegative since $g(\rho) \geq g(-\rho)$ for $\rho \geq 0$.
Thus, taking a derivative, the maximizer $\rho_*$ of $g$ must satisfy
\begin{equation}
\label{eq:rhostareq}
    \rho_* - T + \lambda \alpha \rho_*^{\alpha-1} = 0,
    \qquad \text{i.e.} \qquad
    \rho_* = T - \lambda \alpha \rho_*^{\alpha-1}
\end{equation}
Let us fix a constant $q$ to be determined later.
Now if $0 < \rho < \left( \lambda \alpha \right)^{- \frac{\alpha-2}{\alpha^2}} + q$,
then we have
\begin{align}
    T - \lambda \alpha \rho^{\alpha-1}
    &> T - \lambda \alpha \left( \left( \lambda \alpha \right)^{-\frac{\alpha-2}{\alpha^2}} + q \right)^{\alpha-1} \\
    &= T - \left( \lambda \alpha \right)^{1 - \frac{(\alpha-2)(\alpha-1)}{\alpha^2}}
        \left( 1 + q \left( \lambda \alpha \right)^{\frac{\alpha-2}{\alpha^2}} \right)^{\alpha-1} \\
    &\geq T - \left( \lambda \alpha \right)^{\frac{3\alpha-2}{\alpha^2}}
        \Exp{q (\alpha-1) \left( \lambda \alpha \right)^{\frac{\alpha-2}{\alpha^2}}} \\
    &\geq T - \alpha^{\frac{3\alpha-2}{\alpha^2}} \Exp{q (\alpha-1) \alpha^{\frac{\alpha-2}{\alpha^2}}},
\end{align}
using at the last step our assumption that $\lambda \leq 1$
and the fact that $\alpha \geq 2$.
Now let us set
\begin{equation}
    D_{\alpha_*} = \max \left\{ \alpha^{\frac{3\alpha-2}{\alpha^2}} \Exp{q (\alpha-1) \alpha^{\frac{\alpha-2}{\alpha(\alpha-1)}}} : \alpha \in [2, \alpha_*] \right\},
\end{equation}
which is finite for any fixed $\alpha_*$, so that if $\alpha \in [2,\alpha_*]$ and
\begin{equation}
\label{eq:Tassumption}
    T > \left( \lambda \alpha \right)^{-\frac{\alpha-2}{\alpha^2}} + q + D_{\alpha_*},
\end{equation}
then we must have $\rho_* \geq \left( \lambda \alpha \right)^{-\frac{\alpha-2}{\alpha^2}} + q$ by \eqref{eq:rhostareq}.
Now we set $\rho_0 = \left( \lambda \alpha \right)^{- \frac{\alpha-2}{\alpha^2}}$
which satisfies $\rho_0 \leq \rho_* - q$ under the assumption \eqref{eq:Tassumption}.
We will now adopt this assumption and apply the following tail bound for log-concave
distributions due to \cite[Lemma 7.1]{DLR14Maximum} (see also \cite{DLR09Maximum}):
\begin{equation}
\label{eq:probbound}
    \walk_0^T \left[ \rho \geq \rho_0 \right] 
    \geq \walk_0^T \left[ \rho \geq \rho_* - q \right]
    \geq 1 - e^{- g(\rho_*) q}.
\end{equation}
Since $(-\log g)'' (\rho) \geq 1$ for all $\rho$, and since $\rho_*$ is the mode of $g$
which is a probability density, we must have $g(\rho_*) \geq \frac{1}{\sqrt{2\pi}}$ via comparison
with the standard Gaussian density.
Thus we may choose $q$ to be an absolute constant large enough that the right-hand side above is $\geq \frac{1}{2}$.
Furthermore, for this choice of $\rho_0$ we have
\begin{align}
    \lambda \alpha (\alpha-1) \rho_0^{\alpha-2}
    &= (\alpha-1) \left( \lambda \alpha \right)^{1 - \frac{(\alpha-2)^2}{\alpha^2}} \\
    &= (\alpha-1) \left( \lambda \alpha \right)^{\frac{\alpha^2 - \alpha^2 + 4 \alpha - 4}{\alpha^2}} \\
    &= (\alpha-1) \left( \lambda \alpha \right)^{\frac{4 \alpha - 4}{\alpha^2}} \\
    &\geq \lambda^{\frac{4 \alpha - 4}{\alpha^2}},
\label{eq:ratio_bound}
\end{align}
using the fact that $\alpha \geq 2$ and that $\alpha^{\frac{4\alpha-4}{\alpha^2}} \geq 1$ for all $\alpha \geq 2$
(as may easily be checked numerically or otherwise).
So from \eqref{eq:wkinter} with \eqref{eq:probbound} and \eqref{eq:ratio_bound},
we find that under the assumption \eqref{eq:Tassumption} on $T$ we have
\begin{equation}
    \walk_0^T \left[ (\rho - \walk_0^T[\rho])^2 \right]
    \leq 1 - \frac{1}{2} \cdot \frac{\lambda^{\frac{4\alpha-4}{\alpha^2}}}{1 + \lambda^{\frac{4\alpha-4}{\alpha^2}}}
    \leq 1 - \frac{1}{4} \lambda^{\frac{4\alpha-4}{\alpha^2}},
\end{equation}
using again our assumption that $\lambda \leq 1$.
The assumption \eqref{eq:Tassumption} on $T$ may be satisfied by $T$ of the form $\left\lceil \lambda^{-\frac{\alpha-2}{\alpha^2}} + D_{\alpha_*} \right\rceil$,
after adjusting the constant $D_{\alpha_*}$ and using the fact that $\alpha^{-\frac{\alpha-2}{\alpha^2}} \leq 1$ for all $\alpha \geq 2$
(as again may be easily checked).
So
\begin{equation}
    \plat \left( \hamal,
    \left\lceil \lambda^{- \frac{\alpha-2}{\alpha^2}} + D_{\alpha_*} \right\rceil, 
    \frac{b}{4} \lambda^{\frac{4\alpha-4}{\alpha^2}} \right)
\end{equation}
holds as desired.
\end{proof}

\begin{proof}[Proof of Theorem \ref{thm:main_phialpha}]
Combining the cases of $\lambda \geq 1$ and $\lambda \in (0,1)$ from Lemmas \ref{lem:hamal_plateau_big}
and \ref{lem:hamal_plateau_small}, we find that
\begin{equation}
    \plat \left( \hamal, \left\lceil \lambda^{- \frac{\alpha-2}{\alpha^2}} + D_{\alpha_*} \right\rceil, \min \left\{ \frac{b}{4} \lambda^{\frac{4\alpha-4}{\alpha^2}}, \frac{b}{2} \right\} \right)
\end{equation}
holds.
Thus Theorem \ref{thm:main_general} yields
\begin{align}
    \mubnhal \left[ \phi_{\tau_1} \phi_{\tau_2} \right]
    &\leq C \Exp{- c \d(\tau_1,\tau_2) + C \left( \lambda^{-\frac{\alpha-2}{\alpha^2}} + D_{\alpha_*} + \log \frac{1}{\min \left\{ \frac{b}{4} \lambda^{\frac{4\alpha-4}{\alpha^2}}, \frac{b}{2} \right\}} \right)} \\
    &\leq C \Exp{- c \d(\tau_1,\tau_2) + C \left( \lambda^{-\frac{\alpha-2}{\alpha^2}} + \frac{1}{\alpha} \log_+ \frac{1}{\lambda} + D_{\alpha_*} \right)},
\end{align}
after adjusting the constants, since $\frac{4\alpha-4}{\alpha} \leq 4$ for all $\alpha \geq 2$,
and recalling that $\log_+ x = \max\{0,\log x\}$.
\end{proof}

\begin{remark}
\label{rmk:nonoptimal_power}
From the argument of Lemma \ref{lem:hamal_plateau_small}, we see that the power $\frac{\alpha-2}{\alpha^2}$ may be
replaced by any expression $f(\alpha)$ such that $\alpha^{-f(\alpha)}$ is bounded
above and $\alpha^{1 - (\alpha-2)f(\alpha)}$ is bounded below
for all $\alpha \geq 2$, at the cost of changing the constant $D_{\alpha_*}$
and the coefficient in front of the logarithm term.
We leave the problem of finding the true optimal bound here to future work.
\end{remark}

\bibliographystyle{plain}
\bibliography{references}

\end{document}